

\documentclass{amsart}[11pt]

\usepackage[margin=1in]{geometry}

\usepackage{amsmath,amsthm,amssymb, amsfonts,mathtools}
\usepackage{latexsym}
\usepackage{url}
\usepackage{xcolor}
\usepackage{graphicx}
\usepackage{float}
\usepackage{caption}
\usepackage{subcaption}
\usepackage{verbatim}
\usepackage{enumitem}
\usepackage{amsrefs}
\usepackage{array}
\usepackage{booktabs}
\usepackage{color}
\usepackage[utf8]{inputenc}
\usepackage{tcolorbox}
\usepackage{bbm}
\usepackage{commath}
\usepackage{enumitem}
\usepackage{hyperref}

\newcommand{\R}{\mathbb{R}}
\newcommand{\E}{\mathbb{E}}
\newcommand{\F}{\mathcal{F}}
\newcommand{\Prob}{\mathbb{P}}
\newcommand{\1}{\mathbbm{1}}
\newcommand{\bb}[1]{\mathbb{#1}}
\newcommand{\br}[1]{\lbrace #1 \rbrace}

\newcommand{\toinf}{\rightarrow\infty}
\newcommand{\tto}{\rightarrow}

\newcommand{\W}{\Omega}
\newcommand{\mc}[1]{\mathcal{ #1}}

\newcommand{\red}[1]{\textcolor{red}{#1}}

\DefineSimpleKey{bib}{arxiveprint}
\DefineSimpleKey{bib}{arxivid}
\DefineSimpleKey{bib}{arxivclass}
\makeatletter
\catcode`\'=11 
  \def\arxiveprint{%
    \resolve@inner{\bib@arxiveprint}
  }
  \def\bib@arxiveprint#1{%
    \begingroup
        #1\relax
        \bib@resolve@xrefs
        \bib@field@patches
        \bib'setup
        \let\PrintPrimary\@empty
        {%
          \IfEmptyBibField{arxivid}{\url{https://arxiv.org/}}
          {%
            \href{https://arxiv.org/abs/\bib'arxivid}{\nolinkurl{arXiv:\bib'arxivid}}%
            \IfEmptyBibField{arxivclass}{}{~\nolinkurl{[\bib'arxivclass]}}
          }
        }\bib'transition
        \setbib@@
    \endgroup
  }
\catcode`\'=12 
\makeatother
\BibSpec{arxiv}{%
    +{}  {\PrintAuthors}                {author}
    +{,} { \textit}                     {title}
    +{.} { }                            {part}
    +{:} { \textit}                     {subtitle}
    +{,} { \PrintContributions}         {contribution}
    +{.} { \PrintPartials}              {partial}
    +{,} { }                            {journal}
    +{}  { \textbf}                     {volume}
    +{}  { \PrintDatePV}                {date}
    +{,} { \issuetext}                  {number}
    +{,} { \eprintpages}                {pages}
    +{,} { }                            {status}
    +{,} { \PrintDOI}                   {doi}
    +{,} { available at \eprint}        {eprint}
    +{,} { available at \arxiveprint}   {arxiveprint}
    +{}  { \parenthesize}               {language}
    +{}  { \PrintTranslation}           {translation}
    +{;} { \PrintReprint}               {reprint}
    +{.} { }                            {note}
    +{.} {}                             {transition}
    +{}  {\SentenceSpace \PrintReviews} {review}
}


\newtheorem{thm}{Theorem}[section]
\newtheorem{theo}[thm]{Theorem}
\newtheorem{corollary}[thm]{Corollary}
\newtheorem{lem}[thm]{Lemma}	
\newtheorem{proposition}[thm]{Proposition}

\newtheorem{defi}[thm]{Definition}

\theoremstyle{remark}
\newtheorem{remark}[thm]{Remark}

\newtheorem*{lemma*}{Lemma}

\author{Z.W. Bezemek, and K. Spiliopoulos}
\address{Boston University, Department of Mathematics and Statistics\\ 111 Cummington Mall, Boston, MA 02215, USA}
\email[Zachary William Bezemek]{bezemek@bu.edu}
\email[Konstantinos Spiliopoulos]{kspiliop@bu.edu}
\thanks{This work has been partially supported by the National Science Foundation (DMS 1550918, DMS 2107856) and Simons Foundation Award  672441. We would like to specially thank the referee of this paper for a very careful and thorough review of the paper.
}
\title{Large deviations for interacting multiscale particle systems}
\date{\today}
\allowdisplaybreaks

\begin{document}
\begin{abstract}
We consider a collection of weakly interacting diffusion processes moving in a two-scale locally periodic environment. We study the large deviations principle of the empirical distribution of the particles' positions in the combined limit as the number of particles grow to infinity  and the time-scale separation parameter goes to zero. We make use of weak convergence methods providing a convenient representation for the large deviations rate function, which allow us to characterize the effective controlled mean field dynamics. In addition, we rigorously obtain equivalent non-variational representations for the large deviations rate function as introduced by Dawson-G\"artner.
\end{abstract}
\subjclass[2010]{60F10, 60F05}
\keywords{interacting particle systems,  multiscale processes, empirical measure, large deviations}

\maketitle
\section{Introduction}
The goal of this article is to obtain the large deviations principle (LDP) for interacting particle systems of  diffusion type in multiscale environments.  We use methods from weak convergence and stochastic control,
\cite{DE}, ultimately making connections with mean field stochastic control problems
\cite{CD}.

In particular, we consider on some filtered probability space satisfying the usual conditions $(\W,\bb{F},\Prob),\br{\F_t}_{t\in[0,1]}$ the interacting particle system
\begin{align}
\label{eq:multimeanfield}
dX^{i,N}_t &= \left[\frac{1}{\epsilon}f(X_t^{i,N},X_t^{i,N}/\epsilon,\mu^N_t)+b(X^{i,N}_t,X^{i,N}_t/\epsilon,\mu^N_t)\right]dt+\sigma(X^{i,N}_t,X^{i,N}_t/\epsilon,\mu^N_t)dW^i_t,\\
X^{i,N}_0&=x^{i,N}\nonumber
\end{align}
where $t\in[0,1]$, $W^i_t,i=1,...,N$ are $m$-dimensional independent $\F_t$-Brownian motions, $$\mu^{N}_t(\omega)\coloneqq \frac{1}{N}\sum_{i=1}^N \delta_{X^{i,N}_t(\omega)},$$ $X_t^{i,N}(\omega),f(x,y,\mu),b(x,y,\mu)\in\R^d$, $\sigma(x,y,\mu)\in \R^{d\times m}$ and all coefficients are $1-$periodic in the second coordinate. Suppose also that $\epsilon>0,N\in\bb{N}$ and $\epsilon(N)\tto 0$ as $N\toinf$.

Our goal is to obtain the large deviations principle for the measure-valued process $\left\{\mu^{N}_t,t\in [0,1]\right\}_{N\in\mathbb{N}}$ in the combined limit  $N\rightarrow\infty$ and $\epsilon\downarrow 0$. Here, $\epsilon$ is the time scale separation parameter. One can regard $X^{i,N}$ as the slow $i^{th}$ component and $Y^{i,N}=X^{i,N}/\epsilon$ as the fast $i^{th}$ component.

Systems of interacting diffusions arise in many areas of science, finance and engineering, see for example \cites{BinneyTremaine,Garnier1, Garnier2, IssacsonMS,Lucon2016,MotschTadmor2014} to name just a few. On the other hand, diffusions in multiscale environments are also common in many applications ranging from chemical physics to finance and climate modeling, see for example \cites{Ansari,BryngelsonOnuchicWolynes, feng2010short,feng2012small,jean2000derivatives, HyeonThirumalai, majda2008applied,
 Zwanzig} for a representative, but by no means complete, list. Our goal in this paper is to study the combined effect of weak mean field interactions in a fast oscillating multiscale environment from the point of view of large deviations for the empirical measure of the particles.

In the case $\epsilon=1$, i.e. in the absence of multiple scales, the limiting problem of $N\rightarrow\infty$ has been very well studied in the literature. Typical behavior, fluctuations, as well as large deviations have been obtained, see for example \cites{Dawson,DG,BDF} for related classical works. Analogously, if $N=1$, i.e., in the single particle case, the limiting behavior as $\epsilon\downarrow 0$ has also been extensively studied in the literature under various modeling assumptions, see for example \cites{Baldi,BorkarGaitsgory,DS,FS, GaitsgoryNguyen,  Lipster,PV1,PV2, Spiliopoulos2013a, Spiliopoulos2014Fluctuations, Spiliopoulos2013QuenchedLDP, Veretennikov, VeretennikovSPA2000,HLLS} and the references therein. In this paper, we study the combined limit as $N\rightarrow\infty$ and $\epsilon(N)\downarrow 0$.  The main result of the paper is Theorem \ref{thm:LaplacePrinciple} (see also Theorem \ref{theorem:DGform}) that gives the large deviations principle of the empirical distribution of the particles in the combined limit $N\rightarrow\infty$ and $\epsilon\downarrow 0$. As a byproduct we also obtain in Theorem \ref{thm:propofchaos} the typical behavior, i.e. the law of large numbers.  We use weak convergence methods of
\cite{DE} which leads to the study of related mean field stochastic control problems \cites{CD, Lacker,Fischer}.
In addition, in Subsection \ref{subsec:DGequivalentform}, we connect the variational form of the action functional that we obtain in Theorem \ref{thm:LaplacePrinciple} with the expected ``dual'' form based on the classical work of Dawson and G\"{a}rtner \cite{DG} in the $\epsilon=1$ case and \cite{DS} in the $N=1$ case. As far as we know, this is the first time that the connection of the variational form of the large deviations rate function for the empirical measures as proved in \cite{BDF} in the setting without multiscale structure and that found in \cite{DG} has been rigorously established. This formulation of the rate function has many parallels to the corresponding form of the rate function for small noise diffusions in both the case with and without multiscale structure, as discussed in Remark \ref{remark:comparingwiththesmallnoisecase}. We expect that employing this exciting duality between the rate functions will open the doors to studying the dynamical effects of multiscale structure on phase transitions and exit times from basins of attraction for the empirical measures of weakly interacting diffusions, see \cites{Dawson,DGbook}, and allow for the design provably optimal importance sampling schemes for functionals of the empirical measure in the multiscale and non-multiscale settings \cites{Spiliopoulos2013a,DSX,AS}.

As an example, we consider in Section \ref{S:DawsonExample} a class of examples in which particles diffuse in a rough confining potential and interact through a general interaction potential. These examples are motivated by the seminal work of Dawson in \cite{Dawson}, and in Remark \ref{remark:unboundedconfiningandinteraction} we discuss how to verify that even a rough version of the system of \cite{Dawson} where the confining potential is bi-stable and unbounded and the interaction potential is of Curie-Weiss form can be seen to satisfy the large deviations principle proved here.

To our knowledge, this is the first large deviations result for the combined $\epsilon\downarrow 0$ and $N\toinf$ limit. Some similar results include the proof of an averaging principle for slow-fast McKean-Vlasov SDEs found in \cite{Rockner}, i.e. the $\epsilon\downarrow 0$ limit for a system of the type we get after $N\toinf$. There is also the result of \cite{gammaconv}, wherein the object of study is $J^\epsilon$ which corresponds to the large deviations rate functional with rate $N$ for the empirical density of a multiscale interacting particle system similar to Equation \eqref{eq:multimeanfield} with $\sigma$ independent of $\mu$ and with $\epsilon>0$ fixed, which is known as per the results of \cite{DG}. In our setting with $\sigma=I$, $J^\epsilon:C([0,T];\mc{P}(\R^d))\tto [0,+\infty]$ would be given by
\begin{align*}
J^\epsilon(\theta) &= \frac{1}{2}\int_0^1 \sup_{g\in C^\infty_c(\R^d):\langle |\nabla g|^2,\theta(t)\rangle \neq 0} \frac{|\langle g,\dot{\theta}(t)-[\mc{L}^\epsilon(\theta(t))]^*\theta\rangle|^2}{\langle |\nabla g|^2,\theta(t)\rangle}dt\\
\mc{L}^\epsilon(\theta(t))[g](x)& = \frac{1}{\epsilon}f(x,x/\epsilon, \theta(t))\cdot \nabla g(x) + b(x,x/\epsilon, \theta(t))\cdot \nabla g(x)+\frac{1}{2}\Delta g(x),
\end{align*}
(see the notation in our Theorem \ref{theorem:DGform}), though their system and setup are different. They are able to prove $\Gamma$-convergence of the sequence $\br{J^\epsilon}_{\epsilon>0}$ to a functional $J$ as $\epsilon\downarrow 0$, in some sense establishing an averaging principle for the empirical density a system of mean field multiscale diffusions at the level of large deviations. Lastly, in \cite{delgadino2020}, a result similar to Theorem \ref{thm:propofchaos} appears (only typical behavior, not LDP). A key difference between the regime of \cite{delgadino2020} and the regime of our paper is that rather than depending on the slow process $X^{i,N}_t$, the fast process $X^{i,N}_t/\epsilon$, and the empirical measure $\mu^N_t$, their coefficients depend on the fast process $X^{i,N}_t/\epsilon$ and the ``fast empirical measure'' $\mu^{N,\epsilon}_t \coloneqq \frac{1}{N}\sum_{i=1}^N \delta_{X^{i,N}_t/\epsilon}$. As a result, the invariant measure $\pi$ (see Equation \eqref{eq:pi}) depends not on the parameter $\mu =\mc{L}(\bar{X}_{\cdot})$ in the limit, as in our regime, but implicitly on itself as $\mu =\pi$. Consequently, in \cite{delgadino2020}, multiple steady states can exist, potentially affecting the way in which the limits $\epsilon\rightarrow 0$ and $N\rightarrow\infty$ interact. We discuss this further in our conclusion Section \ref{S:Conclusions}.

The rest of the  paper is organized as follows. In Section \ref{S:Assumptions}, we lay out notation and main assumptions in regard to the model (\ref{eq:multimeanfield}). In addition, we introduce the corresponding controlled particle system and controlled McKean-Vlasov process which will be crucial components of the large deviations analysis. In Section \ref{S:MainResult} we present our main result on large deviations for the measure-valued process $\left\{\mu^{N}_t,t\in [0,1]\right\}_{N\in\mathbb{N}}$ in the combined limit  $N\rightarrow\infty$ and $\epsilon\downarrow 0$. Section \ref{S:DawsonExample} discusses a class of physically motivated examples which take the form of aggregation-diffusion equations. Section \ref{section:connectiontootheratefunctions} connects the obtained Laplace principle with other classical works in the literature, i.e. the LDP in the $\epsilon=1$ case of \cite{DG} and the LDP in the $N=1$ case of \cite{DS}, establishes an alternative variational form of the rate function provided in Theorem \ref{thm:LaplacePrinciple}, and establishes a non-variational, ``negative-Sobolev'' form of the rate function in Theorem \ref{theorem:DGform}.  In Section \ref{sec:limitingbehavior} we discuss the limiting behavior of the controlled particle system, proving tightness and identifying the limit. In Sections \ref{sec:lowerbound} and \ref{sec:upperbound} we prove the Laplace principle (which is equivalent to the large deviations principle) lower and upper bounds respectively. Compactness of the level sets of the rate function is proven in Section \ref{sec:compactlevelsets}. In Appendices \ref{Appendix:Prelimit} and \ref{Appendix:L1}, we discuss  technical preliminary results that are used in various places of the paper. For purposes of self containment and for the reader's convenience, Appendix \ref{Appendix:LionsDifferentiation} reviews the necessary material from Lions differentiation.  Section \ref{S:Conclusions} has our conclusions and directions for future work.

\section{Notation, Assumptions, and the Controlled McKean-Vlasov process}\label{S:Assumptions}
For $S$ a Polish space, we will use $C([0,1];\mc{S})$ to denote the space of continuous functions from $[0,1]$ to $\mc{S}$, equipped with the topology of uniform convergence. A useful fact is that $C([0,1];\R^d)$ with the previously described topology is a Polish space (see \cite{DE} Theorem A.6.5). $\mc{M}(\mc{S}_1;\mc{S}_2)$ will denote the space of Borel-measurable functions $g:\mc{S}_1\tto \mc{S}_2$ for Polish spaces $\mc{S}_1,\mc{S}_2$. We will use $C_b(\mc{S})$ to denote the space of continuous, bounded functions $B:\mc{S}\tto \R$, and let $\norm{B}_\infty\coloneqq \sup_{x\in \mc{S}}|B(x)|$. In addition, we use $C_{b,L}(\mc{S})$ to denote the space of bounded, Lipschitz functions $B:\mc{S}\tto \R$. We use $C_b^k(\R^d)$ to denote the space of continuous, bounded functions $B:\R^d\tto \R$ with $k$ continuous, bounded derivatives. We use $C_c^\infty(\R^d)$ to denote the space of continuous, infinitely differentiable functions $B:\R^d\tto \R$ with compact support.  These spaces are defined in the same way when $\R^d$ is replaced by the $d$-dimensional unit torus $\bb{T}^d$. $L^2(\mc{S},\mu;\R^k)$, where $\mu$ is a measure on $\mc{S}$ will denote the class of functions $B:\mc{S}\tto \R^k$ such that $\norm{B}_{L^2(\mc{S},\mu)}\coloneqq \biggl(\int_\mc{S} |B(x)|^2\mu(dx) \biggr)^{1/2}<\infty$. We may omit the codomain in this notation when convenient. We will also at times denote $L^2(\R^d\times\R^d,\mu\otimes\mu)$ by $L^2(\R^d,\mu)\otimes L^2(\R^d,\mu)$. $\mc{P}(\mc{S})$ will denote the space of probability measures on the Borel $\sigma$-field $\mc{B}(\mc{S})$, where open sets are induced by the metric on $\mc{S}$. $\mc{P}(\mc{S})$ is given the topology of weak convergence and Prokhorov's metric, and is itself a Polish space (\cite{EK} Theorem 3.1.7). $\mc{P}_2(\mc{S})\subset \mc{P}(\mc{S})$ will denote the set of square integrable measures on $\mc{S}$. It is given the $L^2$-Wasserstein distance (see Definition \ref{def:lionderivative}) as its metric and is also a Polish space (\cite{CD} p.360). Given a random variable $\eta$, $\mc{L}(\eta)$ will denote the distribution of $\eta$. For a function $\phi:\mc{S}\tto \R^d$ which is integrable with respect to $\mu\in \mc{P}(\mc{S})$, we will denote $\langle \mu, \phi\rangle \coloneqq \int_{\mc{S}} \phi(x) \mu(dx)$.

Assume the following:
\begin{enumerate}[label=(A\arabic*)]
\item \label{assumption:initialconditions}For some $\nu_0\in\mc{P}_2(\R^d),\frac{1}{N}\sum_{i=1}^N\delta_{x^{i,N}}\tto \nu_0$ in $\mc{P}_2(\R^d)$ as $N\toinf$, where $x^{i,N}$ are the initial conditions from Equation \eqref{eq:multimeanfield}.
\item \label{assumption:LipschitzandBounded} There exists $L\in(0,\infty)$ such that for $x_1,x_2\in\R^d,y_1,y_2\in\bb{T}^d,\mu_1,\mu_2\in\mc{P}_2(\R^d)$, $$|g(x_1,y_1,\mu_1)-g(x_2,y_2,\mu_2)|\leq L\biggl(|x_1-x_2|+|y_1-y_2|+\bb{W}_2(\mu_1,\mu_2)\biggr),$$ where $g=f,b,$ or $\sigma$ and $\bb{W}_2$ is the $L^2$-Wasserstein distance (see Definition \ref{def:lionderivative}). In addition, $f,b,$ and $\sigma$ are bounded and jointly continuous on $\R^d\times \bb{T}^d\times \mc{P}(\R^d)$.
\item \label{assumption:uniformellipticity} For $A=\sigma\sigma^{\top}$ there exists $\lambda_1>0$ such that uniformly in $x\in\R^d,y\in\bb{T}^d,\mu\in\mc{P}_2(\R^d)$,
\begin{align*}
\xi^\top A(x,y,\mu)\xi\geq \lambda_1 |\xi|^2,\forall \xi\in\R^d.
\end{align*}
\item \label{assumption:fsigmaregularity} For $g=f$ or $\sigma$, $g$,$\nabla_x g$, and $\nabla_x\nabla_x g$ exist and are uniformly bounded. Moreover, for each $x\in\R^d,y\in\bb{T}^d$, $g(x,y,\cdot)$ is Fully $C^2$ and $\nabla_x g$ is $C^1$ in the sense of Lions differentiation (see Definition \ref{def:fullyC2}), and $\partial_\mu g(x,y,\mu)(\cdot),$ $\nabla_x\partial_\mu g(x,y,\mu)(\cdot),$ and $\partial_v\partial_\mu g(x,y,\mu)(\cdot)$ are bounded in $L^2(\R^d,\mu)$ uniformly in $x,y,$ and $\mu$ and $\partial^2_\mu g(x,y,\mu)(\cdot,\cdot)$ is bounded in $L^2(\R^d,\mu)\otimes L^2(\R^d,\mu)$ uniformly in $x,y,$ and $\mu$. All the first and second derivatives of $f$ and $\sigma$ listed in here are H\"oldarian in $y$ uniformly in $x$ and $\mu$, and jointly continuous on $\R^d\times \bb{T}^d\times \mc{P}(\R^d)$.
\end{enumerate}

Note that many of these assumptions can be relaxed, including the boundedness of the coefficients (see Remark \ref{remark:unboundedconfiningandinteraction} for an example). In fact, even the assumption that the fast component of the coefficients in Equation \ref{eq:multimeanfield} are periodic can be relaxed - see Remark \ref{eq:onhavingacoupledslowfastsystem}. However, we choose to present the proofs under the simple yet restrictive assumptions posed in this Section for readability purposes.

Assumption \ref{assumption:initialconditions} is used to determine the initial distribution of the limiting McKean-Vlasov Equation \ref{eq:McKeanLimit}, and ensure that it has sufficient moments for the analysis to go through. Assumption \ref{assumption:LipschitzandBounded} is used to ensure unique strong solutions to the system of prelimit Equation \ref{eq:multimeanfield} for each $N$ (see Proposition \ref{prop:uniquestrongsol}). The uniform ellipticity assumption \ref{assumption:uniformellipticity} and the regularity of certain derivatives of $f$ and $\sigma$ imposed in assumption \ref{assumption:fsigmaregularity} are used along with the centering condition \ref{assumption:centeringcondition} below to ensure the analogous regularity of the cell problem \eqref{eq:cellproblem}, which we will now introduce.

An important object of study will be the operator $\mc{L}^1_{x,\mu}$, parameterized by $x\in\R^d$ and $\mu\in\mc{P}_2(\R^d)$ which acts on $g\in C^2(\bb{T}^d)$ by
\begin{align}\label{eq:L1}
\mc{L}^1_{x,\mu}g(y) \coloneqq f(x,y,\mu)\cdot \nabla g(y)+\frac{1}{2} A(x,y,\mu):\nabla\nabla g(y).
\end{align}
Related to this operator we consider the measure $\pi(\cdot|x,\mu)\in\mc{P}(\bb{T}^d)$, parameterized by $\mu\in\mc{P}_2(\R^d)$, whose density $\tilde{\pi}(\cdot|x,\mu)$ satisfies the adjoint equation
\begin{align}\label{eq:pi}
\left(\mc{L}^1_{x,\mu}\right)^*\tilde{\pi}(y|x,\mu)&=0\\
\int_{\bb{T}^d} \tilde{\pi}(y|x,\mu)dy&=1,\forall x\in\R^d,\mu\in\mc{P}_2(\R^d),\nonumber
{}\end{align}
and the function $\Phi:\R^d\times \bb{T}^d\times \mc{P}_2(\R^d)\tto \R^d$, $\Phi = (\Phi_1,...,\Phi_d)$ solving
\begin{align}
\label{eq:cellproblem}
\mc{L}^1_{x,\mu} \Phi_l(x,y,\mu) &= -f_l(x,y,\mu)\\
 \int_{\bb{T}^d} \Phi_l(x,y,\mu)  \pi(dy|x,\mu) &=0, \nonumber
\end{align}
where both of these equations are given periodic boundary conditions.

In order to ensure the existence of solutions to Equation \eqref{eq:cellproblem}, we impose the centering condition, which is standard in the theory of averaging:
\begin{enumerate}[label=(A\arabic*)]\setcounter{enumi}{4}
\item \label{assumption:centeringcondition} The centering condition:
\begin{align*}
\int_{\bb{T}^d} f(x,y,\mu)\pi(dy|x,\mu)=0,\forall x\in\R^d,\mu\in\mc{P}_2(\R^d),
\end{align*}
where $\pi$ is the unique invariant measure for the frozen fast generator associated to Equation \eqref{eq:multimeanfield} as defined in Equation \eqref{eq:pi}, holds.
\end{enumerate}

As we will see in Propositions \ref{prop:invtmeasure} and \ref{prop:Phiexistenceregularity}, $\pi$ and $\Phi$ are uniquely defined and $\pi$ indeed admits a density $\tilde{\pi}$ under assumptions \ref{assumption:uniformellipticity}, \ref{assumption:fsigmaregularity}, and \ref{assumption:centeringcondition}.

$\mc{L}^1_{x,\mu}$ defined in Equation \eqref{eq:L1} is the generator of the diffusion process $Y^{x,y,\mu}_t$ from Equation \eqref{eq:frozeneqn}, which can be obtained from Equation \eqref{eq:multimeanfield} by writing down the generator associated to $Y^{i,N}_t =X^{i,N}_t/\epsilon$ and only keeping the $O(1/\epsilon^2)$ terms and freezing the $x$ and $\mu$ components in time. Intuitively, $X^{i,N}_t$ and $\mu^N_t$ will evolve much slower relative to $Y^{i,N}_t$ in Equation \eqref{eq:multimeanfield}, and as $\epsilon\downarrow 0$, $Y^{i,N}_t$'s dynamics will be immediately stabilized at its invariant measure. Thus we will have as $N\toinf$ (and $\epsilon\downarrow 0$) the arguments $X^{i,N}_t/\epsilon$ will be replaced by integrating against $\pi$ from Equation \eqref{eq:pi}. In addition, derivatives of the solution $\Phi$ to the cell problem as defined in Equation \eqref{eq:cellproblem} will enter the limiting equation for $\mu^N_t$ in order to correct for the fact that the drift term containing $f$ blows up as $\epsilon\downarrow 0$ (see Equation \eqref{eq:McKeanLimit} below). This effect of averaging for single particle systems is well understood. See, e.g. \cite{Bensoussan} and \cite{PS} for more intuition on the role of the cell-problem and invariant measure in a wide array of averaging problems.

We wish to observe the behavior of the sequence of $\mc{P}(C([0,1];\R^d))$-valued random variables
\begin{align}
\label{eq:empiricalmesaure}
\mu^{N}(\omega):=\frac{1}{N}\sum_{i=1}^N\delta_{X^{i,N}(\omega)}
\end{align}
as $N\toinf$. Specifically, letting $ev(t):C([0,1];\R^d)\tto \R^d$ be the evaluation map at time $t$, in Theorem \ref{thm:propofchaos} we see that, under assumptions \ref{assumption:initialconditions}-\ref{assumption:limitinguniformellipticity}, $\mc{L}(\mu^{N})\tto \delta_{\mu^*}$ in $\mc{P}(\mc{P}(C([0,1];\R^d)))$, where deterministic $\mu^*\in \mc{P}(C([0,1];\R^d))$ satisfies $\mu^*\circ [ev(t)]^{-1}= \mc{L}(X_t),t\in[0,1]$ for $X$ solving the McKean-Vlasov SDE:
\begin{align}
\label{eq:McKeanLimit}
dX_t &= \bar{\beta}(X_t,\mc{L}(X_t))dt +\bar{B}(X_t,\mc{L}(X_t))dW_t \\
\bar{B}(x,\mu)\bar{B}(x,\mu)^\top &= \bar{D}(x,\mu) \nonumber\\
X_0&\sim \nu_0 \nonumber
\end{align}
on some (possibly different than the original) filtered probability space $(\tilde\W,\tilde{\bb{F}},\tilde\Prob),\br{\tilde{\F}_t}_{t\in[0,1]}$, where $W_t$ is a standard $d$-dimensional $\tilde{\F}_t-$Brownian motion.  Here we define
\begin{align}\label{eq:limitingcoefficients}
\beta(x,y,\mu)&\coloneqq [\nabla_y\Phi(x,y,\mu)+I]b(x,y,\mu)+\nabla_x\Phi(x,y,\mu)f(x,y,\mu)+A:\nabla_x\nabla_y\Phi(x,y,\mu)\\
D(x,y,\mu)&\coloneqq \nabla_y\Phi(x,y,\mu)A(x,y,\mu)+A(x,y,\mu)[\nabla_y\Phi]^\top(x,y,\mu)+f\otimes\Phi(x,y,\mu)+\Phi\otimes f(x,y,\mu)\nonumber\\ 
&\hspace{12cm} +A(x,y,\mu)\nonumber\\
\bar{\beta}(x,\mu)&\coloneqq \int_{\bb{T}^d}\gamma(x,y,\mu)\pi(dy;x,\mu)\nonumber\\
\bar{D}(x,\mu)&\coloneqq \int_{\bb{T}^d}D(x,y,\mu)\pi(dy;x,\mu)\nonumber,
\end{align}
and
\begin{align*}
A:\nabla_x\nabla_y\Phi(x,y,\mu)\coloneqq (A(x,y,\mu):\nabla_x\nabla_y\Phi_1(x,y,\mu),...,A(x,y,\mu):\nabla_x\nabla_y\Phi_d(x,y,\mu))^\top.
\end{align*}

The following useful remark provides an alternative form for the limiting diffusion $\bar{D}$:
\begin{remark}\label{remark:altdiffusionrep}
It is worth noting that via an integration-by-parts argument, letting $D(x,y,\mu)$ be as in Equation \eqref{eq:limitingcoefficients} and introducing
\begin{align}\label{eq:tildeD}
\tilde{D}(x,y,\mu)=[I+\nabla_y\Phi(x,y,\mu)]A(x,y,\mu)[I+\nabla_y\Phi(x,y,\mu)]^\top,
\end{align}
that
\begin{align*}
\int_{\bb{T}^d}\tilde{D}(x,y,\mu)\pi(dy;x,\mu) = \int_{\bb{T}^d}D(x,y,\mu)\pi(dy;x,\mu)=\bar{D}(x,\mu).
\end{align*}
Thus the diffusion coefficient $\bar{B}(x,\mu)$ in Equation \eqref{eq:McKeanLimit} (and hence in Equations \eqref{eq:paramMcKeanLimit} and \eqref{eq:controlledMcKeanLimit}) can also be written as
\begin{align}\label{eq:equivalentaveragediffusion}
\bar{B}(x,\mu)\bar{B}(x,\mu)^\top = \int_{\bb{T}^d} \tilde{D}(x,y,\mu) \pi(dy|x,\mu).
\end{align}
See e.g. \cite{PS} Remark 11.4. In particular, this implies that $\bar{D}(x,\mu)$ is positive semi-definite for all $x\in\R^d$ and $\mu\in\mc{P}_2(\R^d)$.
\end{remark}

In the course of the proofs, we will need boundedness and continuity assumptions not only on the prelimit diffusion, but also on the limiting diffusion $\bar{B}=\sqrt{\bar{D}}$ and $\bar{B}^{-1}$. In order to ensure this, we finally impose the following assumption, which requires that $\bar{D}$ is uniformly positive definite:
\begin{enumerate}[label=(A\arabic*)]\setcounter{enumi}{5}
\item \label{assumption:limitinguniformellipticity} For some $\bar{\lambda}_1>0$, the second order term in the generator of the limiting McKean-Vlasov Equation $\bar{D}(x,\mu)=\bar{B}(x,\mu)\bar{B}^{\top}(x,\mu)$ from Equation \eqref{eq:limitingcoefficients} satisfies $\bar{\lambda}_1|\xi|^2\leq \xi^\top \bar{D}(x,\mu)\xi$ for each $\xi\in\R^d$ uniformly in $x\in\R^d,\mu\in\mc{P}_2(\R^d)$ .
\end{enumerate}

We seek now to quantify the rate at which the convergence of the random measures given by Equation \eqref{eq:empiricalmesaure} to the law of the solution of Equation \eqref{eq:McKeanLimit} occurs via deriving a large deviations principle for $\br{\mu^N}_{N\in\bb{N}}$ under these assumptions.

\subsection{The Controlled Process}
We start by constructing a controlled version of the system of mean-field SDEs \eqref{eq:multimeanfield} which will then allow us to use the weak  convergence approach to large deviations of \cite{DE}.

For $N\in\bb{N}$ let $\mc{U}_N$ denote the space of $\mc{F}_t$-progressively measurable functions $u:[0,1]\times\W\tto \R^{N\times m}$ such that $\E[\int_0^1|u(t)|^2dt]<\infty$, where $\E$ denotes the expectation with respect to $\Prob$ and $|\cdot|$ the Euclidean norm. For $u\in\mc{U}_N$, we write $u=(u_1,...,u_N)$ where $u_i\in \R^m$, $i=1,...,N$.

Given $u^{N}\in\mc{U}_{N}$, we consider the controlled system of SDEs
\begin{align}
\label{eq:controlledprelimit}
d\bar{X}^{i,N}_t &= \left[\frac{1}{\epsilon}f(\bar{X}_t^{i,N},\bar{X}_t^{i,N}/\epsilon,\bar\mu^N_t)+b(\bar{X}_t^{i,N},\bar{X}_t^{i,N}/\epsilon,\bar \mu^N_t)+\sigma(\bar{X}_t^{i,N},\bar{X}_t^{i,N}/\epsilon,\bar \mu^N_t)u_i^{N}(t)\right]dt\\
&+\sigma(\bar{X}_t^{i,N},\bar{X}_t^{i,N}/\epsilon,\bar\mu^N_t)dW^i_t\nonumber\\
\bar{X}^{i,N}_0&=x^{i,N}\nonumber
\end{align}
where $\bar{\mu}^N(t)$ and $\bar{\mu}^N$ are the empirical measures of $\bar{X}^{i,N}(t)$ and $\bar{X}^{i,N}$ respectively,
\begin{align}
\label{eq:barmu}
\bar{\mu}^{N}_t(\omega):=\frac{1}{N}\sum_{i=1}^N\delta_{\bar{X}^{i,N}_t(\omega)},\hspace{2cm}\bar{\mu}^{N}(\omega):=\frac{1}{N}\sum_{i=1}^N\delta_{\bar{X}^{i,N}(\omega)}.
\end{align}
Note existence and uniqueness of strong solutions to the controlled system of SDEs \eqref{eq:controlledprelimit} follows from Proposition \ref{prop:uniquestrongsol} and the discussion on p.81 of \cite{BDF}.

For notational convenience, we now introduce some spaces of interest. Let $\mc{X}\coloneqq C([0,1];\R^d)$, $\mc{Y}\coloneqq \mc{R}^1_1(\bb{T}^d\times\R^m)$, $\mc{W}\coloneqq C([0,1];\R^d)$ and $\mc{C}=\mc{X}\times\mc{Y}\times\mc{W}$. Here
\begin{align*}
\mc{R}_1^\alpha(\bb{T}^d\times\R^m) \coloneqq \br{&r:r\text{ is a positive Borel measure on }\bb{T}^d\times\R^m\times [0,\alpha],r(\bb{T}^d\times\R^m\times[0,t])=t,\forall t\in [0,\alpha],\\
&\text{ and }\int_{\bb{T}^d\times\R^m\times [0,\alpha]}|z|r(dy dz dt)<\infty}.
\end{align*}
Note that we construct $\mc{Y}$ this way to allow for extension of the results of this paper to bounded time intervals other than $[0,1]$. Also note that by Section 6.3 in \cite{Rachev}, $\mc{Y}$ is a Polish space.

Note that if $u\in\mc{U}^N$ for any $N\in\bb{N}$, then $u$ induces a  $\mc{Y}$-valued random variable $r$ via
\begin{align}
\label{eq:inducedrelaxedcontrol}
r_\omega(D\times E\times  I) \coloneqq \int_I \delta_{(\bar{X}^{i,N}_t/\epsilon) \text{mod}1}(D)\delta_{u(t,\omega)}(E)dt, \hspace{1cm} D\in \mc{B}(\bb{T}^d) ,E\in\mc{B}(\R^m),I\in \mc{B}([0,1]),\omega\in\W,
\end{align}
where $\bar{X}^{i,N}_t$ is as in Equation \eqref{eq:controlledprelimit} with this choice of control $u\in\mc{U}^N$.

Since for $r\in \mc{Y}$, $t\mapsto r(B\times [0,t])$ for $B\in \mc{B}(\bb{T}^d\times\R^d)$ is absolutely continuous, there exists $r_t:[0,1]\tto \mc{P}(\bb{T}^d\times \R^d)$ such that $r(dydzdt)=r_t(dydz)dt$.

Consider the McKean-Vlasov SDE parameterized by $\nu\in C([0,1];\mc{P}(\R^d))$ given by:
\begin{align}
\label{eq:paramMcKeanLimit}
  d\tilde{X}_t^{\nu}&= \biggl[\bar{\beta}(\tilde{X}_t^\nu,\nu(t))+\int_{\bb{T}^d\times\R^d}[\nabla_y \Phi(\tilde{X}_t^{\nu},y,\nu(t))+I]\sigma(\tilde{X}_t^{\nu},y,\nu(t))  z \rho_t(dydz)\biggr]dt+\bar{B}(X_t,\nu(t))dW_t
{}\end{align}
for $\tilde{X}^{\nu}\in\mc{X}, \rho\in\mc{Y}$, and $W\in\mc{W}$ a standard $d$-dimensional Wiener process. Here $\bar{\beta}$ and $\bar{B}$ are as in Equations \eqref{eq:McKeanLimit} and \eqref{eq:limitingcoefficients}. For fixed $\nu\in C([0,1];\mc{P}(\R^d))$, $Q\in\mc{P}(\mc{C})$ corresponds to a weak solution of (\ref{eq:paramMcKeanLimit}) if there exists a filtered probability space $(\tilde{\W},\tilde{\F},\tilde{\Prob}),\br{\tilde{\mc{F}}_t}$ supporting a $\tilde{\F}$-adapted $\R^d$-valued process $\tilde{X}^\nu_t$, a $\mc{P}(\bb{T}^d\times\R^m)$-valued $\tilde{\F}$-predictable process $\rho_t$, and a standard $d$-dimensional $\tilde{\F}_t$-Brownian Motion $W$ such that $(\tilde{X}^\nu,\rho_t(dydz)dt,W)$ is a $\mc{C}$- valued random variable satisfying Equation \eqref{eq:paramMcKeanLimit} that has distribution $Q$ under $\tilde{\Prob}$. Note that $\tilde{X}^{\nu},\rho,$ and $W$ are each random processes, unlike similar constructions in the case of small-noise large deviations, where the limiting process and controls can be taken to be deterministic (see e.g. \cites{DS}, \cite{BD} Section 4). Also note the inclusion of $\mc{W}$ in the construction of the canonical space $\mc{C}$, which allows us to identify the joint distribution of the control and driving Wiener process. This is important as per the discussion on \cite{BDF} p.79 , and in particular, since the driving Brownian motion of the averaged system cannot be realized as a copy of the Brownian motions from the prelimit system \eqref{eq:controlledprelimit}, this highly informs our construction of the occupation measures in Equations \eqref{eq:occmeas} and \eqref{eq:barW} and our proof of the Laplace Principle Upper Bound in Section \ref{sec:upperbound} (see also Remark \ref{remark:ontheoccmeasures}).

We are interested in particular in $Q\in\mc{P}(\mc{C})$ corresponding to weak solutions of $\tilde{X}^{\nu_Q}$, where $\nu_Q(t):[0,1]\tto \mc{P}(\R^d)$ is the Borel measurable mapping defined by
\begin{align}\label{eq:nuQ}
    \nu_Q(t)\coloneqq Q(\br{(\phi,r,w)\in\mc{C}:\phi(t)\in B}),\hspace{1cm}B\in\mc{B}(\R^d),t\in[0,1].
\end{align}
(For a description of $\mc{B}(\mc{P}(\R^d))$ see \cite{DE} Lemma A.5.1). This map is in fact seen to be continuous in Proposition \ref{prop:nucontmeasure}.

Since in this situation, by definition $\nu_Q(t)=\mc{L}(\tilde{X}^{\nu_Q}_t)$, we are thus interested in weak solutions to the limiting controlled McKean-Vlasov SDE:
\begin{align}
\label{eq:controlledMcKeanLimit}
  d\bar{X}_t&= \biggl[\bar{\beta}(\bar{X}_t,\mc{L}(\bar{X}_t))+\int_{\bb{T}^d\times\R^m}[\nabla_y \Phi(\bar{X}_t,y,\mc{L}(\bar{X}_t))+I]\sigma(\bar{X}_t,y,\mc{L}(\bar{X}_t))z \rho_t(dydz)\biggr]dt\\ 
  &+\bar{B}(\bar{X}_t,\mc{L}(\bar{X}_t))dW_t.\nonumber
\end{align}

Note that in the case that, decomposing $\rho_t$ as $\rho_t(dydz)=\gamma_t(dz;y)\beta_t(dy)$, if $\int_{\R^m}z\gamma_{t}(dz;y)=0$ almost-surely for almost every $t\in[0,1]$, this agrees with Equation \eqref{eq:McKeanLimit}.

The process triple $(\bar{X},\rho,W)$ can be given explicitly as the coordinate process on the probability space $(\mc{C},\mc{B}(\mc{C}),Q)$ endowed with the canonical filtration $\mc{G}_t\coloneqq \sigma\biggl((\bar{X}_s,\rho(s),W_s),0\leq s\leq t \biggr)$ (for predictability of a version of $\rho_t$ with respect to the canonical filtration, see e.g. \cite{LackerMarkovian} Lemma 3.2). Thus, for $\omega = (\phi,r,w)\in \mc{C}$,
\begin{align}
\label{eq:canonicalprocess}
    \bar{X}_t(\omega) = \phi(t),\hspace{1.5cm} \rho(t,\omega) = r|_{\mc{B}(\bb{T}^d\times\R^m\times [0,t])},\hspace{1.5cm}W_t(\omega)=w(t).
\end{align}

Thus, for $g:\bb{T}^d\times\R^m\tto \R$, when we write $\E^Q\biggl[\int_{\bb{T}^d\times\R^m\times [s,t]} g(y,z)\rho(t)(dydzd\tau) \biggr]$,
we mean
\begin{align*}
\E^Q\biggl[\int_{\bb{T}^d\times\R^m\times [s,t]} g(y,z)\rho(t)(dydzd\tau) \biggr]& =\int_{\mc{C}}\int_{\bb{T}^d\times\R^m\times [s,t]}g(y,z)\rho(t,\omega)(dydzd\tau)Q(d\omega) \\
&=  \int_{\mc{X}\times\mc{Y}\times\mc{W}}\int_{\bb{T}^d\times\R^m\times [s,t]}g(y,z)r|_{\mc{B}(\bb{T}^d\times\R^m\times [0,t])}(dydzd\tau)Q(d\phi dr dw)\\
& = \int_{\mc{X}\times\mc{Y}\times\mc{W}}\int_s^t\int_{\bb{T}^d\times\R^m}g(y,z)r_\tau(dydz)d\tau Q(d\phi dr dw).
\end{align*}

Throughout this paper we will only integrate $\rho(t,\omega)$ against time intervals of the form $[s,t]$, so we will simply write $\E^Q\biggl[\int_{\bb{T}^d\times\R^m\times [s,t]} g(y)\rho(dydzd\tau) \biggr]$ in the place of $\E^Q\biggl[\int_{\bb{T}^d\times\R^m\times [s,t]} g(y,z)\rho(t)(dydzd\tau) \biggr]$ and $r(dydzd\tau)$ in the place of $r|_{\mc{B}(\bb{T}^d\times\R^m\times [0,t])}(dydzd\tau)$.

\section{Statement of the Main Results}\label{S:MainResult}
The first result of this paper is a Law of Large Numbers for the multiscale empirical measures $\mu^{N}$:
\begin{theo}\label{thm:propofchaos}
Let $ev:\mc{X}\tto \R^d$ be the evaluation map at time $t$ and $\br{\mu^N}$ be as defined by Equation \eqref{eq:empiricalmesaure}. Under assumptions \ref{assumption:initialconditions}-\ref{assumption:limitinguniformellipticity}, $\mc{L}(\mu^{N})\tto \delta_{\mu^{*}}$ in $\mc{P}(\mc{P}(\mc{X}))$, where deterministic $\mu^{*}\in \mc{P}(\mc{X})$ satisfies $\mu^{*}\circ ev^{-1}(t)= \mc{L}(X_t),t\in[0,1]$ for $X$ solving the McKean-Vlasov SDE (\ref{eq:McKeanLimit}).
\end{theo}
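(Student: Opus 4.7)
The plan is to use the weak convergence/martingale problem approach combined with the Khasminskii corrector technique from periodic homogenization. I would proceed in three stages.

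First, I would establish tightness of $\{\mc{L}(\mu^N)\}_{N\in\N}$ in $\mc{P}(\mc{P}(\mc{X}))$. By exchangeability of the particles and Sznitman's criterion, this reduces to tightness of the one-particle laws $\{\mc{L}(X^{1,N})\}_N$ in $\mc{P}(\mc{X})$. The obstacle is the singular $\frac{1}{\epsilon}f$ drift in \eqref{eq:multimeanfield}, which I would neutralize through the corrector transformation
\[
\hat{X}^{i,N}_t := X^{i,N}_t + \epsilon \, \Phi(X^{i,N}_t, X^{i,N}_t/\epsilon, \mu^N_t),
\]
applying an It\^o formula that incorporates the Lions derivative for the $\mu$-dependence (justified by \ref{assumption:fsigmaregularity}). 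The $O(\epsilon^{-1})$ contribution from $\mc{L}^1_{x,\mu}\Phi$ exactly cancels $f/\epsilon$ by the cell problem \eqref{eq:cellproblem}. The resulting SDE for $\hat X^{i,N}_t$ has drift and diffusion bounded uniformly in $N,\epsilon$ by \ref{assumption:LipschitzandBounded} and the regularity of $\Phi$, so Kolmogorov's criterion yields pathwise tightness, and tightness of $\{\mu^N\}$ follows.

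Second, I would identify any subsequential limit $\mu^{N_k}\Rightarrow \mu^\infty$. For $g\in C_c^\infty(\R^d)$, I would apply It\^o to $g(\hat X^{1,N}_t)$, observe that $\hat X^{1,N}-X^{1,N}=O(\epsilon)$ uniformly, and pass to the limit. The averaging step replaces the fast argument $X^{1,N}_t/\epsilon$ by integration against $\pi(dy|x,\mu)$; this uses the centering condition \ref{assumption:centeringcondition}, regularity of $\pi$ and $\Phi$, and a standard ergodic-averaging argument exploiting the uniform ellipticity \ref{assumption:uniformellipticity} of $\mc{L}^1_{x,\mu}$. The outcome is that $\mu^\infty$-almost every realization $\nu\in\mc{P}(\mc{X})$ satisfies the martingale problem
\[
g(X_t)-g(X_0)-\int_0^t\!\Bigl[\bar\beta(X_s,\nu_s)\cdot \nabla g(X_s)+\tfrac12 \bar D(X_s,\nu_s):\nabla\nabla g(X_s)\Bigr]ds\quad\text{is a }\nu\text{-martingale,}
\]
with $\nu_s = \nu \circ ev(s)^{-1}$, $\nu_0$ as in \ref{assumption:initialconditions}, and $\bar\beta,\bar D$ as in \eqref{eq:limitingcoefficients}.

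Third, I would invoke well-posedness of the limiting McKean-Vlasov SDE \eqref{eq:McKeanLimit}: assumptions \ref{assumption:LipschitzandBounded}, \ref{assumption:fsigmaregularity}, and \ref{assumption:limitinguniformellipticity} imply that $\bar\beta$ and $\bar B$ are Lipschitz in $(x,\mu)$ with $\bar B$ uniformly nondegenerate, so the associated martingale problem admits a unique solution $\mu^*$. Hence $\mu^\infty=\mu^*$ almost surely, which gives $\mc{L}(\mu^N)\to\delta_{\mu^*}$ along every subsequence, hence along the full sequence.

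The main obstacle I anticipate is the limit-identification step: the $\mu^N_t$-dependence of $\Phi$ forces cross terms involving $\partial_\mu\Phi$ and the Brownian motions of the other $N-1$ particles. Controlling these requires the $L^2(\R^d,\mu)$-bounds on $\partial_\mu f,\partial_\mu \sigma$, and their composite counterparts in $\partial_\mu \Phi$ granted by \ref{assumption:fsigmaregularity}, together with the $1/N$ weighting of each particle inside $\mu^N_t$, so that these cross terms vanish in $L^2$ as $N\toinf$ uniformly in the choice of $\epsilon(N)\downarrow 0$.
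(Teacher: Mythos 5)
Your overall architecture (corrector $\epsilon\Phi$ to kill the $\frac{1}{\epsilon}f$ drift, martingale-problem identification of subsequential limits, then uniqueness of the limiting McKean--Vlasov equation via the Lipschitz/nondegeneracy of $\bar\beta,\bar B$) is the same as the paper's, which proves the theorem by running the controlled analysis of Section \ref{sec:limitingbehavior} with $u^N\equiv 0$. The genuine gap is in your identification step, precisely at the point you dismiss in your last paragraph. When It\^o's formula (with Lions derivative) is applied to $\epsilon\Phi(X^{1,N}_t,X^{1,N}_t/\epsilon,\mu^N_t)$, the measure argument $\mu^N_t$ feels the dynamics of \emph{all} particles, and the drift of particle $j$ contains $\frac{1}{\epsilon}f(X^{j,N}_t,X^{j,N}_t/\epsilon,\mu^N_t)$. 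The resulting cross term is
\begin{align*}
\epsilon\cdot\frac{1}{N}\sum_{j=1}^N \partial_\mu\Phi(X^{1,N}_t,X^{1,N}_t/\epsilon,\mu^N_t)(X^{j,N}_t)\cdot\frac{1}{\epsilon}f(X^{j,N}_t,X^{j,N}_t/\epsilon,\mu^N_t),
\end{align*}
in which the corrector's prefactor $\epsilon$ is exactly cancelled by the $1/\epsilon$ of the fast drift and the $1/N$ is cancelled by the sum over $j$: with only the uniform $L^2(\R^d,\mu)$ bound on $\partial_\mu\Phi$ and boundedness of $f$, this term is $O(1)$, not $o(1)$, uniformly in $N$ and $\epsilon$. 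So your claim that the $\partial_\mu\Phi$ cross terms ``vanish in $L^2$ as $N\toinf$ uniformly in $\epsilon(N)$'' is correct only for the martingale cross terms (which are $O(\epsilon/\sqrt{N})$), not for this drift term. It does vanish, but only \emph{in the limit}, and only because the joint empirical distribution of the pairs $(X^{j,N},X^{j,N}/\epsilon)$ equilibrates so that the fast coordinates average against $\pi(dy|x,\mu)$ of \eqref{eq:pi}, after which the centering condition \ref{assumption:centeringcondition} kills it. This is why the paper first proves the invariant-measure identification \ref{V:V4} for the limiting occupation measures \emph{before} identifying the SDE, and then disposes of exactly this term in \eqref{eq:D5vanishes}; your sketch needs the analogous lemma (an averaging statement for the fast components of all particles jointly with the empirical measure), not an $L^2$-smallness estimate.

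A secondary, fixable point: you invoke exchangeability and Sznitman's criterion to reduce tightness of $\mc{L}(\mu^N)$ to tightness of $\mc{L}(X^{1,N})$, but the system \eqref{eq:multimeanfield} is not exchangeable here because the initial conditions $x^{i,N}$ are deterministic and in general distinct (Assumption \ref{assumption:initialconditions} only asks that their empirical measure converge in $\mc{P}_2$). The correct reduction is via the intensity-measure criterion: tightness of the mean measures $\frac{1}{N}\sum_{i=1}^N\mc{L}(X^{i,N})$ in $\mc{P}(\mc{X})$, or, as in the paper, a tightness-function bound of the form $\sup_N\E[G(\mu^N)]<\infty$ averaged over particles; since your corrector-based moment estimates are uniform in $i$, this repairs the step without new ideas.
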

\begin{proof}
This follows immediately from the proofs in Section \ref{sec:limitingbehavior} by taking $u^N\equiv 0$ for all $N\in\bb{N}$.
\end{proof}

In order to state the remaining main results of this paper, we need the following two definitions:
\begin{defi}\label{def:V}
{}We will say $\Theta\in\mc{P}(\mc{C})$ is in $\mc{V}$ if
\begin{enumerate}[label=(V\arabic*)]
    \item \label{V:V1}$\Theta$ corresponds to a weak solution $\bar{X}$ of (\ref{eq:controlledMcKeanLimit}).
    \item \label{V:V2}$\E^{\Theta}\biggl[\int_{\bb{T}^d\times\R^m\times [0,1]}|z|^2 \rho(dydzdt)  \biggr]<\infty$ .
    \item \label{V:V3}$\nu_\Theta(0)=\nu_0$ from Assumption \ref{assumption:initialconditions}.
    \item \label{V:V4}\begin{align*}\Theta\biggl(\biggl\lbrace(\phi,r,w)\in\mc{C}&: \exists [(s,y)\mapsto\gamma_s(\cdot;y)]\in \mc{M}([0,1]\times\bb{T}^d;\mc{P}(\R^m))\\&\text{ such that }r(dydzds)=\gamma_s(dz;y)\pi(dy|\phi(s),\nu_{\Theta}(s)) ds,\forall s\in[0,1]\biggr\rbrace\biggr)=1\end{align*}
\end{enumerate}

Where here we are using the notation for the coordinate process given in Equation \eqref{eq:canonicalprocess}.
\end{defi}

\begin{defi}
A function $I:\mc{P}(\mc{X})\tto[0,\infty]$ is called a \textit{(good) rate function} if for each $M<\infty$, the set $\br{\theta\in\mc{P}(\mc{X}):I(\theta)\leq M}$ is compact. We say that the Laplace Principle with speed $N$ holds for the family $\br{\mu^N}_{N\in\bb{N}}$ with rate function $I$ if for any bounded, continuous $F:\mc{P}(\mc{X})\tto\R$,
\begin{align}
\label{eq:laplaceprinciple}
\lim_{N\toinf} - \frac{1}{N}\log\E[\exp(-N F(\mu^N))] = \inf_{\theta\in\mc{P}(\mc{X})}\br{F(\theta)+I(\theta)}
\end{align}
\end{defi}

In order to prove the Laplace Principle for $\br{\mu^N}_{N\in\bb{N}}$, we make use of the following proposition:
\begin{proposition}\label{prop:varrep}
Under assumption \ref{assumption:LipschitzandBounded}, the prelimit expression in (\ref{eq:laplaceprinciple}) can be written as
\begin{align}
\label{eq:varrep}
-\frac{1}{N}\log\E[\exp(-N F(\mu^N))] &= \inf_{u^N\in\mc{U}_{N}}[\frac{1}{2}\E[\frac{1}{N}\int_0^1 |u^{N}(t)|^2dt]+\E[F(\bar{\mu}^N)]]\\
& = \inf_{u^N\in\mc{U}_{N}}[\frac{1}{2}\E[\frac{1}{N}\sum_{i=1}^{N}\int_0^1 |u_i^{N}(t)|^2dt]+\E[F(\bar{\mu}^N)]]\nonumber
\end{align}
for any $F\in\mc{C}_b(\mc{P}(\mc{X}))$ where $\bar{\mu}^N$ is given by (\ref{eq:barmu}) with $u^{N}=(u^{N}_1,...,u^{N}_{N})\in\mc{U}_{N}$ the control in Equation \eqref{eq:controlledprelimit}.
\end{proposition}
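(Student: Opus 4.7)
The plan is to apply the Boué--Dupuis variational representation for positive functionals of a finite-dimensional Brownian motion. For the $Nm$-dimensional standard Brownian motion $\mathbf{W}^N \coloneqq (W^1,\dots,W^N)$ on $(\Omega,\mathbb{F},\Prob)$ and any bounded measurable $G:C([0,1];\R^{Nm})\to\R$, this representation (see Boué--Dupuis or \cite{DE} Chapter 3 / Theorem 3.17 of Budhiraja--Dupuis--Maroulas) states that
\begin{equation*}
-\log \E\bigl[e^{-G(\mathbf{W}^N)}\bigr]
= \inf_{u\in\mc{U}_N}\E\left[\tfrac{1}{2}\int_0^1 |u(t)|^2\,dt + G\!\left(\mathbf{W}^N + \int_0^\cdot u(s)\,ds\right)\right].
\end{equation*}
The program is to realize $\mu^N$ as a measurable functional of $\mathbf{W}^N$, identify the shifted functional as $\bar{\mu}^N$, then divide by $N$.

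First, assumption \ref{assumption:LipschitzandBounded} together with Proposition \ref{prop:uniquestrongsol} guarantees that the system \eqref{eq:multimeanfield} admits a unique strong solution. By the standard Yamada--Watanabe construction, this yields a Borel-measurable mapping $\Psi^N:C([0,1];\R^{Nm})\to \mc{P}(\mc{X})$ with $\mu^N = \Psi^N(\mathbf{W}^N)$ almost surely. Choosing $G(\mathbf{w})=NF(\Psi^N(\mathbf{w}))$, which is bounded since $F\in C_b(\mc{P}(\mc{X}))$ and hence measurable, yields
\begin{equation*}
-\log \E\bigl[e^{-NF(\mu^N)}\bigr]
= \inf_{u\in\mc{U}_N}\E\left[\tfrac{1}{2}\int_0^1 |u(t)|^2\,dt + NF\!\left(\Psi^N\!\left(\mathbf{W}^N + \int_0^\cdot u(s)\,ds\right)\right)\right].
\end{equation*}

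Next, I would identify $\Psi^N(\mathbf{W}^N + \int_0^\cdot u\,ds)$ with $\bar{\mu}^N$. Fix $u\in\mc{U}_N$ and define $\widetilde{W}^i_t = W^i_t + \int_0^t u_i(s)\,ds$. Plugging $\widetilde{W}^i$ into the uncontrolled SDE \eqref{eq:multimeanfield} produces precisely the equations
\begin{equation*}
dX_t = \left[\tfrac{1}{\epsilon}f + b\right]dt + \sigma\,d\widetilde{W}^i_t = \left[\tfrac{1}{\epsilon}f + b + \sigma u_i(t)\right]dt + \sigma\,dW^i_t,
\end{equation*}
which is exactly the controlled system \eqref{eq:controlledprelimit}. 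By pathwise uniqueness for \eqref{eq:controlledprelimit} (again via Proposition \ref{prop:uniquestrongsol} and the discussion on p.~81 of \cite{BDF}), the process $\bar{X}^{i,N}$ coincides $\Prob$-a.s.\ with $\Psi^N_i(\mathbf{W}^N + \int_0^\cdot u\,ds)$, where $\Psi^N_i$ is the $i$-th component of the strong solution map. Consequently $\bar{\mu}^N = \Psi^N(\mathbf{W}^N + \int_0^\cdot u\,ds)$ a.s.

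Substituting this identity and dividing by $N$ yields the first equality in \eqref{eq:varrep}; the second equality is immediate from $|u(t)|^2 = \sum_{i=1}^N |u_i(t)|^2$ in Euclidean norm. The only subtle point is verifying that the Boué--Dupuis formula applies in the form stated, i.e.\ that the strong solution map $\Psi^N$ is Borel measurable and that shifts of the Brownian motion by adapted $L^2$ drifts produce exactly the controlled system; both follow from the pathwise uniqueness under \ref{assumption:LipschitzandBounded}, which I would regard as the main (mild) technical obstacle.
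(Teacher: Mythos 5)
Your proposal is correct and follows essentially the same route as the paper: the paper likewise uses Proposition \ref{prop:uniquestrongsol} together with Yamada--Watanabe to realize the empirical measure as a Borel-measurable function of $(W^1,\dots,W^N)$ (checking measurability of the map $(\phi_1,\dots,\phi_N)\mapsto \frac{1}{N}\sum_i\delta_{\phi_i}$ via Lemma A.5.1 of \cite{DE}) and then invokes the Budhiraja--Dupuis variational representation (Theorem 3.6 of \cite{BD}), which is the same representation you quote in its finite-dimensional Bou\'e--Dupuis form. The identification of the shifted functional with the controlled system \eqref{eq:controlledprelimit} via pathwise uniqueness, which you single out as the one subtle point, is exactly what the paper delegates to the discussion on p.~81 of \cite{BDF}.
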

\begin{proof}
By Proposition \ref{prop:uniquestrongsol} and \cite{YW} there is Borel measurable $\psi^{i,N}$ such that
\begin{align*}
\psi^{i,N}((x^{1,N},...,x^{N,N}),(W^1,...,W^N)) = X^{i,N},
\end{align*} and by the characterization of $\mc{B}( \mc{P}(\mc{X}))$ given in  Lemma A.5.1 of
\cite{DE} $p:C([0,1];\R^d)^N\tto \mc{P}(C([0,1];\R^d))$ given by $p^N(\phi_1,...,\phi_N)=\frac{1}{N}\sum_{i=1}^N \delta_{\phi_i}$ is Borel measurable. So $$\bar{\mu}^N = p^N(\psi^{1,N}((x^{1,N},...,x^{N,N}),(W^1,...,W^N)),...,\psi^{N,N}((x^{1,N},...,x^{N,N}),(W^1,...,W^N)))$$ and is thus a Borel-measurable function of the driving Wiener processes for each $N$. Then Theorem 3.6 in \cite{BD} applies, giving us the desired result.
\end{proof}
Then, as is standard, we will prove the Laplace Principle via showing the Laplace Principle lower bound:
\begin{align}\label{eq:laplaceprinciplelowerbound}
\liminf_{N\toinf} - \frac{1}{N}\log\E[\exp(-N F(\mu^N))] \geq \inf_{\theta\in\mc{P}(\mc{X})}\br{F(\theta)+I(\theta)},
\end{align}
the Laplace Principle upper bound:
\begin{align}\label{eq:laplaceprincipleupperbound}
\limsup_{N\toinf} - \frac{1}{N}\log\E[\exp(-N F(\mu^N))] \leq \inf_{\theta\in\mc{P}(\mc{X})}\br{F(\theta)+I(\theta)},
\end{align}
and compactness of level sets of $I:\mc{P}(\mc{X})\tto [0,+\infty]$.

In proving the Laplace Principle upper bound \eqref{eq:laplaceprincipleupperbound}, we will need to make either of the following additional assumptions:
\begin{enumerate}[label=(B\arabic*)]
\item \label{assumption:samebrownianmotion} $f(x,y,\mu)\equiv 0,\sigma(x,y,\mu)=\sigma(x,\mu),$ and $d=m$
\item \label{assumption:xindependenceassumptionforupperbound} $f(x,y,\mu)=f(y,\mu)$ and $\sigma(x,y,\mu)=\sigma(y,\mu)$.
\end{enumerate}
\begin{remark}\label{remark:onadditionalassumption}
Note that these assumptions are not mutually inclusive. The first is essentially the regime where in the standard one-particle setting, strong ($L^2$) convergence of the multiscale slow process to the averaged slow process can be proved. The important feature for our proof in this setting is that, in the one particle regime, $(X^\epsilon,W)\tto (\bar{X},W)$, that is, in some sense the driving Brownian motion of the averaged system is the same as in the prelimit system. This is not the case in the full setting - see Remark \ref{remark:ontheoccmeasures}. The second assumption is a technical one which allows us to use Lipschitz arguments in order to approximate the a priori $L^2$ controls in Equation \eqref{eq:BDFlimitav} with bounded ones. The requirement for this approximation argument to go through is in fact that $\bar{B}(x,\mu)=\bar{B}(\mu)$, where $\bar{B}$ is the diffusion matrix of the averaged system as per Equations \eqref{eq:McKeanLimit} and \eqref{eq:limitingcoefficients}. This is clearly the case when $f$ and $\sigma$ do not depend on $x$, as all the terms which in the definition of $\bar{D}$ in Equation \eqref{eq:limitingcoefficients} depend only on $f$ and $\sigma$ through Equations \eqref{eq:pi} and \eqref{eq:cellproblem}. See also \cite{BCcurrents} Condition 2.3 i) and \cite{BCsmallnoise} Condition 2.3, where essentially the same assumption as \ref{assumption:xindependenceassumptionforupperbound}, for different but analogous reasons.
\end{remark}

While the natural form of the rate function which arises from the weak convergence approach taken in this paper is formulated in terms of the class of viable ``controls'' $\mc{V}$ from Definition \eqref{def:V} and the limiting controlled Equation \eqref{eq:controlledMcKeanLimit}, as we will see in Proposition \ref{prop:I=Iav}, there is an alternative form of the rate function which is formulated in terms of another controlled McKean-Vlasov Equation \eqref{eq:BDFlimitav} depending only on the limiting coefficients $\bar{\beta}$ and $\bar{B}$ from \eqref{eq:limitingcoefficients} and controls which do not have any $y$-dependence. Analogously to the situation in the joint small-noise and averaging limit for standard SDEs (see \cite{DS} Section 5), this alternative representation is useful in proving the Laplace Principal upper bound \eqref{eq:upperbound}. This is true under each of the Assumptions \ref{assumption:samebrownianmotion} and \ref{assumption:xindependenceassumptionforupperbound}, but in the case of assumption \ref{assumption:samebrownianmotion}, since we will be using the methods of \cites{BDF,BCsmallnoise}, we will need the analogous weak-sense uniqueness assumption to (A4) in \cite{BDF} for this alternative controlled McKean-Vlasov Equation. Hence we present the form of this alternative controlled Equation \eqref{eq:BDFlimitav} under Assumption \ref{assumption:samebrownianmotion} here:

Consider the space of relaxed controls $\mc{Z}\coloneqq R_1^1(\R^d)$ where
\begin{align}\label{eq:Zspace}
\mc{R}_1^\alpha(\R^d) \coloneqq \br{&r:r\text{ is a positive Borel measure on }\R^m\times [0,\alpha],r(\R^m\times[0,t])=t,\forall t\in [0,\alpha],\\
&\text{ and }\int_{\R^m\times [0,\alpha]}|z|r(dz dt)<\infty}.\nonumber
\end{align}
This is the space where the $\R^d\times[0,1]$-marginal of an element of $\mc{Y}$ takes values.

Consider also the controlled SDE with process triple $(\hat{X},\hat{\rho},\hat{W})\in \mc{X}\times\mc{Z}\times\mc{W}$ given by
\begin{align}\label{eq:limitingcontrolledmckeanvlasovsimp}
d\hat{X}_t&= \biggl[\int_{\bb{T}^d}b(\hat{X}_t,y,\mc{L}(\hat{X}_t))\pi(dy)+\sigma(\hat{X}_t,\mc{L}(\hat{X}_t))\int_{\R^d}z\hat{\rho}_t(dz)  \biggr] dt+\sigma(\hat{X}_t,\mc{L}(\hat{X}_t))d\hat{W}_t.
\end{align}
where $\hat{W}$ is a standard $d$-dimensional Brownian motion.

The sense in which we need uniqueness for Equation \eqref{eq:limitingcontrolledmckeanvlasovsimp} is as follows (compare with Definition 1 of \cite{BDF} and Lemma 3.3 in \cite{BCsmallnoise}):
\begin{defi}\label{def:weaksenseuniquness}
We will say weak-sense uniqueness holds for Equation \eqref{eq:limitingcontrolledmckeanvlasovsimp} if for $\Theta,\tilde{\Theta}\in \mc{P}(\mc{X}\times\mc{Z}\times\mc{W})$ such that:
\begin{enumerate}
\item $\Theta$ and $\tilde{\Theta}$ correspond to a weak solution $\hat{X}$ of \eqref{eq:limitingcontrolledmckeanvlasovsimp},
\item $\E^{\Theta}\biggl[\int_{\R^d\times [0,1]}|z|^2 \hat{\rho}(dzdt)\biggr],\E^{\tilde{\Theta}}\biggl[\int_{\R^d\times [0,1]}|z|^2 \hat{\rho}(dzdt)\biggr]<\infty$,
\item $\Theta\circ \vartheta^{-1}=\tilde{\Theta}\circ \vartheta^{-1}$, where $\vartheta:\mc{X}\times\mc{Z}\times\mc{W}\tto \R^d\times \mc{Z}\times\mc{W}$ is given by $\vartheta(\phi,r,w)=(\phi(0),r,w)$,
\end{enumerate}
we have $\Theta=\tilde{\Theta}.$
\end{defi}

Under assumption \ref{assumption:samebrownianmotion}, we will also assume:
\begin{enumerate}[label=(C\arabic*)]
\item \label{assumption:weaksenseuniqueness} Both:
\begin{enumerate}[label=\roman*)]
  \item Weak-sense uniqueness as defined in Definition \ref{def:weaksenseuniquness} for Equation (\ref{eq:limitingcontrolledmckeanvlasovsimp})\footnote{A statement analogous to Proposition C.1 in \cite{FischerFormofRateFunction} is needed in order to claim that Assumption \ref{assumption:weaksenseuniqueness}i) already holds under assumptions \ref{assumption:initialconditions}-\ref{assumption:uniformellipticity} and \ref{assumption:samebrownianmotion}. However, as an anonymous reviewer kindly and correctly pointed out, Proposition C.1 in \cite{FischerFormofRateFunction} is based on an erroneous localization argument. As far as we know, there is no proof currently available in the literature for Assumption \ref{assumption:weaksenseuniqueness}i) to hold under assumptions \ref{assumption:initialconditions}-\ref{assumption:uniformellipticity} and \ref{assumption:samebrownianmotion} alone. It is true, however, that if in addition one takes $\sigma(x,\mu)=\sigma(\mu)$, that a proof analogous to that of Proposition C.1 without the use of stopping times implies weak-sense uniqueness for Equation \eqref{eq:limitingcontrolledmckeanvlasovsimp} - see e.g. Lemma 3.4. in \cite{BCsmallnoise}. Finding precise and reasonable weaker assumptions under which Assumption \ref{assumption:weaksenseuniqueness}i), and hence the result of Theorem \ref{thm:LaplacePrinciple} under Assumption \ref{assumption:samebrownianmotion}, holds is, to the best of our knowledge, an open question.}
\item For any $\nu_0$-integrable $\phi:\R^d\tto \R$, $\frac{1}{N}\sum_{i=1}^N \phi(x^{i,N})\tto \langle \nu_0,\phi\rangle$, where $x^{i,N}$ are as in Assumption \ref{assumption:initialconditions}
\end{enumerate}
hold.
\end{enumerate}

Note that since we use the weaker notion of weak-sense uniqueness of \cites{BCsmallnoise,BCcurrents} where the joint distribution of the initial condition, control, and Brownian motion under $\Theta$ and $\tilde{\Theta}$ are assumed equal rather than the distribution of the initial condition and joint distribution of the control and Brownian motion separately as in \cite{BDF}, we also include the strengthened assumption on convergence of the initial values \ref{assumption:weaksenseuniqueness}ii). This is Condition 2.1 in \cite{BCsmallnoise} and Condition 2.3 (ii) in \cite{BCcurrents}, and is required for the same reason as in those papers. That is, in order to show the convergence stated in \eqref{eq:convergenceforweakuniqueness} when exploiting weak-sense uniqueness in the proof of the Laplace Principle Upper Bound \eqref{eq:upperbound}.

Our main result can now be summarized in the following theorem:
\begin{theo}
\label{thm:LaplacePrinciple}
Under assumptions \ref{assumption:initialconditions}-\ref{assumption:limitinguniformellipticity} the sequence of $\mc{P}(\mc{X})$-valued random variables $\br{\mu^N}_{N\in\bb{N}}$ as defined by Equation \eqref{eq:empiricalmesaure} satisfies the Laplace Principle Lower Bound \eqref{eq:laplaceprinciplelowerbound} with good rate function
\begin{align}
\label{eq:ratefunction}
I(\theta) = \inf_{\Theta\in\mc{V}:\Theta_{\mc{X}}=\theta} \E^{\Theta}\biggl[\frac{1}{2}\int_{\bb{T}^d\times\R^m\times [0,1]}|z|^2\rho(dydzdt)\biggr]
\end{align}
where $\inf(\emptyset):=+\infty.$

Further assuming either \ref{assumption:xindependenceassumptionforupperbound} or both \ref{assumption:samebrownianmotion} and \ref{assumption:weaksenseuniqueness},
$\br{\mu^N}_{N\in\bb{N}}$ satisfies the Laplace Principle Upper Bound \eqref{eq:laplaceprincipleupperbound} with rate function $I$ given as in \eqref{eq:ratefunction}, and hence satisfies the large deviations principle with speed $N$ and rate function $I$.
\end{theo}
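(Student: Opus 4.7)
The plan is to exploit the variational representation of Proposition \ref{prop:varrep}, which rewrites the Laplace functional as an infimum of $\tfrac{1}{2N}\E\int_0^1 |u^N(t)|^2 dt + \E[F(\bar{\mu}^N)]$ over controls $u^N \in \mc{U}_N$, and then carry out the weak convergence scheme of \cite{DE}. Three tasks must be completed: the Laplace principle lower bound \eqref{eq:laplaceprinciplelowerbound}, the upper bound \eqref{eq:laplaceprincipleupperbound}, and compactness of the level sets of $I$.

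\textbf{Lower bound.} For the lower bound I would fix $F \in C_b(\mc{P}(\mc{X}))$ and take a sequence of $1/N$-near-optimal controls $u^N \in \mc{U}_N$ for \eqref{eq:varrep}; standard manipulations give $\sup_N \E\bigl[\tfrac{1}{N}\sum_i \int_0^1 |u_i^N(t)|^2 dt\bigr] < \infty$. Using this bound together with \ref{assumption:LipschitzandBounded}, I would form empirical occupation measures recording the joint behaviour of $(\bar{X}^{i,N}, r^{i,N}, \bar{W}^{i,N})$ over $i=1,\ldots,N$ and establish tightness on $\mc{P}(\mc{C})$ via moment estimates for the prelimit SDE \eqref{eq:controlledprelimit}. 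The key analytic step is to identify any subsequential limit $Q$ as an element of $\mc{V}$: passing to the limit in the slow dynamics requires an It\^o expansion of $\Phi(\bar{X}_t^{i,N}, \bar{X}_t^{i,N}/\epsilon, \bar{\mu}_t^N)$ to absorb the $O(1/\epsilon)$ drift, which uses the regularity of $\Phi$ from Proposition \ref{prop:Phiexistenceregularity} (and hence \ref{assumption:uniformellipticity}, \ref{assumption:fsigmaregularity}, \ref{assumption:centeringcondition}); averaging of test functions of the fast variable against $\pi$ yields property \ref{V:V4}. Fatou's lemma and continuity of $F$ then deliver the bound.

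\textbf{Upper bound.} Given $\varepsilon > 0$, I would select $\Theta^* \in \mc{V}$ with $F(\Theta^*_{\mc{X}}) + \E^{\Theta^*}\bigl[\tfrac12 \int |z|^2 \rho(dydzdt)\bigr] \leq \inf_\theta [F(\theta) + I(\theta)] + \varepsilon$ and construct prelimit controls $u^N$ whose total cost approaches this value. I would first invoke Proposition \ref{prop:I=Iav} to replace $I$ by an alternative representation in terms of weak solutions of the simpler controlled equation \eqref{eq:limitingcontrolledmckeanvlasovsimp}, whose relaxed controls have no $y$-dependence, so it suffices to mimic the simpler control in the prelimit. Under \ref{assumption:xindependenceassumptionforupperbound}, since the absence of $x$-dependence forces $\bar{B}(x,\mu)=\bar{B}(\mu)$, the $L^2$ optimal control may be approximated by a bounded Lipschitz feedback and then inserted symmetrically into each component of \eqref{eq:controlledprelimit}; the lower-bound machinery identifies the limit as the desired $\Theta^*$. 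Under \ref{assumption:samebrownianmotion} together with \ref{assumption:weaksenseuniqueness}, the analogous construction follows \cites{BDF, BCsmallnoise}, and the weak-sense uniqueness of Definition \ref{def:weaksenseuniquness}, combined with the strengthened initial-data condition \ref{assumption:weaksenseuniqueness}ii), pins down the joint limit of the prelimit system as $\Theta^*$.

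\textbf{Compactness and the main obstacle.} Compactness of $\{\theta : I(\theta) \leq M\}$ follows the same template as the lower bound: a sequence $\theta_n$ with $I(\theta_n) \leq M$ is lifted to $\Theta_n \in \mc{V}$ with comparable cost, tightness follows from the uniform $L^2$ bound, and any limit lies in $\mc{V}$ by the same identification argument used above. The hardest part of the proof is the upper bound. The difficulty stems from the fact that the driving Brownian motion of the limiting averaged equation is not a pathwise limit of the prelimit noises $W^i$ (see Remark \ref{remark:ontheoccmeasures}); the two workarounds must either give up $x$-dependence of $f,\sigma$ (assumption \ref{assumption:xindependenceassumptionforupperbound}) so that a Lipschitz-feedback approximation closes, or invoke weak-sense uniqueness (assumption \ref{assumption:weaksenseuniqueness}) to identify joint distributions without a direct pathwise construction. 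A secondary delicate point, shared across all three tasks, is the verification of \ref{V:V4}, which ties the $y$-marginal of the limiting relaxed control to $\pi(\cdot \mid \phi(s),\nu_\Theta(s))$ and relies crucially on \ref{assumption:centeringcondition} and the cell-problem regularity.
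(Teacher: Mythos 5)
Your proposal is correct and follows essentially the same route as the paper: the variational representation of Proposition \ref{prop:varrep}, occupation measures built from $(\bar{X}^{i,N},\rho^{i,N},\bar{W}^{i,N})$, tightness plus identification of limits in $\mc{V}$ (with \ref{V:V4} obtained from the centering condition \ref{assumption:centeringcondition} and the cell-problem regularity, which is exactly what lets the a priori $O(1)$ terms vanish before the martingale-problem step), Fatou for the lower bound, the equivalence $I=I^{av}$ of Proposition \ref{prop:I=Iav} combined with the \cite{BDF}-type weak-uniqueness construction under \ref{assumption:samebrownianmotion}/\ref{assumption:weaksenseuniqueness} and a bounded semi-Markovian feedback approximation under \ref{assumption:xindependenceassumptionforupperbound} for the upper bound, and level-set compactness by the same tightness/viability/Fatou scheme. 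The only imprecision is cosmetic: the general alternative representation is phrased through Equation \eqref{eq:BDFlimitav} (Equation \eqref{eq:limitingcontrolledmckeanvlasovsimp} being its special case under \ref{assumption:samebrownianmotion}), and under \ref{assumption:xindependenceassumptionforupperbound} the approximating controls are bounded continuous functions of $(t,\xi,\bar{W}^{i,N}_{t\wedge\cdot})$ rather than Lipschitz feedbacks, neither of which affects the validity of your outline.
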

\begin{proof}
We prove the Laplace Principle lower bound (\ref{eq:lowerbound}) in Section \ref{sec:lowerbound}. In Section \ref{sec:compactlevelsets} we prove that the level sets of $I$ are compact, so indeed $I$ is a good rate function. Under the further assumptions \ref{assumption:xindependenceassumptionforupperbound} or \ref{assumption:samebrownianmotion} and \ref{assumption:weaksenseuniqueness}, we prove the Laplace Principle upper bound (\ref{eq:upperbound}) in Section \ref{sec:upperbound}.

The main tool in these proofs is the Variational Representation Theorem for Functionals of Brownian Motion, given in Proposition \ref{prop:varrep}. Once we identify the law of large numbers result for the controlled process in Section \ref{sec:limitingbehavior}, the Laplace Principle lower bound \eqref{eq:laplaceprinciplelowerbound}
follows immediately from Fatou's lemma, as seen in Section \ref{sec:lowerbound}. Compactness of level sets follows from the methods of proving tightness of measures in the level sets and employing a version of Fatou's lemma.

To prove the Laplace Principle upper bound \eqref{eq:laplaceprincipleupperbound} in Section \ref{sec:upperbound}, we use the Equivalent formulation for the rate function provided by Proposition \ref{prop:I=Iav}. Under assumptions \ref{assumption:samebrownianmotion} and \ref{assumption:weaksenseuniqueness}, we are able then to use the methods of \cites{BDF,BCsmallnoise}, where weak-sense uniqueness (see Definition \ref{def:weaksenseuniquness}) and a construction of IID controls paired with IID Brownian motions allows one to construct a controlled empirical measure which converges to any near-optimal controlled process associated to Equation \ref{eq:BDFlimitav}.

 Under assumption \ref{assumption:xindependenceassumptionforupperbound}, we must take a more novel approach to proving the Laplace Principle upper bound \eqref{eq:laplaceprincipleupperbound}, as the construction from \cites{BDF,BCsmallnoise} doesn't directly apply for reasons outlined in the beginning of Subsection \ref{subsec:upperboundundernotsameBM}. Instead, using tools and methods from Mean Field Games and the optimal control of McKean-Vlasov Equations found in \cites{Lacker,DPTequivalence,CDL,DPTdpp}, we show one can approximate the expression on the right hand side of Equation \eqref{eq:laplaceprincipleupperbound} with the law of a controlled process with controls that are in semi-Markovian feedback form in terms of their driving Brownian motions and initial conditions. This is where we require the additional assumption \ref{assumption:xindependenceassumptionforupperbound}, as this provides Lipschitz properties for the controlled process that enable us to make such an approximation. We can then use this characterization of the nearly-optimal controls to construct a sequence of controls in feedback form under which the controlled empirical measure will converge to an element of $\mc{P}(\mc{X})$ which approximates any given controlled solution to Equation \eqref{eq:BDFlimitav}.

Once we show these two bounds, we get
\begin{align*}
\inf_{\theta\in\mc{P}(\mc{X})}\br{F(\theta)+I(\theta)}&\leq \liminf_{N\toinf} - \frac{1}{N}\log\E[\exp(-N F(\mu^N))]\\
&\leq \lim_{N\toinf} - \frac{1}{N}\log\E[\exp(-N F(\mu^N))]\\
&\leq \limsup_{N\toinf} - \frac{1}{N}\log\E[\exp(-N F(\mu^N))]\\
&\leq \inf_{\theta\in\mc{P}(\mc{X})}\br{F(\theta)+I(\theta)},
\end{align*}
so that Equation \eqref{eq:laplaceprinciple} is satisfied. It is well known that in our setting the Laplace Principle holds if and only if  $\br{\mu^N}_{N\in\bb{N}}$ satisfies a LDP with rate function $I$. See \cite{DE} Theorem 1.2.3.
\end{proof}

Lastly, we provide the alternative form of the rate function in the form of \cite{DG}. To obtain this form of the rate function, we must use the contraction principle to treat the empirical measures \eqref{eq:empiricalmesaure} as elements of $C([0,1];\mc{P}(\R^d))$ rather than $\mc{P}(\mc{X})$. To define the rate function, it will be useful to consider the generator of the limiting McKeav-Vlasov Equation \eqref{eq:McKeanLimit} as parameterized by $\mu\in\mc{P}(\R^d)$. That is, $\bar{L}_\mu$ which acts on $g\in C^2_b(\R^d)$ by:
\begin{align}\label{eq:limitinggenerator}
\bar{L}_\mu g(x) \coloneqq \bar{\beta}(x,\mu)\cdot \nabla g(x)+ \frac{1}{2}\bar{D}(x,\mu):\nabla\nabla g(x),
\end{align}
where $\bar{\beta}$ and $\bar{D}$ are as in Equation \eqref{eq:limitingcoefficients}.
Then we have the following:
\begin{theo}\label{theorem:DGform}
Consider $J^{DG}:C([0,1];\mc{P}(\R^d))\tto [0,+\infty]$ given by
\begin{align}\label{eq:DGform}
J^{DG}(\theta) = \frac{1}{2}\int_0^1 \sup_{\phi\in C^\infty_c(\R^d):\langle \theta(t),\norm{\nabla_{\theta(t)}\phi}^2_{\theta(t)}\rangle \neq 0}\frac{|\langle\dot{\theta}(t)-\bar{L}^*_{\theta(t)}\theta(t),\phi\rangle|^2}{\langle \theta(t),\norm{\nabla_{\theta(t)}\phi}^2_{\theta(t)}\rangle}dt
\end{align}
if $\phi\mapsto \langle \theta,\phi\rangle$ is absolutely continuous in the sense of distributions (see Definition \ref{def:absolutelycontinuous}) and $\theta(0)=\nu_0$, and $J^{DG}(\theta)=+\infty$ otherwise. In the above $\bar{L}^*_\mu$ is the formal adjoint of $\bar{L}_\mu$ as defined in Equation \eqref{eq:limitinggenerator} acting on $\mc{P}(\R^d)$, $\dot{\theta}$ is the time derivative in the distribution sense of the aforementioned absolutely continuous mapping, and $\norm{\nabla_{\theta(t)}\phi}^2_{\theta(t)}\coloneqq \nabla^\top \phi(\cdot)\bar{D}(\cdot,\theta(t))\nabla \phi(\cdot) $ (see Equation \eqref{eq:reimannianstructure}).

Under Assumptions \ref{assumption:initialconditions}-\ref{assumption:limitinguniformellipticity} and \ref{assumption:xindependenceassumptionforupperbound} or both \ref{assumption:samebrownianmotion} and \ref{assumption:weaksenseuniqueness}, $\br{t\mapsto\mu^N\circ ev^{-1}(t)}_{N\in\bb{N}}$ from Equation \eqref{eq:empiricalmesaure} satisfy the large deviations principle with speed $N$ and rate function $J^{DG}:C([0,1];\mc{P}(\R^d))\tto [0,+\infty]$ given by Equation \eqref{eq:DGform}, where $\text{ev}:\mc{X}\tto \R^d$ is the evaluation map at time $t$.
\end{theo}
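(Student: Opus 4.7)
The plan is to derive Theorem \ref{theorem:DGform} as a consequence of Theorem \ref{thm:LaplacePrinciple} via the contraction principle, and then identify the contracted rate function with the Dawson-G\"artner form $J^{DG}$.

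First I would verify that the projection $\Pi:\mc{P}(\mc{X})\to C([0,1];\mc{P}(\R^d))$ defined by $\Pi(\theta)(t)=\theta\circ\text{ev}^{-1}(t)$ is continuous. If $\theta_n\to\theta$ in $\mc{P}(\mc{X})$, Skorohod coupling produces paths $\phi_n,\phi$ on a common probability space with $\phi_n\to\phi$ uniformly in $t$ almost surely, so for every Lipschitz $g\in C_b(\R^d)$ the map $t\mapsto\langle\theta_n\circ\text{ev}^{-1}(t),g\rangle$ converges uniformly in $t$ to $t\mapsto\langle\theta\circ\text{ev}^{-1}(t),g\rangle$ by dominated convergence. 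Since $\mu^N\circ\text{ev}^{-1}=\Pi(\mu^N)$, the contraction principle applied to the LDP of Theorem \ref{thm:LaplacePrinciple} yields an LDP on $C([0,1];\mc{P}(\R^d))$ with good rate function
\begin{align*}
\tilde{J}(\nu)=\inf\br{I(\theta):\theta\in\mc{P}(\mc{X}),\,\Pi(\theta)=\nu}.
\end{align*}
It then remains to verify $\tilde{J}=J^{DG}$.

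For $\tilde{J}\geq J^{DG}$, fix $\Theta\in\mc{V}$ with $\Pi(\Theta_{\mc{X}})=\nu$ and apply It\^o's formula to $\phi(\bar{X}_t)$ for $\phi\in C_c^\infty(\R^d)$ under the controlled equation \eqref{eq:controlledMcKeanLimit}. Taking expectation under $\Theta$, using \ref{V:V4} to disintegrate $\rho_t(dy\,dz)=\gamma_t(dz;y)\pi(dy|\bar{X}_t,\nu(t))$, and differentiating in $t$ yields in the distribution sense
\begin{align*}
\langle\dot{\nu}(t)-\bar{L}^*_{\nu(t)}\nu(t),\phi\rangle=\E^\Theta\biggl[\int_{\bb{T}^d\times\R^m}\nabla\phi(\bar{X}_t)^\top[I+\nabla_y\Phi(\bar{X}_t,y,\nu(t))]\sigma(\bar{X}_t,y,\nu(t))z\,\rho_t(dy\,dz)\biggr].
\end{align*}
Successive applications of Cauchy-Schwarz — first in $z$ against $\gamma_t$, then in $y$ against $\pi$, and finally in the expectation against $\mc{L}(\bar{X}_t)=\nu(t)$ — together with the identity $\int_{\bb{T}^d}[I+\nabla_y\Phi]A[I+\nabla_y\Phi]^\top d\pi=\bar{D}$ from Remark \ref{remark:altdiffusionrep}, bound the absolute value of the right-hand side by $\sqrt{\langle\nu(t),\norm{\nabla_{\nu(t)}\phi}^2_{\nu(t)}\rangle}\cdot\sqrt{\E^\Theta[\int|z|^2\rho_t]}$. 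Dividing, supping over admissible $\phi$, integrating in $t$, and infimizing over $\Theta\in\mc{V}$ delivers $J^{DG}(\nu)\leq\tilde{J}(\nu)$.

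For the reverse inequality, assume $J^{DG}(\nu)<\infty$ with $\nu(0)=\nu_0$. A Riesz representation argument in the weighted Hilbert space induced by $\bar{D}(\cdot,\nu(t))$ produces a measurable vector field $v:[0,1]\times\R^d\to\R^d$ for which $\nu$ solves the controlled Fokker-Planck equation $\dot{\nu}(t)=\bar{L}^*_{\nu(t)}\nu(t)-\nabla\cdot(\nu(t)\bar{D}(\cdot,\nu(t))v)$ with $\tfrac12\int_0^1\langle\nu(t),v^\top\bar{D}v\rangle\,dt=J^{DG}(\nu)$. A superposition principle of Trevisan/Figalli type then furnishes a weak solution $X$ of the associated McKean-Vlasov SDE with $\mc{L}(X_t)=\nu(t)$. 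Setting $\rho_t(dy\,dz)=\pi(dy|X_t,\nu(t))\otimes\delta_{z_*(X_t,y,t)}(dz)$ with $z_*(x,y,t):=\sigma^\top(x,y,\nu(t))[I+\nabla_y\Phi(x,y,\nu(t))]^\top v(x,t)$ produces a $\Theta\in\mc{V}$: the averaged drift $\int[I+\nabla_y\Phi]\sigma z_*\,\pi(dy)$ reduces to $\bar{D}v$ by Remark \ref{remark:altdiffusionrep}, while $\int|z_*|^2 \pi(dy)=v^\top\bar{D}v$ matches the target cost. The main obstacle is this synthesis step: one needs a superposition principle for degenerate, McKean-Vlasov-type Fokker-Planck equations whose coefficients depend nonlinearly on the unknown marginal, together with a disintegration of $\rho$ that honors \ref{V:V4} while reproducing the prescribed drift — here uniform ellipticity \ref{assumption:limitinguniformellipticity} and the alternative diffusion representation of Remark \ref{remark:altdiffusionrep} are the essential ingredients.
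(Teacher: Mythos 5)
Your proposal is correct in substance and reaches the theorem by a route that is recognizably the same underlying mathematics as the paper's (contraction principle, a Cauchy--Schwarz/duality bound in one direction, a Riesz-representation plus ``FP-equation-to-SDE'' synthesis in the other), but it is organized differently. The paper first proves $I=I^{av}$ (Proposition \ref{prop:I=Iav}) by explicitly mapping multiscale relaxed controls to $y$-independent averaged controls, then contracts $I^{av}$ (Proposition \ref{prop:ratefunctionforflow}), then passes to Markovian feedback controls via the Brunick--Shreve mimicking theorem (Proposition \ref{prop:markovianratefunction}), and only then runs the It\^o/Riesz duality (Lemma \ref{lem:intermediateDGform}, Proposition \ref{prop:intermediatetofinalDG}), invoking \cite{BR} to recover a weak solution of \eqref{eq:limitingcontrolledmckeanvlasovfeedback} from the Fokker--Planck identity. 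You instead contract $I$ directly and fold the averaging step into the two inequalities: your three-fold Cauchy--Schwarz against $\gamma_t$, $\pi$, and $\nu(t)$ reproduces in one stroke what the paper obtains from Lemma 5.1 of \cite{DS} inside Proposition \ref{prop:I=Iav}, and your explicit control $z_*=\sigma^\top[I+\nabla_y\Phi]^\top v$ is exactly the paper's map $G$ with $\bar{B}^{-1}\bar{h}$ replaced by $v$; this lets you skip the mimicking step entirely, since your two-inequality structure never needs the intermediate identity $J=J^m$. Citing a Trevisan/Figalli superposition principle in place of \cite{BR} is legitimate: once the flow $\theta(\cdot)$ is frozen the Fokker--Planck equation is linear and time-inhomogeneous with bounded, uniformly elliptic diffusion (Assumption \ref{assumption:limitinguniformellipticity}, Corollary \ref{cor:limitingcoefficientsregularity}) and drift in $L^1(\theta(t)dt)$, so the integrability hypotheses hold and the ``main obstacle'' you flag is in fact covered by an off-the-shelf result, just as the paper covers it by an external citation.

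Two points you gloss over should be tightened, though neither is a structural gap. First, your continuity argument for the projection $\Pi$ only shows, for each fixed bounded Lipschitz $g$, uniform-in-$t$ convergence of $\langle\theta_n\circ ev^{-1}(t),g\rangle$; the metric on $C([0,1];\mc{P}(\R^d))$ involves a supremum over $g$ as well, so you should argue as the paper does, bounding $\sup_t d_{BL}(\Pi(\mu_1)(t),\Pi(\mu_2)(t))\leq d_{BL,\mc{P}(\mc{X})}(\mu_1,\mu_2)$ directly (your Skorokhod coupling gives this with one more line: the bound $\E[\min(2,\sup_t|\phi_n(t)-\phi(t)|)]$ is uniform over the BL ball). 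Second, in the lower bound the exceptional $t$-null set in the a.e. identity for $\langle\dot\nu(t)-\bar{L}^*_{\nu(t)}\nu(t),\phi\rangle$ depends on $\phi$, and $J^{DG}$ takes a supremum over $\phi$ at each fixed $t$; you also need to check that finiteness of the control cost forces $t\mapsto\langle\nu(t),\phi\rangle$ to be absolutely continuous in the sense of Definition \ref{def:absolutelycontinuous} (so $\dot\nu$ exists a.e. by Lemma \ref{lemma:DG4.2}). Both issues are handled in the paper by working with the time-space functional $F_\theta$ of Equation \eqref{eq:Ftheta} and the density argument on p.~280 of \cite{DG}; a countable dense family of test functions suffices in your formulation as well, but it must be said.
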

\begin{proof}
See Subsection \ref{subsec:DGequivalentform}.
\end{proof}

\section{Example: A class of aggregation-diffusion equations  } \label{S:DawsonExample}
In this section we discuss a class of aggregation-diffusion equations which fall into the regime of \eqref{eq:multimeanfield} and how the law of large numbers and rate function for the empirical measure look. We also remark on extending the analysis of this paper to systems for which the drift coefficients are not necessarily bounded in $x$ and $\mu$, so that a ``rough'' potential version of the system with bi-stable confining potential and Currie-Weiss interactions considered in the classical work of \cite{Dawson} also satisfies the LDP presented in Theorems \ref{thm:LaplacePrinciple} and \ref{theorem:DGform}.

Consider the system of weakly interacting diffusions:
\begin{align}\label{eq:prelimituncontrolledaggregationdiffusionnomultiscale}
dX^{i,N}_t&= -\nabla V(X^{i,N}_t)dt - \frac{1}{N}\sum_{j=1}^N \nabla W(X^{i,N}_t-X^{j,N}_t)dt +\sigma dW^i_t,\quad X^{i,N}_0=x^{i,N},
\end{align}
where $V,W:\R^d\tto \R$ are sufficiently smooth, $\sigma>0$, and $W^i$ are IID $d-$dimensional Brownian motions.

Such interacting particle systems in the applications including biology, ecology, social sciences, economics, molecular dynamics, and in study of spatially homogeneous granular media (see \cites{MT,Garnier1,BCCP,KCBFL} and the references therein). In such a system $V$ is referred to as the confining potential, and $W$ is referred to as the interaction potential.

In some applications, the confining potential is known to be most accurately modeled by a so-called ``rough potential'' \cite{Zwanzig}. This means that the confining potential in Equation \eqref{eq:prelimituncontrolledaggregationdiffusionnomultiscale} takes the form $V^\epsilon(x)=V_1(x)+V_2(x/\epsilon)$, where $V_2$ is periodic and $\epsilon>0$ is a small parameter which represents the period of the overlayed roughness from $V_2$ over $V_1$. Making this replacement, Equation \eqref{eq:prelimituncontrolledaggregationdiffusionnomultiscale} becomes:
 \begin{align}\label{eq:prelimituncontrolledaggregationdiffusionmultiscale}
dX^{i,N}_t& = \biggl[-\nabla V_1(X^{i,N}_t)-\frac{1}{\epsilon}\nabla V_2(X^{i,N}_t/\epsilon) - \frac{1}{N}\sum_{j=1}^N \nabla W(X^{i,N}_t-X^{j,N}_t)\biggr]dt +\sigma dW^i_t,\quad X^{i,N}_0=x^{i,N}.
\end{align}
This corresponds to our Equation \eqref{eq:multimeanfield} with $f(x,y,\mu)=-\nabla V_2(y)$, $b(x,y,\mu)=-\nabla V_1(x)-\langle \mu, \nabla W(x-\cdot)\rangle$, $d=m$, $\sigma(x,y,\mu)=\sigma I$. Assume the initial conditions satisfy \ref{assumption:initialconditions}.

Note that Assumption \ref{assumption:xindependenceassumptionforupperbound} holds in this situation.

If $V_2\in C^2_b(\bb{T}^d)$ and $V_1,W\in C^2_b(\R^d)$, Assumptions \ref{assumption:LipschitzandBounded}-\ref{assumption:fsigmaregularity} hold (for Lipschitz continuity in $\mu$ of $g$ one can use \cite{CD} Section 5.2.2 Example 1 and Remark 5.27). Moreover, one can compute that explicitly that the invariant measure $\pi$ from Equation \eqref{eq:pi} admits a density $\tilde{\pi}$ given by:
\begin{align*}
\tilde{\pi}(y)=Z^{-1}\exp(-2V_2(y)/\sigma^2), \quad Z=\int_{\bb{T}^d}\exp(-2V_2(y)/\sigma^2)dy
\end{align*}
so that Assumption \ref{assumption:centeringcondition} holds. Finally, we note that via Remark \ref{remark:altdiffusionrep}, $\bar{D}$ is constant and given by
\begin{align*}
\bar{D}=\sigma^2 \int_{\bb{T}^d}[I+\nabla_y \Phi(y)][I+\nabla_y \Phi(y)]^\top \pi(dy),
\end{align*}
so that Assumption \ref{assumption:limitinguniformellipticity} can be readily verified.

Thus, Theorems \ref{thm:propofchaos}, \ref{thm:LaplacePrinciple}, and \ref{theorem:DGform} directly apply to the empirical measure as defined in Equation \eqref{eq:empiricalmesaure} for the system \eqref{eq:prelimituncontrolledaggregationdiffusionmultiscale}.

Further assuming that $V_2$ is separable, i.e. $V_2(y_1,...,y_d)=Q_1(y_1)+Q_2(y_2)+...+Q_d(y_d)$, we have via an explicit calculation that
\begin{align}\label{eq:Gamma}
&\Gamma\coloneqq \int_{\bb{T}^d}[I+\nabla_y \Phi(y)] \pi(dy)=\int_{\bb{T}^d}[I+\nabla_y \Phi(y)][I+\nabla_y \Phi(y)]^\top \pi(dy)= \text{diag}\biggl[Z_1^{-1}\hat{Z}_1^{-1},..., Z_d^{-1}\hat{Z}_d^{-1}\biggr]\\
Z_k&\coloneqq \int_0^1 \exp(-2Q_k(y)/\sigma^2)dy,\quad\hat{Z}_k\coloneqq \int_0^1 \exp(2Q_k(y)/\sigma^2)dy,k=1,...,d\nonumber.
\end{align}
Note in particular that the entries of $\Theta$ are less than $1$, so that the averaging effect is seen to decrease the effective diffusivity of the particle system in all directions (in addition to decreasing the magnitude of both the confining and interaction potential in all directions) - see the discussion at the end of Section 3 in \cite{BezemekSpiliopoulosAveraging2022}.

We have then the following Corollary (compare with the analogous Corollary 5.4 in \cite{DS}):
\begin{corollary}\label{cor:aggregationdiffusionequationresult}
Consider the system of interacting particles in a rough environment given by Equation \eqref{eq:prelimituncontrolledaggregationdiffusionmultiscale}. Assume $\sigma>0$, $V_1,W\in C^2_b(\R^d)$, and $V_2(y_1,...,y_d)=Q_1(y_1)+Q_2(y_2)+...+Q_d(y_d)$, $Q_k\in C^2_b(\bb{T})$. Then the empirical measures $\br{\mu^N}_{N\in\bb{N}}$ on the paths of these particles as defined in Equation \eqref{eq:empiricalmesaure} converge in distribution as $\mc{P}(\mc{X})$-valued random variables to $\mc{L}(X)$, where $X$ satisfies:
\begin{align*}
dX_t &= - \biggl[\Gamma \nabla V_1(X_t)+ \hat{\E}[\Gamma \nabla W(x-\hat{X}_t)]\biggl|_{x=X_t}\biggr]dt +\sigma \sqrt{\Gamma}d\hat W_t\\
X_0\sim \nu_0,
\end{align*}
where $\hat W$ is a $d$-dimensional standard Brownian motion on a (possibly different) probability space, $\hat{X}_t$ is an independent copy of $X_t$ on a copy of that space on which we denote the expectation by $\hat{\E}$, and $\Gamma$ is as in Equation \eqref{eq:Gamma}.

Moreover, $\br{t\mapsto \mu^N\circ ev^{-1}(t)}$ satisfies the large deviations principle on $C([0,1];\mc{P}(\R^d))$ with speed $N$ and good rate function given by:
\begin{align*}
J^{DG}(\theta) &= \frac{1}{2\sigma^2}\int_0^1 \sup_{\phi\in C^\infty_c(\R^d):\langle \theta(t),\nabla^\top\phi(\cdot) \Gamma \nabla\phi(\cdot)\rangle \neq 0}\frac{|\langle\dot{\theta}(t)-\bar{L}^*_{\theta(t)}\theta(t),\phi\rangle|^2}{\langle \theta(t),\nabla^\top\phi(\cdot) \Gamma \nabla\phi(\cdot)\rangle}dt\\
\bar{L}_\mu \phi (x) &\coloneqq -\Gamma \nabla V_1(x)\cdot \nabla \phi(x) -  \langle \mu, \Gamma\nabla W(x-\cdot)\rangle\cdot \nabla \phi (x) + \frac{\sigma^2}{2}\Gamma : \nabla \nabla \phi(x)
\end{align*}
if $\phi\mapsto \langle \theta,\phi\rangle$ is absolutely continuous in the sense of Definition \ref{def:absolutelycontinuous} and $\theta(0)=\nu_0$, and $J^{DG}(\theta)=+\infty$ otherwise.
\end{corollary}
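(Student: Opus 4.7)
The plan is to verify that all hypotheses of Theorems \ref{thm:propofchaos}, \ref{thm:LaplacePrinciple}, and \ref{theorem:DGform} are satisfied by the system \eqref{eq:prelimituncontrolledaggregationdiffusionmultiscale}, and then read off explicit formulas for the limiting coefficients $\bar{\beta}$ and $\bar{D}$ from the separability of $V_2$. Much of the assumption-checking, namely Assumptions \ref{assumption:initialconditions}--\ref{assumption:fsigmaregularity} and \ref{assumption:centeringcondition}, is already carried out in the discussion preceding the corollary. Moreover, Assumption \ref{assumption:xindependenceassumptionforupperbound} is immediate since $f(x,y,\mu) = -\nabla V_2(y)$ and $\sigma(x,y,\mu) = \sigma I$ are both independent of $x$. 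What remains is the explicit computation of $\Phi$, $\bar{\beta}$, $\bar{D}$, together with the verification of Assumption \ref{assumption:limitinguniformellipticity}.

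First I would compute $\Phi$ coordinate-by-coordinate. Because $f$ and $A = \sigma^2 I$ are independent of $(x,\mu)$, so is the cell operator $\mc{L}^1_{x,\mu}$ from \eqref{eq:L1}, and the separable ansatz $\Phi_l(y) = \phi_l(y_l)$ reduces the cell problem \eqref{eq:cellproblem} to a one-dimensional periodic ODE on $\bb{T}$. Multiplying through by $e^{-2Q_l(y_l)/\sigma^2}$ makes the equation exact, and integrating once with the periodic boundary condition yields the explicit identity $1 + \phi_l'(y_l) = \hat{Z}_l^{-1} e^{2Q_l(y_l)/\sigma^2}$. In particular $\nabla_x \Phi \equiv 0$ and $\nabla_y \Phi$ is diagonal, so the terms $\nabla_x \Phi\, f$ and $A:\nabla_x \nabla_y \Phi$ appearing in \eqref{eq:limitingcoefficients} vanish.

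Second, with $\tilde{\pi}(y) = Z^{-1} e^{-2V_2(y)/\sigma^2} = \prod_k Z_k^{-1} e^{-2Q_k(y_k)/\sigma^2}$, the averages against $\pi$ of both $[I + \nabla_y \Phi]$ and $[I + \nabla_y \Phi][I + \nabla_y \Phi]^\top$ factor into products of one-dimensional integrals, each of which collapses cleanly to $Z_k^{-1} \hat{Z}_k^{-1}$ on the diagonal and to zero off-diagonal. This yields the matrix $\Gamma$ of \eqref{eq:Gamma} and gives $\bar{\beta}(x,\mu) = \Gamma b(x,\mu) = -\Gamma \nabla V_1(x) - \langle \mu, \Gamma \nabla W(x - \cdot)\rangle$, while Remark \ref{remark:altdiffusionrep} yields $\bar{D} = \sigma^2 \Gamma$. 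Since $\Gamma$ is diagonal with strictly positive entries (the $Z_k, \hat{Z}_k$ are finite and positive because the $Q_k$ are bounded), Assumption \ref{assumption:limitinguniformellipticity} is satisfied with, e.g., $\bar{\lambda}_1 = \sigma^2 \min_k (Z_k \hat{Z}_k)^{-1}$.

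Third, I would invoke Theorem \ref{thm:propofchaos} to obtain the law of large numbers, rewriting $\langle \mc{L}(X_t), \Gamma \nabla W(x - \cdot)\rangle = \hat{\E}[\Gamma \nabla W(x - \hat{X}_t)]$ in the standard fashion via an independent copy $\hat{X}_t$. Then Theorem \ref{theorem:DGform} delivers the large deviations statement: substituting $\bar{\beta}$ and $\bar{D} = \sigma^2 \Gamma$ into \eqref{eq:limitinggenerator} produces the stated $\bar{L}_\mu$, and pulling the scalar factor $\sigma^2$ out of the denominator $\|\nabla_{\theta(t)} \phi\|^2_{\theta(t)} = \sigma^2 \nabla^\top \phi\, \Gamma\, \nabla \phi$ inside \eqref{eq:DGform} accounts for the $\frac{1}{2\sigma^2}$ prefactor in $J^{DG}$. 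There is no substantive obstacle in the argument; the only points requiring care are the justification that the separable ansatz genuinely solves \eqref{eq:cellproblem}, which follows from the uniqueness asserted by Proposition \ref{prop:Phiexistenceregularity} together with a direct check that the product density $\tilde{\pi}$ is annihilated by $(\mc{L}^1_{x,\mu})^*$, and the bookkeeping of factoring the $y_j$-integrals with $j \neq k$ against the product form of $\tilde{\pi}$ when computing $\Gamma$ and $\bar{D}$.
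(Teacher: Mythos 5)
Your proposal is correct and follows essentially the same route as the paper: verify Assumptions \ref{assumption:initialconditions}--\ref{assumption:limitinguniformellipticity} and \ref{assumption:xindependenceassumptionforupperbound} via the discussion preceding the corollary, and then apply Theorems \ref{thm:propofchaos} and \ref{theorem:DGform} with the explicit $\Gamma$, $\bar{\beta}$, and $\bar{D}=\sigma^2\Gamma$. The only difference is that you spell out the one-dimensional cell-problem solution $1+\phi_l'=\hat{Z}_l^{-1}e^{2Q_l/\sigma^2}$ and the factorized averaging that the paper summarizes as ``an explicit calculation'' in Equation \eqref{eq:Gamma}, and your computations agree with it.
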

\begin{proof}
This follows immediately from the above discussion and Theorems \ref{thm:propofchaos} and \ref{theorem:DGform}.
\end{proof}

\begin{remark}\label{remark:unboundedconfiningandinteraction}
One may be interested in applying the result of Corollary \ref{cor:aggregationdiffusionequationresult} to systems such as that found in the classical work of \cite{Dawson}. Such systems are used as a simple model of cooperative behavior, and exhibit interesting phase transitions in the mean field limit. The system of \cite{Dawson}, in the notation of Equation \eqref{eq:prelimituncontrolledaggregationdiffusionnomultiscale}, has $d=1$, $V(x)=\frac{1}{4}x^4-\frac{1}{2}x^2,$ and $W(x)=\frac{\kappa}{2}x^2$ for $\kappa>0$ a parameter controlling the strength of the interaction. Clearly here we do not have $V_1=V\in C^2_b(\R)$ or $W\in C^2_b(\R)$.

The additional considerations required to extend the proof of Theorems \ref{thm:propofchaos}, \ref{thm:LaplacePrinciple}, and \ref{theorem:DGform} to this setting are two-fold:

First, one must establish sufficient uniform integrability of the controlled empirical measure \eqref{eq:barmu} corresponding to this system in order to be able to pass to the limits in the proofs in Subsection \ref{subsection:identifylimit}, and to gain tightness of the $\mc{X}$- component of the occupation measures in Subsection \ref{subsubsection:TightnessQ^N_X} (and the analogous compactness of level sets for the rate function in Section \ref{sec:compactlevelsets}). However, using bounds on the explicit solution of the Cell Problem $\Phi(y)$ from Equation \eqref{eq:cellproblem} which are available in the 1-D setting (see \cite{GS} Proposition A.4), one can find via a long but straightforward calculation that if $\sup_{N\in\bb{N}}\E\biggl[\frac{1}{N}\sum_{i=1}^N \int_0^1 |u_i^N(t)|^2dt\biggr]\leq C_{con},C_{con}\in[0,\infty)$  then for each $t\in [0,1]$,
\begin{align*}
\sup_{N\in\bb{N}}\E\biggl[\sup_{t\in[0,1]}\frac{1}{N}\sum_{i=1}^N |\bar{X}^{i,N}_t|^4\biggr]+ \sup_{N\in\bb{N}}\E\biggl[\frac{1}{N}\sum_{i=1}^N \int_0^1 |\bar{X}^{i,N}_t|^6 dt\biggr]&\leq C(\kappa,\sigma,C_{con}),
\end{align*}
where $\bar{X}^{i,N}$ are as in Equation \eqref{eq:controlledprelimit} with this specific choice of coefficients. This provides enough uniform integrability for the proofs to go through.

Secondly, due to the lack of Lipschitz property of the limiting drift coefficient $$\bar{\beta}(x,\mu) = -\Gamma x^3 +\Gamma[1-\kappa] x +\kappa\Gamma \int_{\R}v\mu(dv)$$
in $x$, one must be careful when making the approximation argument of \cite{Lacker} in the proof of the Laplace Principle Upper Bound in Subsection \ref{subsec:upperboundundernotsameBM}. However, thanks to the one-sided Lipschitz property of the polynomial part of the drift, using similar arguments but applying It\^o's formula to estimate the square expectation rather than directly squaring the equations, the arguments go through. See Appendix A of \cite{Dawson} for guidance.
\end{remark}

\section{Connections to Rate Functions in the Existing Literature}\label{section:connectiontootheratefunctions}
 The goal of this section is to connect the rate function \eqref{eq:ratefunction} to existing rate functions for (non-multiscale) empirical measures of weakly interacting diffusions in the literature. We will assume \ref{assumption:initialconditions}-\ref{assumption:limitinguniformellipticity} throughout. Our first result will connect our rate function with the rate function of \cite{BDF} associated to the system where one makes the ansatz that the effective behavior of each particle will be of the same form as the effective averaged dynamics in the case of one-particle systems. This equivalent form of the rate function, found in Proposition \ref{prop:I=Iav}, is a key tool in proving the Laplace Principle Upper Bound in Section \ref{sec:upperbound}.

We then use the contraction principle, the mimicking theorem of \cite{BSmimicking}, and a Reisz Representation argument in order to further obtain a form of the rate function, posed on $C([0,1];\mc{P}(\R^d))$ rather than $\mc{P}(\mc{X})=\mc{P}(C([0,1];\R^d))$, for the empirical measure process which corresponds to the form found in the seminal paper \cite{DG} of Dawson-G\"artner. This is, to our knowledge, the first time that the large deviations rate function of \cite{BDF} has be shown to be equivalent to that of \cite{DG} (once the contraction principle is applied) in a rigorous way. This result is stated in Section \ref{S:MainResult} as Theorem \ref{theorem:DGform}. This form of the rate function further allows us to compare the relation between our joint averaging and propagation-of-chaos rate function \eqref{eq:ratefunction} and the propagation-of-chaos rate function without multiscale structure of \cite{BDF} with the relation of the joint averaging and small-noise rate function from \cite{DS} with the small-noise without multiscale structure rate function of \cite{FW} - See Remark \ref{remark:comparingwiththesmallnoisecase}.
\subsection{An Alternative Variational Form of the Rate Function}\label{subsec:altvariationalform}
Recall the space of relaxed controls $\mc{Z}\coloneqq R_1^1(\R^d)$ from Equation \eqref{eq:Zspace}.

Consider the controlled SDE with process triple $(\hat{X},\hat{\rho},\hat{W})\in \mc{X}\times\mc{Z}\times\mc{W}$ given by
\begin{align}\label{eq:BDFlimitav}
d\hat{X}_t &= [\bar{\beta}(\hat{X}_t,\mc{L}(\hat{X}_t))+\bar{B}(\hat{X}_t,\mc{L}(\hat{X}_t))\int_{\R^d}z\hat{\rho}_t(dz)]dt+\bar{B}(\hat{X}_t,\mc{L}(\hat{X}_t))d\hat{W}_t,
\end{align}
where $\bar{\beta}$ and $\bar{B}$ are as in Equations \eqref{eq:McKeanLimit} and \eqref{eq:limitingcoefficients} and $\hat{W}$ is a standard $d$-dimensional Brownian motion.

We define a class of measures in $\mc{P}(\mc{X}\times\mc{Z}\times\mc{W})$ by:
\begin{defi}\label{defi:Vav}
$\Theta\in\mc{P}(\mc{X}\times\mc{Z}\times\mc{W})$ is in $\mc{V}^{av}$ if
\begin{enumerate}[label=($V^{av}$\arabic*)]
\item \label{V:V1BDFav}$\Theta$ corresponds to a weak solution $\hat{X}$ of (\ref{eq:BDFlimitav}).
\item \label{V:V2BDFav}$\E^{\Theta}\biggl[\int_{\R^d\times [0,1]}|z|^2 \hat{\rho}(dzdt)\biggr]<\infty$ .
\item \label{V:V3BDFav}$\hat{\nu}_{\Theta}(0)=\nu_0$, where $\hat{\nu}_{\Theta}$ is as in Equation \eqref{eq:nuQ}, but parameterized by $\Theta\in\mc{P}(\mc{X}\times\mc{Z}\times\mc{W})$ rather than $\Theta\in\mc{P}(\mc{C})$.
\end{enumerate}
\end{defi}
and a function $I^{av}:\mc{P}(\mc{X})\tto [0,+\infty]$ by:
\begin{align}\label{eq:BDFratefunctionav}
I^{av}(\theta) = \inf_{\Theta\in\mc{V}^{av}:\Theta_{\mc{X}}=\theta} \E^{\Theta}\biggl[\frac{1}{2}\int_{\R^d\times [0,1]}|z|^2\hat{\rho}(dzdt)\biggr]
\end{align}
where $\inf(\emptyset):=+\infty$.

Here we are using the coordinate process notation from Equation \eqref{eq:canonicalprocess}, but where $\hat{\rho}\in\mc{Z}$ rather than $\mc{Y}$.

\begin{remark}\label{remark:tildeconstruction}
Note that $I^{av}$ from Equation \eqref{eq:BDFratefunctionav} is the rate function for the sequence of empirical measures $\br{\hat{\mu}^N}_{N\in\bb{N}}\subset\mc{P}(\mc{X})$ given by
\begin{align}\label{eq:averagedempiricalmeasure}
\hat{\mu}^N\coloneqq \frac{1}{N}\sum_{i=1}^N\delta_{\hat{X}^{i,N}},
\end{align}
where
\begin{align}\label{eq:averagingfirstlimit}
d\hat{X}^{i,N}_t &= \bar{\beta}(\hat{X}^{i,N}_t,\hat{\mu}^N_t)dt+\bar{B}(\hat{X}^{i,N}_t,\hat{\mu}^N_t)d\hat{W}^i_t,
\end{align}
and $\hat{W}^i$ are independent $d$-dimensional standard Brownian motions, as per Theorem 3.1 in \cite{BDF}. Equation \eqref{eq:averagingfirstlimit} is the equation which one arrives at by replacing the coefficients from Equation \eqref{eq:multimeanfield} with those obtained from sending $\epsilon\downarrow 0$ with $N$ fixed.
\end{remark}

We have the following Proposition:
\begin{proposition}\label{prop:I=Iav}
$I^{av}=I$, $I$ is as in Equation \eqref{eq:ratefunction} and $I^{av}$ is as in Equation \eqref{eq:BDFratefunctionav}.
\end{proposition}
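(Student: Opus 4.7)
The plan is to prove the two inequalities $I^{av}\leq I$ and $I\leq I^{av}$ via explicit correspondences between elements of $\mc{V}$ and $\mc{V}^{av}$ that preserve the $\mc{X}$-marginal, with cost comparison via Jensen's inequality and the integral representation from Remark \ref{remark:altdiffusionrep}:
\begin{equation*}
\bar{D}(x,\mu)=\bar{B}\bar{B}^\top(x,\mu)=\int_{\bb{T}^d}\tilde{\sigma}(x,y,\mu)\tilde{\sigma}^\top(x,y,\mu)\pi(dy|x,\mu),\quad \tilde{\sigma}\coloneqq [I+\nabla_y\Phi]\sigma.
\end{equation*}
Under Assumption \ref{assumption:limitinguniformellipticity}, $\bar{B}^{-1}$ exists and is bounded.

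For $I^{av}\leq I$, I would take $\Theta\in\mc{V}$ with $\Theta_{\mc{X}}=\theta$. Condition \ref{V:V4} lets me write $\rho_t(dydz)=\gamma_t(dz;y)\pi(dy|\bar{X}_t,\nu_\Theta(t))$; set $\bar{u}(t,y)\coloneqq \int z\gamma_t(dz;y)$ and $\hat{u}(t)\coloneqq \bar{B}^{-1}(\bar{X}_t,\nu_\Theta(t))\int\tilde{\sigma}(\bar{X}_t,y,\nu_\Theta(t))\bar{u}(t,y)\pi(dy|\bar{X}_t,\nu_\Theta(t))$. Then I push $\Theta$ forward via $(\phi,r,w)\mapsto(\phi,\delta_{\hat{u}(t)}(dz)\,dt,w)$ to obtain $\hat{\Theta}\in\mc{P}(\mc{X}\times\mc{Z}\times\mc{W})$. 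The drift of $\bar{X}$ in \eqref{eq:controlledMcKeanLimit} becomes $\bar{\beta}+\bar{B}\hat{u}$, matching the drift in \eqref{eq:BDFlimitav}, so $\hat{\Theta}\in\mc{V}^{av}$. For the cost, viewing $S\coloneqq \bar{B}^{-1}T:L^2(\pi;\R^m)\to\R^d$ with $T\bar{u}=\int\tilde{\sigma}\bar{u}\,\pi(dy)$ as a bounded linear operator, the key identity gives $SS^*=\bar{B}^{-1}\bar{D}(\bar{B}^\top)^{-1}=I$, so $\|S\|_{\mathrm{op}}\leq 1$. Thus $|\hat{u}(t)|^2\leq \int|\bar{u}(t,y)|^2\pi(dy)\leq \int\int|z|^2\gamma_t(dz;y)\pi(dy)=\int|z|^2\rho_t(dydz)$, the middle step by Jensen, which upon integration in $t$ and taking expectation yields the cost bound.

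For $I\leq I^{av}$, I would reverse the construction: given $\hat{\Theta}\in\mc{V}^{av}$ with $\hat{\Theta}_{\mc{X}}=\theta$, let $\hat{u}(t)\coloneqq \int z\hat{\rho}_t(dz)$ and $\bar{u}(t,y)\coloneqq \tilde{\sigma}^\top(\hat{X}_t,y,\hat{\nu}_{\hat{\Theta}}(t))[\bar{B}(\hat{X}_t,\hat{\nu}_{\hat{\Theta}}(t))^\top]^{-1}\hat{u}(t)$. Push $\hat{\Theta}$ forward via $(\phi,\hat{r},w)\mapsto (\phi,\delta_{\bar{u}(t,y)}(dz)\pi(dy|\phi(t),\hat{\nu}_{\hat{\Theta}}(t))\,dt,w)$ to obtain $\Theta\in\mc{P}(\mc{C})$. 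Using the key identity, the drift of $\bar{X}$ in \eqref{eq:controlledMcKeanLimit} equals $\bar{\beta}+\int\tilde{\sigma}\tilde{\sigma}^\top\pi\,(\bar{B}^\top)^{-1}\hat{u}=\bar{\beta}+\bar{B}\hat{u}$, matching $\hat{\Theta}$'s drift, so $\Theta\in\mc{V}$ (condition \ref{V:V4} holding by construction). The cost satisfies $\int\int|z|^2\rho_t(dydz)=\hat{u}(t)^\top[\bar{B}^{-1}\bar{D}(\bar{B}^\top)^{-1}]\hat{u}(t)=|\hat{u}(t)|^2\leq \int|z|^2\hat{\rho}_t(dz)$, the last step by Jensen.

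The main obstacle is verifying adaptedness and measurability of the newly constructed controls with respect to the canonical filtration, and confirming that the pushforwards are bona fide weak solutions of the respective controlled McKean-Vlasov SDEs (Definitions \ref{def:V} and \ref{defi:Vav}). Since only the control coordinate changes while the state process, driving Brownian motion, and initial distribution are preserved, and drifts are matched by construction, this is essentially formal; the analytic content lives in the invertibility and uniform boundedness of $\bar{B}$ from Assumption \ref{assumption:limitinguniformellipticity} together with the integral representation for $\bar{D}$ from Remark \ref{remark:altdiffusionrep}.
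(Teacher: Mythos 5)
Your proposal is correct and follows essentially the same route as the paper: both directions are proved by pushing the measure forward through the explicit control transformations $\bar{u}(t,y)=\sigma^\top[I+\nabla_y\Phi]^\top(\bar{B}^\top)^{-1}\hat{u}(t)$ and $\hat{u}(t)=\bar{B}^{-1}\int[I+\nabla_y\Phi]\sigma\, z\,r_t(dydz)$, matching drifts via $\bar{D}=\int\tilde{\sigma}\tilde{\sigma}^\top\pi$ and comparing costs by Jensen together with \ref{V:V4}. The only difference is cosmetic: where you establish the contraction $|\hat{u}(t)|\leq\|\bar{u}(t,\cdot)\|_{L^2(\pi)}$ directly from the operator identity $SS^*=\bar{B}^{-1}\bar{D}(\bar{B}^\top)^{-1}=I$, the paper cites Lemma 5.1 of \cite{DS} for the same inequality.
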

\begin{proof}
Let $\theta\in \mc{P}(\mc{X})$ be such that $I^{av}(\theta)<\infty$. Let $\eta>0$ and $\hat{\Theta}\in\mc{V}^{av}$ be such that $\E^{\hat{\Theta}}\biggl[\frac{1}{2}\int_{\R^d\times [0,1]}|z|^2\hat{\rho}(dzdt)\biggr]\leq I^{av}(\theta)+\eta$.

Consider $\Theta\in \mc{P}(\mc{C})$ be given by $\Theta=\hat{\Theta}\circ G^{-1}$, where $G:\mc{X}\times\mc{Z}\times\mc{W}\tto \mc{C}$ is defined by $G(\phi,\hat{r},w)=(\phi,r_{\phi,\hat{r}},w)$, where for $A\in\mc{B}(\bb{T}^d),B\in\mc{B}(\R^m),\Gamma\in \mc{B}([0,1])$,
\begin{align*}
r_{\phi,\hat{r}}(A\times B\times \Gamma)=\int_{\Gamma}\int_{A}\delta_{\sigma^\top(\phi(t),y,\theta(t))[I+\nabla_y\Phi(\phi(t),y,\theta(t))]^\top (\bar{B}^\top)^{-1}(\phi(t),\theta(t))\int_{\R^d}z\hat{r}_t(dz)}(B)\pi(dy;\phi(t),\theta(t))dt.
\end{align*}
Here Assumption \ref{assumption:LipschitzandBounded}, Proposition \ref{prop:Phiexistenceregularity}, and Corollary \ref{cor:limitingcoefficientsregularity} ensure sufficient regularity of the coefficients for measurability of $G$.
By construction, $\Theta_{\mc{X}}=\hat{\Theta}_{\mc{X}}=\theta$. Thus, since $\hat{\Theta}$ satisfies \ref{V:V3BDFav}, $\Theta$ satisfies \ref{V:V3}. Moreover, since $\nu_{\Theta}(t)=\theta(t)$ for all $t$, by construction $\Theta$ satisfies \ref{V:V4}. $\Theta_{\mc{W}}=\hat{\Theta}_{\mc{W}}$, so the $\mc{W}$-marginal of $\Theta$ is the standard Wiener measure, and  by \ref{V:V1BDFav}, $\hat{\Theta}=\mc{L}(\hat{X},\hat{\rho},\hat{W})$ satisfying Equation \ref{eq:BDFlimitav}, so $\Theta=\mc{L}(G(\hat{X},\hat{\rho},\hat{W}))=\mc{L}(\hat{X},\rho,\hat{W})$ where $\rho=r_{\hat{X},\hat{\rho}}$ as defined above.

We verify that $(\hat{X},\rho,\hat{W})$ satisfies the desired Equation \eqref{eq:controlledMcKeanLimit}, since by definition, for any $s\in [0,1]:$
\begin{align*}
&\int_0^s\int_{\bb{T}^d\times\R^d}[\nabla_y \Phi(\hat{X}_t,y,\mc{L}(\hat{X}_t))+I]\sigma(\hat{X}_t,y,\mc{L}(\hat{X}_t))z\rho_t(dydz) dt\\
& = \int_0^s\int_{\bb{T}^d}[\nabla_y \Phi(\hat{X}_t,y,\mc{L}(\hat{X}_t))+I]\sigma(\hat{X}_t,y,\mc{L}(\hat{X}_t))\sigma^\top (\hat{X}_t,y,\mc{L}(\hat{X}_t))[\nabla_y \Phi(\hat{X}_t,y,\mc{L}(\hat{X}_t))+I]^\top \pi(dy;\hat{X}_t,\mc{L}(\hat{X}_t))\\
&\hspace{8cm}(\bar{B}^{\top})^{-1}(\hat{X}_t,\mc{L}(\hat{X}_t))\int_{\R^d}z\hat{\rho}_t(dz) dt\\
&= \int_0^s \bar{B}(\hat{X}_t,\mc{L}(\hat{X}_t))\bar{B}^{\top}(\hat{X}_t,\mc{L}(\hat{X}_t))(\bar{B}^{\top})^{-1}(\hat{X}_t,\mc{L}(\hat{X}_t))\int_{\R^d}z\hat{\rho}_t(dz) dt\\
& = \int_0^s \bar{B}(\hat{X}_t,\mc{L}(\hat{X}_t))\int_{\R^d}z\hat{\rho}_t(dz) dt,
\end{align*}
and since $(\hat{X},\hat{\rho},\hat{W})$ satisfies Equation \eqref{eq:BDFlimitav}, indeed $(\hat{X},\rho,\hat{W})$ satisfies Equation \eqref{eq:controlledMcKeanLimit}, and hence $\Theta$ satisfies \ref{V:V1}.

Lastly, by the change-of-variables formula:
\begin{align*}
&\E^{\Theta}\biggl[\frac{1}{2}\int_{\bb{T}^d\times \R^m\times [0,1]}|z|^2\rho(dydzdt)\biggr]\\
& = \E^{\hat{\Theta}}\biggl[\frac{1}{2}\int_0^1\int_{\bb{T}^d}\biggl|\sigma^\top (\hat{X}_t,y,\mc{L}(\hat{X}_t))[\nabla_y \Phi(\hat{X}_t,y,\mc{L}(\hat{X}_t))+I]^\top(\bar{B}^{\top})^{-1}(\hat{X}_t,\mc{L}(\hat{X}_t))\int_{\R^d}z\hat{\rho}_t(dz)\biggr|^2 \pi(dy;\hat{X}_t,\mc{L}(\hat{X}_t))dt\biggr]\\
& = \E^{\hat{\Theta}}\biggl[\frac{1}{2}\int_0^1\biggl[\int_{\R^d}z\hat{\rho}_t(dz)\biggr]^\top\bar{B}^{-1}(\hat{X}_t,\mc{L}(\hat{X}_t))\int_{\bb{T}^d}[\nabla_y \Phi(\hat{X}_t,y,\mc{L}(\hat{X}_t))+I]\sigma(\hat{X}_t,y,\mc{L}(\hat{X}_t))\\
&\hspace{2cm}\sigma^\top (\hat{X}_t,y,\mc{L}(\hat{X}_t))[\nabla_y \Phi(\hat{X}_t,y,\mc{L}(\hat{X}_t))+I]^\top \pi(dy;\hat{X}_t,\mc{L}(\hat{X}_t))(\bar{B}^{\top})^{-1}(\hat{X}_t,\mc{L}(\hat{X}_t))\int_{\R^d}z\hat{\rho}_t(dz)dt\biggr]\\
& = \E^{\hat{\Theta}}\biggl[\frac{1}{2}\int_0^1\biggl[\int_{\R^d}z\hat{\rho}_t(dz)\biggr]^\top\bar{B}^{-1}(\hat{X}_t,\mc{L}(\hat{X}_t))\bar{B}(\hat{X}_t,\mc{L}(\hat{X}_t))\bar{B}^\top(\hat{X}_t,\mc{L}(\hat{X}_t))(\bar{B}^{\top})^{-1}(\hat{X}_t,\mc{L}(\hat{X}_t))\int_{\R^d}z\hat{\rho}_t(dz)dt\biggr]\\
& = \E^{\hat{\Theta}}\biggl[\frac{1}{2}\int_0^1\biggl|\int_{\R^d}z\hat{\rho}_t(dz)\biggr|^2dt\biggr]\\
&\leq \E^{\hat{\Theta}}\biggl[\frac{1}{2}\int_0^1\int_{\R^d}|z|^2\hat{\rho}_t(dz)dt\biggr]\\
&\leq I^{av}(\theta)+\eta,
\end{align*}
so $\Theta$ satisfies \ref{V:V2}, and hence $\Theta\in \mc{V}$ and
\begin{align*}
I(\theta)\leq \E^{\Theta}\biggl[\frac{1}{2}\int_{\bb{T}^d\times \R^m\times [0,1]}|z|^2\rho(dydzdt)\biggr]\leq I^{av}(\theta)+\eta.
\end{align*}
Since $\eta$ and $\theta$ were arbitrary, $I\leq I^{av}$.

Now let $\theta\in \mc{P}(\mc{X})$ be such that $I(\theta)<\infty$. Let $\eta>0$ and $\Theta\in\mc{V}$ be such that $\E^{\Theta}\biggl[\frac{1}{2}\int_{\bb{T}^d\times \R^m\times [0,1]}|z|^2\rho(dydzdt)\biggr]\leq I(\theta)+\eta$.

Consider $\hat{\Theta}\in \mc{P}(\mc{X}\times\mc{Z}\times\mc{W})$ be given by $\hat{\Theta}=\Theta\circ \hat{G}^{-1}$, where $\hat{G}:\mc{C}\tto \mc{X}\times\mc{Z}\times\mc{W}$ is defined by $\hat{G}(\phi,r,w)=(\phi,\hat{r}_{\phi,r},w)$, where for $A\in\mc{B}(\R^m),\Gamma\in \mc{B}([0,1])$:
\begin{align*}
\hat{r}_{\phi,r}(A\times\Gamma)=\int_\Gamma \delta_{B^{-1}(\phi(t),\theta(t))\int_{\bb{T}^d\times\R^m}[I+\nabla_y\Phi(\phi(t),y,\theta(t))]\sigma(\phi(t),y,\theta(t))zr_t(dydz)}(A)dt.
\end{align*}
Again Assumption \ref{assumption:LipschitzandBounded}, Proposition \ref{prop:Phiexistenceregularity}, and Corollary \ref{cor:limitingcoefficientsregularity} ensure sufficient regularity of the coefficients for measurability of $\hat{G}$, and by construction, $\hat{\Theta}_{\mc{X}}=\Theta_{\mc{X}}=\theta$. Thus, since $\Theta$ satisfies \ref{V:V3}, $\hat\Theta$ satisfies \ref{V:V3BDFav}. In addition, $\hat{\Theta}_{\mc{W}}=\Theta_{\mc{W}}$, so the $\mc{W}$-marginal of $\hat\Theta$ is the standard Wiener measure, and by \ref{V:V1}, $\Theta=\mc{L}(\bar{X},\rho,W)$ satisfying Equation \ref{eq:controlledMcKeanLimit}, so $\hat\Theta=\mc{L}(\hat{G}(\bar{X},\rho,W))=\mc{L}(\bar{X},\hat{\rho},W)$ where $\hat{\rho}=\hat{r}_{X,\rho}$ as defined above.

We verify that $(\bar{X},\hat{\rho},W)$ satisfies the desired Equation \eqref{eq:BDFlimitav}, since by definition, for any $s\in [0,1]:$
\begin{align*}
&\int_0^s \bar{B}(\bar{X}_t,\mc{L}(\bar{X}_t))\int_{\R^d}z\hat{\rho}_t(dz)dt\\
&=\int_0^s \bar{B}(\bar{X}_t,\mc{L}(\bar{X}_t))\bar{B}^{-1}(\bar{X}_t,\mc{L}(\bar{X}_t))\int_{\bb{T}^d\times\R^m}[\nabla_y \Phi(\bar{X}_t,y,\mc{L}(\bar{X}_t))+I]\sigma(\bar{X}_t,y,\mc{L}(\bar{X}_t))z\rho_t(dydz)dt\\
& = \int_0^s \int_{\bb{T}^d\times\R^m}[\nabla_y \Phi(\bar{X}_t,y,\mc{L}(\bar{X}_t))+I]\sigma(\bar{X}_t,y,\mc{L}(\bar{X}_t))z\rho_t(dydz)dt
\end{align*}
and since $(\bar{X},\rho,W)$ satisfies Equation \eqref{eq:controlledMcKeanLimit}, indeed $(\bar{X},\hat{\rho},W)$ satisfies Equation \eqref{eq:BDFlimitav}, and hence $\Theta$ satisfies \ref{V:V1BDFav}.

Once again, by the change-of-variables formula:
\begin{align*}
&\E^{\hat{\Theta}}\biggl[\frac{1}{2}\int_{\R^d\times [0,1]}|z|^2\hat{\rho}(dzdt)\biggr]\\
& = \E^{\Theta}\biggl[\frac{1}{2}\biggl|\bar{B}^{-1}(\bar{X}_t,\mc{L}(\bar{X}_t))\int_{\bb{T}^d\times\R^m}[\nabla_y \Phi(\bar{X}_t,y,\mc{L}(\bar{X}_t))+I]\sigma(\bar{X}_t,y,\mc{L}(\bar{X}_t))z\rho_t(dydz)\biggr|^2dt\biggr]\\
& = \E^{\Theta}\biggl[\frac{1}{2}\int_{\bb{T}^d\times\R^m}z^\top\sigma^\top(\bar{X}_t,y,\mc{L}(\bar{X}_t))[\nabla_y \Phi(\bar{X}_t,y,\mc{L}(\bar{X}_t))+I]^\top\rho_t(dydz)(\bar{B}^\top)^{-1}(\bar{X}_t,\mc{L}(\bar{X}_t))\bar{B}^{-1}(\bar{X}_t,\mc{L}(\bar{X}_t))\\
&\hspace{2cm}\int_{\bb{T}^d\times\R^m}[\nabla_y \Phi(\bar{X}_t,y,\mc{L}(\bar{X}_t))+I]\sigma(\bar{X}_t,y,\mc{L}(\bar{X}_t))z\rho_t(dydz)dt\biggr]\\
& = \E^{\Theta}\biggl[\frac{1}{2}\int_{\bb{T}^d}\int_{\R^m}z^\top\gamma_t(dz;y)\sigma^\top(\bar{X}_t,y,\mc{L}(\bar{X}_t))[\nabla_y \Phi(\bar{X}_t,y,\mc{L}(\bar{X}_t))+I]^\top\pi(dy;\bar{X}_t,\mc{L}(\bar{X}_t))(\bar{B}^\top)^{-1}(\bar{X}_t,\mc{L}(\bar{X}_t))\\
&\bar{B}^{-1}(\bar{X}_t,\mc{L}(\bar{X}_t))\int_{\bb{T}^d}[\nabla_y \Phi(\bar{X}_t,y,\mc{L}(\bar{X}_t))+I]\sigma(\bar{X}_t,y,\mc{L}(\bar{X}_t))\int_{\R^m}z\gamma_t(dz;y)\pi(dy;\bar{X}_t,\mc{L}(\bar{X}_t))dt\biggr] \text{ by \ref{V:V4}}\\
&\leq \E^{\Theta}\biggl[\frac{1}{2}\int_{\bb{T}^d}\biggl|\int_{\R^m}z\gamma_t(dz;y)\biggr|^2\pi(dy;\bar{X}_t,\mc{L}(\bar{X}_t))dt\biggr] \text{ by Lemma 5.1 in \cite{DS}}\\
&\leq \E^{\Theta}\biggl[\frac{1}{2}\int_{\bb{T}^d}\int_{\R^m}|z|^2\gamma_t(dz;y)\pi(dy;\bar{X}_t,\mc{L}(\bar{X}_t))dt\biggr]\\
& = \E^{\Theta}\biggl[\frac{1}{2}\int_{\bb{T}^d\times \R^m\times [0,1]}|z|^2\rho(dydzdt)\biggr]\\
&\leq I(\theta)+\eta.
\end{align*}
so $\hat\Theta$ satisfies \ref{V:V2BDFav}, and hence $\hat\Theta\in \mc{V}^{av}$ and
\begin{align*}
I^{av}(\theta)\leq \E^{\hat{\Theta}}\biggl[\frac{1}{2}\int_{\R^d\times [0,1]}|z|^2\hat{\rho}(dzdt)\biggr]\leq I(\theta)+\eta.
\end{align*}
Since $\eta$ and $\theta$ were arbitrary, $I^{av}\leq I$.

\end{proof}
As stated in the introduction of this section, Proposition \ref{prop:I=Iav} will be used crucially in the proof of the Laplace Principle Upper Bound \eqref{eq:laplaceprinciple} in Section \ref{sec:upperbound}. Once Theorem \ref{thm:LaplacePrinciple} is proved, since the rate function associated to a sequence of random variables is unique (see e.g. Theorem 1.3.1 in \cite{DE}), we immediately get the following Corollary:
\begin{corollary}\label{corollary:LDPcommutes}
Under assumptions \ref{assumption:initialconditions}-\ref{assumption:limitinguniformellipticity} and \ref{assumption:xindependenceassumptionforupperbound} or both \ref{assumption:samebrownianmotion} and \ref{assumption:weaksenseuniqueness}, $\br{\mu^N}_{N\in\bb{N}}$ satisfies the large deviations principle with speed $N$ and rate function $I^{av}$.
\end{corollary}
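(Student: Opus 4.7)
The plan is to observe that this corollary follows directly by combining the two principal results already established in this section, namely Theorem \ref{thm:LaplacePrinciple} and Proposition \ref{prop:I=Iav}, so no genuinely new argument is required. The first provides, under the stated assumptions, the large deviations principle for $\br{\mu^N}_{N\in\bb{N}}$ with speed $N$ and good rate function $I$ as in \eqref{eq:ratefunction}; the second identifies $I$ and $I^{av}$ pointwise on $\mc{P}(\mc{X})$. Combining these yields the LDP with speed $N$ and rate function $I^{av}$.

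More concretely, I would argue as follows. First, invoke Theorem \ref{thm:LaplacePrinciple} to conclude that the Laplace principle lower and upper bounds \eqref{eq:laplaceprinciplelowerbound} and \eqref{eq:laplaceprincipleupperbound} both hold with rate function $I$, and that $I$ is a good rate function. Second, substitute the identity $I = I^{av}$ from Proposition \ref{prop:I=Iav} into each bound to obtain the Laplace principle with rate function $I^{av}$. In particular, since $I$ is a good rate function and $I = I^{av}$ as functions on $\mc{P}(\mc{X})$, it follows that $I^{av}$ is itself a good rate function. Finally, by the equivalence of the Laplace principle and the large deviations principle in this Polish-space setting (Theorem 1.2.3 of \cite{DE}), we conclude the LDP holds with speed $N$ and good rate function $I^{av}$.

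As an alternative framing (invoked in the statement), one can appeal to uniqueness of the rate function: Theorem 1.3.1 of \cite{DE} guarantees that a given sequence of random variables can have at most one rate function governing its LDP, so once Theorem \ref{thm:LaplacePrinciple} establishes the LDP with rate $I$ and Proposition \ref{prop:I=Iav} shows $I^{av} = I$, the identity $I^{av}$ must equally govern the LDP. There is no main obstacle here: all the substantive work has been done in the proof of Proposition \ref{prop:I=Iav}, where the explicit change-of-variables constructions $G$ and $\hat G$ convert between controls $\rho \in \mc{Y}$ living on $\bb{T}^d \times \R^m$ and averaged controls $\hat\rho \in \mc{Z}$ living on $\R^d$, together with the identification of the averaged diffusion matrix via Remark \ref{remark:altdiffusionrep}. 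The corollary itself is purely a repackaging of those results.
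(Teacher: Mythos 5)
Your proposal is correct and matches the paper's argument: the paper likewise deduces the corollary immediately from Theorem \ref{thm:LaplacePrinciple} together with Proposition \ref{prop:I=Iav}, citing uniqueness of the rate function (Theorem 1.3.1 in \cite{DE}). The direct substitution $I=I^{av}$ you describe is exactly the intended one-line repackaging, so nothing further is needed.
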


\begin{remark}\label{remark:ontheequivalenceofIandIav}
Theorem \ref{thm:LaplacePrinciple} shows that the limits $\epsilon\downarrow 0$ and $N\toinf$ for the system \eqref{eq:multimeanfield} commute at the level of the law of large numbers. Corollary \ref{corollary:LDPcommutes} along with Theorem 3.1 of \cite{BDF} implies that, ever further, the empirical measures $\hat{\mu}^N$ from Equation \eqref{eq:averagedempiricalmeasure} obtained from first sending $\epsilon\downarrow 0$ and the multiscale empirical measures $\mu^N$ from Equation \eqref{eq:empiricalmesaure} satisfy the same large deviations principle. This parallels the situation in the small-noise diffusion setting - see Remark \ref{remark:comparingwiththesmallnoisecase}.
\end{remark}

We end this subsection with the remark that, as per Remark 3.2 in \cite{BDF}, in the definition of the rate function $I^{av}$ we may take the relaxed (meaning $\mc{Z}$-valued) controls $\hat{\rho}$ to in fact be standard open-loop controls. That is:
\begin{align}\label{eq:BDFratefunctionavstandard}
I^{av}(\theta) = \inf_{\Theta\in\mc{V}^{av}:\Theta_{\mc{X}}=\theta} \E^{\Theta}\biggl[\frac{1}{2}\int_0^1 |u(t)|^2dt\biggr]
\end{align}
where $\inf(\emptyset):=+\infty$, $u(t)\coloneqq \int_{\R^d}z\rho_t(dz)$, and we re-characterize \ref{V:V1BDFav} as $\Theta$ corresponding to a weak solution of:
\begin{align}\label{eq:BDFlimitavstandard}
d\hat{X}_t &= [\bar{\beta}(\hat{X}_t,\mc{L}(\hat{X}_t))+\bar{B}(\hat{X}_t,\mc{L}(\hat{X}_t))u(t)]dt+\bar{B}(\hat{X}_t,\mc{L}(\hat{X}_t))d\hat{W}_t.
\end{align}

\subsection{Connection to Dawson-G\"artner Form of the Rate Function}\label{subsec:DGequivalentform}
In this subsection we show how to connect the function $I^{av}$ given in Equation \eqref{eq:BDFratefunctionav} (equivalently in Equation \eqref{eq:BDFratefunctionavstandard}) to the ``negative Sobolev norm'' form of the rate function given in the seminal paper \cite{DG}. This is done via a series of Lemmas, with the main result being Theorem \ref{theorem:DGform}.

Since the rate function in \cite{DG} is for the flow of the empirical measures rather than the empirical measures on path space, that is treating them as elements of $C([0,1];\mc{P}(\R^d))$ rather than $\mc{P}(\mc{X})$. Thus our first step is to apply the contraction principle to $I^{av}$ to obtain a rate function for $\br{\mu^N_\cdot}$ (and $\br{\hat{\mu}^N_\cdot}$) on $C([0,1];\mc{P}(\R^d))$. Note that we use here the subscript $\mu^N_\cdot$ to distinguish the $C([0,1];\mc{P}(\R^d))$-valued random variable $[t\mapsto \mu^N\circ ev^{-1}(t)]$ from the $\mc{P}(\mc{X})$-valued random variable $\mu^N$. For a discussion of a similar matter, see Remark 6.12 in \cite{DLR}.

\begin{proposition}\label{prop:ratefunctionforflow}
Consider $J:C([0,1];\mc{P}(\R^d))\tto [0,+\infty]$ given by:
\begin{align}\label{eq:ordinaryratefunctionpaths}
J(\theta) = \inf_{\Theta\in\mc{V}^{av}:\hat{\nu}_{\Theta}(t)=\theta(t),\forall t\in[0,1]} \frac{1}{2}\E^{\Theta}\biggl[\int_0^1|u(t)|^2 dt\biggr].
{}\end{align}
Then, under Assumptions \ref{assumption:initialconditions}-\ref{assumption:limitinguniformellipticity} and \ref{assumption:xindependenceassumptionforupperbound} or both \ref{assumption:samebrownianmotion} and \ref{assumption:weaksenseuniqueness}, $\br{\mu^N_\cdot}_{N\in\bb{N}}$ from Equation \eqref{eq:empiricalmesaure} and $\br{\hat{\mu}^N_\cdot}$ from Equation \eqref{eq:averagedempiricalmeasure} satisfy the same large deviations principle with speed $N$ and rate function $J$.
\end{proposition}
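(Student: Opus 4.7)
The approach is to apply the contraction principle to the large deviations principles already established on $\mc{P}(\mc{X})$, via the time-marginal projection map, and then identify the contracted rate function with $J$ by an elementary swap of infima.

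First, by Corollary \ref{corollary:LDPcommutes}, under the stated assumptions the sequence $\br{\mu^N}_{N\in\bb{N}}$ satisfies the LDP on $\mc{P}(\mc{X})$ with good rate function $I^{av}$ given in \eqref{eq:BDFratefunctionavstandard}. Independently, Theorem 3.1 of \cite{BDF} applied directly to the averaged system \eqref{eq:averagingfirstlimit} (whose coefficients $\bar{\beta}$, $\bar{B}$ are Lipschitz and bounded with $\bar{D}$ uniformly elliptic, by Corollary \ref{cor:limitingcoefficientsregularity} and Assumption \ref{assumption:limitinguniformellipticity}) yields that $\br{\hat{\mu}^N}_{N\in\bb{N}}$ also satisfies the LDP on $\mc{P}(\mc{X})$ with the same good rate function $I^{av}$.

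Next, I would introduce the projection map $\Psi:\mc{P}(\mc{X})\to C([0,1];\mc{P}(\R^d))$ defined by $\Psi(\mu)(t)\coloneqq \mu\circ ev^{-1}(t)$, and verify that $\Psi$ is well-defined and continuous. Well-definedness, i.e.\ continuity of $t\mapsto \mu\circ ev^{-1}(t)$ in the weak topology, follows from bounded convergence along the path-continuous trajectories under $\mu$. For continuity of $\Psi$ itself, if $\mu_n\to\mu$ in $\mc{P}(\mc{X})$ then $\br{\mu_n}$ is tight, which produces a modulus of continuity for the time-marginals uniformly in $n$; combined with pointwise weak convergence $\mu_n\circ ev^{-1}(t)\to\mu\circ ev^{-1}(t)$, a standard equicontinuity argument upgrades this to uniform convergence in $t$. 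This is essentially the content of Proposition \ref{prop:nucontmeasure}. By the contraction principle (Theorem 1.3.2 in \cite{DE}), both $\br{\mu^N_\cdot}=\br{\Psi(\mu^N)}$ and $\br{\hat{\mu}^N_\cdot}=\br{\Psi(\hat{\mu}^N)}$ satisfy the LDP on $C([0,1];\mc{P}(\R^d))$ with the common good rate function
\begin{align*}
\tilde{J}(\theta)\coloneqq \inf\br{I^{av}(\mu):\mu\in\mc{P}(\mc{X}),\ \Psi(\mu)=\theta}.
\end{align*}

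It remains to identify $\tilde{J}$ with $J$ from \eqref{eq:ordinaryratefunctionpaths}. Unpacking the definition of $I^{av}$ and interchanging the two nested infima gives
\begin{align*}
\tilde{J}(\theta)&=\inf_{\mu:\,\mu\circ ev^{-1}(\cdot)=\theta(\cdot)}\ \inf_{\Theta\in\mc{V}^{av}:\Theta_{\mc{X}}=\mu}\ \frac{1}{2}\E^{\Theta}\biggl[\int_0^1|u(t)|^2\,dt\biggr]\\
&=\inf_{\Theta\in\mc{V}^{av}:\ \hat{\nu}_{\Theta}(t)=\theta(t),\,\forall t\in[0,1]}\ \frac{1}{2}\E^{\Theta}\biggl[\int_0^1|u(t)|^2\,dt\biggr]=J(\theta),
\end{align*}
since by Definition \ref{defi:Vav} we have $\hat{\nu}_{\Theta}(t)=\Theta_{\mc{X}}\circ ev^{-1}(t)=\Psi(\Theta_{\mc{X}})(t)$, so the joint constraints $\Theta_{\mc{X}}=\mu$ and $\Psi(\mu)=\theta$ collapse to the single constraint $\hat{\nu}_\Theta=\theta$.

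The main technical point is the continuity of $\Psi$, where one must upgrade pointwise to uniform convergence in $t\in[0,1]$; this is handled by a tightness/equicontinuity argument (or directly by appeal to Proposition \ref{prop:nucontmeasure}). Everything else is a straightforward unpacking of definitions and an application of the contraction principle to two LDPs that have already been proved in the preceding sections.
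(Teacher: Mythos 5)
Your proposal is correct and follows essentially the same route as the paper: apply the contraction principle to the path-space LDPs for $\br{\mu^N}$ (Corollary \ref{corollary:LDPcommutes}) and $\br{\hat{\mu}^N}$ (Theorem 3.1 of \cite{BDF}, as in Remark \ref{remark:tildeconstruction}) under the time-marginal projection, and identify the contracted rate function with $J$ by collapsing the two infima using $\hat{\nu}_\Theta(t)=\Theta_{\mc{X}}\circ ev^{-1}(t)$. The only cosmetic difference is the continuity of the projection map: the paper proves it by a one-line Lipschitz estimate comparing bounded-Lipschitz metrics on $\mc{P}(\mc{X})$ and $C([0,1];\mc{P}(\R^d))$ (note that Proposition \ref{prop:nucontmeasure} alone gives only separate continuity in $Q$ and in $t$, so your tightness/equicontinuity argument, not that citation, is what actually carries this step), but your argument is sound.
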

\begin{proof}
We first claim that $\bar{\Psi}:\mc{P}(\mc{X})\tto C([0,1];\mc{P}(\R^d))$ sending $\mu$ to $t\mapsto \mu\circ [ev(t)]^{-1},t\in [0,1]$ is continuous. To see this, it is useful to use the bounded Lipschitz metric on both $\mc{P}(\R^d)$ and $\mc{P}(\mc{X})$, which agrees with the topology of weak convergence in both cases (see e.g. Proposition 11.3.2. in \cite{Dudley}). We have for $\mu_1,\mu_2\in \mc{P}(\mc{X})$, that
\begin{align*}
&d_{C([0,1];\mc{P}(\R^d))}(\bar{\Psi}(\mu_1),\bar{\Psi}(\mu_2)) \\
&= \sup_{t\in[0,1]}\sup_{g\in C_b(\R^d):\sup_{x\in\R^d}|g(x)|\leq 1,\sup_{x\neq y\in\R^d}\frac{|g(x)-g(y)|}{|x-y|}\leq 1}\biggl|\int_{\R^d}g(x)\mu_1\circ [ev(t)]^{-1}(t)(dx)-\int_{\R^d}g(x)\mu_2\circ [ev(t)]^{-1}(dx)\biggr|\\
& = \sup_{t\in[0,1]}\sup_{g\in C_b(\R^d):\sup_{x\in\R^d}|g(x)|\leq 1,\sup_{x\neq y\in\R^d}\frac{|g(x)-g(y)|}{|x-y|}\leq 1}\biggl|\int_{\mc{X}}g(\phi(t))\mu_1(d\phi)-\int_{\mc{X}}g(\phi(t))\mu_2(d\phi)\biggr|\\
&\leq \sup_{G\in C_b(\mc{X}):\sup_{\phi\in\mc{X}}|G(\phi)|\leq 1,\sup_{\phi\neq\psi\in\mc{X}}\frac{|G(\phi)-G(\psi)|}{\norm{\phi-\psi}_{\mc{X}}}\leq 1}\biggl|\int_{\mc{X}}G(\phi)\mu_1(d\phi)-\int_{\mc{X}}G(\phi)\mu_2(d\phi) \biggr|\\
& = d_{\mc{P}(\mc{X})}(\mu_1,\mu_2)
\end{align*}
Where in the inequality, we used for any $g\in C_b(\R^d)$ such that $\sup_{x\in\R^d}|g(x)|\leq 1$ and $\sup_{x\neq y\in\R^d}\frac{|g(x)-g(y)|}{|x-y|}\leq 1$, $G_t\in C_b(\mc{X})$ defined by $G_t(\phi) = g(\phi(t))$ satisfies $\sup_{\phi\in\mc{X}}|G_t(\phi)|\leq 1$ and $\sup_{\phi\neq\psi\in\mc{X}}\frac{|G_t(\phi)-G_t(\psi)|}{\norm{\phi-\psi}_{\mc{X}}}\leq 1$ for all $t\in[0,1]$.

Thus, in fact $\bar{\Psi}$ is Lipschitz continuous.

Then the contraction principle (see e.g. \cite{DE} Theorem 1.3.2) gives that, since $\br{\mu^N}\subset \mc{P}(\mc{X})$ satisfies and LDP with rate function $I^{av}:\mc{P}(\mc{X})\tto [0,+\infty]$, $\br{\mu^N_\cdot}=\br{\bar{\Psi}(\mu^N)}$ satisfies an LDP with rate function $J:C([0,1];\mc{P}(\R^d))\tto [0,+\infty]$ given by
\begin{align*}
J(\nu) &= \inf\br{I(\mu):\mu\in \bar{\Psi}^{-1}(\nu)}\\
& = \inf\br{I(\mu):\mu\in\mc{P}(\mc{X}) \text{ has the same one dimensional time marginals as }\nu}.
\end{align*}

Thus Corollary \ref{corollary:LDPcommutes} and Remark \ref{remark:tildeconstruction} along with the form of $I^{av}$ given in Equation \eqref{eq:BDFratefunctionavstandard} yield the desired result.
\end{proof}

Now that we have the rate function $J$ which acts on $C([0,1];\mc{P}(\R^d))$, we are ready to start to prove an equivalent form of $J$. The first step is to use the arguments along the lines of the proof of Theorem 3.7 in \cite{LackerMarkovian} (see also the discussion at the end of Subsection 6.2.5 of \cite{CD}), where we apply an extension of the well-known mimicking result of Gy\"ongy \cite{gyongymimicking} due to Brunick and Shreve \cite{BSmimicking} to obtain that the ``open-loop'' controls in $J$ can in fact be taken to be in Markovian feedback form.

To this end, we define:
\begin{defi}
Given $\theta\in C([0,1];\mc{P}(\R^d))$, define the class of functions $H(\theta)$ to be the set of measurable functions $h:[0,1]\times \R^d\tto \R^d$ such that:
\begin{enumerate}[label=($H$\arabic*)]
\item \label{H:H1} $\theta(t) = \mc{L}(X^h_t)$ for all $t\in [0,1]$, where $X^h$ satisfies Equation \eqref{eq:limitingcontrolledmckeanvlasovfeedback} on some filtered probability space supporting a $d$-dimensional Brownian motion $\hat{W}$.
\item \label{H:H2}$\int_0^1 \int_{\R^d}|h(t,x)|^2\theta(t)(dx)dt<\infty$ .
\end{enumerate}
Here
\begin{align}\label{eq:limitingcontrolledmckeanvlasovfeedback}
  dX^h_t&= [\bar{\beta}(X^h_t,\mc{L}(X^h_t))+\bar{B}(X^h_t,\mc{L}(X^h_t))h(t,X^h_t)]dt+\bar{B}(X^h_t,\mc{L}(X^h_t))d\hat{W}_t.
\end{align}
\end{defi}
We also define $J^m:C([0,1];\mc{P}(\R^d))\tto [0,+\infty]$ by
\begin{align}\label{eq:markovianratefunction}
J^m(\theta) = \inf_{h\in H(\theta)}\frac{1}{2}\int_0^1 \int_{\R^d}|h(t,x)|^2\theta(t)(dx)dt
\end{align}
if $\theta(0)=\nu_0$ and $J^m(\theta)=+\infty$ otherwise. As always, we take $\inf\br{\emptyset}=+\infty$.

\begin{proposition}\label{prop:markovianratefunction}
$J^m=J$, where $J$ is as in \eqref{eq:ordinaryratefunctionpaths} and $J^m$ is as in \eqref{eq:markovianratefunction}.
\end{proposition}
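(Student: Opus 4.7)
The plan is to prove $J^m = J$ by establishing the two inequalities separately, with the Brunick--Shreve mimicking theorem \cite{BSmimicking} doing the heavy lifting in the nontrivial direction.

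For the easy direction $J \le J^m$, suppose $h \in H(\theta)$ with $\theta(0)=\nu_0$, and let $(X^h, \hat{W})$ solve Equation \eqref{eq:limitingcontrolledmckeanvlasovfeedback} on some filtered probability space. Defining the open-loop control $u(t) := h(t, X^h_t)$, the triple $(X^h, u, \hat{W})$ solves Equation \eqref{eq:BDFlimitavstandard} and its law $\Theta$ lies in $\mc{V}^{av}$ (with $\hat{\nu}_{\Theta}(t) = \mc{L}(X^h_t) = \theta(t)$ by \ref{H:H1}, and the square-integrability coming from \ref{H:H2}). Then
\[
\tfrac{1}{2}\E^{\Theta}\!\left[\int_0^1 |u(t)|^2 dt\right] = \tfrac{1}{2}\int_0^1 \!\int_{\R^d}|h(t,x)|^2\, \theta(t)(dx)\, dt,
\]
so taking infimum over $h$ yields $J(\theta) \le J^m(\theta)$. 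When $\theta(0) \ne \nu_0$ the right side is $+\infty$ by convention and also $\hat{\nu}_{\Theta}(0) = \theta(0) \ne \nu_0$ for any $\Theta$ achieving the constraint, so the inequality is vacuous.

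For the reverse inequality $J^m \le J$, fix $\theta$ with $J(\theta) < \infty$ and, given $\eta > 0$, pick $\Theta \in \mc{V}^{av}$ with $\hat{\nu}_{\Theta}(t)=\theta(t)$ for all $t$ and $\tfrac{1}{2}\E^{\Theta}[\int_0^1 |u(t)|^2 dt] \le J(\theta) + \eta$. The associated process $\hat{X}$ under $\Theta$ is an It\^o process with drift $\bar{\beta}(\hat{X}_t, \theta(t)) + \bar{B}(\hat{X}_t, \theta(t))u(t)$ and diffusion matrix $\bar{B}(\hat{X}_t, \theta(t))$. Applying the mimicking theorem of \cite{BSmimicking} to this semimartingale (the hypotheses are verified using Assumptions \ref{assumption:LipschitzandBounded} and \ref{assumption:limitinguniformellipticity}, which bound and uniformly elliptify $\bar{D}$ in light of Corollary \ref{cor:limitingcoefficientsregularity}), there exists a measurable $h:[0,1]\times \R^d \to \R^d$ with $h(t,x) = \E^\Theta[u(t)\mid \hat{X}_t = x]$ (for $\theta(t)$-a.e.\ $x$) and a weak solution $X^h$ on some probability space supporting a Brownian motion $\hat{W}$ with
\[
dX^h_t = \big[\bar{\beta}(X^h_t, \theta(t)) + \bar{B}(X^h_t, \theta(t)) h(t, X^h_t)\big] dt + \bar{B}(X^h_t, \theta(t))\, d\hat{W}_t
\]
and $\mc{L}(X^h_t) = \theta(t)$ for all $t$. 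Since the marginals agree, $\theta(t)$ may be replaced by $\mc{L}(X^h_t)$, so $X^h$ solves Equation \eqref{eq:limitingcontrolledmckeanvlasovfeedback}, i.e. $h \in H(\theta)$. By the conditional Jensen inequality,
\[
\int_0^1\! \int_{\R^d} |h(t,x)|^2 \theta(t)(dx) dt = \int_0^1 \E^\Theta\!\big[\big|\E^\Theta[u(t) \mid \hat{X}_t]\big|^2\big] dt \le \int_0^1 \E^\Theta[|u(t)|^2] dt,
\]
which gives $J^m(\theta) \le J(\theta) + \eta$. Letting $\eta \downarrow 0$ finishes this direction.

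The main obstacle is the correct invocation of the Brunick--Shreve theorem: one must verify the integrability and non-degeneracy hypotheses (using the uniform ellipticity \ref{assumption:limitinguniformellipticity} for $\bar{D}$ and the bounds on $\bar{\beta}$, $\bar{B}$ from Assumption \ref{assumption:LipschitzandBounded} combined with Corollary \ref{cor:limitingcoefficientsregularity}), and ensure that the resulting drift function can genuinely be written in the feedback form $\bar{B}(x, \theta(t)) h(t,x)$, which uses that the mimicking of the drift is linear in $u(t)$ (the $\bar{\beta}(\hat{X}_t, \theta(t))$ piece is already state-Markov and is unchanged under conditioning, while the factor $\bar{B}(\hat{X}_t, \theta(t))$ in front of $u(t)$ is $\sigma(\hat{X}_t)$-measurable so it factors out of the conditional expectation). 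Modulo these checks — which mirror those in the proof of Theorem 3.7 of \cite{LackerMarkovian} and the discussion at the end of Subsection 6.2.5 of \cite{CD} — the argument reduces to the Jensen inequality above.
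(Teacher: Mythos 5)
Your proposal is correct and follows essentially the same route as the paper: the direction $J\le J^m$ by turning a feedback control $h$ into an (open-loop/relaxed) control law in $\mc{V}^{av}$ with identical cost, and the direction $J^m\le J$ via the Brunick--Shreve mimicking theorem giving $h(t,\hat X_t)=\E^\Theta[u(t)\mid \hat X_t]$ together with conditional Jensen. The only cosmetic differences are the order of the two inequalities and your extra remarks on ellipticity (not actually needed for the mimicking step), so no changes are required.
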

\begin{proof}
Let $\theta\in C([0,1];\mc{P}(\R^d))$ be such that $J(\theta)<\infty$. Let $\eta>0$, and take $\Theta\in\mc{V}^{av}$ such that $\hat{\nu}_{\Theta}(t)=\theta(t),\forall t\in [0,1]$ and $\frac{1}{2}\E^{\Theta}\biggl[\int_0^1|u(t)|^2 dt\biggr]\leq J(\theta)+\eta$.

Then, by \cite{BSmimicking} Corollary 3.7, there exists $\hat{\theta}\in \mc{P}(\mc{X})$, a filtered probability space, a Brownian motion $\hat{W}$, and a measurable $h:[0,1]\times \R^d\tto \R^d$ such that $\hat{\theta}(t) = \theta(t),\forall t\in [0,1]$, and $\hat{\theta}=\mc{L}(\hat{X})$ solving Equation (\ref{eq:limitingcontrolledmckeanvlasovfeedback}) with this choice of $h$. Thus $h$ satisfies \ref{H:H1} in the definition of $H(\theta) = H(\bar{\Psi}(\hat{\theta}))$ for $\bar{\Psi}$ as in the proof of Proposition \ref{prop:ratefunctionforflow} (note that $\hat{\theta}=\theta$ as elements of $C([0,1];\mc{P}(\R^d))$). Moreover, for $\hat{X}_t$ as in Equation \eqref{eq:BDFlimitavstandard}, for Lebesgue almost every $t\in[0,1]$,
\begin{align*}
h(t,\hat{X}_t) = \E[u(t)|\hat{X}_t].
\end{align*}

Thus:
\begin{align*}
\frac{1}{2}\int_0^1 \int_{\R^d}|h(t,x)|^2\hat{\theta}(t)(dx)dt& =\frac{1}{2}\int_0^1 \int_{\R^d}|h(t,x)|^2\theta(t)(dx)dt\\
& = \frac{1}{2}\E^{\Theta}\biggl[\int_0^1 |h(t,\hat{X}_t)|^2  dt\biggr]\\
& = \frac{1}{2}\E^{\Theta}\biggl[\int_0^1 |\E[u(t)|\hat{X}_t]|^2  dt\biggr]\\
&\leq \frac{1}{2}\E^{\Theta}\biggl[\int_0^1 |u(t)|^2  dt\biggr] \text{ by Jensen's inequality}\\
&\leq J(\theta)+\eta.
\end{align*}
Thus $h$ satisfies \ref{H:H2}, and hence $h\in H(\theta)$. So
\begin{align*}
J^m(\theta)&\leq \frac{1}{2}\int_0^1 \int_{\R^d}|h(t,x)|^2\hat{\theta}(t)(dx)dt\leq J(\theta)+\eta.
\end{align*}
Since $\theta$ and $\eta$ were arbitrary, $J^m\leq J$.

Now, let  $\theta\in C([0,1];\mc{P}(\R^d))$ be such that $J^m(\theta)<\infty$. Let $\eta>0$, and take $h\in H(\theta)$ such that $\frac{1}{2}\int_0^1 \int_{\R^d}|h(t,x)|^2\theta(t)(dx)dt\leq J^m(\theta)+\eta$. Let $(\hat{\W},\hat{\F},\hat{\Prob}),\br{\hat{\F}_t},(\hat{X},\hat{W})$ be a weak solution to Equation \eqref{eq:limitingcontrolledmckeanvlasovfeedback} with this choice of $h$ (which exists via \ref{H:H1}). Consider $F:\mc{X}\times\mc{W}\tto \mc{X}\times \mc{Z}\times \mc{W}$ given by $F(\phi,w) = (\phi,\hat{r}_{\phi},w)$ where for $A\in\mc{B}(\R^d),\Gamma\in\mc{B}([0,1])$:
\begin{align*}
\hat{r}(A\times\Gamma)=\int_\Gamma \delta_{h(t,\phi(t))}(A)dt.
\end{align*}
Define $\Theta\in \mc{P}(\mc{X}\times\mc{Z}\times\mc{W})$ by $\hat{\Prob}\circ (\hat{X},\hat{W})^{-1}\circ F^{-1}$. Then $\hat{\nu}_{\Theta}(t)=\theta(t),\forall t\in[0,1]$, and $\Theta$ satisfies \ref{V:V1BDFav} and \ref{V:V3BDFav} from the definition of $\mc{V}^{av}$. Moreover,
\begin{align*}
\frac{1}{2}\E^{\Theta}\biggl[\int_0^1 |z|^2\hat{\rho}_t(dz)  dt\biggr] & = \frac{1}{2}\E^{\Theta}\biggl[\int_0^1 |h(t,\hat{X}_t)|^2 dt\biggr] =\frac{1}{2}\int_0^1 \int_{\R^d}|h(t,x)|^2\theta(t)(dx)dt.
\end{align*}
So $\Theta$ satisfies \ref{V:V2BDFav} from the definition of $\mc{V}^{av}$, and hence $\Theta\in \mc{V}^{av}$. Then
\begin{align*}
J(\theta)&\leq \frac{1}{2}\E^{\Theta}\biggl[\int_0^1 |z|^2\hat{\rho}_t(dz)  dt\biggr] = \frac{1}{2}\int_0^1 \int_{\R^d}|h(t,x)|^2\theta(t)(dx)dt]\leq  J^m(\theta)+\eta.
\end{align*}
Once again, $\theta$ and $\eta$ were arbitrary, so we are done. \end{proof}

We now use a similar Reisz-representation argument to Lemma 4.8 of \cite{DG} to gain an equivalent ``negative Sobolev'' form of $J^m$ (equivalently of $J$) from Equation \eqref{eq:markovianratefunction}.

In order to do so, we first need to introduce some notation, as borrowed from p.270-271 of \cite{DG}.

For $t\in [0,T]$ and $\theta\in C([0,1];\mc{P}(\R^d))$, we define $\nabla_{\theta(t)},(\cdot,\cdot)_{\theta(t)},$ and $|\cdot|_{\theta(t)}$ be (formally) the Riemannian gradient, inner product, and Riemannian norm in the tangent space of the Riemannian structure on $\R^d$ induced by the diffusion matrix $t\mapsto \bar{D}(\cdot,\theta(t))$, where $\bar{D}$ is defined as in Equation \eqref{eq:McKeanLimit}. I.e:
\begin{align}\label{eq:reimannianstructure}
(\nabla_{\theta(t)} \phi)^i& \coloneqq \sum_{j=1}^d\bar{D}^{i,j}(\cdot,\theta(t))\frac{d\phi}{dx^j},i=1,...,d\\
(X,Y)_{\theta(t)} & \coloneqq \sum_{i,j=1}^d[\bar{D}^{-1}(\cdot,\theta(t))]^{i,j}X^iY^j\nonumber\\
|X|_{\theta(t)}&\coloneqq (X,X)^{1/2}_{\theta(t)}.\nonumber
\end{align}
Note in particular that
\begin{align*}
|\nabla_{\theta(t)}\phi|^2_{\theta(t)}= \sum_{i,j=1}^d[\bar{D}(\cdot,\theta(t))]^{i,j}\frac{d\phi}{dx^i}\frac{d\phi}{dx^j},
\end{align*}
and recall that $\bar{D}(x,\mu)$ uniformly positive definite in $x\in\R^d,\mu\in\mc{P}(\R^d)$ under Assumption \ref{assumption:limitinguniformellipticity}.

Also, we define for fixed $\theta\in C([0,1];\mc{P}(\R^d))$ the linear functional $F_\theta:C^\infty_c(U\times\R^d)\tto \R$ by
\begin{align}\label{eq:Ftheta}
F_\theta(\psi) = \langle \theta(1),\psi(1,\cdot)\rangle -\langle \theta(0),\psi(0,\cdot)\rangle-\int_0^1 \langle \theta(t),\dot{\psi}(t,\cdot)\rangle + \langle \theta(t),\bar{L}_{\theta(t)}\psi(t,\cdot)\rangle dt,
\end{align}
where here $U$ is any open interval in $\R$ containing $[0,1]$ and $\bar{L}_\mu$ is as in Equation \eqref{eq:limitinggenerator}.

We our now ready to define our final intermediate form of the rate function as it acts on $C([0,1];\mc{P}(\R^d))$, which is analogous to Equation (4.21) in \cite{DG}. We define $\bar{J}:C([0,1];\mc{P}(\R^d))\tto [0,+\infty]$ by:
\begin{align}\label{eq:Jbar}
\bar{J}(\theta) = \sup_{\psi\in C^\infty_c(U\times \R^d)}\left\{F_\theta(\psi)-\frac{1}{2}\int_0^1 \langle \theta(t),|\nabla_{\theta(t)}\psi(t,\cdot)|^2_{\theta(t)}\rangle dt\right\}
\end{align}
if $\theta(0)=\nu_0$ and $J^m(\theta)=+\infty$ otherwise.
\begin{lem}\label{lem:intermediateDGform}
$J^m=\bar{J}$, where $J^m$ is as in Equation \eqref{eq:markovianratefunction} and $\bar{J}$ is as in Equation \eqref{eq:Jbar}.
\end{lem}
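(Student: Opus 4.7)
The plan is to prove the two inequalities $\bar J(\theta)\le J^m(\theta)$ and $J^m(\theta)\le\bar J(\theta)$ separately, noting that both quantities equal $+\infty$ when $\theta(0)\ne\nu_0$, so we may restrict to the case $\theta(0)=\nu_0$. For the forward inequality, fix $h\in H(\theta)$ and let $X^h$ solve \eqref{eq:limitingcontrolledmckeanvlasovfeedback} with $\theta(t)=\mc L(X^h_t)$. Applying It\^o's formula to $\psi(t,X^h_t)$ for $\psi\in C^\infty_c(U\times\R^d)$ and taking expectations (the stochastic integral has zero mean since $\nabla\psi$ is compactly supported and $\bar B$ is bounded) yields
\begin{align*}
F_\theta(\psi) \;=\; \int_0^1\!\!\int_{\R^d}\bigl(\bar B(x,\theta(t))^\top\nabla\psi(t,x)\bigr)\cdot h(t,x)\,\theta(t)(dx)\,dt.
\end{align*}
Since $|\bar B^\top\nabla\psi|^2=\nabla\psi^\top\bar D\nabla\psi=|\nabla_{\theta(t)}\psi|^2_{\theta(t)}$, the pointwise inequality $a\cdot b\le\tfrac12|a|^2+\tfrac12|b|^2$ gives $F_\theta(\psi)-\tfrac12\int_0^1\langle\theta(t),|\nabla_{\theta(t)}\psi|^2_{\theta(t)}\rangle dt\le \tfrac12\int_0^1\!\!\int_{\R^d}|h|^2\theta(t)(dx)dt$. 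Taking the supremum over $\psi$ and infimum over $h\in H(\theta)$ produces $\bar J(\theta)\le J^m(\theta)$.

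For the reverse inequality assume $\bar J(\theta)<\infty$. Replacing $\psi$ by $\lambda\psi$ in the definition of $\bar J$ and optimizing over $\lambda\in\R$ gives the Cauchy--Schwarz-type bound
\begin{align*}
|F_\theta(\psi)|^2\;\le\; 2\bar J(\theta)\int_0^1\langle\theta(t),|\nabla_{\theta(t)}\psi|^2_{\theta(t)}\rangle dt \;=\; 2\bar J(\theta)\,\|\bar B^\top\nabla\psi\|_{\mc H}^2,
\end{align*}
where $\mc H:=L^2([0,1]\times\R^d,\theta(t)(dx)dt;\R^d)$. Hence the assignment $\bar B^\top\nabla\psi\mapsto F_\theta(\psi)$ is a well-defined, bounded linear functional on the subspace $\{\bar B^\top\nabla\psi:\psi\in C^\infty_c(U\times\R^d)\}\subset\mc H$, so by Hahn--Banach and the Riesz representation theorem there exists $h\in\mc H$ with $\|h\|_{\mc H}^2\le 2\bar J(\theta)$ such that
\begin{align*}
F_\theta(\psi)\;=\;\int_0^1\!\!\int_{\R^d}\bigl(\bar B(x,\theta(t))h(t,x)\bigr)\cdot\nabla\psi(t,x)\,\theta(t)(dx)\,dt.
\end{align*}
Comparing with \eqref{eq:Ftheta}, this identity is exactly the statement that $t\mapsto\theta(t)$ is a distributional solution of the Fokker--Planck equation with drift $\bar\beta(\cdot,\theta(t))+\bar B(\cdot,\theta(t))h(t,\cdot)$, diffusion $\bar D(\cdot,\theta(t))$, and initial datum $\nu_0$.

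The main obstacle is then to upgrade this distributional solution to a weak solution of the McKean--Vlasov SDE \eqref{eq:limitingcontrolledmckeanvlasovfeedback} with the prescribed flow of marginals. I would invoke the superposition principle of Trevisan (the measure-valued extension of the Figalli--Ambrosio theorem), whose integrability hypotheses are satisfied because $\bar\beta,\bar B,\bar D$ are bounded (Assumption \ref{assumption:LipschitzandBounded} and Corollary \ref{cor:limitingcoefficientsregularity}) and $h\in\mc H$ is square-integrable on the finite time interval. This yields a probability measure on $\mc X$ concentrated on paths that solve the associated martingale problem, with time marginals $\theta(t)$ and initial distribution $\nu_0$; it is therefore the law of a weak solution $X^h$ to \eqref{eq:limitingcontrolledmckeanvlasovfeedback}, so $h\in H(\theta)$ and
\begin{align*}
J^m(\theta)\;\le\;\tfrac12\int_0^1\!\!\int_{\R^d}|h(t,x)|^2\theta(t)(dx)dt \;=\;\tfrac12\|h\|_{\mc H}^2\;\le\; \bar J(\theta),
\end{align*}
completing the proof.
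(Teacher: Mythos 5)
Your proof is correct and follows essentially the same route as the paper: It\^o's formula plus a Cauchy--Schwarz/Young bound for $\bar{J}\leq J^m$, and a Riesz-representation argument followed by an FPE-to-SDE superposition step for $J^m\leq\bar{J}$. The only differences are cosmetic — you work in the unweighted space $L^2(\theta(t)(dx)dt)$ with the subspace $\{\bar{B}^\top\nabla\psi\}$ rather than the paper's $\bar{D}^{-1}$-weighted space $L^2_\theta[0,1]$ (the two are isometric via $g\mapsto\bar{B}^{-1}g$, which also lets you read off $\tfrac12\norm{h}^2\leq\bar{J}(\theta)$ directly from the functional's norm instead of the paper's approximating sequence $\psi^n$), and you invoke Trevisan's superposition principle where the paper cites the Barbu--R\"ockner result, which rests on the same mechanism.
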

\begin{proof}
Let $\theta\in C([0,1];\mc{P}(\R^d))$ be such that $J^m(\theta)<\infty$, and consider any $h\in H(\theta)$. Letting $\psi \in C^\infty_c(U\times\R^d)$ and applying It\^o's formula to $\psi(1,X^h_1)$ for $X^h$ a solution to \eqref{eq:limitingcontrolledmckeanvlasovfeedback} and taking expectations, we get
\begin{align*}
F_{\theta}(\psi) = \int_0^1 \langle \theta(t),\biggl(\bar{B}(\cdot,\theta(t)h(t,\cdot)\biggr)\cdot\nabla_x\psi(t,\cdot)\rangle dt.
\end{align*}
Thus we have for all $\psi \in C^\infty_c(U\times\R^d)$:
\begin{align}\label{eq:Fthetabounded}
|F_{\theta}(\psi)|&\leq \biggl(\int_0^1 \langle \theta(t),(\nabla_x\psi)^\top(t,\cdot)\bar{B}(\cdot,\theta(t))\bar{B}^\top(\cdot,\theta(t))\nabla_x\psi(t,\cdot)\rangle dt\biggr)^{1/2} \biggl(\int_0^1\langle \theta(t),|h(t,\cdot)|^2\rangle dt\biggr)^{1/2} \\
&= \biggl(\int_0^1 \langle \theta(t),(\nabla_x\psi)^\top(t,\cdot)\bar{D}(\cdot,\theta(t))\nabla_x\psi(t,\cdot)\rangle dt\biggr)^{1/2} \biggl(\int_0^1\langle \theta(t),|h(t,\cdot)|^2\rangle dt\biggr)^{1/2} \nonumber\\
&= \biggl(\int_0^1\langle \theta(t),|\nabla_{\theta(t)}\phi(\cdot)|^2_{\theta(t)}\rangle dt\biggr)^{1/2}\biggl(\int_0^1\langle \theta(t),|h(t,\cdot)|^2\rangle dt\biggr)^{1/2}.\nonumber
\end{align}
So if $\int_0^1\langle \theta(t),|\nabla_{\theta(t)}\phi(\cdot)|^2_{\theta(t)}\rangle dt=0$, then $F_\theta(\psi)=0$. Thus, using that $F_\theta$ is linear and that $\psi\in C^\infty_c(U\times \R^m)$ if and only if $c \psi \in C^\infty_c(U\times \R^m)$ for all $c\in\R\setminus\br{0}$:
\begin{align*}
\bar{J}(\theta) &= \sup_{\psi\in C^\infty_c(U\times \R^d):\int_0^1\langle \theta(t),|\nabla_{\theta(t)}\phi(\cdot)|^2_{\theta(t)}\rangle dt\neq 0}\br{F_\theta(\psi)-\frac{1}{2}\int_0^1 \langle \theta(t),|\nabla_{\theta(t)}\psi(t,\cdot)|^2_{\theta(t)}\rangle dt}\vee 0\\
& = \sup_{\psi\in C^\infty_c(U\times \R^d):\int_0^1\langle \theta(t),|\nabla_{\theta(t)}\phi(\cdot)|^2_{\theta(t)}\rangle dt\neq 0}\sup_{c\in\R}\br{cF_\theta(\psi)-\frac{c^2}{2}\int_0^1 \langle \theta(t),|\nabla_{\theta(t)}\psi(t,\cdot)|^2_{\theta(t)}\rangle dt}\vee 0\\
& = \sup_{\psi\in C^\infty_c(U\times \R^d):\int_0^1\langle \theta(t),|\nabla_{\theta(t)}\phi(\cdot)|^2_{\theta(t)}\rangle dt\neq 0}\frac{1}{2}\frac{|F_\theta(\psi)|^2}{\int_0^1 \langle \theta(t),|\nabla_{\theta(t)}\psi(t,\cdot)|^2_{\theta(t)}\rangle dt}\vee 0 \\
& = \sup_{\psi\in C^\infty_c(U\times \R^d):\int_0^1\langle \theta(t),|\nabla_{\theta(t)}\phi(\cdot)|^2_{\theta(t)}\rangle dt\neq 0}\frac{1}{2}\frac{|F_\theta(\psi)|^2}{\int_0^1 \langle \theta(t),|\nabla_{\theta(t)}\psi(t,\cdot)|^2_{\theta(t)}\rangle dt}.
\end{align*}
So squaring both sides of the inequality \eqref{eq:Fthetabounded}, we get
\begin{align*}
\frac{1}{2}\frac{|F_\theta(\psi)|^2}{\int_0^1 \langle \theta(t),|\nabla_{\theta(t)}\psi(t,\cdot)|^2_{\theta(t)}\rangle dt}\leq \frac{1}{2}\int_0^1\langle \theta(t),|h(t,\cdot)|^2\rangle dt
\end{align*}
for all $\psi\in C^\infty_c(U\times \R^d)$ such that $\int_0^1\langle \theta(t),|\nabla_{\theta(t)}\phi(\cdot)|^2_{\theta(t)}dt\neq 0$ and all $h\in H(\theta)$. Thus $\bar{J}\leq J^m$.

Now take $\theta\in C([0,1];\mc{P}(\R^d))$ such that $\bar{J}(\theta)<\infty$. Then, since
\begin{align*}
\bar{J}(\theta) &= \sup_{\psi\in C^\infty_c(U\times \R^d)}\sup_{c\in\R}\br{cF_\theta(\psi)-\frac{c^2}{2}\int_0^1 \langle \theta(t),|\nabla_{\theta(t)}\psi(t,\cdot)|^2_{\theta(t)}\rangle dt}\\
& = +\infty
\end{align*}
if there exists $\psi\in C^\infty_c(U\times \R^d)$ such that $\int_0^1 \langle \theta(t),|\nabla_{\theta(t)}\psi(t,\cdot)|^2_{\theta(t)}\rangle dt=0$ and $F_\theta(\psi)\neq 0$, we have once again that
\begin{align*}
\bar{J}(\theta) = \sup_{\psi\in C^\infty_c(U\times \R^d):\int_0^1\langle \theta(t),|\nabla_{\theta(t)}\phi(\cdot)|^2_{\theta(t)}\rangle dt\neq 0}\frac{1}{2}\frac{|F_\theta(\psi)|^2}{\int_0^1 \langle \theta(t),|\nabla_{\theta(t)}\psi(t,\cdot)|^2_{\theta(t)}\rangle dt}.
\end{align*}
Moreover, since $\bar{J}(\theta)$ is bounded by some $C>0$, we get
\begin{align}\label{eq:multiFZbounded}
|F_\theta(\psi)|\leq \sqrt{2 C} \biggl(\int_0^1 \langle \theta(t),|\nabla_{\theta(t)}\psi(t,\cdot)|^2_{\theta(t)}\rangle dt\biggr)^{1/2},\forall \psi \in C^\infty_c(U\times\R^d).
\end{align}

Now, as on p.279 in \cite{DG}, we define $L^2_\theta[0,1]$ to be the Hilbert space of measurable maps $g:[0,1]\times\R^d\tto \R^d$ with finite norm
\begin{align*}
\norm{g}_{L^2_\theta[0,1]} \coloneqq \biggl(\int_0^1 \langle \theta(t), |g(t,\cdot)|^2_{\theta(t)} \rangle dt\biggr)^{1/2}
\end{align*}
and inner product
\begin{align*}
[g_1,g_2]_{L^2_\theta[0,1]}& \coloneqq \int_0^1 \langle \theta(t),(g_1(t,\cdot),g_2(t,\cdot))_{\theta(t)}\rangle dt.
\end{align*}
Denote by $L^2_{\nabla,\theta}[0,1]$ the closure in $L^2_\theta[0,1]$ of the linear subset $L_{\nabla,\theta}$ consisting of all maps $(s,x)\mapsto \nabla_{\theta(s)}\psi(s,x)$, $\psi \in C^\infty_c(U\times\R^d)$. Then $F_\theta$ can be viewed as a linear functional on $L_{\nabla,\theta}$, and by the bound \eqref{eq:multiFZbounded}, is bounded. Then, by the Reisz Representation Theorem, there exists $\bar{h}\in L^2_{\nabla,\theta}[0,1]$ such that
\begin{align}\label{eq:FZreiszrep}
F_\theta(\psi) = \int_0^1 \langle \theta(s),(\bar{h}(s,\cdot),\nabla_{\theta(s)}\psi(s,\cdot))_{\theta(s)} \rangle ds = \int_0^1 \langle \theta(s),\nabla_x\psi(s,\cdot)\cdot \bar{h}(s,\cdot)\rangle ds,\forall \psi \in C^\infty_c(U\times\R^d).
\end{align}
Note that actually, $L_{\nabla,\theta}$ must be considered not as a class of functions, but as a set of equivalence classes of functions agreeing $\theta(t)(dx)dt$-almost surely. This is of no consequence, however, since the bound \eqref{eq:multiFZbounded} ensures that $F_\theta(\psi)=F_\theta(\tilde{\psi})$ if $\nabla_{\theta(\cdot)}\psi$ and $\nabla_{\theta(\cdot)}\tilde{\psi}$ are in the same equivalence class (see p.279 in \cite{DG} and Appendix D.5 in \cite{FK} for a more thorough treatment of the space $L^2_{\nabla,\theta}[0,1]$ and its dual).
Consider now $\tilde{h}:[0,1]\times \R^d\tto \R^d$ given by
\begin{align}\label{eq:tildeh}
\tilde{h}(t,x) &\coloneqq \bar{B}^{-1}(x,\theta(t))\bar{h}(t,x).
\end{align}
Then
\begin{align*}
&\int_0^1 \int_{\R^d}|\tilde{h}(t,x)|^2\theta(t)(dx)dt\\
& = \int_0^1 \int_{\R^d}\bar{h}^\top(t,x)(\bar{B}^{-1})^\top(x,\theta(t))\bar{B}^{-1}(x,\theta(t))\bar{h}(t,x)\theta(t)(dx)dt\\
& = \int_0^1 \int_{\R^d}\bar{h}^\top(t,x)\bar{D}^{-1}(x,\theta(t))\bar{h}(t,x)\theta(t)(dx)dt\\
& = \norm{\bar{h}}_{L^2_\theta[0,1]}^2<\infty.
\end{align*}
Moreover, for any $\psi\in C^\infty_c(U\times\R^d)$, we have
\begin{align*}
&\int_0^1 \langle\theta(t),\biggl[\bar{B}(\cdot,\theta(t))\tilde{h}(t,\cdot)\biggr]\cdot \nabla_x\psi(t,\cdot)\rangle dt = \int_0^1 \langle\theta(t),\bar{h}(t,\cdot)\cdot \nabla_x\psi(t,\cdot)\rangle dt = F_\theta(\psi)
\end{align*}
by Equations \eqref{eq:tildeh} and \eqref{eq:FZreiszrep}. From this, via an approximation argument taking $\psi(t,x)=\beta^k(t)\phi(x)$ for any $\phi\in C^\infty_c(\R^d)$ and $\beta^k:U\tto \R$ which approach $\1_{[0,t]}$, we see for all $\phi\in C^\infty_c(\R^d)$ and all $t\in [0,1]$,
\begin{align*}
\langle \theta(t),\phi\rangle = \langle \nu_0,\phi\rangle + \int_0^t \langle \theta(s),\bar{L}_{\theta(s)}\phi\rangle ds + \int_0^t \langle\theta(s),\biggl[\bar{B}(\cdot,\theta(s))\tilde{h}(s,\cdot)\biggr]\cdot \nabla_x\phi(\cdot)\rangle ds.
\end{align*}
By Section 2 in \cite{BR}, this shows that $\theta(t)=\mc{L}(X^{\tilde{h}}_t)$ for $X^{\tilde{h}}_t$ solving Equation \eqref{eq:limitingcontrolledmckeanvlasovfeedback} with $\tilde{h}$ in the place of $h$. Thus, $\tilde{h}\in H(\theta)$.

Now, since $\bar{h}\in L^2_{\nabla,\theta}[0,1]$, we can take a sequence $\br{\tilde{\psi}^n} \subset L_{\nabla,\theta}$ such that $\tilde{\psi}^n\tto \bar{h}$ in $L^2_\theta[0,T]$. By virtue of $\tilde{\psi}^n\in L_{\nabla,\theta}$, we have for each $n$, there is $\psi^n \in C^\infty_c(U\times\R^d)$ such that $\tilde{\psi}^n(s,x) = \nabla_{\theta(s)}\psi^n(s,x)$. Then $\norm{\nabla_{\theta(\cdot)}\psi^n}_{L^2_\theta[0,1]}^2 \tto \norm{\bar{h}}_{L^2_\theta[0,1]}^2$, and $[\bar{h},\nabla_{\theta(\cdot)}\psi^n]_{L^2_\theta[0,1]}\tto \norm{\bar{h}}_{L^2_\theta[0,1]}^2$. In particular, since
\begin{align}\label{eq:tildehvsbarhnorm}
\int_0^1 \int_{\R^d}|\tilde{h}(t,x)|^2\theta(t)(dx)dt=\norm{\bar{h}}_{L^2_\theta[0,1]}^2
\end{align}
and $\tilde{h}\in H(\theta)$, if $\norm{\bar{h}}_{L^2_\theta[0,1]}^2=0$ then $J^m(\theta)=0$, so we can without loss of generality assume $\norm{\bar{h}}_{L^2_\theta[0,1]}^2\neq 0$ and take a subsequence of $\br{\tilde{\psi}^n}$ such that $\norm{\nabla_{\theta(\cdot)}\psi^n}_{L^2_\theta[0,1]}\neq 0,\forall n\in\bb{N}$.

Then, for all $n\in\bb{N}$,
\begin{align*}
\bar{J}(\theta)&\geq \frac{1}{2}\frac{|F_\theta(\psi^n)|^2}{\int_0^1 \langle \theta(t),|\nabla_{\theta(t)}\psi^n(t,\cdot)|^2_{\theta(t)}\rangle dt}\\
& = \frac{1}{2}\frac{[\bar{h},\nabla_{\theta(\cdot)}\psi^n]^2_{L^2_\theta[0,1]}}{\norm{\nabla_{\theta(\cdot)}\psi^n}_{L^2_\theta[0,1]}^2} \text{ by Equation \eqref{eq:FZreiszrep} and the definition of the inner product}\\
&\tto \frac{1}{2}\norm{\bar{h}}_{L^2_\theta[0,1]}^2 \text{ as }n\toinf \\
& = \int_0^1 \int_{\R^d}|\tilde{h}(t,x)|^2\theta(t)(dx)dt\\
&\geq J^m(\theta).
\end{align*}
\end{proof}
Now we are ready to prove a final Proposition, which will yield the main result of this subsection (Theorem \ref{theorem:DGform}). In order to do so, we will need to recall first the following from \cite{DG}, which is needed for the definition of the rate function $J^{DG}$ from \eqref{eq:DGform}:
\begin{defi}\label{def:absolutelycontinuous}(Definition 4.1 in \cite{DG})
For a compact set $K\subset \R$, we will denote the subspace of $C^\infty_c(\R^d)$ which have compact support contained in $K$ by $\mc{S}_K$.
Let $I$ be an interval on the real line. A map $Z:I\tto \mc{S}'$ is called absolutely continuous if for each compact set $K\subset \R$, there exists a neighborhood of $0$ in $\mc{S}_K$ and an absolutely continuous function $H_K:I\tto \R$ such that
\begin{align*}
|\langle Z(u),\phi\rangle - \langle Z(v),\phi\rangle | \leq |H_K(u)-H_K(v)|
\end{align*}
for all $u,v\in I$ and $\phi\in U_K$.
\end{defi}

It is also useful to recall the following result:
\begin{lem}\label{lemma:DG4.2}(Lemma 4.2 in \cite{DG})
Assume that $Z:I\tto \mc{S}'$ is absolutely continuous. Then the real function $\langle Z,\phi\rangle$ is absolutely continuous for each $\phi\in C^\infty_c(\R^d)$ and the derivative in the distribution sense
\begin{align*}
\dot{Z}(t)\coloneqq \lim_{h\downarrow 0}h^{-1}[Z(t+h)-Z(t)]
\end{align*}
exists for Lebesgue almost-every $t\in I$.
\end{lem}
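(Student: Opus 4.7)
First I would prove the pointwise absolute continuity of $t\mapsto \langle Z(t),\phi\rangle$ for a fixed $\phi\in C^\infty_c(\R^d)$ by a simple scaling argument. Choose any compact $K\subset\R^d$ containing $\mathrm{supp}(\phi)$, so that $\phi\in\mathcal{S}_K$. By Definition \ref{def:absolutelycontinuous} there are a neighborhood $U_K$ of $0$ in $\mathcal{S}_K$ and an absolutely continuous $H_K:I\to\R$ with $|\langle Z(u)-Z(v),\psi\rangle|\le|H_K(u)-H_K(v)|$ for every $\psi\in U_K$. Because $U_K$ is absorbing, there exists $\lambda>0$ with $\lambda\phi\in U_K$; plugging in and using linearity gives $|\langle Z(u)-Z(v),\phi\rangle|\le \lambda^{-1}|H_K(u)-H_K(v)|$, and multiplication by a constant preserves absolute continuity.

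Next I would upgrade this to the a.e.\ existence of the distributional derivative $\dot Z(t)$. Exhaust $\R^d$ by a nested sequence of compact sets $K_1\subset K_2\subset\cdots$; each Fréchet space $\mathcal{S}_{K_n}$ is separable, so pick a countable dense set $D_n\subset\mathcal{S}_{K_n}$ and set $\mathcal{D}\coloneqq\bigcup_n D_n$. By Step~1 and the classical theory of real absolutely continuous functions, for every $\phi\in\mathcal{D}$ there is a null set $N_\phi\subset I$ off of which $\tfrac{d}{dt}\langle Z(t),\phi\rangle$ exists, and for every $n$ there is a null set $M_n$ off of which $H_{K_n}'(t)$ exists and $t$ is a Lebesgue point of it. Let $N\coloneqq\bigcup_{\phi\in\mathcal{D}}N_\phi\cup\bigcup_n M_n$, still a Lebesgue-null set. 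For $t\in I\setminus N$ and any $\phi\in C^\infty_c(\R^d)$, choose $n$ with $\phi\in\mathcal{S}_{K_n}$; given $\varepsilon>0$, by density pick $\psi\in D_n$ with $\varepsilon^{-1}(\phi-\psi)\in U_{K_n}$, so the absolute continuity bound yields
\begin{equation*}
\limsup_{h\downarrow 0}\bigl|h^{-1}\langle Z(t+h)-Z(t),\phi-\psi\rangle\bigr|\le \varepsilon\,|H_{K_n}'(t)|,
\end{equation*}
while $h^{-1}\langle Z(t+h)-Z(t),\psi\rangle$ converges by construction. A standard $\varepsilon/3$ Cauchy argument then produces the limit $\dot Z(t)(\phi)$, and the resulting functional is linear with $|\dot Z(t)(\psi)|\le |H_{K_n}'(t)|$ on $U_{K_n}$, hence continuous on each $\mathcal{S}_{K_n}$. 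Since these estimates are consistent as $n$ grows, $\dot Z(t)$ is a well-defined element of $(C_c^\infty(\R^d))'=\mathcal{S}'$.

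The main obstacle is keeping the exceptional null set independent of $\phi$: a priori each test function contributes its own zero-measure bad set, and uncountable unions of null sets need not be null. The key is to exploit the uniform modulus of continuity provided by the absolute continuity of $H_K$ together with the separability of each $\mathcal{S}_{K_n}$ — these together let one replace the uncountable family of bad sets by a countable one and then extend the limit from a dense subset to all test functions via equicontinuity of the difference quotients. The remaining verifications (consistency of the definition for different $K_n\supset\mathrm{supp}(\phi)$, and Borel measurability of $t\mapsto \langle\dot Z(t),\phi\rangle$) follow at once from uniqueness of the limit and the fact that it is a pointwise limit of measurable functions.
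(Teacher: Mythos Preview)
The paper does not prove this lemma; it is merely recalled from \cite{DG} (Lemma 4.2 there) and used as a black box in the proof of Proposition \ref{prop:intermediatetofinalDG}. So there is no ``paper's own proof'' to compare against.

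Your argument is the standard one and is essentially correct: the scaling step via the absorbing property of $U_K$ is exactly right for the first claim, and the strategy of fixing a countable dense subset of each $\mathcal{S}_{K_n}$, building a single null set, and then using the uniform modulus $|H_{K_n}(u)-H_{K_n}(v)|$ to push the limit from the dense set to all of $C^\infty_c$ is the right idea and is carried out correctly. One small slip: you write $(C_c^\infty(\R^d))'=\mathcal{S}'$, but the dual of $C_c^\infty$ with its inductive limit topology is $\mathcal{D}'$, not the tempered distributions $\mathcal{S}'$. Your argument actually produces $\dot Z(t)\in\mathcal{D}'$, which is all that is used downstream in this paper (only pairings with $\phi\in C_c^\infty$ appear in Equation \eqref{eq:Zdotrep}); whether $\dot Z(t)$ is automatically tempered requires an additional argument or an interpretation of the limit in the weak-$*$ topology of $\mathcal{S}'$, which you have not supplied but which the application here does not need.
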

In the above $\mc{S}$ is the space of Schwartz test function on $\R^d$ and $\mc{S}'$ is its dual. Then, we have the following:
\begin{proposition}\label{prop:intermediatetofinalDG}
$\bar{J}=J^{DG}$, where $\bar{J}$ is as in Equation \eqref{eq:Jbar} and $J^{DG}$ is as in Equation \eqref{eq:DGform}.
\end{proposition}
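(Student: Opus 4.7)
My plan is to prove both inequalities $\bar J \leq J^{DG}$ and $J^{DG} \leq \bar J$ by leveraging the Reisz-representation setup developed in the proof of Lemma \ref{lem:intermediateDGform}. Both functionals equal $+\infty$ unless $\theta(0) = \nu_0$, so I restrict to that case; and I first need to establish that whenever either functional is finite, $\theta$ is absolutely continuous in the distribution sense of Definition \ref{def:absolutelycontinuous}, so that $\dot\theta(t)$ exists a.e.\ by Lemma \ref{lemma:DG4.2}. For $J^{DG}$ this is definitional. For the case $\bar J(\theta) < \infty$, I would specialize the bound \eqref{eq:multiFZbounded} to $\psi(t,x) = \beta(t)\phi(x)$ with $\beta \in C^\infty_c(U)$ and $\phi \in C^\infty_c(\R^d)$ to extract an estimate giving absolute continuity of $t \mapsto \langle \theta(t), \phi\rangle$ with a modulus uniform over $\phi$ in a neighborhood of the origin in each $\mc{S}_K$. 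Once this is in hand, integration by parts in $t$ converts the definition of $F_\theta$ into
\begin{align*}
F_\theta(\psi) = \int_0^1 \langle \dot\theta(t) - \bar L^*_{\theta(t)}\theta(t),\, \psi(t,\cdot)\rangle\, dt, \qquad \psi \in C^\infty_c(U \times \R^d).
\end{align*}

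For the direction $\bar J \leq J^{DG}$, assume $J^{DG}(\theta) < \infty$ (otherwise the inequality is vacuous). The definition of $J^{DG}$ yields the pointwise-in-$t$ bound $|\langle \dot\theta(t) - \bar L^*_{\theta(t)}\theta(t), \psi(t,\cdot)\rangle|^2 \leq 2\, I^{DG}(t)\cdot \langle \theta(t), |\nabla_{\theta(t)}\psi(t,\cdot)|^2_{\theta(t)}\rangle$ for each $t$ where the denominator is positive, with $I^{DG}(t)$ the integrand of $J^{DG}$; on the complementary null set the numerator also vanishes by the very Cauchy-Schwarz reasoning underlying \eqref{eq:multiFZbounded}. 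Integrating, applying Cauchy-Schwarz in $t$, and then using $ax - x^2/2 \leq a^2/2$ with $a = \sqrt{2J^{DG}(\theta)}$ and $x = \bigl(\int_0^1 \langle\theta(t),|\nabla_{\theta(t)}\psi(t,\cdot)|^2_{\theta(t)}\rangle\, dt\bigr)^{1/2}$ gives $F_\theta(\psi) - \tfrac{1}{2}\int_0^1 \langle\theta(t),|\nabla_{\theta(t)}\psi|^2_{\theta(t)}\rangle\, dt \leq J^{DG}(\theta)$ uniformly in $\psi$, so $\bar J(\theta) \leq J^{DG}(\theta)$.

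For the converse, $J^{DG} \leq \bar J$, assume $\bar J(\theta) < \infty$ and invoke the Reisz representation constructed in the proof of Lemma \ref{lem:intermediateDGform}: there exists $\bar h \in L^2_{\nabla,\theta}[0,1]$ with $F_\theta(\psi) = \int_0^1 \langle \theta(t), \bar h(t,\cdot)\cdot\nabla\psi(t,\cdot)\rangle\, dt$ and $\bar J(\theta) = \tfrac{1}{2}\norm{\bar h}^2_{L^2_\theta[0,1]}$. Substituting the integration-by-parts identity and specializing to $\psi(t,x) = \beta(t)\phi(x)$ with $\beta$ ranging over a countable dense subset of $C^\infty_c(U)$ forces $\langle \dot\theta(t) - \bar L^*_{\theta(t)}\theta(t), \phi\rangle = \langle \theta(t), \bar h(t,\cdot)\cdot \nabla \phi\rangle$ on a common Lebesgue-full-measure set of $t$, first along a countable dense subset of $C^\infty_c(\R^d)$ and then for all $\phi \in C^\infty_c(\R^d)$ by the continuity implicit in \eqref{eq:multiFZbounded}. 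A pointwise Cauchy-Schwarz in the $\bar D(\cdot,\theta(t))$-inner product then yields
\begin{align*}
\frac{|\langle \dot\theta(t) - \bar L^*_{\theta(t)}\theta(t),\phi\rangle|^2}{\langle \theta(t), |\nabla_{\theta(t)}\phi|^2_{\theta(t)}\rangle} \leq \langle \theta(t), |\bar h(t,\cdot)|^2_{\theta(t)}\rangle
\end{align*}
whenever the denominator is positive, so the pointwise supremum defining the $J^{DG}$ integrand is bounded by $\langle\theta(t), |\bar h(t,\cdot)|^2_{\theta(t)}\rangle$, and integration gives $J^{DG}(\theta) \leq \tfrac{1}{2}\norm{\bar h}^2_{L^2_\theta[0,1]} = \bar J(\theta)$.

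The main technical difficulty will be the absolute-continuity step, where I must translate the Hilbert-style estimate \eqref{eq:multiFZbounded} into the precise form of Definition \ref{def:absolutelycontinuous}, essentially repeating the Dawson-G\"artner argument in \cite{DG} adapted to our time-dependent Riemannian structure. A secondary concern is the bookkeeping around the a.e.-in-$t$ identification of $\bar h(t,\cdot)$ as the pointwise Reisz representative, which requires a countable-dense argument to ensure the identity holds on a single full-measure set independent of $\phi$, together with measurability of the pointwise supremum appearing in the $J^{DG}$ integrand.
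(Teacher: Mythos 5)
Your overall architecture coincides with the paper's. For $\bar J\leq J^{DG}$ you use the same pointwise quadratic-optimization identity; your Cauchy--Schwarz-in-$t$ followed by $ax-x^2/2\leq a^2/2$ is a harmless repackaging of the paper's direct substitution of $\psi(t,\cdot)$ into the pointwise supremum together with Lemma 4.3 of \cite{DG}. For $J^{DG}\leq\bar J$ you use, as the paper does, the Riesz representative $\bar h$ from the proof of Lemma \ref{lem:intermediateDGform}, the a.e.-in-$t$ identity $\langle\dot\theta(t)-\bar L^*_{\theta(t)}\theta(t),\phi\rangle=\langle\theta(t),\bar h(t,\cdot)\cdot\nabla\phi\rangle$, and a pointwise Cauchy--Schwarz in the $\bar D(\cdot,\theta(t))$-inner product. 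The only routing difference is that the paper obtains that identity probabilistically (It\^o's formula applied to $X^{\tilde h}$, using $\bar J=J^m$ and $\tilde h\in H(\theta)$), whereas you re-derive it analytically from the Riesz representation with product test functions $\beta(t)\phi(x)$; the two are equivalent, and the weak Fokker--Planck identity you need is in fact already established inside the proof of Lemma \ref{lem:intermediateDGform}.

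The one step that would fail as you describe it is the absolute-continuity step when $\bar J(\theta)<\infty$. Writing $g_\phi(t)\coloneqq\langle\theta(t),\norm{\nabla_{\theta(t)}\phi}^2_{\theta(t)}\rangle$, the bound \eqref{eq:multiFZbounded} with $\psi=\beta(t)\phi(x)$ only yields $|\langle\theta(u),\phi\rangle-\langle\theta(v),\phi\rangle|\leq C_\phi(u-v)+C\bigl(\int_v^u g_\phi(t)dt\bigr)^{1/2}$, a square-root modulus; this is not an admissible modulus for Definition \ref{def:absolutelycontinuous}, because one needs a bound of the form $|H_K(u)-H_K(v)|$ with $H_K$ absolutely continuous, and $\bigl(\int_v^u g\bigr)^{1/2}\geq\bigl(\int_0^u g\bigr)^{1/2}-\bigl(\int_0^v g\bigr)^{1/2}$ goes the wrong way, so no AC majorant comes out of \eqref{eq:multiFZbounded} alone. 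The correct order (the one the paper and \cite{DG} follow) is: first obtain $\bar h$ and the identity $\langle\theta(t),\phi\rangle-\langle\theta(s),\phi\rangle=\int_s^t\bigl[\langle\theta(u),\bar L_{\theta(u)}\phi\rangle+\langle\theta(u),\bar h(u,\cdot)\cdot\nabla\phi\rangle\bigr]du$, and only then read off absolute continuity, e.g. with $H_K(t)=\int_0^t C_K\bigl(1+\langle\theta(u),|\bar h(u,\cdot)|\rangle\bigr)du$, which is finite and AC since $\norm{\bar h}_{L^2_\theta[0,1]}<\infty$ and the coefficients are bounded by Corollary \ref{cor:limitingcoefficientsregularity}; after that, Lemma \ref{lemma:DG4.2} gives $\dot\theta$ and your a.e. identification and Cauchy--Schwarz go through. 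A smaller point: in the direction $\bar J\leq J^{DG}$ you cannot invoke ``the Cauchy--Schwarz reasoning underlying \eqref{eq:multiFZbounded}'' to kill the numerator where $g_{\psi(t,\cdot)}(t)=0$, since that bound presupposes $\bar J(\theta)<\infty$, which is not available there; instead observe that if the numerator were nonzero while the quadratic form vanished, perturbing to $\psi(t,\cdot)+\varepsilon\chi$ would make the supremum defining the $J^{DG}$-integrand infinite, which can occur only on a Lebesgue-null set of $t$ when $J^{DG}(\theta)<\infty$.
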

\begin{proof}
Once again, we are using ideas from the proof of Lemma 4.8 in \cite{DG}.

Take $\theta\in C([0,1];\mc{P}(\R^d))$ such that $I^{DG}(\theta)<\infty$. Note that, similarly to as in the proof of Lemma \ref{lem:intermediateDGform}:
\begin{align*}
&\sup_{\phi\in C^\infty_c(\R^d):\norm{\nabla_{\theta(t)}\phi}_{\theta(t)}\neq 0}\biggl\lbrace\langle\dot{\theta}(t)-\bar{L}^*_{\theta(t)}\theta(t),\phi\rangle - \frac{1}{2}\langle \theta(t),\norm{\nabla_{\theta(t)}\phi}^2_{\theta(t)}\rangle \biggr\rbrace\\
& = \sup_{\phi\in C^\infty_c(\R^d):\norm{\nabla_{\theta(t)}\phi}_{\theta(t)}\neq 0}\sup_{c\in\R}\biggl\lbrace c\langle\dot{\theta}(t)-\bar{L}^*_{\theta(t)}\theta(t),\phi\rangle - \frac{c^2}{2}\langle \theta(t),\norm{\nabla_{\theta(t)}\phi}^2_{\theta(t)}\rangle \biggr\rbrace\\
& = \frac{1}{2}\sup_{\phi\in C^\infty_c(\R^d):\norm{\nabla_{\theta(t)}\phi}_{\theta(t)}\neq 0}\frac{|\langle\dot{\theta}(t)-\bar{L}^*_{\theta(t)}\theta(t),\phi\rangle|^2}{\langle \theta(t),\norm{\nabla_{\theta(t)}\phi}^2_{\theta(t)}\rangle}.
\end{align*}
So for any $\psi \in C^\infty_c(U\times \R^d)$:
\begin{align*}
J^{DG}(\theta) &= \frac{1}{2}\int_0^1 \sup_{\phi\in C^\infty_c(\R^d):\norm{\nabla_{\theta(t)}\phi}_{\theta(t)}\neq 0}\frac{|\langle\dot{\theta}(t)-\bar{L}^*_{\theta(t)}\theta(t),\phi\rangle|^2}{\langle \theta(t),\norm{\nabla_{\theta(t)}\phi}^2_{\theta(t)}\rangle}dt\\
& = \int_0^1 \sup_{\phi\in C^\infty_c(\R^d):\norm{\nabla_{\theta(t)}\phi}_{\theta(t)}\neq 0}\biggl\lbrace\langle\dot{\theta}(t)-\bar{L}^*_{\theta(t)}\theta(t),\phi\rangle - \frac{1}{2}\langle \theta(t),\norm{\nabla_{\theta(t)}\phi}^2_{\theta(t)}\rangle \biggr\rbrace dt\\
&\geq \int_0^1 \langle\dot{\theta}(t)-\bar{L}^*_{\theta(t)}\theta(t),\psi(t,\cdot)\rangle - \frac{1}{2}\langle \theta(t),\norm{\nabla_{\theta(t)}\psi(t,\cdot)}^2_{\theta(t)}\rangle dt\\
& = \langle \theta(1),\psi(1,\cdot)\rangle -\langle \theta(0),\psi(0,\cdot)\rangle-\int_0^1 \langle \theta(t),\dot{\psi}(t,\cdot)\rangle + \langle \theta(t),\bar{L}_{\theta(t)}\psi(t,\cdot)\rangle dt\\
&\hspace{6cm}- \int_0^1 \frac{1}{2}\langle \theta(t),\norm{\nabla_{\theta(t)}\psi(t,\cdot)}^2_{\theta(t)}\rangle dt,
\end{align*}
where in the last step we used Lemma 4.3 in \cite{DG}. Then taking the supremum over all $\psi\in C^\infty_c(U\times\R^d)$, we get $J^{DG}(\theta)\geq \bar{J}(\theta)$.

Now take $\theta\in C([0,1];\mc{P}(\R^d))$ such that $\bar{J}(\theta)<\infty$. Using that, per Lemma \ref{lem:intermediateDGform}, $\bar{J}=J^m$, and recalling that $\tilde{h}$ defined in Equation \eqref{eq:tildeh} is in $H(\theta)$, we have that $\theta(t)=\mc{L}(X^{\tilde{h}}_t)$ solving Equation \eqref{eq:limitingcontrolledmckeanvlasovfeedback} with $h=\tilde{h}$. Thus, taking any $0\leq s<t\leq 1$ and $\phi\in C^\infty_c(\R^d)$ and applying It\^o's formula to $\phi(X^{\tilde{h}}_t)$ and $\phi(X^{\tilde{h}}_s)$ and taking expectations, we have
\begin{align*}
\langle \theta(t),\phi\rangle - \langle \theta(s),\phi\rangle & = \int_s^t \langle \theta(u),\bar{L}_{\theta(u)}\phi\rangle du + \int_s^t \langle \theta(u),\bar{h}(u,\cdot)\cdot \nabla \phi\rangle du,
\end{align*}
where here we recall the definition of $\bar{h}$ from Equation \eqref{eq:FZreiszrep}.
We also have that, as per the proof of Lemma \ref{lem:intermediateDGform} and Equation \eqref{eq:tildehvsbarhnorm}:
\begin{align*}
\bar{J}(\theta) = J^m(\theta) = \frac{1}{2}\norm{\bar{h}}^2_{L^2_\theta[0,1]}<\infty
\end{align*}
by assumption, so by Definition \ref{def:absolutelycontinuous} and boundedness of the coefficients from Corollary \ref{cor:limitingcoefficientsregularity}, $\theta$ is an absolutely continuous map from $[0,1]$ to $\mc{S}'$.

Then, using Lemma \ref{lemma:DG4.2}, we have for each $\phi\in C^\infty_c(\R^d)$:
\begin{align}\label{eq:Zdotrep}
\langle \dot{\theta}_t,\phi\rangle &= \langle \theta(t),\bar{L}_{\theta(t)}\phi\rangle + \langle \theta(t),\bar{h}(t,\cdot)\cdot \nabla \phi\rangle.
\end{align}
Using a density argument, we can make sure this holds simultaneously for all $\phi \in C^\infty_c(\R)$ and Lebesgue almost every $t\in [0,T]$ (see p.280 of \cite{DG}).

This gives:
\begin{align*}
J^{DG}(\theta) = \frac{1}{2}\int_0^1 \sup_{\phi\in C^\infty_c(\R^d):\norm{\nabla_{\theta(t)}\phi}_{\theta(t)}\neq 0}\frac{|\langle \theta(t),\bar{h}(t,\cdot)\cdot \nabla \phi\rangle|^2}{\langle \theta(t),\norm{\nabla_{\theta(t)}\phi}^2_{\theta(t)}\rangle}dt.
\end{align*}
For any $\phi \in C^\infty_c(\R^d)$ and $t\in [0,1]$ such that $\norm{\nabla_{\theta(t)}\phi}_{\theta(t)}\neq 0$, we have
\begin{align*}
\frac{|\langle \theta(t),\bar{h}(t,\cdot)\cdot \nabla \phi\rangle|^2}{\norm{\nabla_{\theta(t)}\phi}^2_{\theta(t)}}& = \frac{|\langle \theta(t),(\bar{h}(t,\cdot),\nabla_{\theta(t)}\phi)_{\theta(t)}|^2}{\langle \theta(t),\norm{\nabla_{\theta(t)}\phi}^2_{\theta(t)}\rangle}\\
&\leq \frac{|\langle \theta(t),\norm{\bar{h}(t,\cdot)}_{\theta(t)}\norm{\nabla_{\theta(t)}\phi}_{\theta(t)}|^2}{\langle \theta(t),\norm{\nabla_{\theta(t)}\phi}^2_{\theta(t)}\rangle}\\
&\leq \frac{\langle \theta(t),\norm{\bar{h}(t,\cdot)}^2_{\theta(t)}\rangle \langle \theta(t),\norm{\nabla_{\theta(t)}\phi}^2_{\theta(t)}}{\langle \theta(t),\norm{\nabla_{\theta(t)}\phi}^2_{\theta(t)}\rangle}\\
& = \langle \theta(t),\norm{\bar{h}(t,\cdot)}^2_{\theta(t)}\rangle.
\end{align*}
So
\begin{align*}
J^{DG}(\theta) &\leq \frac{1}{2}\int_0^1 \langle \theta(t),\norm{\bar{h}(t,\cdot)}^2_{\theta(t)}\rangle dt = \frac{1}{2}\norm{\bar{h}}^2_{L^2_\theta[0,1]} = \bar{J}(\theta).
\end{align*}
\end{proof}
Now we prove Theorem \ref{theorem:DGform}:
\begin{proof}\textit{(Theorem \ref{theorem:DGform})}
Once again, we are using that the rate function for a sequence of random variables satisfying the large deviations principle is unique. Via Proposition \ref{prop:ratefunctionforflow}, we know both $\br{\mu^N_\cdot}$ from Equation \eqref{eq:empiricalmesaure} and $\br{\hat{\mu}^N_\cdot}$  from Equation \eqref{eq:averagedempiricalmeasure} satisfy the LDP with rate function $J$ given by Equation \eqref{eq:ordinaryratefunctionpaths} under the given assumptions. By Proposition \ref{prop:markovianratefunction}, we know $J=J^m$, where $J^m$ is given by Equation \eqref{eq:markovianratefunction}, by Lemma \ref{lem:intermediateDGform} we know $J^m=\bar{J}$ where $\bar{J}$ is given by Equation \eqref{eq:Jbar}, and by Proposition \ref{prop:intermediatetofinalDG} we know $\bar{J}=J^{DG}$. So in fact we have not only that $\br{\mu^N_\cdot}$ satisfies the result of Theorem \ref{theorem:DGform}, but so does $\br{\hat{\mu}^N_\cdot}$.
\end{proof}

\begin{remark}\label{remark:comparingwiththesmallnoisecase}
Note that the result of Theorem \ref{theorem:DGform} is completely analogous to the situation in the small noise case. Via the classical work of Freidlin and Wentzell \cite{FW}, the large deviations rate function on $C([0,1];\R^d)$ for small noise SDEs is given by the $L^2$ in time norm of $\dot{\phi}(t)-b(\phi(t))$, where the law of large numbers for the SDE is such that this term is $0$ for all time, and where the norm on $\R^d$ used at each time is that induced by the inverse of the diffusion matrix evaluated at $\phi(t)$. Similarly, the rate function on $C([0,1];\mc{P}(\R^d))$ for the empirical measure of weakly interacting diffusions, as per the classical work of Dawson and G\"artner \cite{DG} (and as extended to the case where the diffusion coefficient depends on the measure parameter in Theorem \eqref{theorem:DGform} above), is the $L^2$ in time norm of $\dot{\theta}(t)-\mc{L}^*_{\theta(t)}\theta(t)$, where the law of large numbers for the empirical measure is such that this term is $0$ for all time, and the norm on $\mc{S}'$ is that induced by the inverse of the diffusion matrix evaluated at $\theta(t)$.

To define the appropriate norm on $\mc{S}'$, we treat the time derivative of $\theta$ as an element of \newline$\overline{\br{\nabla \phi : \phi \in C^\infty_c(\R^d)}}^{L^2(\R^d,\theta(s))}$ via identification with its representative element in this space as a bounded linear functional on it, and with the norm on $\R^d$ once again being that induced by the inverse diffusion matrix. This space is known to be the tangent space to $\theta(s)$ in $\mc{P}_2(\R^d)$ (see Section 8.5 in \cite{GradientFlows}), and note that, in a parallel direction, the tangent space to $\phi(s)$ in the small noise case is simply $\R^d$.

Moreover, in light of our results, it seems this more general principle pervades with the addition of averaging in the joint limit. In Regime 1 of \cite{DS}, it is shown that the analogous large deviations principle to \cite{FW} with the addition of averaging results in a rate function which is exactly that of \cite{FW}, but where both the drift and diffusion are replaced by the effective drift and diffusion obtained from sending $\epsilon\downarrow 0$ before taking the small noise limit. Similarly here, we obtain that the analogous rate function to \cite{DG} with the addition of averaging results in a rate function which is exactly that of \cite{FW}, but where both the drift and diffusion and replaced with the effective drift and diffusion obtained from sending $\epsilon\downarrow 0$ before taking the large particle limit.

For a more concrete example where this effect can easily be observed, see Corollary \ref{cor:aggregationdiffusionequationresult} here and the analogous Corollary 5.4 in \cite{DS}.

\end{remark}

\section{Limiting Behavior of the Controlled Empirical Measure}\label{sec:limitingbehavior}
Throughout this Section we assume \ref{assumption:initialconditions}-\ref{assumption:limitinguniformellipticity}.

Our object of study in this section is the family of occupation measures $\br{Q^{N}}_{N\in\bb{N}} \in \mc{P}(\mc{C})$ defined by:
\begin{align}
\label{eq:occmeas}
    Q^N_\omega(A\times B\times C) = \frac{1}{N}\sum_{i=1}^N \delta_{\bar{X}^{i,N}(\omega)}(A) \delta_{\rho^{i,N}(\omega)}(B)\delta_{\bar{W}^{i,N}(\omega)}(C)
\end{align}
for $A\times B\times C\in \mc{B}(\mc{C}),\omega\in\W$, $\rho^{i,N}$ the relaxed controls corresponding to $u^{N}_i$ via Equation \eqref{eq:canonicalprocess}, $\bar{X}^{i,N}$ as in (\ref{eq:controlledprelimit}) controlled by the same controls used to construct $\rho^{i,N}$, and
\begin{align}\label{eq:barW}
\bar{W}^{i,N}_\cdot(\omega)&\coloneqq \int_0^\cdot \bar{B}^{-1}(\bar{X}^{i,N}_s(\omega),\bar{\mu}^N_s(\omega))[I+\nabla_y\Phi(\bar{X}^{i,N}_s(\omega),\bar{X}^{i,N}_s(\omega)/\epsilon,\bar{\mu}^N_s(\omega))]\\
&\hspace{6cm}\sigma(\bar{X}^{i,N}_s(\omega),\bar{X}^{i,N}_s(\omega)/\epsilon,\bar{\mu}^N_s(\omega))dW^i_s(\omega).\nonumber
\end{align}
Here we recall the definition of $\bar{B}(x,\mu)$ from Equations \eqref{eq:McKeanLimit} and \eqref{eq:equivalentaveragediffusion} and of $\bar{\mu}^N$ from Equation \eqref{eq:barmu}. Also, the Brownian Motions used to construct $\bar{W}^{i,N}$ are the same as those driving the controlled particles $\bar{X}^{i,N}$ as per Equation \eqref{eq:controlledprelimit}, and $\bar{B}(x,\mu)$ is invertible for all $x\in \R^d,\mu\in\mc{P}(\R^d)$ with uniformly bounded inverse by Corollary \ref{cor:limitingcoefficientsregularity}. We use the convention that for $s>1$, $u^N_i(s)=0,\forall i,N\in\bb{N}$.
\begin{remark}\label{remark:ontheoccmeasures}
The intuition for the construction of the occupation measures in Equation \eqref{eq:occmeas} is as follows:

$Q^N$ will be shown to converge in distribution to $Q$ which is almost surely in $\mc{V}$ from Definition \ref{def:V}. In particular, $Q$ will almost-surely satisfy \eqref{eq:controlledMcKeanLimit}. The $Q^N_{\mc{X}}$ is the controlled empirical measure $\bar{\mu}^N$ which enters the right-hand side of the prelimit Laplace Principle expression \eqref{eq:varrep}, and thus tracking this component is clearly necessary when taking the limits \eqref{eq:laplaceprinciplelowerbound} and \eqref{eq:laplaceprincipleupperbound}.

$Q^N_{\mc{Y}}$ is the empirical measure on the relaxed prelimit controls joined with the fast process, and will converge to $\rho_t(dydz)dt$ such that the $\bb{T}^d$-marginal of $\rho$ is almost surely the invariant measure $\pi$, as per \ref{V:V4}. It is extremely important to our proof, and in particular to the proof of the Laplace Principle Upper Bound in Section \ref{sec:upperbound}, that the invariant measure and controls are joined in this way. Namely, considering the term $u(t,y)\coloneqq \int_{\R^m}z\gamma_t(dz;y)$ which appears in Equation \eqref{eq:controlledMcKeanLimit} by decomposing $\rho_t(dydz)=\gamma_t(dz;y)\pi(dy;\bar{X}_t,\mc{L}(\bar{X}_t))$, it is precisely the degree of freedom gained by having $u$ depend on $y\in\bb{T}^d$ in addition to $t\in [0,1]$ that allows us to prove the equivalent variational formulation \eqref{eq:BDFratefunctionav} for the rate function used in the upper bound in Subsection \ref{subsec:altvariationalform}.

Lastly, $Q^N_{\mc{W}}$ will converge to the law of the driving Brownian motion in Equation \eqref{eq:controlledMcKeanLimit}. This component is vital for tracking the joint distribution of the prelimit controls and Brownian motions. The unusual construction of this marginal compared to the case without averaging compared to, e.g. \cite{BDF} Equation (5.2), where the IID Brownian motions from Equation \eqref{eq:multimeanfield} can be used instead, is due to the averaging effect. Namely, even in the one-dimensional setting, it is not $(X^\epsilon,W)$ which will jointly converge to the averaged limit $(\bar{X},W)$, but rather $(X^\epsilon,\bar{W}^\epsilon)$, where $\bar{W}^\epsilon$ is as in Equation \eqref{eq:barW} but without the empirical measure dependence. Indeed, if $m\neq d$, then we do not even have $W^i\in \mc{W}$. See \cite{Bensoussan} Remark 3.4.4 for an illustrative example regarding the effect of the change in driving Brownian motion with averaging, and \cite{Kushner} p.76 for a construction of a similar martingale to $\bar{W}^{i,N}$ in a simpler, one-dimensional setting in the context of singularly-perturbed control problems.
\end{remark}

Assume that there exists $C_{con}>0$ such that
\begin{align}
\label{eq:controlL2boundunspecific}
\sup_{N\in\bb{N}}\E\biggl[\frac{1}{N}\sum_{i=1}^N \int_0^1 |u^N_i(t)|^2dt\biggr]\leq C_{con}.
\end{align}

We will prove the following two propositions:
\begin{proposition}
Under assumption (\ref{eq:controlL2boundunspecific}), the sequence $\br{\mc{L}(Q^N)}_{N\in\bb{N}}$ is precompact in $\mc{P}(\mc{P}(\mc{C}))$.
\end{proposition}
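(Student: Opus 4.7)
The plan is to reduce precompactness of $\{\mathcal{L}(Q^N)\}$ in $\mathcal{P}(\mathcal{P}(\mathcal{C}))$ to tightness of the mean (intensity) measures $\{\mathbb{E}[Q^N]\} \subset \mathcal{P}(\mathcal{C})$, using the standard fact (via Sznitman/Jakubowski) that for empirical measures this equivalence holds. Because $\mathcal{C} = \mathcal{X}\times\mathcal{Y}\times\mathcal{W}$ is a product of Polish spaces, it suffices to verify tightness of each marginal separately. Note that by exchangeability the intensity measure agrees with the law of $(\bar X^{1,N},\rho^{1,N},\bar W^{1,N})$, so we can work with a single, typical index $i=1$.

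For the $\mathcal{X}$-marginal, the obstacle is the singular $\frac{1}{\epsilon}f$-term in \eqref{eq:controlledprelimit}. Following the standard averaging trick, I would apply Itô's formula to $\epsilon\,\Phi(\bar X^{1,N}_t,\bar X^{1,N}_t/\epsilon,\bar\mu^N_t)$ using the cell problem \eqref{eq:cellproblem}, together with the Lions chain rule from Assumption \ref{assumption:fsigmaregularity} for the $\bar\mu^N_t$-dependence. This eliminates $\tfrac{1}{\epsilon}f$ at the cost of boundary terms of order $\epsilon\|\Phi\|_\infty$ and regular drift/martingale terms in which all coefficients are bounded (by Proposition \ref{prop:Phiexistenceregularity} and Corollary \ref{cor:limitingcoefficientsregularity}). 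Using boundedness of $b,\sigma,\nabla_y\Phi,\nabla_x\Phi$, the control bound \eqref{eq:controlL2boundunspecific}, Cauchy--Schwarz, and BDG, I get moment estimates of the form $\mathbb{E}[|\bar X^{1,N}_t - \bar X^{1,N}_s|^2]\le C(|t-s| + \epsilon^2)$ uniformly in $N$, together with $\sup_N \mathbb{E}[\sup_{t\le 1}|\bar X^{1,N}_t|^2]<\infty$ (using Assumption \ref{assumption:initialconditions}). By the Kolmogorov--Chentsov criterion this gives tightness of the $\mathcal{X}$-marginal.

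For the $\mathcal{W}$-marginal, $\bar W^{1,N}$ is an Itô integral whose integrand is uniformly bounded (by $\bar\lambda_1^{-1/2}\cdot \|I+\nabla_y\Phi\|_\infty\cdot\|\sigma\|_\infty$, using Assumption \ref{assumption:limitinguniformellipticity}, Proposition \ref{prop:Phiexistenceregularity}, and Assumption \ref{assumption:LipschitzandBounded}). Hence BDG yields $\mathbb{E}[|\bar W^{1,N}_t - \bar W^{1,N}_s|^4]\le C|t-s|^2$, and Kolmogorov--Chentsov again gives tightness. For the $\mathcal{Y}$-marginal, recall that an element $r\in\mathcal{Y}=\mathcal{R}^1_1(\mathbb{T}^d\times\mathbb{R}^m)$ has $[0,1]$-marginal equal to Lebesgue and $\mathbb{T}^d$-marginal automatically tight since $\mathbb{T}^d$ is compact; the only unbounded direction is $\mathbb{R}^m$. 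A classical criterion for tightness in spaces of relaxed controls (see, e.g., Kushner, or Sec.~6.3 of \cite{Rachev}) is that
\begin{align*}
\sup_N \mathbb{E}\!\left[\int_{\mathbb{T}^d\times\mathbb{R}^m\times[0,1]} g(|z|)\,\rho^{1,N}(dy\,dz\,dt)\right]<\infty
\end{align*}
for some $g:[0,\infty)\to[0,\infty)$ with $g(r)/r\to\infty$. By exchangeability and \eqref{eq:controlL2boundunspecific}, the choice $g(r)=r^2$ works since $\mathbb{E}[\int|z|^2 \rho^{1,N}(dy\,dz\,dt)] = \mathbb{E}[\int_0^1 |u_1^N(t)|^2 dt]\le C_{con}$ (up to a factor absorbed by exchangeability applied to the mean).

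The hard part is the $\mathcal{X}$-tightness, because the $\frac{1}{\epsilon}$-scaling interacts with the measure-dependent coefficients. Once the cell-problem substitution is carried out carefully using Lions calculus, all remaining terms are genuinely $O(1)$ and the rest is routine Kolmogorov--Chentsov. Putting the three marginals together yields tightness of $\{\mathbb{E}[Q^N]\}$ in $\mathcal{P}(\mathcal{C})$, and hence of $\{\mathcal{L}(Q^N)\}$ in $\mathcal{P}(\mathcal{P}(\mathcal{C}))$, which by Prokhorov's theorem is precompactness.
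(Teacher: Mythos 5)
Your overall architecture (reduce to tightness of the mean measures, treat the three marginals of $\mc{C}=\mc{X}\times\mc{Y}\times\mc{W}$ separately, and kill the $\tfrac{1}{\epsilon}f$ term by the cell-problem substitution) parallels the paper, and the reduction from $\br{\mc{L}(Q^N)}$ to the intensity measures $\br{\E[Q^N]}$ is legitimate and essentially what the tightness-function machinery of \cite{DE} (Theorem A.3.17) formalizes. However, there are two genuine gaps. First, the appeal to exchangeability is unfounded: the controls $u^N=(u^N_1,\dots,u^N_N)\in\mc{U}_N$ are arbitrary progressively measurable processes with no symmetry, so the particles in \eqref{eq:controlledprelimit} are not exchangeable, the intensity measure of $Q^N$ from \eqref{eq:occmeas} is $\frac1N\sum_{i}\mc{L}(\bar X^{i,N},\rho^{i,N},\bar W^{i,N})$ and \emph{not} the law of particle $1$, and the bound \eqref{eq:controlL2boundunspecific} controls only the average $\frac1N\sum_i\E\int_0^1|u^N_i|^2dt$, not $\E\int_0^1|u^N_1|^2dt$ (which may grow like $N$). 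This is repairable by carrying the average over $i$ throughout, but your single-particle estimates must then be restated in averaged form.

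Second, and more seriously, the Kolmogorov--Chentsov argument for the $\mc{X}$-marginal does not close. The criterion requires $\E[|\bar X_t-\bar X_s|^{\alpha}]\le C|t-s|^{1+\beta}$ with $\beta>0$ uniformly in $N$; your claimed bound $C(|t-s|+\epsilon^2)$ has exponent $1$ on $|t-s|$, and it cannot be improved: the control term satisfies only $|\int_s^t\sigma u_i\,d\tau|^2\le C|t-s|\int_0^1|u_i|^2d\tau$, and since nothing beyond the averaged $L^2$ bound \eqref{eq:controlL2boundunspecific} is assumed (no higher moments, no uniform-in-time integrability), the $L^2$-in-time control mass can concentrate and exponent $1$ is sharp. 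Moreover the $O(\epsilon)$ boundary term $\epsilon[\Phi(\cdot,\cdot,\bar\mu^N_s)-\Phi(\cdot,\cdot,\bar\mu^N_t)]$ produced by the substitution is not controlled by any power of $|t-s|$ uniformly in $N$, so folding it into ``$+\epsilon^2$'' breaks the criterion for $|t-s|$ small relative to $\epsilon$. This is exactly the point the paper confronts: it replaces Kolmogorov--Chentsov by the modulus-weighted tightness function \eqref{eq:PXtightnessfunction}, under which the control contributes $\sup_\eta\eta^{-1/4}\cdot C\eta^{1/2}\|u_i\|_{L^2}$, integrable after averaging, and it isolates the $O(\epsilon)$ remainder in an auxiliary measure $\nu^N$ on $\mc{P}(\mc{X}\times\mc{X})$ whose second marginal converges to $\delta_0$, recovering tightness of $\bar\mu^N$ via the continuous addition map. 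Without some substitute of this kind (or a Billingsley-type criterion that lets $N\toinf$ before $|t-s|\downarrow0$, suitably justified at the level of measures), your $\mc{X}$-marginal step fails. The $\mc{W}$-marginal argument (bounded integrand, BDG) and the $\mc{Y}$-marginal argument (coercive function $g(r)=\int|z|^2r$, using the averaged control bound) are fine in substance, modulo verifying that sublevel sets of $g$ are relatively compact in $\mc{Y}$ with its first-moment topology, which the paper checks explicitly.
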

\begin{proof}
See Subsection \ref{subsection:tightness}.
\end{proof}
\begin{proposition}
Under assumption (\ref{eq:controlL2boundunspecific}), for $Q$ such that $\mc{L}(Q^N)\tto \mc{L}(Q)$ in $\mc{P}(\mc{P}(\mc{C}))$ along any subsequence, $Q \in\mc{V}$ almost surely, where the class of measures $\mc{V}$ is described in Definition \ref{def:V}.
\end{proposition}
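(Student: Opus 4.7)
The plan is to establish each of conditions \ref{V:V1}--\ref{V:V4} for the limit $Q$ almost surely, working along the convergent subsequence produced by the previous proposition. By the Skorokhod representation theorem we may realize all the $Q^N$ and $Q$ on a common probability space so that $Q^N\to Q$ almost surely in $\mathcal{P}(\mathcal{C})$, which converts distributional statements into a.s. ones.

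I would first dispose of the two easy conditions. For \ref{V:V3}, the initial marginal $\nu_{Q^N}(0)$ is exactly the empirical measure $\frac{1}{N}\sum_i \delta_{x^{i,N}}$, which converges to $\nu_0$ by assumption \ref{assumption:initialconditions}; continuity of the map $\Theta\mapsto \nu_\Theta(0)$ (Proposition \ref{prop:nucontmeasure}) then gives $\nu_Q(0)=\nu_0$ a.s. For \ref{V:V2}, use the control bound \eqref{eq:controlL2boundunspecific} together with the identity
\[
\mathbb{E}\!\left[\mathbb{E}^{Q^N}\!\!\left[\int_{\bb{T}^d\times\R^m\times[0,1]}|z|^2\,\rho(dydzdt)\right]\right] = \mathbb{E}\!\left[\frac{1}{N}\sum_{i=1}^N\int_0^1 |u_i^N(t)|^2\,dt\right] \le C_{\mathrm{con}},
\]
lower semicontinuity of $\Theta\mapsto\mathbb{E}^\Theta[\int|z|^2\rho]$ on $\mathcal{P}(\mathcal{C})$, and Fatou's lemma to conclude $\mathbb{E}[\mathbb{E}^Q[\int|z|^2\rho]]\le C_{\mathrm{con}}$, whence \ref{V:V2} a.s.

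For \ref{V:V4}, I would use the invariant-measure characterization of $\pi$ from Proposition \ref{prop:invtmeasure}. Fix $g\in C^2(\bb{T}^d)$ and $0\le s<t\le 1$, and apply It\^o's formula to $\epsilon^2 g(\bar X^{i,N}_\tau/\epsilon)$ on $[s,t]$. The dominant $\mathcal{O}(1)$ contribution is $\int_s^t \mathcal{L}^1_{\bar X^{i,N}_\tau,\bar\mu^N_\tau}g(\bar X^{i,N}_\tau/\epsilon)\,d\tau$; the remaining deterministic terms involve $\epsilon b$, $\epsilon \sigma u^N_i$, and the martingale term is $\mathcal{O}(\epsilon)$ in $L^2$ using the boundedness of $\sigma$ and $\nabla g$. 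Averaging over $i$, the left-hand side is $\mathcal{O}(\epsilon^2)$ and the control term is $\mathcal{O}(\epsilon\sqrt{C_{\mathrm{con}}})$ by Cauchy--Schwarz and \eqref{eq:controlL2boundunspecific}, so
\[
\mathbb{E}^{Q^N}\!\!\left[\int_{\bb{T}^d\times\R^m\times[s,t]} \mathcal{L}^1_{\phi(\tau),\nu_{Q^N}(\tau)}g(y)\,\rho(dydzd\tau)\right]\longrightarrow 0.
\]
The joint continuity of $f,\sigma$ in \ref{assumption:LipschitzandBounded} and the continuity of $\tau\mapsto\nu_\Theta(\tau)$ let me pass the integrand to the limit continuously, and the $|z|^2$-uniform integrability from \ref{V:V2} handles the unbounded $\R^m$ component via truncation. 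The resulting identity for $Q$ holding for all such $g$ and $(s,t)$ uniquely identifies the conditional $\bb{T}^d$-marginal of $\rho$ as $\pi(dy|\phi(\tau),\nu_Q(\tau))d\tau$ by Proposition \ref{prop:invtmeasure}, which is \ref{V:V4}.

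Condition \ref{V:V1} will be the main obstacle, since it requires identifying the full averaged controlled dynamics \eqref{eq:controlledMcKeanLimit} in the limit. My approach is the standard cell-problem corrector method adapted to the mean-field setting. For $f\in C^2_b(\R^d)$, apply It\^o's formula to $f(\bar X^{i,N}_t)+\epsilon\,\Phi(\bar X^{i,N}_t,\bar X^{i,N}_t/\epsilon,\bar\mu^N_t)\cdot\nabla f(\bar X^{i,N}_t)$; the cell-problem identity \eqref{eq:cellproblem} cancels the singular $\frac{1}{\epsilon}f\cdot\nabla f$ term, and the remaining drift regroups into exactly $\beta(x,y,\mu)\cdot\nabla f(x)$ and the control contribution $[\nabla_y\Phi+I]\sigma z\cdot\nabla f$, while the quadratic-variation term combines to $\frac{1}{2}D(x,y,\mu):\nabla\nabla f$. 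Using Remark \ref{remark:altdiffusionrep}, the diffusion piece is the one driven by the rebuilt martingale $\bar W^{i,N}$ from \eqref{eq:barW}; uniform ellipticity \ref{assumption:limitinguniformellipticity} and Corollary \ref{cor:limitingcoefficientsregularity} make $\bar B^{-1}$ bounded, so each $\bar W^{i,N}$ is a standard $d$-dimensional Brownian motion by L\'evy's characterization. Averaging over $i$ rewrites the resulting relation as an identity under $Q^N$; the Lions-derivative regularity in \ref{assumption:fsigmaregularity} together with the uniform Wasserstein bound on $\bar\mu^N_t$ ensures that terms involving $\partial_\mu$-derivatives of $\Phi$ (arising from the time-variation of $\bar\mu^N_t$ in the It\^o expansion) vanish uniformly. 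Combining the $\mathcal{Y}$-component continuity argument used in \ref{V:V4} with the joint convergence $(Q^N_{\mathcal{X}},Q^N_{\mathcal{Y}},Q^N_{\mathcal{W}})\to(Q_{\mathcal{X}},Q_{\mathcal{Y}},Q_{\mathcal{W}})$ and the martingale property of $\bar W^{i,N}$, one passes to the limit to obtain the martingale-problem identity characterizing weak solutions of \eqref{eq:controlledMcKeanLimit} under $Q$, yielding \ref{V:V1}. The most delicate estimates will be the uniform control of the $\partial_\mu\Phi$ error terms under only $L^2$-bounded controls and the care needed in identifying the joint law of $\rho$ and $W$ in the limit, both of which rely crucially on the construction of the $\mathcal{W}$-marginal via $\bar W^{i,N}$ rather than the prelimit $W^i$ (cf.\ Remark \ref{remark:ontheoccmeasures}).
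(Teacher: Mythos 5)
Your overall architecture (Skorokhod representation, verify \ref{V:V3} and \ref{V:V2} by continuity and Fatou, identify the fast marginal via the generator $\mc{L}^1$, then a corrector-based martingale problem for \ref{V:V1}) is the same as the paper's, and your treatments of \ref{V:V2} and \ref{V:V3} are fine. However, there are two genuine gaps. First, in your argument for \ref{V:V4} the limiting identity you extract is $\E^{Q}\bigl[\int_{\bb{T}^d\times\R^m\times[s,t]}\mc{L}^1_{\phi(\tau),\nu_{Q}(\tau)}g(y)\,\rho(dydzd\tau)\bigr]=0$, i.e.\ the expectation of a \emph{signed} quantity vanishes. Because the operator $\mc{L}^1_{\phi(\tau),\nu_Q(\tau)}$ depends on the path $\phi$, this does not preclude cancellation across different paths under $Q$, and so it does not identify the conditional $\bb{T}^d$-marginal of $r_\tau$ path-by-path; uniqueness of the invariant measure can only be invoked once you know that $\int_{\bb{T}^d\times\R^m\times[0,t]}\mc{L}^1 g\,r(dydzd\tau)=0$ holds $Q$-a.s.\ for all $t$ and $g$. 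This is exactly why the paper's Lemma \ref{lemma:willimplyV4} keeps the modulus \emph{inside} the particle average, $\frac1N\sum_i\bigl|\int_0^t\mc{L}^1 g_l(\bar Y^{i,N}_s)ds\bigr|$, so that the limit is $\E^{Q}[\,|\cdot|\,]=0$; your estimates are per-particle anyway, so the fix is to insert the absolute value before averaging, but as written the step fails.

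Second, and more seriously, your treatment of \ref{V:V1} misses the structural point that makes this joint limit delicate. The corrector term you add carries an $\epsilon\,\partial_\mu\Phi$ contribution whose It\^o expansion produces $\frac1N\sum_j\partial_\mu\Phi(\bar X^{i,N}_\tau,\bar X^{i,N}_\tau/\epsilon,\bar\mu^N_\tau)(\bar X^{j,N}_\tau)\,f(\bar X^{j,N}_\tau,\bar X^{j,N}_\tau/\epsilon,\bar\mu^N_\tau)$ (the $\epsilon$ prefactor is cancelled by the $1/\epsilon$ drift of particle $j$): this term is $O(1)$ in both $N$ and $\epsilon$, so it does \emph{not} ``vanish uniformly'' by regularity of $\partial_\mu\Phi$ and Wasserstein bounds. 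In the paper it is the term $D^{i,N}_5$, and it vanishes only in the limit, in \eqref{eq:D5vanishes}, by combining the already-established \ref{V:V4} with the centering condition \ref{assumption:centeringcondition} ($\int f\,\pi=0$); this is precisely why \ref{V:V4} must be proved before \ref{V:V1}. Likewise, your appeal to L\'evy's characterization for $\bar W^{i,N}$ is invalid at finite $N$: its quadratic variation is $\int_0^\cdot\bar B^{-1}\tilde D(\bar X^{i,N}_s,\bar X^{i,N}_s/\epsilon,\bar\mu^N_s)(\bar B^\top)^{-1}ds$, not $tI$; it becomes Brownian only in the limit, again via \ref{V:V4} and Remark \ref{remark:altdiffusionrep} (the paper's terms $D^{i,N}_6,D^{i,N}_7$), which is why the paper runs the martingale problem jointly in $(x,p)$ with test functions $g(\bar X^{i,N},\bar W^{i,N})$ rather than treating the state equation and the Brownian marginal separately. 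Without these two ingredients your passage to the limiting martingale problem for \eqref{eq:controlledMcKeanLimit} does not go through.
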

\begin{proof}
See Subsection \ref{subsection:identifylimit}.
\end{proof}

\subsection{Tightness of the Occupation Measures}
\label{subsection:tightness}

We prove tightness of the occupation measures $\br{Q^N}_{N\in\bb{N}}$ defined in Equation \eqref{eq:occmeas} as $\mc{P}(\mc{C})$-valued random variables by proving tightness of each of the marginals, $Q^N_\mc{Y}$, $Q^N_{\mc{W}}$, and $Q^N_{\mc{X}}=\bar{\mu}^N$ (as defined in Equation \eqref{eq:barmu}).

\subsubsection{Tightness of $Q^N_{\mc{Y}}$}\label{subsubsection:TightnessQ^N_Z}
This will follow analogously to p.88-89 of \cite{BDF}. We recall here the arguments for the readers convenience. Observe that
\begin{align*}
g(r):= \int_{\R^{m}\times [0,1]} |z|^2r(dydzdt)
\end{align*}
is a tightness function on $\mc{Y}$. Namely,  it is bounded from below and has relatively compact level sets. Indeed, boundedness from below is obvious and in order to confirm  the second property, for $c\in(0,\infty)$ let us set $R_c:=\br{r\in\mc{Y}:g(r)\leq c}$. Chebyshev's inequality for $M>0$ gives that
\begin{align}
\label{eq:chebybound}
\sup_{r\in R_c} r(\bb{T}^d\times\br{z\in\R^{m}:|z|>M}\times[0,1])\leq \frac{c}{M^2}.
\end{align}

Therefore, $R_c$ is tight and thus relatively compact as a subset of $\mc{Y}$. Let $\br{r_n}_{n\in\bb{N}}\subset R_c$ be such that $\br{r_n}_{n\in\bb{N}}$ converges weakly to $r_*\in \mc{Y}$. We need to show that $r_*$ has finite first moment and that first moments of $\br{r_n}_{n\in\bb{N}}$ converge to the first moment of $r_*$. By Jensen's inequality and  Fatou's lemma (Theorem A.3.12 in \cite{DE}),
\begin{align*}
\sqrt{c}\geq \liminf_{n\toinf} \sqrt{g(r_n)} \geq  \liminf_{n\toinf} \int_{\bb{T}^d\times\R^{m}\times[0,1]}|z|r_n(dydzdt)\geq \int_{\bb{T}^d\times\R^{m}\times [0,1]} |z|r_*(dydzdt) .
\end{align*}

Now letting $M>0$, by Equation \eqref{eq:chebybound} and H\"older's inequality, we have for all $r\in R_c$,
\begin{align*}
\int_{\bb{T}^d\times\R^{m}\times [0,1]} \1_{\br{z\in\R^{m}:|y|>M}} |z|r(dydzdt)&\leq \sqrt{\int_{\bb{T}^d\times\R^{m}\times [0,1]} |z|^2 r(dydzdt)\int_{\bb{T}^d\times\R^{m}\times [0,1]}\1_{\br{z\in\R^{m}:|z|>M}} r(dydz dt)}\\
&\leq \sqrt{c \frac{c}{M^2}} = \frac{c}{M}.
\end{align*}

So by reverse Fatou's Lemma we get
\begin{align*}
\limsup_{n\toinf}\int_{\bb{T}^d\times\R^{m}\times [0,1]} |z|r_n(dydzdt)&\leq \frac{c}{M}+\int_{\bb{T}^d\times\R^{m}\times [0,1]}\1_{\br{z\in\R^{m}:|z|\leq M}}|z|r_*(dydz dt)\\
&\leq \frac{c}{M}+\int_{\bb{T}^d\times\R^{m}\times [0,1]} |z|r_*(dydzdt).
\end{align*}

Given that $M$ may be taken to be arbitrarily large, we have
\begin{align*}
\lim_{n\toinf}\int_{\bb{T}^d\times\R^{m}\times [0,1]} |z|r_n(dydzdt) = \int_{\bb{T}^d\times\R^{m}\times [0,1]} |z|r_*(dydz dt).
\end{align*}

Thus we have $g$ is a tightness function on $\mc{R}_1$. Now define $G:\mc{P}(\mc{Y})\tto [0,\infty]$ by
\begin{align*}
G(\Theta):=\int_{\mc{Y}}g(r)\Theta(dr).
\end{align*}

Then $G$ is a tightness function on $\mc{P}(\mc{Y})$ (see Theorem A.3.17 in \cite{DE}). Thus in order to prove tightness of $\br{Q^N_{\mc{Y}}}_{N\in\bb{N}}$, it is enough to show that
\begin{align*}
\sup_{N\in\bb{N}} \E[G(Q^N)]<\infty.
\end{align*}

But this follows immediately from assumption (\ref{eq:controlL2boundunspecific}), since by definition of $G$ and $Q^N$,
\begin{align*}
\E[G(Q^N_{\mc{Y}})] &= \E[\int_{\mc{Y}} \int_{\bb{T}^d\times\R^{m}\times [0,1]} |z|^2 r(dydzdt)Q^N_{\mc{Y}}(dr)]\\
&= \E[\frac{1}{N}\sum_{i=1}^N\int_{\bb{T}^d\times\R^{m}\times [0,1]} |z|^2 \rho^{i,N}(dydzdt)]\\
&= \E[\frac{1}{N}\sum_{i=1}^N\int_0^1 |u_i^N(t)|^2 dt]\\
&<\infty.
\end{align*}

\subsubsection{Tightness of $Q^N_{\mc{W}}$}\label{subsubsection:TightnessQ^N_W}
Consider the function $G:\mc{P}(\mc{W})\tto [0,+\infty]$ given by
\begin{align}\label{eq:PXtightnessfunction}
G(\mu) = \int_{\mc{X}}\biggl\lbrace |\phi(0)| + \sup_{\eta\in(0,1]}\eta^{-1/4}\sup_{s,t\in [0,1]: |s-t|\leq \eta}|\phi(s)-\phi(t)|\bigg\rbrace\mu(d\phi).
\end{align}
Then, as per the discussion on p. 1798 of \cite{FischerFormofRateFunction}, $G$ is a tightness function on $\mc{P}(\mc{W})$ in the sense of \cite{DE} p.309. So, to show $\br{Q^N_{\mc{W}}}_{N\in\bb{N}}$ is tight as a sequence of $\mc{P}(\mc{W})$-valued random variables, it suffices to show
\begin{align*}
\sup_{N\in\bb{N}}\E[G(Q^N_{\mc{W}})]<\infty.
\end{align*}
We have for each $N\in\bb{N}$:
\begin{align*}
\E[G(Q^N_{\mc{W}})]& = \frac{1}{N}\sum_{i=1}^N\E\biggl[\sup_{\eta\in(0,1]}\eta^{-1/4}\sup_{s,t\in [0,1]: |s-t|\leq \eta}\biggl|\int_s^t\bar{B}^{-1}(\bar{X}^{i,N}_\tau,\bar{\mu}^N_\tau)[I+\nabla_y\Phi(\bar{X}^{i,N}_\tau,\bar{X}^{i,N}_\tau/\epsilon,\bar{\mu}^N_\tau)]\\
&\hspace{6cm}\sigma(\bar{X}^{i,N}_\tau,\bar{X}^{i,N}_\tau/\epsilon,\bar{\mu}^N_\tau)dW^i_\tau \biggr|\biggr]
\end{align*}
Since by assumption \ref{assumption:LipschitzandBounded} $\sigma$ is bounded, by Proposition \ref{prop:Phiexistenceregularity} $\nabla_y\Phi$ is bounded, and by Corollary \ref{cor:limitingcoefficientsregularity} $\bar{B}^{-1}$ is bounded, we can apply Lemma C.1 in \cite{FischerFormofRateFunction} to get there is $C>0$ such that for all $i\in\br{1,...,N}$:
\begin{align*}
&\E\biggl[\sup_{\eta\in(0,1]}\eta^{-1/4}\sup_{s,t\in [0,1]: |s-t|\leq \eta}\biggl|\int_s^t\bar{B}^{-1}(\bar{X}^{i,N}_\tau,\bar{\mu}^N_\tau)[I+\nabla_y\Phi(\bar{X}^{i,N}_\tau,\bar{X}^{i,N}_\tau/\epsilon,\bar{\mu}^N_\tau)]\sigma(\bar{X}^{i,N}_\tau,\bar{X}^{i,N}_\tau/\epsilon,\bar{\mu}^N_\tau)dW^i_\tau \biggr|\biggr]\\
&\leq C.
\end{align*}
Then
\begin{align*}
\E[G(Q^N_{\mc{W}})]\leq C,
\end{align*}
and since this bound is uniform in $N$, tightness of $\br{Q^N_{\mc{W}}}_{N\in\bb{N}}$ is proved.

\subsubsection{Tightness of $Q^N_{\mc{X}}$ (In $\mc{P}(\mc{X})$)}\label{subsubsection:TightnessQ^N_X}

\begin{proposition}
$\br{Q^N_{\mc{X}}}_{N\in\bb{N}}$ is tight as a sequence of $\mc{P}(\mc{X})$-valued random variables.
\end{proposition}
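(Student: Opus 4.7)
The natural strategy mirrors Section \ref{subsubsection:TightnessQ^N_W}: on $\mc{P}(\mc{X})$ take the tightness function
\begin{align*}
G(\mu) = \int_{\mc{X}}\biggl\{|\phi(0)|+\sup_{\eta\in(0,1]}\eta^{-1/4}\sup_{s,t\in[0,1]:|s-t|\leq\eta}|\phi(s)-\phi(t)|\biggr\}\mu(d\phi),
\end{align*}
which is a tightness function on $\mc{P}(\mc{X})$ in the sense of \cite{DE} p.\,309, and show $\sup_N\E[G(Q^N_{\mc{X}})]<\infty$. Since $\bar{X}^{i,N}_0=x^{i,N}$, the initial-position part is uniformly bounded via assumption \ref{assumption:initialconditions}, so the task reduces to a uniform-in-$N$ estimate of $\E\bigl[\tfrac{1}{N}\sum_{i=1}^N\sup_{\eta\in(0,1]}\eta^{-1/4}\sup_{|s-t|\leq\eta}|\bar{X}^{i,N}_t-\bar{X}^{i,N}_s|\bigr]$.

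The central difficulty is the singular drift $\epsilon^{-1}f(\bar{X}^{i,N}_\tau,\bar{X}^{i,N}_\tau/\epsilon,\bar{\mu}^N_\tau)$ in \eqref{eq:controlledprelimit}, which forbids a direct bound on the increment. The standard remedy is the corrector argument: apply It\^o's formula to $\epsilon\Phi(\bar{X}^{i,N}_\tau,\bar{X}^{i,N}_\tau/\epsilon,\bar{\mu}^N_\tau)$, using the empirical-measure It\^o formula of Appendix \ref{Appendix:LionsDifferentiation} to handle the $\bar{\mu}^N$-dependence (the requisite regularity of $\Phi$ in $x,\mu$ is furnished by Proposition \ref{prop:Phiexistenceregularity}). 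The two singular-in-$\epsilon$ contributions produced, namely $\epsilon^{-1}\nabla_y\Phi\cdot f$ and $\tfrac{1}{2}\epsilon^{-1}A:\nabla_y\nabla_y\Phi$, combine via $\mc{L}^1_{x,\mu}\Phi=-f$ to $-\epsilon^{-1}f$, so that rearrangement yields
\begin{align*}
\int_s^t\frac{1}{\epsilon}f(\bar{X}^{i,N}_\tau,\bar{X}^{i,N}_\tau/\epsilon,\bar{\mu}^N_\tau)\,d\tau = \epsilon\Phi(s)-\epsilon\Phi(t)+\int_s^t R^{i,N}_\tau\,d\tau+\int_s^t S^{i,N}_\tau\,dW^i_\tau+E^{i,N}_{s,t},
\end{align*}
where $R^{i,N}_\tau$ gathers integrands bounded uniformly in $(i,N,\tau,\omega)$ (terms of the type $\nabla_x\Phi\cdot f$, $\nabla_y\Phi\cdot b$, $A:\nabla_x\nabla_y\Phi$, $\nabla_y\Phi\,\sigma u^N_i$, along with second-order It\^o corrections) up to a single occurrence of the control $u^N_i$, $S^{i,N}_\tau$ is uniformly bounded, and $E^{i,N}_{s,t}$ collects the Lions-derivative contributions arising from $\bar{\mu}^N$-dependence, each of which is controlled by the $L^2(\R^d,\bar{\mu}^N_\tau)$ bounds on $\partial_\mu\Phi$, $\partial_v\partial_\mu\Phi$, $\nabla_x\partial_\mu\Phi$, $\partial^2_\mu\Phi$ from \ref{assumption:fsigmaregularity}/Proposition \ref{prop:Phiexistenceregularity} (this is where the main technical care is required, and constitutes the principal obstacle).

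Substituting back into \eqref{eq:controlledprelimit} decomposes $\bar{X}^{i,N}_t-\bar{X}^{i,N}_s$ into a boundary corrector of size $O(\epsilon)$; bounded Lebesgue integrals contributing $C|t-s|$, hence $C\eta^{3/4}$ after the $\eta^{-1/4}$ scaling; control contributions that, by Cauchy--Schwarz, are dominated by $|t-s|^{1/2}\bigl(\int_0^1|u^N_i(\tau)|^2d\tau\bigr)^{1/2}$, hence by $\eta^{1/4}\bigl(\int_0^1|u^N_i|^2d\tau\bigr)^{1/2}$; and It\^o integrals against $W^i$ with uniformly bounded integrand, which are handled by Lemma C.1 of \cite{FischerFormofRateFunction} verbatim as in Section \ref{subsubsection:TightnessQ^N_W}. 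Summing over $i$, taking expectations, and using concavity of the square root together with the control bound \eqref{eq:controlL2boundunspecific} gives
\begin{align*}
\sup_N\E\biggl[\frac{1}{N}\sum_{i=1}^N\sup_{\eta\in(0,1]}\eta^{-1/4}\sup_{|s-t|\leq\eta}|\bar{X}^{i,N}_t-\bar{X}^{i,N}_s|\biggr]\leq C(1+\sqrt{C_{\mathrm{con}}}),
\end{align*}
completing the uniform bound on $\E[G(Q^N_{\mc{X}})]$ and hence the tightness of $\{Q^N_{\mc{X}}\}_{N\in\bb{N}}$ in $\mc{P}(\mc{P}(\mc{X}))$ via Theorem A.3.17 of \cite{DE}.
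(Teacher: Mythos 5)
Your overall strategy — the tightness function $G$ from \eqref{eq:PXtightnessfunction}, the corrector $\epsilon\Phi$ via It\^o's formula with the Lions-derivative terms handled by Proposition \ref{prop:Phiexistenceregularity}, H\"older/Cauchy--Schwarz on the control terms, and Lemma C.1 of \cite{FischerFormofRateFunction} for the stochastic integrals — is exactly the paper's starting point, and those pieces of your estimate are fine. The gap is in your treatment of the "boundary corrector of size $O(\epsilon)$", i.e.\ the term $\epsilon\bigl[\Phi(\bar{X}^{i,N}_s,\bar{X}^{i,N}_s/\epsilon,\bar{\mu}^N_s)-\Phi(\bar{X}^{i,N}_t,\bar{X}^{i,N}_t/\epsilon,\bar{\mu}^N_t)\bigr]$. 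Being $O(\epsilon)$ uniformly in $(s,t)$ is not enough to control its contribution to $G$, because $G$ weights increments over windows of width $\eta$ by $\eta^{-1/4}$ and takes $\sup_{\eta\in(0,1]}$: without an $\eta^{1/4}$ (or any positive-power) modulus of continuity in time, the bound $2\epsilon\norm{\Phi}_\infty$ gets multiplied by $\eta^{-1/4}\to\infty$. And no such modulus is available uniformly in $N$: estimating the increment via Lipschitz continuity of $\Phi$ in its second argument reintroduces the factor $|\bar{X}^{i,N}_t-\bar{X}^{i,N}_s|/\epsilon$ (circular, since the increment of $\bar{X}^{i,N}$ is what you are trying to control), and expanding it by It\^o reintroduces the singular drift $\epsilon^{-1}f$. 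So the claimed conclusion $\sup_N\E[G(Q^N_{\mc{X}})]<\infty$ does not follow from your decomposition; indeed the paper never proves that bound for $Q^N_{\mc{X}}$ itself.

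The paper resolves precisely this point by a detour you would need to add (or replace with an argument of your own): it defines corrected paths $\tilde{X}^{i,N}=\bar{X}^{i,N}-C^{i,N}_8$ and the auxiliary empirical measures $\nu^N=\frac{1}{N}\sum_i\delta_{\tilde{X}^{i,N}}\delta_{\bar{X}^{i,N}-\tilde{X}^{i,N}}$ on $\mc{X}\times\mc{X}$. The first marginal is handled exactly by your estimates (all remaining terms have the required $\eta^{3/4}$, $\eta^{1/4}$, or martingale structure), while the second marginal converges in probability to $\delta_0$ simply because $\sup_t|C^{i,N}_8(t)|\leq 2\epsilon\norm{\Phi}_\infty\to 0$ — no modulus of continuity is needed for it. Tightness of $\br{\bar{\mu}^N}=\br{Q^N_{\mc{X}}}$ then follows since $\bar{\mu}^N$ is the pushforward of $\nu^N$ under the (Lipschitz, hence continuous) addition map $(\phi,\psi)\mapsto\phi+\psi$, by Prokhorov and the continuous mapping theorem. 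With that step inserted, the rest of your proposal matches the paper's proof.
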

\begin{proof}
This follows along the lines of Proposition C.3. in \cite{FischerFormofRateFunction}, with appropriate modifications in order to account of the multiscale structure in the controlled interacting particle system \eqref{eq:controlledprelimit}.

We begin by showing an attempt to use the approach of Proposition C.3. in \cite{FischerFormofRateFunction}, after which we proceed to resolve a minor technical problem which arises in the course of the proof.

Consider the tightness function $G:\mc{P}(\mc{X})\tto [0,+\infty]$ given by Equation \eqref{eq:PXtightnessfunction}. Recall that $\mc{X}=\mc{W}=C([0,1];\R^d)$, so indeed we can define the tightness function for the $\mc{X}$-marginals in the same way as for the $\mc{W}$-marginals. So, to show $\br{Q^N_{\mc{X}}}$ is tight as a sequence of $\mc{P}(\mc{X})$-valued random variables, it suffices to show
\begin{align*}
\sup_{N\in\bb{N}}\E[G(Q^N_{\mc{X}})]=\sup_{N\in\bb{N}}\E[G(\bar{\mu}^N)]<\infty.
\end{align*}

Applying It\^o's formula (using Equations \eqref{eq:empfirstder} and \eqref{eq:empsecondder} in Proposition \ref{prop:empprojderivatives}, the regularity of $\Phi$ from Proposition \ref{prop:Phiexistenceregularity}, and Proposition \ref{prop:uniformXL2bound}) to $\Phi(\bar{X}^{i,N}_t ,\bar{X}^{i,N}_t/\epsilon,\bar{\mu}^N_t)$ and rearranging, we have for all $t\in [0,1]$:
\begin{align*}
\bar{X}^{i,N}_t = x^{i,N}+ \sum_{k=1}^8 C^{i,N}_k(t)
\end{align*}
where
\begin{align*}
C^{i,N}_{1}(t)& = \int_0^t \biggl[[I+\epsilon \nabla_x \Phi +\nabla_y \Phi ]b+\nabla_x \Phi f+ A:[\nabla_x\nabla_y\Phi +\frac{\epsilon}{2}\nabla_x\nabla_x \Phi]\biggr] d\tau\\
C^{i,N}_2(t)& = \int_0^t \biggl[[I + \epsilon\nabla_x \Phi +\nabla_y \Phi]\sigma u_i^N\biggr] d\tau\\
C^{i,N}_3(t)& = \int_0^t  [I +\epsilon\nabla_x\Phi +\nabla_y \Phi]\sigma dW^i_\tau \\
C^{i,N}_4(t)& =\int_0^t\biggl[\int_{\R^d} \partial_\mu \Phi(\bar{X}^{i,N}_\tau,\bar{X}^{i,N}_\tau/\epsilon,\bar{\mu}^N_\tau)(v)\biggl[f(v,v/\epsilon,\bar{\mu}^N_\tau)+ \epsilon b(v,v/\epsilon,\bar{\mu}^N_\tau) \biggr]+ \frac{1}{2}A(v,v/\epsilon,\bar{\mu}^N_\tau)\\
&\hspace{2cm}:\biggl[\epsilon \partial_v\partial_\mu \Phi(\bar{X}^{i,N}_\tau,\bar{X}^{i,N}_\tau/\epsilon,\bar{\mu}^N_\tau)(v) + \frac{\epsilon}{N}\partial^2_\mu\Phi(\bar{X}^{i,N}_\tau,\bar{X}^{i,N}_\tau/\epsilon,\bar{\mu}^N_\tau)(v,v)  \biggr] \bar{\mu}^N_\tau(dv)\biggr] d\tau\\
C^{i,N}_5(t)& = \int_0^t \biggl[\frac{\epsilon}{N}\sum_{j=1}^N \partial_\mu \Phi(\bar{X}^{i,N}_\tau,\bar{X}^{i,N}_\tau/\epsilon,\bar{\mu}^N_\tau)(\bar{X}^{j,N}_\tau)\sigma(\bar{X}^{j,N}_\tau,\bar{X}^{j,N}_\tau/\epsilon,\bar{\mu}^N_\tau) u_j^N(s)\biggr] d\tau\\
C^{i,N}_6(t)& =  \int_0^t \frac{\epsilon}{N} \sum_{j=1}^N \partial_\mu \Phi(\bar{X}^{i,N}_\tau,\bar{X}^{i,N}_\tau/\epsilon,\bar{\mu}^N_\tau)(\bar{X}^{j,N}_\tau)\sigma(\bar{X}^{j,N}_\tau,\bar{X}^{j,N}_\tau/\epsilon,\bar{\mu}^N_\tau)dW^j_\tau\\
C^{i,N}_7(t)& = \int_0^t\biggl[ A:[\frac{1}{N}\nabla_{y} \partial_\mu \Phi(\bar{X}^{i,N}_\tau,\bar{X}^{i,N}_\tau/\epsilon,\bar{\mu}^N_\tau)(\bar{X}^{i,N}_\tau) + \frac{\epsilon}{N}\nabla_{x} \partial_\mu \Phi(\bar{X}^{i,N}_\tau,\bar{X}^{i,N}_\tau/\epsilon,\bar{\mu}^N_\tau)(\bar{X}^{i,N}_\tau)  ] \biggr] d\tau \\
C^{i,N}_8(t)& = \epsilon \biggl[\Phi(x^{i,N},x^{i,N}/\epsilon,\bar{\mu}^N_0) -  \Phi (\bar{X}^{i,N}_t,\bar{X}^{i,N}_t/\epsilon,\bar{\mu}^N_t)\biggr],
\end{align*}
where all arguments where omitted are $(\bar{X}^{i,N}_\tau,\bar{X}^{i,N}_\tau/\epsilon,\bar{\mu}^N_\tau)$.
Then, by construction, we have for all $N\in\bb{N}$
\begin{align}\label{eq:GbarmuN}
&\E[G(\bar{\mu}^N)]= \frac{1}{N}\sum_{i=1}^N |x^{i,N}| + \frac{1}{N}\sum_{i=1}^N\E\biggl[\sup_{\eta\in(0,1]}\eta^{-1/4}\sup_{s,t\in [0,1]: |s-t|\leq \eta}\biggl|\sum_{k=1}^8 \br{C^{i,N}_k(t)-C^{i,N}_k(s)}\biggr|\biggr]\\
&\leq \frac{1}{N}\sum_{i=1}^N |x^{i,N}| + \frac{1}{N}\sum_{i=1}^N\E\biggl[\sup_{\eta\in(0,1]}\eta^{-1/4}\sum_{k=1}^8 \sup_{s,t\in [0,1]: |s-t|\leq \eta}|C^{i,N}_k(t)-C^{i,N}_k(s)|\biggr]\nonumber
\end{align}
Immediately we can see there is a possible issue with the term
\begin{align*}
C^{i,N}_8(t)-C^{i,N}_8(s)=\epsilon \biggl[\Phi(\bar{X}^{i,N}_s,\bar{X}^{i,N}_s/\epsilon,\bar{\mu}^N_s) -  \Phi (\bar{X}^{i,N}_t,\bar{X}^{i,N}_t/\epsilon,\bar{\mu}^N_t)\biggr],
\end{align*}
since it is not clear that $\sup_{s,t\in [0,1]: |s-t|\leq \eta}|C^{i,N}_8(t)-C^{i,N}_8(s)|\leq C \eta^\beta$ for $\beta\geq 1/4$.

In the standard case of averaging for single particle systems, where one only needs to establish tightness of $X^\epsilon$ in $\mc{X}$, this is overcome by the fact that one can take $\limsup_{N\toinf}$ before taking $|s-t|\tto 0$ in the characterization of compactness for $\mc{X}$-valued random variables - see e.g. Theorem 7.3 on p. 8.2 of \cite{billingsley}. As $\epsilon\downarrow 0$, the ``undesirable'' term captured in $C^{i,N}_8$ vanishes, so it becomes a non-issue (see Equation (3.2) in \cite{DS} and the argument thereafter).

Since it is this same characterization of compact sets in $\mc{P}(\mc{X})$ from which we construct our tightness function $G$, we expect the same to be true here. However, it is not immediately obvious how to account for the fact that $C^{i,N}_8$ vanishes as $N\toinf$ when constructing $G$.

We avoid this problem by constructing an auxiliary sequence of random measures which is tight, and from which we will be able to conclude the tightness of $\br{Q^N_{\mc{X}}}_{N\in\bb{N}}= \br{\bar{\mu}^N}_{N\in\bb{N}}$ via the continuous mapping theorem.

Define $\nu^N_\omega \coloneqq \frac{1}{N}\sum_{i=1}^N \delta_{\tilde{X}^{i,N}(\omega)}\delta_{\bar{X}^{i,N}(\omega)-\tilde{X}^{i,N}(\omega)}$ in $\mc{P}(\mc{X}\times\mc{X})$
\begin{align*}
\nu^N_\omega(A\times B) \coloneqq \frac{1}{N}\sum_{i=1}^N \delta_{\tilde{X}^{i,N}(\omega)}(A)\delta_{\bar{X}^{i,N}(\omega)-\tilde{X}^{i,N}(\omega)}(B)
\end{align*}
for $A,B\in\mc{B}(\mc{X})$.
Here
\begin{align*}
\tilde{X}^{i,N}_t \coloneqq \bar{X}^{i,N}_t-C^{i,N}_8(t)=\bar{X}^{i,N}_t-\epsilon \biggl[\Phi(x^{i,N},x^{i,N}/\epsilon,\bar{\mu}^N_0) -  \Phi (\bar{X}^{i,N}_t,\bar{X}^{i,N}_t/\epsilon,\bar{\mu}^N_t)\biggr],\forall t\in [0,1].
\end{align*}
Note by $\nu^N_1$ the first marginal of $\nu^N$ and by $\nu^N_2$ the second marginal of $\nu^N$. As always, to prove tightness of $\br{\nu^N}_{N\in\bb{N}}$ as a sequence of $\mc{P}(\mc{X}\times\mc{X})$-valued random variables, it suffices to prove tightness of $\br{\nu^N_1}_{N\in\bb{N}}$ and $\br{\nu^N_2}_{N\in\bb{N}}$ as $\mc{P}(\mc{X})$-valued random variables (this follows from Prokhorov's Theorem - see p.232-233 of \cite{DE}).

For $\nu^N_1$, we make use of the tightness function $G$ from Equation \eqref{eq:PXtightnessfunction}.
By definition of $\nu^N_1$ and comparing with Equation \eqref{eq:GbarmuN}, we get for all $N\in\bb{N}$:
\begin{align*}
&\E[G(\nu_1^N)]\leq \frac{1}{N}\sum_{i=1}^N |x^{i,N}| + \frac{1}{N}\sum_{i=1}^N\E\biggl[\sup_{\eta\in(0,1]}\eta^{-1/4}\sum_{k=1}^7 \sup_{s,t\in [0,1]: |s-t|\leq \eta}|C^{i,N}_k(t)-C^{i,N}_k(s)|\biggr]
\end{align*}
For the first term, we have
\begin{align*}
\sup_{N\in\bb{N}}\frac{1}{N}\sum_{i=1}^N |x^{i,N}|\leq \sup_{N\in\bb{N}}\biggl(\frac{1}{N}\sum_{i=1}^N |x^{i,N}|^2\biggr)^{1/2}<\infty
\end{align*}
by Jensen's inequality and Assumption \ref{assumption:initialconditions}. So we only need to focus on the second term.

Since $\epsilon\downarrow 0$ as $N\toinf$, we may assume without loss of generality that $0\leq \epsilon\leq 1$.

Then for deterministic constants $C$ independent of $N$ and $\eta$, by Assumption \ref{assumption:LipschitzandBounded} and Proposition \ref{prop:Phiexistenceregularity}, we have by repeated applications of H\"older's and Young's inequalities:
\begin{align*}
\sup_{s,t\in [0,1]: |s-t|\leq \eta}|C^{i,N}_k(t)-C^{i,N}_k(s)|&\leq C\eta,\quad k=1,4,7\\
\sup_{s,t\in [0,1]: |s-t|\leq \eta}|C^{i,N}_2(t)-C^{i,N}_2(s)|&\leq C\eta^{1/2}\biggl(\int_0^1 |u^N_i(\tau)|^2d\tau\biggr)^{1/2}\leq C\eta^{1/2}[1+\int_0^1 |u^N_i(\tau)|^2d\tau]\\
 \sup_{s,t\in [0,1]: |s-t|\leq \eta}|C^{i,N}_5(t)-C^{i,N}_5(s)|&\leq C\eta^{1/2}\biggl( \frac{1}{N}\sum_{j=1}^N\int_0^1 |u^N_j(s)|^2 ds   \biggr)^{1/2}\leq C\eta^{1/2}\biggl[1+\frac{1}{N}\sum_{j=1}^N\int_0^1 |u^N_j(s)|^2 ds \biggr].
\end{align*}
See the proof of the bound \eqref{eq:vanishingmartingaleprobterms} in Section \ref{subsubsection:V1} for more details on a very similar computation. In addition, by Assumption \ref{assumption:LipschitzandBounded} and Proposition \ref{prop:Phiexistenceregularity}, the martingale terms $C^{i,N}_3(t)$ and $C^{i,N}_6(t)$ satisfy the conditions of Lemma C.1 in \cite{FischerFormofRateFunction}.
Combining the above bounds and the result of the aforementioned Lemma, we get
\begin{align*}
\sup_{N\in\bb{N}}\E[G(\nu_1^N)]&\leq C\biggl\lbrace 1+\sup_{N\in\bb{N}}\frac{1}{N}\sum_{i=1}^N\E\biggl[\int_0^1 |u^N_i(\tau)|^2d\tau\biggr]\biggr\rbrace\\
&\leq C[1+C_{con}]
\end{align*}
by the assumed bound \eqref{eq:controlL2boundunspecific}. So $\br{\nu^N_1}_{N\in\bb{N}}$ is tight as a sequence of $\mc{P}(\mc{X})$-valued random variables.

For $\br{\nu^N_2}_{N\in\bb{N}}$, we let $\bb{W}_{1,\mc{X}}$ be the 1-Wasserstein metric on $\mc{P}_1(\mc{X})$, as defined in Definition \ref{def:lionderivative}.

Then by construction:
\begin{align*}
\E[\bb{W}_{1,\mc{X}}(\nu^N_2,\delta_0)]&\leq \E[\int_{\mc{X}} \sup_{t\in [0,1]} |\phi(t)| \nu_2^{N}(d\phi)] \\
&= \frac{1}{N}\sum_{i=1}^N \epsilon \E[\sup_{t\in [0,1]} |\Phi(\bar{X}^{i,N}_0,\bar{X}^{i,N}_0/\epsilon,\bar{\mu}^N_0) -  \Phi (\bar{X}^{i,N}_t,\bar{X}^{i,N}_t/\epsilon,\bar{\mu}^N_t)|]\\
&\leq 2\epsilon \norm{\Phi}_\infty \tto 0 \text{ as }N\toinf,
\end{align*}
so by Markov's inequality, $\nu_2^{N}$ converges in probability to $\delta_0$ as $N\toinf$ as a $\mc{P}_1(\mc{X})$-valued random variable, and hence as a $\mc{P}(\mc{X})$-valued random variable (see, e.g., Lemma 8.2.1. on p. 175 of \cite{Rachev}). Here $\delta_0\in \mc{P}_1(\mc{X})$ is denoting the unit mass on $\phi\in\mc{X}$ which is uniformly $0$ for all $t\in [0,1]$, and we used the boundedness of $\Phi$ from Proposition \ref{prop:Phiexistenceregularity}. Thus $\nu^N_2\tto \delta_0$ in distribution, and by Prokhorov's theorem, $\br{\nu_2^N}_{N\in\bb{N}}$ is tight as a sequence of $\mc{P}(\mc{X})$-valued random variables.

Now we know that $\br{\nu^N}_{N\in\bb{N}}$ is tight as a sequence of $\mc{P}(\mc{X}\times\mc{X})$-valued random variables. We claim there is a continuous mapping $\bar{F}:\mc{P}(\mc{X}\times\mc{X})\tto \mc{P}(\mc{X})$ such that $\bar{F}(\nu^N) = \bar{\mu}^N,\forall N\in \bb{N}$. Once we show this, tightness of $\br{\bar{\mu}^N}_{N\in\bb{N}}$ as a sequence of $\mc{P}(\mc{X})$-valued random variables will follow immediately from Prokhorov's theorem and the continuous mapping theorem (\cite{billingsley} Theorem 2.7).

We define $\bar{F}:\mc{P}(\mc{X}\times\mc{X})\tto \mc{P}(\mc{X})$ as $\bar{F}(\mu) = \mu \circ \tilde{F}^{-1}$, where $\tilde{F}:\mc{X}\times \mc{X}\tto \mc{X}$ is given by$\tilde{F}(\phi,\psi) = \phi+\psi$.

First we show $\bar{F}(\nu^{N}) = \bar{\mu}^N$. To see this, take and $g\in C_b(\mc{X})$. Then
\begin{align*}
\int_{\mc{X}} g(\phi) \bar{F}(\nu^N)(d\phi) &= \int_{\mc{X}\times\mc{X}} g(\tilde{F}(\phi,\psi)) \nu^N(d\phi,d\psi)\\
&= \int_{\mc{X}\times\mc{X}} g(\phi+\psi) \nu^N(d\phi,d\psi)\\
&=\frac{1}{N}\sum_{i=1}^N g(\tilde{X}^{i,N}+\bar{X}^{i,N}-\tilde{X}^{i,N})\\
& = \frac{1}{N}\sum_{i=1}^N g(\bar{X}^{i,N})\\
& = \int_{\mc{X}} g(\phi) \bar{\mu}^N(d\phi).
\end{align*}
Since $C_b(\mc{X})$ is separating (Chapter 3 Section 4 in \cite{EK}), we indeed have $\bar{F}(\nu^{N}) = \bar{\mu}^N$.

Now we show $\bar{F}$ is continuous. Take $\br{\pi^N}$ such that $\pi^N\tto \nu$ in $\mc{P}(\mc{X}\times\mc{X})$. Then for any $g\in C_{b,L}(\mc{X})$:
\begin{align*}
\lim_{N\toinf}\int_{\mc{X}}g(\phi)\bar{F}(\pi^N)(d\phi) & =\lim_{N\toinf}\int_{\mc{X}\times\mc{X}} g(\tilde{F}(\phi,\psi)) \pi^N(d\phi,d\psi)\\
& = \int_{\mc{X}\times\mc{X}} g(\tilde{F}(\phi,\psi)) \nu(d\phi,d\psi)\\
&= \int_{\mc{X}}g(\phi)\bar{F}(\nu)(d\phi),
\end{align*}
where in the second equality we used $g(\tilde{F}(\phi,\psi))\in C_b(\mc{X}\times\mc{X})$, since
\begin{align*}
\norm{\tilde{F}(\phi_1,\psi_1)-\tilde{F}(\phi_2,\psi_2)}_{\mc{X}} &= \norm{\phi_1-\phi_2+\psi_1-\psi_2}_{\mc{X}}\leq \norm{\phi_1-\phi_2}_{\mc{X}}+\norm{\psi_1-\psi_2}_{\mc{X}}= \norm{(\phi_1,\psi_1)-(\phi_2,\psi_2)}_{\mc{X}\times\mc{X}}
\end{align*}
and hence $\tilde{F}\in C_{b,L}(\mc{X}\times\mc{X};\mc{X})$. So by, e.g. Theorem 11.3.3 in \cite{Dudley}, $\bar{F}(\pi^N)\tto \bar{F}(\nu)$ in $\mc{P}(\mc{X})$, and $\bar{F}$ is continuous.

Thus $\br{Q^N_{\mc{X}}}_{N\in\bb{N}} = \br{\bar{\mu}^N}_{N\in\bb{N}}$ is tight as a sequence of $\mc{P}(\mc{X})$-valued random variables, and we are done.

\end{proof}

\subsection{Identification of the Limit}
\label{subsection:identifylimit}
Again, throughout this Section we assume \ref{assumption:initialconditions}-\ref{assumption:limitinguniformellipticity}.

Extract a convergent subsequence from $\br{Q^N}_{N\in\bb{N}}$ and relabel with new indexes so that $\br{Q^N}_{N\in\bb{N}}$ converges to some $Q$ weakly as a $\mc{P}(\mc{C})$-valued random variable. Let $(\tilde\W,\tilde\F,\tilde \Prob)$ be the probability space on which $Q$ lies.

We wish to identify the limit $Q$ as a member of $\mc{V}$ for $\tilde\Prob$ almost every $\tilde\omega\in\tilde\W$. Our main tool here is the associated martingale problem to weak solutions of (\ref{eq:paramMcKeanLimit}). An important element to the proof which is special to the joint limit as $N\toinf,\epsilon\downarrow 0$ is that we must first show that \ref{V:V4} holds before identifying the SDE associated to $Q$ to prove \ref{V:V1}. This is because, as proven in (\ref{eq:D5vanishes}), in the prelimit there are terms which are a priori $O(1)$ in $N$, but that are fact $0$ in the limit due to the centering condition \ref{assumption:centeringcondition}. As the centering condition is a statement involving the invariant measure $\pi$ (see Equation \eqref{eq:pi}), it is necessary to the proof that we have already identified the $\mc{Y}$ component of the limiting Coordinate Process \eqref{eq:canonicalprocess} as being concentrated on elements with $\bb{T}^d$-marginal $\pi$.

\subsubsection{Proof of \ref{V:V4}}
As stated, we offer the proof of \ref{V:V4} first. We begin with a Lemma:
\begin{lem}\label{lemma:willimplyV4}
For almost every $\tilde\omega\in\tilde\W$ and $\forall t\in [0,1],g\in C^2_b(\bb{T}^d)$,
\begin{align*}
\E^{Q_{\tilde{\omega}}}\biggl[\biggl|\int_{\bb{T}^d\times\R^m\times [0,t]} \mc{L}^1_{\bar{X}_s,\nu_{Q_{\tilde{\omega}}(s)}}g(y)\rho(dydzds)\biggr|\biggr]=0,
\end{align*}
where $\mc{L}^1$ is given in Equation \eqref{eq:L1}.
\end{lem}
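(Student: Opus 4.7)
The plan is to realize the claim as the vanishing of an appropriate continuous, bounded functional $F:\mc{P}(\mc{C})\to[0,\infty)$ at $Q$, and to derive this from the distributional convergence $Q^N\Rightarrow Q$ together with $\E[F(Q^N)]\to 0$. Concretely, set
\begin{align*}
F(\mu):=\E^{\mu}\!\left[\,\left|\int_{\bb{T}^d\times\R^m\times[0,t]}\mc{L}^1_{\bar{X}_s,\nu_{\mu}(s)}g(y)\,\rho(dy\,dz\,ds)\right|\,\right],
\end{align*}
where $\bar{X}$ is the $\mc{X}$-coordinate of the canonical process. Then the lemma is the assertion that $F(Q)=0$ a.s., and by Tonelli this reduces to proving $\E[F(Q)]=0$.

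For the prelimit estimate, I would apply It\^o's formula to $\epsilon^2 g(\bar{X}^{i,N}_\cdot/\epsilon)$ using \eqref{eq:controlledprelimit}. The $O(\epsilon^{-2})$ terms in the generator of $\bar{X}^{i,N}/\epsilon$ reproduce precisely $\mc{L}^1_{\bar{X}^{i,N}_s,\bar{\mu}^N_s}g(\bar{X}^{i,N}_s/\epsilon)$, so after rearrangement one obtains
\begin{align*}
\int_0^t \mc{L}^1_{\bar{X}^{i,N}_s,\bar{\mu}^N_s}g(\bar{X}^{i,N}_s/\epsilon)\,ds
&=\epsilon^2\!\left[g(\bar{X}^{i,N}_t/\epsilon)-g(\bar{X}^{i,N}_0/\epsilon)\right]\\
&\quad-\epsilon\int_0^t(b+\sigma u^N_i)\cdot \nabla g\,ds-\epsilon\int_0^t(\nabla g)^{\!\top}\sigma\,dW^i_s.
\end{align*}
Since $\nu_{Q^N}(s)=\bar{\mu}^N_s$ and $\rho^{i,N}$ disintegrates as the time-Dirac at $(\bar{X}^{i,N}_s/\epsilon, u^N_i(s))$, averaging over $i$ identifies $F(Q^N)$ with the mean of absolute values of the left-hand side above. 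The boundedness of $g,\nabla g,b,\sigma$ from \ref{assumption:LipschitzandBounded} yields an $O(\epsilon^2)$ control on the boundary contribution, an $O(\epsilon)$ control on the martingale contribution via It\^o isometry and Doob, and an $O(\epsilon)$ control on the remaining contribution via Cauchy-Schwarz together with the uniform bound \eqref{eq:controlL2boundunspecific}; hence $\E[F(Q^N)]=O(\epsilon)\to 0$.

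It remains to verify that $F$ is bounded and continuous on $\mc{P}(\mc{C})$. Boundedness is immediate from $\rho(\bb{T}^d\times\R^m\times[0,t])\le 1$ together with $\|\mc{L}^1_{x,\mu}g\|_\infty\le \|f\|_\infty\|\nabla g\|_\infty+\tfrac12\|A\|_\infty\|\nabla^2 g\|_\infty$. For continuity, given $\mu_n\to\mu$ in $\mc{P}(\mc{C})$, I would use the Skorokhod representation to realize a.s.\ convergence of canonical variables $(\phi_n,r_n,w_n)\to(\phi,r,w)$ with $\mc{L}(\phi_n,r_n,w_n)=\mu_n$, invoke Proposition \ref{prop:nucontmeasure} to obtain $\nu_{\mu_n}\to\nu_\mu$ in $C([0,1];\mc{P}(\R^d))$, and combine this with the joint continuity of $(x,y,\mu)\mapsto\mc{L}^1_{x,\mu}g(y)$ inherited from \ref{assumption:LipschitzandBounded} to conclude that $\mc{L}^1_{\phi_n(s),\nu_{\mu_n}(s)}g(y)$ converges uniformly in $(s,y)\in[0,t]\times\bb{T}^d$ (using that $\phi([0,1])$ is a compact subset of $\R^d$). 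The weak convergence $r_n\Rightarrow r$ in $\mc{Y}$ then yields a.s.\ convergence of the inner integrals, and dominated convergence delivers $F(\mu_n)\to F(\mu)$. With $F$ bounded and continuous and $Q^N\Rightarrow Q$, the continuous-mapping theorem gives $\E[F(Q)]=\lim_{N\to\infty}\E[F(Q^N)]=0$, which is the claim.

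The main technical point will be the continuity of $F$: one must simultaneously push the convergence $\mu_n\to\mu$ through the outer integration $\E^{\mu_n}[\cdot]$ and through the $\mu$-dependence of the integrand via $\nu_{\mu_n}$. The Skorokhod reduction decouples these difficulties, and Proposition \ref{prop:nucontmeasure} is precisely what is needed to handle the inner dependence in a measure-valued sense; the remainder is a routine weak-convergence argument for a sequence of bounded continuous test functions paired with a weakly convergent sequence of positive measures on $\bb{T}^d\times\R^m\times[0,t]$.
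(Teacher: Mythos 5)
Your proposal is correct and follows essentially the same route as the paper: It\^o's formula applied to $g(\bar X^{i,N}_\cdot/\epsilon)$ isolates $\int_0^t\mc{L}^1_{\bar X^{i,N}_s,\bar\mu^N_s}g(\bar X^{i,N}_s/\epsilon)\,ds$ as an $O(\epsilon)+O(\epsilon^2)$ remainder (boundary, controlled drift via \eqref{eq:controlL2boundunspecific}, and martingale terms), while the limit is identified through a bounded, continuously convergent functional of $Q^N$ using Skorokhod representation and Proposition \ref{prop:nucontmeasure}, exactly the ingredients the paper invokes. The only point left implicit is that your null set depends on the fixed pair $(t,g)$, whereas the lemma asserts a single null set for all $t\in[0,1]$ and $g\in C^2_b(\bb{T}^d)$; this is repaired exactly as in the paper's closing step, by running the argument over a countable dense family of $g$'s and a dense set of times and then using boundedness/linearity in $g$ and continuity in $t$ of the inner integral.
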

\begin{proof}
Let $g_l:\bb{T}^d\tto \R,l\in\bb{N}$ be smooth and bounded with bounded derivatives and dense in $C^2_b(\bb{T}^d)$. This set exists by using Stone–Weierstrass and taking rational coefficients. Let $\bar{Y}^{i,N}=\bar{X}^{i,N}/\epsilon$. Considering the operator which acts on $g\in C^2_b(\bb{T}^d)$ by
\begin{align*}
\mc{A}_{x,z,\mu}[g](y) \coloneqq \biggl[\frac{1}{\epsilon^2} f(x,y,\mu)+\frac{1}{\epsilon}(b(x,y,\mu)+\sigma(x,y,\mu)z)\biggr]\cdot \nabla g(y)+ \frac{1}{2\epsilon^2} A(x,y,\mu): \nabla\nabla g(y).
\end{align*}

Note that by \ref{assumption:LipschitzandBounded}, for $t\in [0,1]$ and fixed $N\in\bb{N}$,
\begin{align*}
M^{i,N}_t&\coloneqq g_l(\bar{Y}^{i,N}_t)- g_l(x^{i,N}_0/\epsilon)-\int_0^t \mc{A}_{\bar{X}^{i,N}_s,u_i^N(s),\bar{\mu}^N_s} [g_l](\bar{Y}^{i,N}_s)ds \\
&= \frac{1}{\epsilon} \int_0^t \nabla_y g_l(\bar{Y}^{i,N}_s)\cdot (\sigma(\bar{X}^{i,N}_s,\bar{Y}^{i,N}_s,\bar{\mu}^N_s)dW^i_t)
\end{align*}
is an $\F_t$-martingale. By definition, for $t\in[0,1]$,
\begin{align*}
\int_0^t \mc{A}_{\bar{X}^{i,N}_s,u_i^N(s),\bar{\mu}^N_s} g_l(\bar{Y}^{i,N}_s)ds &= \frac{1}{\epsilon^2} \int_0^t \mc{L}^1_{\bar{X}^{i,N}_s,\bar{\mu}^N_s} g_l(\bar{Y}^{i,N}_s)ds\\
& + \frac{1}{\epsilon}  \int_0^t  \biggl[b(\bar{X}^{i,N}_s,\bar{Y}^{i,N}_s,\bar{\mu}^N_s)+\sigma(\bar{X}^{i,N}_s,\bar{Y}^{i,N}_s,\bar{\mu}^N_s)u_i^N(s)\biggr]\cdot \nabla_y g_l(\bar{Y}^{i,N}_s)ds
\end{align*}

Consider now the operator which acts on $g\in C^2_b(\bb{T}^d)$ by
\begin{align*}
\mc{B}_{x,z,\mu}[g](y)\coloneqq \biggl[b(x,y,\mu)+\sigma(x,y,\mu)z\biggr]\cdot\nabla g(y).
\end{align*}

Then
\begin{align}\label{eq:V4eqn}
&\frac{1}{N}\sum_{i=1}^N\biggl|\epsilon^2 \biggl(-M_t^{i,N}+g_l(\bar{Y}^{i,N}_t)-g_l(x_0^{i,N}/\epsilon) \biggr) - \epsilon \int_0^t  \mc{B}_{\bar{X}^{i,N}_s,u^N_i(s),\bar{\mu}^N_s}[g_l](\bar{Y}^{i,N}_s)ds \biggr|\\
& =\frac{1}{N}\sum_{i=1}^N\biggl|\int_0^t  \mc{L}^1_{\bar{X}^{i,N}_s,\bar{\mu}^N_s} g_l(\bar{Y}^{i,N}_s)ds\biggr|.\nonumber
\end{align}
We will show the right hand side of Equation \eqref{eq:V4eqn} converges in distribution to \newline$\E^Q\biggl[\biggl|\int_{\bb{T}^d\times\R^m\times[0,t]} \mc{L}^1_{\bar{X}_s,\nu_{Q}(s)}g_l(y)\rho(dydzds)\biggr|\biggr]$ and the left hand side converges in distribution to $0$, so by a density argument the result holds.

The proof that the right hand side of Equation \eqref{eq:V4eqn} $\tto \E^Q\biggl[\biggl|\int_{\bb{T}^d\times\R^m\times[0,t]} \mc{L}^1_{\bar{X}_s,\nu_{Q}(s)}g_l(y)\rho(dydzds)\biggr|\biggr]$ in distribution follows from the observation that
\begin{align*}
\frac{1}{N}\sum_{i=1}^N\biggl|\int_0^t  \mc{L}^1_{\bar{X}^{i,N}_s,\bar{\mu}^N_s} g_l(\bar{Y}^{i,N}_s)ds\biggr|& = \int_{\mc{C}} \biggl| \int_{\bb{T}^d\times\R^m\times[0,t]} \mc{L}^1_{\phi(s),\nu_{Q^N}(s)}g_l(y)r(dydzds)\biggr| Q^N(d\phi dr dw).
\end{align*}

We invoke Skorokhod's representation theorem (Theorem 3.1.8 in \cite{EK}) to assume the convergence of $Q^N\tto Q$ holds with probability $1$. Without making a distinction between the original probability space in the new one, we will prove
\begin{align}
\label{eq:RHSconv}
\E\biggl[\biggl|\frac{1}{N}\sum_{i=1}^N\biggl|\int_0^t  \mc{L}^1_{\bar{X}^{i,N}_s,\bar{\mu}^N_s} g_l(\bar{Y}^{i,N}_s)ds\biggr| - \E^Q\biggl[\biggl|\int_{\bb{T}^d\times\R^m\times[0,t]} \mc{L}^1_{\bar{X}_{s},\nu_{Q}(s)}g_l(y)\rho(dydzds)\biggr|\biggr]\biggr|\biggr]\tto 0,
\end{align}
so that by Chebyshev's inequality the convergence holds in probability and hence in distribution.

First we note that the left hand side of \eqref{eq:RHSconv} can be written as
\begin{align*}
&\E\biggl[ \biggl|\int_{\mc{C}} \biggl| \int_{\bb{T}^d\times[0,t]} \mc{L}^1_{\phi(s),\nu_{Q^N}(s)}g_l(y)r(dydzds)\biggr| Q^N(d\phi dr dw)- \int_{\mc{C}} \biggl| \int_{\bb{T}^d\times[0,t]} \mc{L}^1_{\phi(s),\nu_{Q}(s)}g_l(y)r(dydzds)\biggr| Q(d\phi  dr dw) \biggr| \biggr].
\end{align*}
By assumption \ref{assumption:LipschitzandBounded}, the term inside the expectation is bounded, so
\begin{align*}
&\lim_{N\toinf}\E\biggl[ \biggl|\int_{\mc{C}} \biggl| \int_{\bb{T}^d\times\R^m\times[0,t]} \mc{L}^1_{\phi(s),\nu_{Q^N}(s)}g_l(y)r(dydzds)\biggr| Q^N(d\phi dr dw)\\
& \hspace{2cm}- \int_{\mc{C}} \biggl| \int_{\bb{T}^d\times\R^m\times[0,t]} \mc{L}^1_{\phi(s),\nu_{Q}(s)}g_l(y)r(dydzds)\biggr| Q(d\phi dr dw) \biggr| \biggr]\\
& = \E\biggl[\lim_{N\toinf} \biggl|\int_{\mc{C}} \biggl| \int_{\bb{T}^d\times\R^m\times[0,t]} \mc{L}^1_{\phi(s),\nu_{Q^N}(s)}g_l(y)r(dydzds)\biggr| Q^N(d\phi dr dw)\\
& \hspace{2cm}- \int_{\mc{C}} \biggl| \int_{\bb{T}^d\times\R^m\times[0,t]} \mc{L}^1_{\phi(s),\nu_{Q}(s)}g_l(y)r(dydzds)\biggr| Q(d\phi dr dw) \biggr| \biggr]\\
\end{align*}
Now, observing that
\begin{align*}
(\phi,r,\Theta)\mapsto \biggl| \int_{\bb{T}^d\times\R^m\times[0,t]} \mc{L}^1_{\phi(s),\nu_{\Theta}(s)}g_l(y)r(dydzds)\biggr|
\end{align*}
is bounded and continuous (using Proposition \ref{prop:nucontmeasure} and Assumption \ref{assumption:LipschitzandBounded}),  the results follows via Theorem A.3.18 in \cite{DE} and almost sure convergence of $Q^N\tto Q$.

To prove the left hand side of (\ref{eq:V4eqn}) converges to zero in distribution, we will show that
\begin{align*}
\E\biggl[\frac{1}{N}\sum_{i=1}^N\biggl|\epsilon^2 \biggl(-M_t^{i,N}+g_l(\bar{Y}^{i,N}_t)-g_l(x_0^{i,N}/\epsilon) \biggr) - \epsilon \int_0^t  \mc{B}_{\bar{X}^{i,N}_s,u^N_i(s),\bar{\mu}^N_s}[g_l](\bar{Y}^{i,N}_s)ds \biggr|\biggr]\tto 0
\end{align*}
as $N\toinf$, so the result will follow by Chebyshev's inequality.

We first note that
\begin{align*}
\epsilon^2\frac{1}{N}\sum_{i=1}^N\E\biggl[ \biggl| M_t^{i,N}\biggr|\biggr]&\leq \epsilon^2\frac{1}{N}\sum_{i=1}^N\E\biggl[ \biggl( M_t^{i,N}\biggr)^2\biggr]^{1/2}\leq \epsilon \norm{\nabla g_l}_\infty C \text{ by Assumption \ref{assumption:LipschitzandBounded} and It\^o Isometry.}
\end{align*}

Also,
\begin{align*}
\epsilon^2\frac{1}{N}\sum_{i=1}^N\E\biggl[ \biggl| g_l(\bar{Y}^{i,N}_t)-g_l(x^{i,N}/\epsilon)\biggr|\biggr]\leq 2\epsilon^2\norm{g_l}_\infty.
\end{align*}

Lastly,
\begin{align*}
&\epsilon\frac{1}{N}\sum_{i=1}^N\E\biggl[\biggl|\int_0^t  \mc{B}_{\bar{X}^{i,N}_s,u^N_i(s),\bar{\mu}^N_s}[g_l](\bar{Y}^{i,N}_s)ds \biggr|\biggr]\leq \epsilon \norm{\nabla g_l}_\infty C(C_{con})\\
& \text{ by H\"older's inequality, Assumption (\ref{eq:controlL2boundunspecific}), and Assumption \ref{assumption:LipschitzandBounded} .}
\end{align*}

Now we have that for each $g_l$ and $t\in[0,1]$, there exists a set $N_{g_l,t}$ such that $\tilde{P}(N_{g_l,t})=0$ and
\begin{align*}
\E^{Q_{\tilde{\omega}}}\biggl[\biggl|\int_{\bb{T}^d\times\R^m\times [0,t]} \mc{L}^1_{\bar{X}_{s},\nu_{Q_{\tilde{\omega}}(s)}}g_l(y)\rho(dydzds)\biggr|\biggr]=0, \forall \tilde{\omega}\in\tilde\W\setminus N_{g_l,t}.
\end{align*}
Taking a countable dense set $D\subset [0,1]$ and letting $N = \cup_{l\in\bb{N}}\cup_{t\in D}N_{g_l,t}$, we have $\tilde\Prob(N)=0$ and
\begin{align*}
\E^{Q_{\tilde{\omega}}}\biggl[\biggl|\int_{\bb{T}^d\times\R^m\times [0,t]} \mc{L}^1_{\bar{X}_{s},\nu_{Q_{\tilde{\omega}}(s)}}g(y)\rho(dydzds)\biggr|\biggr]=0, \forall \tilde{\omega}\in\tilde\W\setminus N,\forall g\in C^2_b(\bb{T}^d).
\end{align*}
\end{proof}
Now we can prove that the limit point $Q$ satisfies \ref{V:V4}.

\begin{proposition}\label{remark:V4explaination}
$Q$ satisfies \ref{V:V4} $\tilde\Prob$ almost-surely.
\end{proposition}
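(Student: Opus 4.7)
The plan is to use Lemma \ref{lemma:willimplyV4} to identify the $\bb{T}^d$-marginal of $\rho_s$ almost surely as the unique invariant measure $\pi(\cdot \mid \phi(s),\nu_Q(s))$, and then appeal to a standard measurable disintegration to produce the kernel $\gamma_s(dz;y)$ required by \ref{V:V4}.

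First, fix $\tilde\omega$ outside the $\tilde\Prob$-null set $N$ given at the end of the proof of Lemma \ref{lemma:willimplyV4}, so that
\[
\E^{Q_{\tilde\omega}}\biggl[\,\biggl|\int_{\bb{T}^d\times\R^m\times[0,t]} \mc{L}^1_{\phi(s),\nu_{Q_{\tilde\omega}}(s)}g(y)\,r(dydzds)\biggr|\,\biggr]=0
\]
for every $g \in C^2_b(\bb{T}^d)$ and every $t\in[0,1]$. Choosing a countable dense family $\{g_l\}\subset C^2_b(\bb{T}^d)$ and a countable dense $D\subset[0,1]$, and taking a further $Q_{\tilde\omega}$-null intersection, we may fix a full-measure set of triples $(\phi,r,w)$ on which the integral is zero for every $g_l$ and every $t\in D$ simultaneously. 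Since $\mc{L}^1_{\phi(s),\nu_{Q_{\tilde\omega}}(s)}g$ is uniformly bounded in $s$ (using assumption \ref{assumption:LipschitzandBounded} and Proposition \ref{prop:nucontmeasure} for continuity of $\nu_{Q_{\tilde\omega}}$), the map $t\mapsto \int_{\bb{T}^d\times\R^m\times[0,t]}\mc{L}^1_{\phi(s),\nu_{Q_{\tilde\omega}}(s)}g(y)\,r(dydzds)$ is continuous, so the vanishing extends from $D$ to all of $[0,1]$, and then from $\{g_l\}$ to all of $C^2_b(\bb{T}^d)$ by density of $\{g_l\}$ in the $C^2$-norm.

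Disintegrating $r(dydzds)=r_s(dydz)\,ds$ and writing the $\bb{T}^d$-marginal as $\beta^{\phi,r}_s(dy)\coloneqq r_s(\cdot\times\R^m)(dy)$, the Lebesgue differentiation theorem applied to the identity above, for each fixed $g$, gives
\[
\int_{\bb{T}^d} \mc{L}^1_{\phi(s),\nu_{Q_{\tilde\omega}}(s)} g(y)\, \beta^{\phi,r}_s(dy)=0 \qquad \text{for a.e.\ } s\in[0,1],
\]
simultaneously for every $g$ in the countable dense family, hence for every $g\in C^2_b(\bb{T}^d)$. That is, for a.e.\ $s$, $\beta^{\phi,r}_s$ is a probability measure on $\bb{T}^d$ that solves the adjoint equation $(\mc{L}^1_{\phi(s),\nu_{Q_{\tilde\omega}}(s)})^*\beta=0$. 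By Proposition \ref{prop:invtmeasure} this invariant measure is unique, so $\beta^{\phi,r}_s = \pi(\cdot\mid \phi(s),\nu_{Q_{\tilde\omega}}(s))$ for a.e.\ $s\in[0,1]$.

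Finally, a measurable disintegration (see, e.g., the standard kernel existence result for Polish spaces) produces a Borel map $(s,y)\mapsto \gamma_s(\cdot;y)\in\mc{P}(\R^m)$ such that $r_s(dydz)=\gamma_s(dz;y)\,\beta^{\phi,r}_s(dy)$, which combined with the identification above yields $r(dydzds)=\gamma_s(dz;y)\pi(dy\mid \phi(s),\nu_{Q_{\tilde\omega}}(s))\,ds$. This is precisely \ref{V:V4}, and since it holds on a full-measure set of triples for every $\tilde\omega\notin N$, we conclude $Q$ satisfies \ref{V:V4} $\tilde\Prob$-almost surely. The main technical point is the careful handling of the null sets so that the conclusion holds \emph{simultaneously} for all $g$ and all $s$; uniqueness of the invariant measure and the standard disintegration theorem do the rest.
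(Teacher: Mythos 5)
Your proof is correct and follows essentially the same route as the paper: apply Lemma \ref{lemma:willimplyV4}, deduce that the $\bb{T}^d$-marginal of $r_s$ is a stationary measure for $\mc{L}^1_{\phi(s),\nu_Q(s)}$, invoke uniqueness of the invariant measure from Proposition \ref{prop:invtmeasure}, and disintegrate to obtain the kernel $\gamma_s(dz;y)$. The only difference is presentational -- you make explicit the almost-every-$s$ (Lebesgue differentiation) step, the countable-density bookkeeping, and the measurable disintegration, which the paper compresses into the implication \eqref{eq:V4implication} and its subsequent computation with the conditional measures $\lambda(dr|\phi)$.
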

\begin{proof}
Note that we can write $\Theta|_{\mc{B}(\mc{X}\times\mc{Y})}(d\phi dr) = \lambda(dr|\phi)\Theta_{\mc{X}}(d\phi)$. Then the result of Lemma \ref{lemma:willimplyV4} can be written as:
\begin{align*}
&\forall t\in [0,1],g\in C^2_b(\bb{T}^d),\int_{\mc{X}}\int_{\mc{Y}}\biggl|\int_{\bb{T}^d\times\R^d\times [0,t]} \mc{L}^1_{\phi(s),\nu_\Theta(s)}g(y)r(dydzds)\biggr| \lambda(dr|\phi)\Theta_{\mc{X}}(d\phi)=0,\tilde\Prob-\text{a.s.}\\
\end{align*}

So $\tilde\Prob-\text{a.s.}$:
\begin{align}\label{eq:V4implication}
&\int_{\mc{Y}}\biggl|\int_{\bb{T}^d\times\R^m\times[0,t]} \mc{L}^1_{\phi(s),\nu_\Theta(s)} g(y)r(dydzds)\biggr| \lambda(dr|\phi) =0, \forall t\in [0,1],\forall g\in C^2_b(\bb{T}^d),\Theta_{\mc{X}}-\text{a.e. }\phi\in\mc{X}\nonumber\\
&\Rightarrow \text{for }\Theta_{\mc{X}}\text{-a.e.}\phi\in\mc{X},\hspace{.2cm}\lambda(\br{r\in\mc{Y}:r(dydzds)=\gamma_s(dz;y)\eta_s(dy)ds\\
&\hspace{6cm}\text{ such that}(\mc{L}^1_{\phi(s),\nu_\Theta(s)})^*\eta_s=0,\forall s\in[0,1]}|\phi)=1.\nonumber
\end{align}

But by Proposition \ref{prop:invtmeasure}, the invariant measure associated with $\mc{L}^1_{\phi(s),\nu_{\Theta}(s)}$ is unique for each $\phi,\Theta,$ and $s$, so $\lambda(\cdot|\phi)$ is concentrated on measures of the form $\gamma_s(dz;y)\pi(dy|\phi(s),\nu_{\Theta}(s))ds$ for $\Theta_{\mc{X}}$-a.e. $\phi$. Thus we have $\tilde\Prob-\text{a.s.}$:
\begin{align*}
&\Theta\biggl(\biggl\lbrace(\phi,r,w)\in\mc{C}: \exists \gamma\in \mc{M}([0,1]\times\bb{T}^d;\mc{P}(\R^m)) \text{ such that }r_s(dydz) = \gamma_s(dz;y)\pi(dy|\phi(s),\nu_{\Theta}(s)),\forall s\in[0,1]\biggr\rbrace\biggr)\\
&= \Theta|_{\mc{B}(\mc{X}\times\mc{Y})}\biggl(\biggl\lbrace(\phi,r)\in\mc{X}\times\mc{Y}: \exists \gamma\in \mc{M}([0,1]\times\bb{T}^d;\mc{P}(\R^m))&\\
&\hspace{6cm} \text{ such that }r_s(dydz) = \gamma_s(dz;y)\pi(dy|\phi(s),\nu_{\Theta}(s)),\forall s\in[0,1]\biggr\rbrace\biggr)\\
&\text{ since the set we are measuring doesn't depend on $w$}\\
& = \int_{\mc{X}} \lambda\biggl(\biggl\lbrace r\in\mc{Y}: \exists \gamma\in \mc{M}([0,1]\times\bb{T}^d;\mc{P}(\R^m))\\
&\hspace{6cm} \text{ such that }r_s(dydz) = \gamma_s(dz;y)\pi(dy|\phi(s),\nu_{\Theta}(s)),\forall s\in[0,1]\biggr\rbrace|\phi\biggr)\Theta_{\mc{X}}(d\phi)\\
& = \int_{A} \lambda\biggl(\biggl\lbrace r\in\mc{Y}: \exists \gamma\in \mc{M}([0,1]\times\bb{T}^d;\mc{P}(\R^m)) \\
&\hspace{6cm}\text{ such that }r_s(dydz) = \gamma_s(dz;y)\pi(dy|\phi(s),\nu_{\Theta}(s)),\forall s\in[0,1]\biggr\rbrace|\phi\biggr)\Theta_{\mc{X}}(d\phi)\\
& \text{ for }A\in\mc{B}(\mc{X}) \text{ such that }\eqref{eq:V4implication} \text{ holds }\forall \phi\in A,\Theta_{\mc{X}}(A)=1\\
& =1.
\end{align*}
\end{proof}

\subsubsection{Proof of \ref{V:V1}}\label{subsubsection:V1}
We wish to prove that $Q_{\tilde\omega}$ corresponds to $\bar{X}$, a weak solution of Equation \eqref{eq:controlledMcKeanLimit}, for $\tilde{\Prob}$-a.e. $\tilde{\omega}\in\tilde{\W}$.

Given $g\in C^2_b(\R^d\times\R^d)$ and $\Theta\in\mc{P}(\mc{C})$, define a real-valued process $\br{M^\Theta_g(t)}_{t\in[0,1]}$ on $(\mc{C},\mc{B}(\mc{C}),\Theta)$ given by
\begin{align}
\label{eq:M}
    &M^\Theta_g(t,(\phi,r,w)) = g(\phi(t),w(t))-g(\phi(0),w(0))-\int_0^t\int_{\bb{T}^d\times\R^m} \mc{A}[g](\phi(s),y,z,\nu_\Theta(s),w(s))r_s(dydz)ds\\
    &-\int_0^t \sqrt{\int_{\bb{T}^d\times\R^m}\tilde{D}(\phi(s),y,\nu_{\Theta}(s))  r_s(dydz)}:\nabla_p\nabla_x g(\phi(s),w(s)) ds \nonumber
\end{align}
where
\begin{align} \label{eq:decomposeA}
    \mc{A}[g](x,y,z,\nu_\Theta(s),p) &\coloneqq\biggl[\beta(x,y,\nu_\Theta(s))+ [\nabla_y\Phi(x,y,\nu_\Theta(s))+I]\sigma(x,y,\nu_\Theta(s))z\biggr]\cdot \nabla_x g(x,p)\\
    &\quad+ \frac{1}{2}D(x,y,\nu_\Theta(s)):\nabla_x\nabla_x g(x,p)+\frac{1}{2}I:\nabla_p\nabla_pg(x,p)\nonumber,
\end{align}
and $\beta$, $D$ are as in Equation \eqref{eq:limitingcoefficients} and $\tilde{D}$ as in Equation \eqref{eq:tildeD}.
Here by the square root of a symmetric positive semi-definite matrix $M\in \R^{d\times d}$, we mean the unique positive definite $\sqrt{M}\in \R^{d\times d}$ is such that $\sqrt{M}\sqrt{M}=\sqrt{M}\sqrt{M}^{\top} = M$.%

We will say $\Theta\in \mc{P}(\mc{C})$ solves the martingale problem associated to $\tilde{X}^{\nu_\Theta}$ with initial distribution $\nu_0$ if for all $0\leq s\leq t\leq 1$ and $g\in C^2_b(\R^d\times\R^d)$,
\begin{align}
    \label{eq:martingaleproblem}
    \E^\Theta\biggl[M^\Theta_g(t)|\mc{G}_s \biggr]=M^\Theta_g(s),\hspace{2cm} \nu_\Theta(0)=\nu_0
\end{align}
for $\br{\mc{G}_t}_{t\in[0,1]}$ the canonical filtration on the coordinate process as defined in Equation (\ref{eq:canonicalprocess}). To identify the limit $Q$ as a weak solution $\tilde{X}^{\nu_Q}$ to \eqref{eq:paramMcKeanLimit}, by the density argument offered at the end of this subsection, it suffices to show that for fixed $g\in C^\infty_c(\R^d\times\R^d)$, $0\leq s\leq t\leq 1$, and $\mc{G}_s$- measurable $\Psi\in C_b(\mc{C})$ that
\begin{thm}
\label{thm:V1}
\begin{align*}
    \E^{Q_{\tilde\omega}}\biggl[\Psi(M_g^{Q_{\tilde\omega}}(t)-M_g^{Q_{\tilde\omega}}(s))\biggr]=0
\end{align*}
for almost every $\tilde{\omega}\in \tilde{\W}$.
\end{thm}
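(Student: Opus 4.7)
The plan is a weak-convergence identification: I construct from the prelimit system \eqref{eq:controlledprelimit} a quantity which, up to errors vanishing in $L^1$, equals $\E\bigl[\E^{Q^N}[\Psi(M^{Q^N}_g(t)-M^{Q^N}_g(s))]\bigr]$, and then pass to the limit. The starting point is It\^o's formula applied to $g(\tilde X^{i,N}_\tau,\bar W^{i,N}_\tau)$ on $[s,t]$, where
$$\tilde X^{i,N}_\tau := \bar X^{i,N}_\tau + \epsilon\,\Phi(\bar X^{i,N}_\tau, \bar X^{i,N}_\tau/\epsilon, \bar\mu^N_\tau)$$
is the cell-problem-corrected slow process (analogous to the decomposition in Subsection \ref{subsubsection:TightnessQ^N_X}). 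Because $\mc L^1\Phi = -f$ by \eqref{eq:cellproblem}, the two $O(1/\epsilon)$ singular drifts coming from $\nabla_x g\cdot d\bar X^{i,N}$ and from $\nabla_x g\cdot \epsilon\,d\Phi$ cancel exactly. The residual $O(1)$ integrand reproduces, with the fast variable still at $\bar X^{i,N}_\tau/\epsilon$, all the coefficients appearing in \eqref{eq:limitingcoefficients}: the drift $\beta\cdot\nabla_x g$, the control-drift $[I+\nabla_y\Phi]\sigma u^N_i\cdot \nabla_x g$, the second-order piece $\tfrac12 D:\nabla_x\nabla_x g$, the cross-covariation $\tilde D \bar B^{-\top}:\nabla_p\nabla_x g$ from $\langle \bar X^{i,N},\bar W^{i,N}\rangle_\tau$, and the quadratic-variation piece $\tfrac12\bar B^{-1}\tilde D \bar B^{-\top}:\nabla_p\nabla_p g$. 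The remainder consists of an It\^o martingale $M^{i,N}_\tau$ plus an $O(\epsilon)$ correction accounting for the $\mu$-derivatives of $\Phi$ and the boundary discrepancy $g(\tilde X^{i,N},\bar W^{i,N})-g(\bar X^{i,N},\bar W^{i,N}) = \epsilon\,\nabla_x g\cdot\Phi+O(\epsilon^2)$---essentially $C^{i,N}_4,\dots,C^{i,N}_8$ of Subsection \ref{subsubsection:TightnessQ^N_X} reweighted by derivatives of $g$.

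Averaging over $i$, multiplying by $\Psi$, taking expectations, and rewriting $\frac{1}{N}\sum_i$ as integration against $Q^N$, the resulting prelimit equality takes the form $\E\bigl[\E^{Q^N}[\Psi\,\Xi^N]\bigr]=\text{errors}$ for an appropriate bounded functional $\Xi^N$ on $\mc C$. The errors split into: the martingale contribution, whose second moment is $O(1/N)$ by independence of the $W^i$ and boundedness of $\nabla g,\nabla_y\Phi,\sigma$ (Assumption \ref{assumption:LipschitzandBounded}, Proposition \ref{prop:Phiexistenceregularity}, Corollary \ref{cor:limitingcoefficientsregularity}); and the $O(\epsilon)$ remainder, controlled using the uniform bound \eqref{eq:controlL2boundunspecific} on the controls. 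Crucially, in $\Xi^N$ the fast variable $\bar X^{i,N}_\tau/\epsilon$ enters only through $r$ via the construction \eqref{eq:inducedrelaxedcontrol}, so $\Xi^N$ is expressible purely in terms of $(\phi,r,w)\in\mc C$ with the fast dependence appearing as integration against $r_\tau(dydz)$.

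For the limit I invoke Skorokhod's representation so that $Q^N\to Q$ almost surely. Continuity of $\Theta\mapsto\nu_\Theta$ (Proposition \ref{prop:nucontmeasure}), boundedness of the integrand, and uniform integrability of $\int|z|\,r(dydzd\tau)$ under $Q^N$ (Subsection \ref{subsubsection:TightnessQ^N_Z}) justify passing $\E[\E^{Q^N}[\Psi\,\Xi^N]]\to\E[\E^{Q}[\Psi\,\Xi]]$. Property \ref{V:V4}, just established, is what identifies $\Xi$ with $M^{Q}_g(t)-M^{Q}_g(s)$: under $Q$ the fast marginal of $r_\tau$ is $\pi(\cdot|\bar X_\tau,\nu_Q(\tau))$, so by Remark \ref{remark:altdiffusionrep} one has $\int\tilde D\,\pi(dy)\,\bar B^{-\top}=\bar D\bar B^{-\top}=\bar B=\sqrt{\int\tilde D\,r_\tau(dydz)}$ and $\int\bar B^{-1}\tilde D \bar B^{-\top}\pi(dy)=I$, matching the coefficients in \eqref{eq:M} and \eqref{eq:decomposeA}, while $[I+\nabla_y\Phi]b+\nabla_x\Phi f + A:\nabla_x\nabla_y\Phi$ averages to $\beta$. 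The main obstacle is precisely this reconciliation: the prelimit has its fast variable rigidly locked to the oscillatory $\bar X^{i,N}/\epsilon$, so the genuine averaging against $\pi$ only emerges in the limit through \ref{V:V4}; a density argument over a countable family of $g\in C^\infty_c(\R^d\times\R^d)$, $\mc G_s$-measurable $\Psi\in C_b(\mc C)$, and $0\leq s\leq t\leq 1$ then upgrades the almost sure identity to the full statement of the theorem.
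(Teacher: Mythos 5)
Your overall architecture is the same as the paper's: apply It\^o's formula to the prelimit particles with a cell-problem corrector so the $O(1/\epsilon)$ drift cancels, rewrite the average over $i$ as an integral against $Q^N$, show the resulting quantity converges both to $\E^{Q}[\Psi(M^{Q}_g(t)-M^{Q}_g(s))]$ (via Skorokhod, Proposition \ref{prop:nucontmeasure}, and a truncation of the linearly-growing control term) and to $0$, then use \ref{V:V4} together with Remark \ref{remark:altdiffusionrep} to reconcile $\tilde{D}$, $D$, $\bar{D}$, $\bar{B}$, and finish with the countable density argument. The only structural difference from the paper is cosmetic: you apply It\^o to $g$ evaluated at the corrected process $\bar{X}^{i,N}+\epsilon\Phi$, whereas the paper applies It\^o to $\psi_l=\Phi_l\,g_{x_l}$ and solves for the singular term; these produce the same cancellation and the same family of remainders.

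There is, however, a genuine gap in your bookkeeping of the remainders. You classify everything coming from the measure argument of $\Phi$ as "an $O(\epsilon)$ correction accounting for the $\mu$-derivatives of $\Phi$" and propose to kill it with the control bound \eqref{eq:controlL2boundunspecific}. That is false for one term: when It\^o's formula hits the empirical-measure argument $\bar{\mu}^N_\tau$ of $\epsilon\Phi$, the chain rule (Proposition \ref{prop:empprojderivatives}) produces $\epsilon\cdot\frac{1}{N}\sum_{j}\partial_\mu\Phi(\cdot)(\bar{X}^{j,N}_\tau)\cdot d\bar{X}^{j,N}_\tau$, and the $\frac{1}{\epsilon}f(\bar{X}^{j,N}_\tau,\bar{X}^{j,N}_\tau/\epsilon,\bar{\mu}^N_\tau)$ part of $d\bar{X}^{j,N}_\tau$ cancels the prefactor $\epsilon$, leaving the term the paper isolates as $D^{i,N}_5$, namely $\int_s^t\bigl[\frac{1}{N}\sum_j \partial_\mu\Phi(\bar{X}^{i,N}_\tau,\bar{X}^{i,N}_\tau/\epsilon,\bar{\mu}^N_\tau)(\bar{X}^{j,N}_\tau)f(\bar{X}^{j,N}_\tau,\bar{X}^{j,N}_\tau/\epsilon,\bar{\mu}^N_\tau)\bigr]\cdot\nabla_x g\,d\tau$. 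This is $O(1)$ in both $\epsilon$ and $N$ (compare $C^{i,N}_4$ in Subsection \ref{subsubsection:TightnessQ^N_X}, where boundedness sufficed for tightness but does not give vanishing), and no crude estimate makes it small. It disappears only in the limit: after passing to $Q$, you use \ref{V:V4} to replace the fast marginal by $\pi(\cdot|\phi(\tau),\nu_Q(\tau))$, the uniform $L^2(\R^d,\mu)$ bound on $\partial_\mu\Phi$ from Proposition \ref{prop:Phiexistenceregularity} to decouple it via Cauchy--Schwarz, and then the centering condition \ref{assumption:centeringcondition}, $\int_{\bb{T}^d}f\,d\pi=0$, to conclude it is zero. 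Your proposal never invokes \ref{assumption:centeringcondition}, yet this is precisely the point the paper flags as special to the joint limit and the reason \ref{V:V4} must be established before \ref{V:V1}; without this step the identification of the limit fails. The rest of your reconciliation (the $\tilde{D}\bar{B}^{-\top}$ and $\bar{B}^{-1}\tilde{D}\bar{B}^{-\top}$ terms matching $\sqrt{\smallint\tilde{D}\,r_\tau}$ and $I$ under \ref{V:V4}) is correct and corresponds to the paper's $D^{i,N}_6$ and $D^{i,N}_7$.
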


We note that it is enough to prove Theorem \ref{thm:V1} for $g\in C^\infty_c(\R^d\times\R^d)$, due to the fact that $C^\infty_c(\R^d\times\R^d)$ is separating in the sense of Chapter 3 Section 4 in \cite{EK} (see Chapter 4 Section 8 in \cite{EK}). Moreover, we are using implicitly here that the $\bb{T}^d$-marginal of the $\mc{Y}$-coordinate process under the limit point $Q$ is almost-surely $\pi(dy;\phi(s),\nu_Q(s))$ via Proposition \ref{remark:V4explaination}, so that via Remark \ref{remark:altdiffusionrep} the diffusion coefficient in Equation \ref{eq:controlledMcKeanLimit} can be written as either the $\tilde{D}$ integrated against $\rho$ or $D$ integrated against $\rho$, and indeed proving Theorem \ref{thm:V1} shows $Q$ almost surely satisfies \ref{V:V1}.

In order to prove Theorem \ref{thm:V1} we will prove Lemma \ref{lem:marprob1} and Lemma \ref{lem:marprob2}.
\begin{lem}
\label{lem:marprob1}
\begin{align}\label{eq:marprob1}
    \E^{Q^N}\biggl[\Psi(M_g^{Q^N}(t)-M_g^{Q^N}(s))\biggr]\tto \E^{Q}\biggl[\Psi(M_g^{Q}(t)-M_g^{Q}(s))\biggr] \text{ in distribution},
\end{align}
\end{lem}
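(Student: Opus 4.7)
The natural approach is to pass from weak convergence of $\{Q^N\}$ to $Q$ up to almost-sure convergence via Skorokhod's representation theorem, and then to show that the functional $F(\Theta) := \E^\Theta\bigl[\Psi(M_g^\Theta(t)-M_g^\Theta(s))\bigr]$ is stable under the appropriate limit, delegating the only non-bounded-continuous dependence on $\omega$ (namely the linear dependence on the control variable $z$) to a uniform integrability argument. First I would invoke Skorokhod to realize the weak convergence $Q^N \to Q$ as an almost-sure convergence of $\mc{P}(\mc{C})$-valued random variables on a common probability space; convergence in distribution of $F(Q^N) \to F(Q)$ then reduces to showing $F(Q^N) \to F(Q)$ almost surely.

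Next I would split
\[
M_g^\Theta(t)-M_g^\Theta(s) = H_0(\phi,w) + H_1^{\nu_\Theta}(\phi,r,w) + H_2^{\nu_\Theta}(\phi,r,w),
\]
where $H_0$ is the boundary term $g(\phi(t),w(t))-g(\phi(s),w(s))$, $H_1^{\nu_\Theta}$ gathers all integrands in \eqref{eq:M}--\eqref{eq:decomposeA} that are bounded uniformly in $z$ (the $\beta$-drift, the $D$- and identity-diffusion terms, and the $\sqrt{\int \tilde{D}\,dr_u}$ term), and $H_2^{\nu_\Theta}$ is the single $z$-linear piece $\int_s^t\!\!\int [\nabla_y\Phi+I]\sigma\, z \cdot \nabla_x g\, r_u(dy\,dz)\,du$. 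The dependence on $\Theta$ is only through $\nu_\Theta$, and by Proposition \ref{prop:nucontmeasure} this map is continuous from $\mc{P}(\mc{C})$ to $C([0,1];\mc{P}(\R^d))$.

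For the bounded pieces $j=0,1$, Assumption \ref{assumption:LipschitzandBounded}, Proposition \ref{prop:Phiexistenceregularity}, Corollary \ref{cor:limitingcoefficientsregularity} and $g \in C^\infty_c$, together with continuity of the principal square root on bounded sets of positive semi-definite matrices, imply that $(\phi,r,w,\mu) \mapsto \Psi(\phi,r,w) H_j^{\mu}(\phi,r,w)$ is bounded and jointly continuous, and Lipschitz in $\mu$ uniformly in $(\phi,r,w)$. Combining the a.s.\ Skorokhod convergence $Q^N \to Q$, the derived a.s.\ convergence $\nu_{Q^N} \to \nu_Q$ in $C([0,1];\mc{P}(\R^d))$, and the standard splitting
\[
\bigl|\E^{Q^N}[\Psi H_j^{\nu_{Q^N}}]-\E^{Q}[\Psi H_j^{\nu_Q}]\bigr| \le \|\Psi\|_\infty \sup_\omega\bigl|H_j^{\nu_{Q^N}}(\omega)-H_j^{\nu_Q}(\omega)\bigr| + \bigl|(\E^{Q^N}-\E^Q)[\Psi H_j^{\nu_Q}]\bigr|,
\]
the first summand tends to $0$ by the uniform-in-$\omega$ Lipschitz dependence on $\mu$, and the second by weak convergence applied to the bounded continuous integrand $\Psi H_j^{\nu_Q}$.

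The main obstacle is the $z$-linear piece $H_2^{\nu_\Theta}$, whose integrand is only $O(1+|z|)$ while $z$ is unbounded under $Q^N$. The plan is to truncate: for $L>0$, replace $z$ by $z\mathbf{1}_{\{|z|\le L\}}$ to reduce to a bounded continuous integrand that can be handled exactly as for $H_1$, and control the tail uniformly in $N$ using Chebyshev and the a priori $L^2$ bound \eqref{eq:controlL2boundunspecific} inherited by the $\rho^{i,N}$'s:
\[
\E\!\left[\E^{Q^N}\!\left[\int_{\bb{T}^d\times\R^m\times[0,1]} |z|\mathbf{1}_{\{|z|>L\}}\,\rho(dy\,dz\,du)\right]\right] \le \frac{1}{L}\,\E\!\left[\E^{Q^N}\!\left[\int|z|^2\rho\right]\right] \le \frac{C_{\mathrm{con}}}{L}.
\]
The same bound propagates to the limit via lower semicontinuity (Fatou) applied to the tightness function from Subsection \ref{subsubsection:TightnessQ^N_Z}. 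Sending $N\to\infty$ for fixed $L$ and then $L\to\infty$ eliminates the tail uniformly. Combining the three contributions yields $F(Q^N)\to F(Q)$ almost surely on the Skorokhod space, hence in distribution as claimed.
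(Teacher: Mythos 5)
Your overall architecture coincides with the paper's: Skorokhod representation, isolating the single $z$-linear piece of $\mc{A}[g]$, and removing it by truncation controlled through the uniform $L^2$ bound \eqref{eq:controlL2boundunspecific} and Chebyshev, with Fatou passing the tail bound to the limit. The genuine gap is in your treatment of the bounded pieces. You bound the first summand of your splitting by $\norm{\Psi}_\infty\sup_\omega|H_j^{\nu_{Q^N}}(\omega)-H_j^{\nu_Q}(\omega)|$ and claim it vanishes by ``uniform-in-$\omega$ Lipschitz dependence on $\mu$.'' The Lipschitz estimates actually available (Assumption \ref{assumption:LipschitzandBounded}, Proposition \ref{prop:Phiexistenceregularity}, Corollary \ref{cor:limitingcoefficientsregularity}) are with respect to $\bb{W}_2$, whereas all that the a.s. convergence $Q^N\tto Q$ gives via Proposition \ref{prop:nucontmeasure} is, for each fixed $t$, weak convergence $\nu_{Q^N}(t)\tto\nu_Q(t)$ in $\mc{P}(\R^d)$; neither $\bb{W}_2$-convergence nor the convergence in $C([0,1];\mc{P}(\R^d))$ you invoke follows without uniform integrability of second moments under $Q^N_{\mc{X}}$, which you do not establish and which is not a consequence of the $L^2$ control bound alone. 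In addition, the square-root term is only continuous, not Lipschitz, in the underlying matrix once $\int_{\bb{T}^d\times\R^m}\tilde{D}\,dr_u$ is merely positive semi-definite (uniform ellipticity holds for $\bar{D}$, i.e. after integrating against $\pi$, not against an arbitrary $r_u$), so the claimed uniform Lipschitz dependence on $\mu$ fails for that piece even granting $\bb{W}_2$-control. As stated, the first summand of your splitting is not shown to vanish.

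The paper sidesteps this entirely: it never takes a supremum over $\omega$, but uses the joint continuity of all coefficients in the weak topology on $\mc{P}(\R^d)$ (built into Assumptions \ref{assumption:LipschitzandBounded} and \ref{assumption:fsigmaregularity} and Propositions \ref{prop:Phiexistenceregularity}, \ref{prop:nucontmeasure}) together with Theorem A.3.18 of \cite{DE}, which yields $\int h_N\,dQ^N\tto\int h\,dQ$ for uniformly bounded integrands satisfying $h_N(\omega_N)\tto h(\omega)$ whenever $\omega_N\tto\omega$; the $N$-dependence through $\nu_{Q^N}$ is absorbed into this ``continuously convergent'' integrand. Your bounded-term step should be repaired along those lines rather than via a uniform-in-$\omega$ modulus. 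A smaller point: the cutoff $z\1_{\{|z|\le L\}}$ is discontinuous in $z$, so either mollify it or use the paper's mass-based truncation $\psi^1_M,\psi^2_M$, which is constructed to be continuous in $r$; with that adjustment your tail estimate and the $M\toinf$ (resp. $L\toinf$) limit match the paper's argument.
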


\begin{lem}
\label{lem:marprob2}
\begin{align}
    \E^{Q^N}\biggl[\Psi(M_g^{Q^N}(t)-M_g^{Q^N}(s))\biggr]\tto 0 \text{ in distribution}.
\end{align}
\end{lem}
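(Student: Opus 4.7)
The approach is to unfold the definition of the occupation measure so that
\[
\E^{Q^N}\bigl[\Psi(M_g^{Q^N}(t)-M_g^{Q^N}(s))\bigr] \;=\; \frac{1}{N}\sum_{i=1}^N \Psi_i\bigl[M_g^{Q^N}(t,T_i)-M_g^{Q^N}(s,T_i)\bigr],
\]
with $T_i \coloneqq (\bar X^{i,N},\rho^{i,N},\bar W^{i,N})$, and then to use It\^o's formula, together with the cell corrector $\epsilon\Phi$, to recast each summand as the sum of a mean-zero stochastic integral and error terms that vanish as $N\to\infty$, $\epsilon\downarrow 0$.

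The central computation is an application of It\^o to the perturbed test function $g\bigl(\bar X^{i,N}_t+\epsilon\Phi(\bar X^{i,N}_t,\bar X^{i,N}_t/\epsilon,\bar\mu^N_t),\bar W^{i,N}_t\bigr)$. The $\epsilon\Phi$-correction is the standard averaging device: by the cell problem \eqref{eq:cellproblem}, the singular drift $\tfrac{1}{\epsilon}f\cdot\nabla_x g$ produced by $d\bar X^{i,N}_t$ is exactly cancelled by the second-order $y$-derivative of the corrector. Lions differentiation (Appendix \ref{Appendix:LionsDifferentiation} and Proposition \ref{prop:empprojderivatives}) handles the $\bar\mu^N_t$-dependence of $\Phi$, contributing extra terms that carry an explicit $\tfrac{1}{N}$ (from second Lions derivatives) or an explicit $\epsilon$. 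Collecting the surviving $O(1)$ drift reconstructs $\beta+[I+\nabla_y\Phi]\sigma u^N_i$, matching the first-order part of $\mc A[g]$ from \eqref{eq:decomposeA}.

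The difference between the It\^o-derived integrand and the integrand appearing in $M_g^{Q^N}$ then splits into four families of residuals: (i) a stochastic integral whose expectation against the $\mc G_s$-measurable $\Psi$ vanishes identically, and whose variance is controlled in $L^2$ by It\^o isometry together with the control bound \eqref{eq:controlL2boundunspecific} and the factor $\tfrac{1}{N}$, yielding a contribution of order $\tfrac{1}{\sqrt N}$; (ii) explicit $O(\epsilon)$ terms, including the boundary pieces $\epsilon\Phi(\cdot)\cdot\nabla_x g$ at the endpoints, all uniformly bounded via Proposition \ref{prop:Phiexistenceregularity} and Corollary \ref{cor:limitingcoefficientsregularity}; (iii) $O(1/N)$ terms from Lions derivatives of $\Phi$ against $\bar\mu^N_t$; and (iv) $\pi$-centered fast-oscillating integrands, most notably $\tfrac12(\tilde D-D)\mathbin{:}\nabla_x\nabla_x g$ (centered by Remark \ref{remark:altdiffusionrep}), together with centered quadratic-variation and cross-variation mismatches involving $\nabla_p\nabla_p g$ and $\nabla_p\nabla_x g$. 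For each integrand in family (iv) the standard secondary-averaging trick—solving an auxiliary Poisson problem on $\bb T^d$ in the spirit of \eqref{eq:cellproblem} and applying It\^o to its solution—converts the time integral into a boundary term of order $\epsilon$ plus a stochastic integral with $L^2$-norm vanishing as $N\to\infty$. Summing the four families and invoking Chebyshev's inequality produces the claimed convergence in distribution to $0$.

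The main obstacle is family (iv), specifically the cross term involving $\nabla_p\nabla_x g$: the prelimit It\^o cross variation of $(\bar X^{i,N},\bar W^{i,N})$ is not equal to the symmetric root $\sqrt{\tilde D}$ appearing inside $M_g^{Q^N}$, and the two reconcile only in the limit through the identity $\sqrt{\int\tilde D\,\pi}=\bar B$ guaranteed by \ref{V:V4} and Remark \ref{remark:altdiffusionrep}. Disposing of this mismatch in the prelimit requires a dedicated matrix-valued corrector, solved on $\bb T^d$ against the $(x,\mu)$-frozen generator $\mc L^1_{x,\mu}$, and careful bookkeeping to ensure that all error contributions are controlled uniformly in $N,\epsilon$ while the $L^2$-norm of the controls remains only a priori bounded as in \eqref{eq:controlL2boundunspecific}; this is where the bulk of the technical work of the proof lies.
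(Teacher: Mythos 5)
Your decomposition and treatment of the small residuals (the martingale terms of order $N^{-1/2}$, the explicit $O(\epsilon)$ boundary and corrector terms, the $O(1/N)$ Lions-derivative terms) matches the paper's handling of its terms $D^{i,N}_1$--$D^{i,N}_4$. The genuine gap is in your family (iv), and specifically in the cross-variation mismatch that you yourself flag as the main obstacle. The integrand of that mismatch is
\[
\tilde D(x,y,\mu)\,(\bar B^\top)^{-1}(x,\mu)\;-\;\sqrt{\tilde D(x,y,\mu)},
\]
and its $\pi$-average equals $\bar B(x,\mu)-\int_{\bb T^d}\sqrt{\tilde D(x,y,\mu)}\,\pi(dy|x,\mu)$, which is \emph{not} zero unless $\tilde D$ is $\pi$-a.s.\ constant in $y$ (the matrix square root is strictly operator concave, so Jensen gives a strict gap whenever $\tilde D$ genuinely oscillates in $y$). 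Consequently the auxiliary Poisson problem you propose, $\mc L^1_{x,\mu}\chi=-[\tilde D(\bar B^\top)^{-1}-\sqrt{\tilde D}\,]$, is not solvable (the Fredholm alternative requires a $\pi$-centered right-hand side), and no "dedicated matrix-valued corrector'' can dispose of this term in the prelimit: by the ergodic behavior of the fast variable its time integral converges to a quantity proportional to $\bar B-\int\sqrt{\tilde D}\,d\pi\neq 0$, not to something of order $\epsilon$ or $N^{-1/2}$. You correctly observe that the reconciliation only happens in the limit through $\sqrt{\int\tilde D\,\pi}=\bar B$, but your proposed prelimit mechanism contradicts that observation and would fail exactly at this point. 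The same caveat applies, less dramatically, to the other $O(1)$ residuals: the $\nabla_p\nabla_p g$ mismatch and the term $\frac1N\sum_j\partial_\mu\Phi(\cdot)(\bar X^{j,N})f(\bar X^{j,N},\bar X^{j,N}/\epsilon,\bar\mu^N)$ (which is $O(1)$, not $O(1/N)$ -- the $\frac1N\sum_j$ is an average, and the $\epsilon^{-1}$ from $f$ has been absorbed) are $\pi$-centered, so a secondary corrector is at least conceivable there, but it would require solvability and regularity (including Lions differentiability in $\mu$) of new cell problems whose data involve $\bar B^{-1}$ and $\partial_\mu\Phi$, none of which is provided by Proposition \ref{prop:Phiexistenceregularity} or Corollary \ref{cor:limitingcoefficientsregularity}, and you give no argument for it.

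The paper's route for these $O(1)$ terms is structurally different and is the step your proposal is missing: it does \emph{not} attempt any prelimit rate estimate for $D^{i,N}_5$, $D^{i,N}_6$, $D^{i,N}_7$. Instead it invokes Skorokhod to get almost sure convergence $Q^N\to Q$, uses the fact -- established \emph{before} the martingale-problem argument, in Proposition \ref{remark:V4explaination} -- that $Q$ almost surely satisfies \ref{V:V4}, and only then evaluates these functionals under the limit: the $\partial_\mu\Phi\, f$ term vanishes by the centering condition \ref{assumption:centeringcondition}, while the $\nabla_p\nabla_p g$ and $\nabla_p\nabla_x g$ mismatches vanish because, once the $y$-marginal of $r_\tau$ is $\pi$, the square root and the inverse are applied to the already-averaged matrix $\int\tilde D\,\pi=\bar D=\bar B\bar B^\top$ (Remark \ref{remark:altdiffusionrep}). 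This ordering of the argument (identify \ref{V:V4} first, then pass to the limit in the $O(1)$ terms) is precisely the feature of the joint limit $N\to\infty$, $\epsilon\downarrow 0$ that the paper emphasizes, and your proof as written neither uses it nor supplies a valid substitute for the non-centered cross term.
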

Then the conclusion follows.

We proceed with the proof of Lemma \ref{lem:marprob1}.
\begin{proof}[Proof of  Lemma \ref{lem:marprob1}]
Unpacking the notation in Equation \eqref{eq:marprob1}, we see what we are trying to show is that
\begin{align*}
&\int_{\mc{C}} \Psi(\phi,r,w)\biggl[ g(\phi(t),w(t)) -g(\phi(s),w(s))  - \int_s^t \int_{\bb{T}^d\times\R^m}  \mc{A}[g](\phi(\tau),y,z,\nu_{Q^N}(\tau),w(\tau))r_\tau (dydz)d\tau \\
&\hspace{4cm}-\int_s^t \sqrt{\int_{\bb{T}^d\times\R^m}\tilde{D}(\phi(\tau),y,\nu_{Q^N}(\tau))  r_\tau(dydz)}:\nabla_p\nabla_x g(\phi(\tau),w(\tau)) d\tau\biggr] Q^N(d\phi dr dw)\\
&\tto \int_{\mc{C}} \Psi(\phi,r,w)\biggl[g(\phi(t),w(t)) -g(\phi(s),w(s))  - \int_s^t \int_{\bb{T}^d\times\R^m} \mc{A}[g](\phi(\tau),y,z,\nu_{Q}(\tau),w(\tau))r_\tau (dydz)d\tau\biggr]\\
&\hspace{4cm}-\int_s^t \sqrt{\int_{\bb{T}^d\times\R^m}\tilde{D}(\phi(\tau),y,\nu_{Q}(\tau))  r_\tau(dydz)}:\nabla_p\nabla_x g(\phi(\tau),w(\tau)) d\tau\biggr] Q(d\phi dr dw)
\end{align*}
in distribution. We invoke Skorokhod's representation theorem to assume the convergence of $Q^N\tto Q$ occurs with probability 1, without making a distinction in the notation between the new probability space and the original one. Then we see Lemma \ref{lem:marprob1} essentially follows from the definition of convergence of measures in the space $\mc{P}(\mc{C})$. The only caveat is that the integrand is not a priori in $C_b(\mc{C})$, since it grows linearly in the control.

We will show
\begin{align}
\label{eq:conv2}
\E\biggl[\bigg|\E^{Q^N}\biggl[\Psi(M_g^{Q^{N}}(t)-M_g^{Q^{N}}(s))\biggr]-\E^{Q}\biggl[\Psi(M_g^{Q}(t)-M_g^{Q}(s))\biggr]\biggr|\biggr] \tto 0.
\end{align}
Once this limit is established, by Chebyshev's inequality Lemma \ref{lem:marprob1} will be proved.
\begin{align*}
&\E\biggl[\bigg|\E^{Q^N}\biggl[\Psi(M_g^{Q}(t)-M_g^{Q}(s))\biggr]-\E^{Q}\biggl[\Psi(M_g^{Q}(t)-M_g^{Q}(s))\biggr]\biggr|\biggr]\\
&=\E\biggl[\bigg|\int_{\mc{C}} \Psi(\phi,r,w)\biggl[ g(\phi(t),w(t)) -g(\phi(s),w(s))  - \int_s^t \int_{\bb{T}^d\times\R^m}  \mc{A}[g](\phi(\tau),y,z,\nu_{Q^N}(\tau),w(\tau))r_\tau (dydz)d\tau \\
&\hspace{4cm}-\int_s^t \sqrt{\int_{\bb{T}^d\times\R^m}\tilde{D}(\phi(\tau),y,\nu_{Q^N}(\tau))  r_\tau(dydz)}:\nabla_p\nabla_x g(\phi(\tau),w(\tau)) d\tau\biggr] Q^N(d\phi dr dw)\\
&- \int_{\mc{C}} \Psi(\phi,r,w)\biggl[g(\phi(t),w(t)) -g(\phi(s),w(s))  - \int_s^t \int_{\bb{T}^d\times\R^m} \mc{A}[g](\phi(\tau),y,z,\nu_{Q}(\tau),w(\tau))r_\tau (dydz)d\tau\biggr]\\
&\hspace{4cm}-\int_s^t \sqrt{\int_{\bb{T}^d\times\R^m}\tilde{D}(\phi(\tau),y,\nu_{Q}(\tau))  r_\tau(dydz)}:\nabla_p\nabla_x g(\phi(\tau),w(\tau)) d\tau\biggr] Q(d\phi dr dw)\biggr|\biggr].
\end{align*}

Noting that $\Psi(\phi,r,w)[g(\phi(t),w(t))-g(\phi(s),w(s))]$ is bounded and writing $\mc{A}[g]$ as in Equation \eqref{eq:decomposeA}, we see that only $\int_{\bb{T}^d\times\R^m} [\nabla_y\Phi(x,y,\nu_\Theta(s))+I]\sigma(x,y,\nu_\Theta(s))zr_s(dydz)$ exhibits growth in the control $r$. Since the desired convergence occurs immediately by boundedness of the integrand, almost-sure convergence of $Q^N\tto Q$, Proposition \ref{prop:nucontmeasure}, and Theorem A.3.18 on \cite{DE}, and boundedness and continuity of the matrix square root, we only show work to show the convergence of this term. Let
\begin{align*}
B^1(M)&\coloneqq \br{r\in\mc{Y}:\int_0^1\int_{\R^m} |z|r(dzdt)\leq M}\\
B^2(M)&\coloneqq \br{r\in\mc{Y}:\int_0^1\int_{\bb{T}^d\times\R^m} |z|r_t(dydz)dt> M},\\
\psi_M^1(r,x,\mu,t)&\coloneqq\1_{B^1(M)}(r)\int_{\bb{T}^d\times \R^m} [I+\nabla_y\Phi(x,y,\mu)]\sigma(x,y,\mu)z r_t(dydz)\\
&+\1_{B^2(M)}(r)\frac{M}{\int_0^1\int_{\bb{T}^d\times\R^m} |z|r_t(dydz)dt}\int_{\bb{T}^d\times \R^m} [I+\nabla_y\Phi(x,y,\mu)]\sigma(x,y,\mu)z r_t(dydz)\\
\psi_M^2(r,x,\mu,t)&\coloneqq \1_{B^2(M)}(r)\int_{\bb{T}^d\times \R^m} [I+\nabla_y\Phi(x,y,\mu)]\sigma(x,y,\mu)z r_t(dydz)  \biggl(1-\frac{M}{\int_0^1\int_{\bb{T}^d\times\R^m} |z|r_t(dydz)dt}\biggr)
\end{align*}
Then for all $r\in\mc{Y},M>0,t\in[0,1]$,  $\psi^1_M(r,x,\mu,t)+\psi^2_M(r,x,\mu,t)=\int_{\bb{T}^d\times \R^m} [I+\nabla_y\Phi(x,y,\mu)]\sigma(x,y,\mu)z r_t(dydz),$ $$\sup_{x\in\R^d,\mu\in\mc{P}(\R^d)}\int_0^1 |\psi^1_M(r,x,\mu,t)|dt\leq \norm{[I+\nabla_y\Phi]\sigma}_\infty M,$$ and $$\sup_{x\in\R^d,\mu\in\mc{P}(\R^d)}\int_0^1 |\psi^2_M(r,x,\mu,t)|dt< \1_{B^2(M)}(r)\norm{[I+\nabla_y\Phi]\sigma}_\infty\int_0^1\int_{\bb{T}^d\times\R^m} |z|r_t(dydz)dt.$$

So
\begin{align*}
&\E\biggl[\bigg|\int_{\mc{C}}\Psi(\phi,r,w)\int_s^t \biggl(\int_{\bb{T}^d\times\R^m} [\nabla_y\Phi+I]\sigma(\phi(\tau),y,\nu_{Q^{N}}(\tau))zr_{\tau}(dydz)\biggr)\cdot \nabla g (\phi(\tau),w(\tau))d\tau Q^N(d\phi dr dw)\biggr]\\
&\hspace{1cm}-\int_{\mc{C}}\Psi(\phi,r,w)\int_s^t \biggl(\int_{\bb{T}^d\times\R^m}[\nabla_y\Phi+I]\sigma(\phi(\tau),y,\nu_{Q}(\tau))zr_{\tau}(dydz)\biggr)\cdot \nabla g (\phi(\tau),w(\tau))d\tau Q(d\phi dr dw)\biggr|\biggr]\\
& = \E\biggl[\bigg|\int_{\mc{C}}\Psi(\phi,r,w)\int_s^t \psi^1_M(r,\phi(\tau),\nu_{Q^N}(\tau),\tau)\cdot \nabla g (\phi(\tau),w(\tau))d\tau Q^N(d\phi drd\phi)\\
&+\int_{\mc{C}}\Psi(\phi,r,w)\int_s^t \psi^2_M(r,\phi(\tau),\nu_{Q^N}(\tau),\tau)\cdot \nabla g (\phi(\tau),w(\tau))d\tau Q^N(d\phi dr dw)\\
&-\int_{\mc{C}}\Psi(\phi,r,w)\int_s^t \psi^1_M(r,\phi(\tau),\nu_{Q}(\tau),\tau)\cdot \nabla g (\phi(\tau),w(\tau))d\tau Q(d\phi dr dw)\\
&-\int_{\mc{C}}\Psi(\phi,r,w)\int_s^t \psi^2_M(r,\phi(\tau),\nu_{Q}(\tau),\tau)\cdot \nabla g (\phi(\tau),w(\tau))d\tau Q(d\phi dr dw)\biggr|\biggr]\\
&\leq \E\biggl[\bigg|\int_{\mc{C}}\Psi(\phi,r,w)\int_s^t \psi^1_M(r,\phi(\tau),\nu_{Q^N}(\tau),\tau)\cdot \nabla g (\phi(\tau),w(\tau))d\tau Q^N(d\phi dr dw)\\
&-\int_{\mc{C}}\Psi(\phi,r,w)\int_s^t \psi^1_M(r,\phi(\tau),\nu_{Q}(\tau),\tau)\cdot \nabla g (\phi(\tau),w(\tau))d\tau Q(d\phi dr dw)\biggr|\biggr]\\
& + 2\norm{\nabla g}_\infty\norm{\Psi}_\infty \sup_{N\in \bb{N}}\sup_{\mu\in\mc{P}(\R^d)}\E\biggl[\int_{\mc{C}}\int_s^t\biggl|\psi^2_M(r,\phi(\tau),\mu,\tau) \biggr|d\tau Q^N(d\phi dr dw)\biggr]\text{ by Theorem A.3.12 in \cite{DE}}.
\end{align*}
By Bounded Convergence Theorem and almost-sure convergence of $Q^N\tto Q$, the first term vanishes as $N\toinf$ in the same manner as discussed for the other bounded terms (for continuity of the time integral of $\psi^1_M$ see Lemma 5.3.4/3.3.1 in \cite{DE}). To handle the second term, we have:
\begin{align*}
&\sup_{N\in \bb{N}}\sup_{\mu\in\mc{P}(\R^d)}\E\biggl[\int_{\mc{C}}\int_s^t\biggl|\psi^2_M(r,\phi(\tau),\mu,\tau) \biggr|d\tau Q^N(d\phi dr dw)\biggr]\\
&\leq C \sup_{N\in \bb{N}}\E\biggl[\int_{\mc{C}}\1_{B^2(M)}(r)\int_0^1\int_{\bb{T}^d\times\R^m} |z|r_t(dydz)dt Q^N(d\phi dr dw)\biggr]\\
&  = C \sup_{N\in \bb{N}}\E\biggl[\int_{\mc{C}}\1_{B^2(M)}(r) \frac{\biggl(\int_0^1\int_{\bb{T}^d\times\R^m} |z|r_t(dydz)dt\biggr)^2}{\int_0^1\int_{\bb{T}^d\times\R^m} |z|r_t(dydz)dt}Q^N(d\phi dr dw)\biggr]\\
&\leq \frac{C}{M} \sup_{N\in \bb{N}}\E\biggl[\int_{\mc{C}}\biggl(\int_0^1\int_{\bb{T}^d\times\R^m} |z|r_t(dydz)dt\biggr)^2 Q^N(d\phi dr dw)\biggr] \text{ by the definition of }B^2(M)\\
&\leq \frac{C}{M} \sup_{N\in \bb{N}}\E\biggl[\int_{\mc{C}}\int_0^1\int_{\bb{T}^d\times\R^m} |z|^2r_t(dydz)dtQ^N(d\phi dr dw)\biggr] \text{ by Jensen's inequality}\\
& = \frac{C}{M} \sup_{N\in \bb{N}}\E\biggl[\frac{1}{N}\sum_{i=1}^N\int_0^1 |u_i^N(\tau)|^2 d\tau\biggr] \text{ by definition of }Q^N\\
&\leq \frac{CC_{con}}{M} \text{ by Assumption (\ref{eq:controlL2boundunspecific})}.
\end{align*}
Taking $N\toinf$ then $M\toinf$ the result follows.

\end{proof}
Now we prove Lemma \ref{lem:marprob2}.
\begin{proof}[Proof of Lemma \ref{lem:marprob2}]
Again, we invoke Skorokhod's representation theorem to assume the convergence of $Q^N\tto Q$ occurs with probability 1.

We will show $\E\biggl[\biggl|\E^{Q^N}\biggl[\Psi(M_g^{Q^N}(t)-M_g^{Q^N}(s))\biggr]\biggr|\biggr]\tto 0 \text{ as } N\toinf$, and so the conclusion will follow via Chebyshev's inequality.

Applying It\^o's formula to $g(\bar{X}^{i,N}_t,\bar{W}^{i,N}_t)$, we get (suppressing the arguments for notational convenience)
\begin{align*}
g(\bar{X}^{i,N}_t,\bar{W}^{i,N}_t)& = g(x^{i,N},0)+\int_0^t [\frac{1}{\epsilon} f+b +\sigma u_i^N ]\cdot \nabla_x g +\frac{1}{2} A : \nabla_x\nabla_x g +\biggl[A [I+\nabla_y\Phi]^\top(\bar{B}^\top)^{-1}\biggr]:\nabla_p\nabla_x g\\
&\hspace{4cm}+\frac{1}{2}\biggl[\bar{B}^{-1}[I+\nabla_y\Phi]A[I+\nabla_y\Phi]^\top (\bar{B}^\top)^{-1}\biggr]:\nabla_p\nabla_p gds\\
&+\int_0^t \nabla_x g \cdot (\sigma dW^i_s)+\int_0^t \nabla_p g\cdot (\bar{B}^{-1}[I+\nabla_y\Phi]\sigma dW^i_s).
\end{align*}

In order to control the term that blows up as $\epsilon\to 0$, we define $\psi_l(x,p,y,\mu)\coloneqq \Phi_l(x,y,\mu) g_{x_l}(x,p),l=1,...,d$, for $\Phi$ as in Equation \eqref{eq:cellproblem}. Then $\psi_l$ solves
\begin{align*}
\mc{L}^1_{x,\mu} \psi_l(x,p,y,\mu) &= -f_l(x,y,\mu)g_{x_l}(x,p).
\end{align*}

Now applying It\^o's formula to $\psi_l$  (using Equations \eqref{eq:empfirstder} and \eqref{eq:empsecondder} in Proposition \ref{prop:empprojderivatives}, the regularity of $\Phi$ from Proposition \ref{prop:Phiexistenceregularity}, and Proposition \ref{prop:uniformXL2bound}), we get%

\begin{align*}
&\psi_l(\bar{X}^{i,N}_t,\bar{W}^{i,N}_t,\bar{X}^{i,N}_t/\epsilon,\bar{\mu}^N_t)\\
& = \psi_l(x^{i,N},0,x^{i,N}/\epsilon,\bar{\mu}^N_0)+\int_0^t [\frac{1}{\epsilon}f+b+\sigma u_i^N]\cdot \nabla_x \psi_l+\frac{1}{2} A:\nabla_x\nabla_x \psi_l +\frac{1}{\epsilon} [b+\sigma u_i^N]\cdot \nabla_y \psi_l\\
&+ \frac{1}{\epsilon} A : \nabla_x\nabla_y \psi_l +\biggl[A[I+\nabla_y\Phi]^\top (\bar{B}^\top)^{-1}\biggr]:\nabla_p\nabla_x\psi_l+\frac{1}{\epsilon}A[I+\nabla_y\Phi]^\top (\bar{B}^\top)^{-1}:\nabla_p\nabla_y\psi_l\\
&\hspace{6cm}+\frac{1}{2}\biggl[\bar{B}^{-1}[I+\nabla_y\Phi]A[I+\nabla_y\Phi]^\top (\bar{B}^\top)^{-1}\biggr]:\nabla_p\nabla_p \psi_lds\\
&+\int_0^t \nabla_x \psi_l\cdot(\sigma dW^i_s) +\frac{1}{\epsilon} \int_0^t \nabla_y \psi_l \cdot (\sigma dW^i_s)+\int_0^t \nabla_p\psi_l \cdot (\bar{B}^{-1}[I+\nabla_y\Phi]\sigma dW^i_s)  -\frac{1}{\epsilon^2}\int_0^t f_l g_{x_l}ds\\
&+ \frac{1}{N}\sum_{j=1}^N \biggl[\int_0^t \partial_\mu \psi_l(\bar{X}^{i,N}_s,\bar{W}^{i,N}_s,\bar{X}^{i,N}_s/\epsilon,\bar{\mu}^N_s)(\bar{X}^{j,N}_s)\\
&\hspace{2cm}\cdot \biggl(\frac{1}{\epsilon}f(\bar{X}^{j,N}_s,\bar{X}^{j,N}_s/\epsilon,\bar{\mu}^N_s) +b(\bar{X}^{j,N}_s,\bar{X}^{j,N}_s/\epsilon,\bar{\mu}^N_s)+\sigma(\bar{X}^{j,N}_s,\bar{X}^{j,N}_s/\epsilon,\bar{\mu}^N_s) u_j^N(s)\biggr)\\
& \hspace{5cm}+\frac{1}{2}A(\bar{X}^{j,N}_s,\bar{X}^{j,N}_s/\epsilon,\bar{\mu}^N_s):\partial_v\partial_\mu \psi_l(\bar{X}^{i,N}_s,\bar{W}^{i,N}_s,\bar{X}^{i,N}_s/\epsilon,\bar{\mu}^N_s)(\bar{X}^{j,N}_s)  \\
& \hspace{5cm}+\frac{1}{2N}A(\bar{X}^{j,N}_s,\bar{X}^{j,N}_s/\epsilon,\bar{\mu}^N_s):\partial^2_\mu \psi_l(\bar{X}^{i,N}_s,\bar{W}^{i,N}_s,\bar{X}^{i,N}_s/\epsilon,\bar{\mu}^N_s)(\bar{X}^{j,N}_s,\bar{X}^{j,N}_s) ds \\
&\hspace{5cm}+ \int_0^t \partial_\mu \psi_l(\bar{X}^{i,N}_s,\bar{W}^{i,N}_s,\bar{X}^{i,N}_s/\epsilon,\bar{\mu}^N_s)(\bar{X}^{j,N}_s)\cdot ( \sigma(\bar{X}^{j,N}_s,\bar{X}^{j,N}_s/\epsilon,\bar{\mu}^N_s) dW_s^j)\biggr]\\
&+\frac{1}{N}\int_0^t A:\nabla_{x} \partial_\mu \psi_l(\bar{X}^{i,N}_s,\bar{W}^{i,N}_s,\bar{X}^{i,N}_s/\epsilon,\bar{\mu}^N_s)(\bar{X}^{i,N}_s) ds + \frac{1}{N \epsilon} \int_0^t A:\nabla_{y} \partial_\mu \psi_l(\bar{X}^{i,N}_s,\bar{W}^{i,N}_s,\bar{X}^{i,N}_s/\epsilon,\bar{\mu}^N_s)(\bar{X}^{i,N}_s)  ds\\
&+\frac{1}{N}\int_0^t \biggl[A[I+\nabla_y\Phi]^\top (\bar{B}^\top)^{-1}\biggr]:\nabla_{p} \partial_\mu \psi_l(\bar{X}^{i,N}_s,\bar{W}^{i,N}_s,\bar{X}^{i,N}_s/\epsilon,\bar{\mu}^N_s)(\bar{X}^{i,N}_s) ds,
\end{align*}
where in all coefficients where the argument is suppressed, the argument is $(\bar{X}^{i,N}_s,\bar{X}^{i,N}_s/\epsilon,\bar{\mu}^N_s)$, and the argument of $\psi_l$ where suppressed is $(\bar{X}^{i,N}_s,\bar{W}^{i,N}_s,\bar{X}^{i,N}_s/\epsilon,\bar{\mu}^N_s)$. Solving for $\frac{1}{\epsilon}\int_0^t f_l g_{x_l}ds$ and plugging into our representation for $h$, we get
\begin{align}
\label{eq:hxin}
g(\bar{X}_t^{i,N},\bar{W}^{i,N}_t)& = g(x^{i,N},0)+\sum_{k=1}^8 B^{i,N}_k(t)
\end{align}
where
\begin{align*}
B^{i,N}_1(t)& = \int_0^t b\cdot \nabla_{x} g +\frac{1}{2} A :\nabla_{x}\nabla_{x} g+\int_0^t A [I+\nabla_y\Phi]^\top(\bar{B}^\top)^{-1}:\nabla_p\nabla_x g +\sum_{l=1}^d \biggl\lbrace [f+\epsilon b]\cdot [\nabla_x \Phi_l g_{x_l}+ \Phi_l \nabla_x g_{x_l}] \\
&+\frac{\epsilon}{2} A:[g_{x_l} \nabla_x\nabla_x\Phi_l  + 2\nabla_x\Phi_l \otimes  \nabla_x g_{x_l} + \nabla_x\nabla_x g_{x_l} \Phi_l  ]+b \cdot [g_{x_l}\nabla_y\Phi_l ]\\
& +A : [g_{x_l}\nabla_x\nabla_y \Phi_l +\nabla_x g_{x_l}\otimes\nabla_y\Phi_l  ]\\
&+\biggl[A[I+\nabla_y\Phi]^\top (\bar{B}^\top)^{-1}\biggr]:[\epsilon \nabla_x\Phi_l\otimes \nabla_pg_{x_l}+\epsilon \nabla_p\nabla_xg_{x_l}\Phi_l+\nabla_y\Phi_l\otimes \nabla_pg_{x_l}]\\
&+\frac{1}{2}\biggl[\bar{B}^{-1}[I+\nabla_y\Phi]A[I+\nabla_y\Phi]^\top (\bar{B}^\top)^{-1}\biggr]:[\epsilon\nabla_p\nabla_pg_{x_l} \Phi_l+\nabla_p\nabla_p g]\biggl\rbrace ds\\
& = \int_0^t \biggl[[I+\epsilon \nabla_x \Phi +\nabla_y \Phi ]b+\nabla_x \Phi f+ A:[\nabla_x\nabla_y\Phi +\frac{\epsilon}{2}\nabla_x\nabla_x \Phi]\biggr]\cdot \nabla_{x} g \\
&+\biggl[[\frac{1}{2}+\epsilon\nabla_x \Phi+\nabla_y\Phi] A+[f+\epsilon b]\otimes \Phi \biggr]:\nabla_{x}\nabla_{x} g +\biggl[[I+ \epsilon \nabla_x\Phi+\nabla_y\Phi] A[I+\nabla_y\Phi]^\top(\bar{B}^\top)^{-1}\biggr]:\nabla_p\nabla_x g\\
&+\frac{1}{2}\biggl[\bar{B}^{-1}[I+\nabla_y\Phi]A[I+\nabla_y\Phi]^\top (\bar{B}^\top)^{-1}\biggr]:\nabla_p\nabla_p g\\
&\hspace{1cm}+\sum_{l=1}^d\biggl\lbrace\frac{\epsilon}{2} [A :\nabla_x\nabla_x g_{x_l}]\Phi_l +\epsilon \biggl[A[I+\nabla_y\Phi]^\top (\bar{B}^\top)^{-1}\biggr]: \nabla_p\nabla_xg_{x_l}\Phi_l\\
&\hspace{4cm}+\frac{\epsilon}{2}\biggl[\bar{B}^{-1}[I+\nabla_y\Phi]A[I+\nabla_y\Phi]^\top (\bar{B}^\top)^{-1}\biggr]:\nabla_p\nabla_pg_{x_l} \Phi_l\biggr\rbrace ds\\
B^{i,N}_2(t)& = \int_0^t [\sigma u_i^N]\cdot \nabla_{x} g +\sum_{l=1}^d \biggl\lbrace \epsilon[\sigma u_i^N]\cdot [\nabla_x \Phi_l g_{x_l}+ \Phi_l \nabla_x g_{x_l}] +[\sigma u_i^N]\cdot [g_{x_l}\nabla_y \Phi_l] \biggr\rbrace ds\\
& = \int_0^t \biggl[[I + \epsilon\nabla_x \Phi +\nabla_y \Phi]\sigma u_i^N\biggr]\cdot \nabla_{x} g + \epsilon[\sigma u^N_i]\otimes \Phi : \nabla_{x}\nabla_{x} g ds\\
B^{i,N}_3(t)& = \int_0^t \nabla_{x} g\cdot (\sigma dW^i_s) +\int_0^t \nabla_p g\cdot (\bar{B}^{-1}[I+\nabla_y\Phi]\sigma dW^i_s)\\
&+\sum_{l=1}^d\biggl\lbrace \int_0^t  [\epsilon [\nabla_x \Phi_l g_{x_l}+ \Phi_l \nabla_x g_{x_l}]+[g_{x_l}\nabla_y \Phi_l]]\cdot (\sigma dW^i_s)+\epsilon\int_0^t \Phi_l\nabla_p g_{x_l} \cdot (\bar{B}^{-1}[I+\nabla_y\Phi]\sigma dW^i_s)\biggr\rbrace \\
& = \int_0^t \nabla_{x} g\cdot ([I +\epsilon\nabla_x\Phi +\nabla_y \Phi]\sigma dW^i_s )+\int_0^t \nabla_p g\cdot (\bar{B}^{-1}[I+\nabla_y\Phi]\sigma dW^i_s) \\
&+\int_0^t \epsilon \nabla_{x}\nabla_{x} g: \Phi\otimes (\sigma dW^i_s)+\int_0^t \epsilon \nabla_{x}\nabla_p g: \Phi\otimes (\bar{B}^{-1}[I+\nabla_y\Phi]\sigma dW^i_s)\\
B^{i,N}_4(t)& = \epsilon \sum_{l=1}^d \biggl\lbrace g_{x_l}(x^{i,N},0)\Phi_l(x^{i,N},x^{i,N}/\epsilon,\bar{\mu}^N_0)-g_{x_l}(\bar{X}^{i,N}_t,\bar{W}^{i,N}_t)\Phi_l (\bar{X}^{i,N}_t,\bar{X}^{i,N}_t/\epsilon,\bar{\mu}^N_t)\biggr\rbrace\\
& = \epsilon \biggl[\nabla_{x} g(x^{i,N}) \cdot\Phi(x^{i,N},x^{i,N}/\epsilon,\bar{\mu}^N_0) - \nabla_{x} g(\bar{X}^{i,N}_t)\cdot \Phi (\bar{X}^{i,N}_t,\bar{X}^{i,N}_t/\epsilon,\bar{\mu}^N_t)\biggr]\\
B^{i,N}_5(t)& =\frac{\epsilon}{N}\sum_{l=1}^d  \biggl\lbrace \sum_{j=1}^N \biggl[\int_0^t [g_{x_l}\partial_\mu \Phi_l(\bar{X}^{i,N}_s,\bar{X}^{i,N}_s/\epsilon,\bar{\mu}^N_s)(\bar{X}^{j,N}_s)]\\
&\hspace{4cm}\cdot \biggl(\frac{1}{\epsilon}f(\bar{X}^{j,N}_s,\bar{X}^{j,N}_s/\epsilon,\bar{\mu}^N_s) +b(\bar{X}^{j,N}_s,\bar{X}^{j,N}_s/\epsilon,\bar{\mu}^N_s)\biggr)\\
&\hspace{1cm}+\frac{1}{2}A(\bar{X}^{j,N}_s,\bar{X}^{j,N}_s/\epsilon,\bar{\mu}^N_s):[g_{x_l}\partial_v\partial_\mu \Phi_l(\bar{X}^{i,N}_s,\bar{X}^{i,N}_s/\epsilon,\bar{\mu}^N_s)(\bar{X}^{j,N}_s)]\\
&\hspace{1cm} +\frac{1}{2N}A(\bar{X}^{j,N}_s,\bar{X}^{j,N}_s/\epsilon,\bar{\mu}^N_s):[g_{x_l}\partial^2_\mu \Phi_l(\bar{X}^{i,N}_s,\bar{X}^{i,N}_s/\epsilon,\bar{\mu}^N_s)(\bar{X}^{j,N}_s,\bar{X}^{j,N}_s)] ds\biggr]\biggr\rbrace\\
& = \int_0^t\biggl[\int_{\R^d} \partial_\mu \Phi(\bar{X}^{i,N}_s,\bar{X}^{i,N}_s/\epsilon,\bar{\mu}^N_s)(v)\biggl[f(v,v/\epsilon,\bar{\mu}^N_s)+ \epsilon b(v,v/\epsilon,\bar{\mu}^N_s) \biggr]+ \frac{1}{2}A(v,v/\epsilon,\bar{\mu}^N_s)\\
&\hspace{2cm}:\biggl[\epsilon \partial_v\partial_\mu \Phi(\bar{X}^{i,N}_s,\bar{X}^{i,N}_s/\epsilon,\bar{\mu}^N_s)(v) + \frac{\epsilon}{N}\partial^2_\mu\Phi(\bar{X}^{i,N}_s,\bar{X}^{i,N}_s/\epsilon,\bar{\mu}^N_s)(v,v)  \biggr] \bar{\mu}^N_s(dv)\biggr]\\
&\hspace{4cm}\cdot \nabla_{x} g ds\\
B^{i,N}_6(t)& = \frac{\epsilon}{N}\sum_{l=1}^d  \biggl\lbrace \sum_{j=1}^N \biggl[\int_0^t [g_{x_l}\partial_\mu \Phi_l(\bar{X}^{i,N}_s,\bar{X}^{i,N}_s/\epsilon,\bar{\mu}^N_s)(\bar{X}^{j,N}_s)]\cdot\biggl( \sigma(\bar{X}^{j,N}_s,\bar{X}^{j,N}_s/\epsilon,\bar{\mu}^N_s) u_j^N(s)\biggr) ds\biggr]\biggr\rbrace \\
& = \int_0^t \biggl[\frac{\epsilon}{N}\sum_{j=1}^N \partial_\mu \Phi(\bar{X}^{i,N}_s,\bar{X}^{i,N}_s/\epsilon,\bar{\mu}^N_s)(\bar{X}^{j,N}_s)\sigma(\bar{X}^{j,N}_s,\bar{X}^{j,N}_s/\epsilon,\bar{\mu}^N_s) u_j^N(s)\biggr] \cdot \nabla_{x} g ds\\
B^{i,N}_7(t)& = \frac{\epsilon}{N}\sum_{l=1}^d  \biggl\lbrace \sum_{j=1}^N \biggl[\int_0^t [g_{x_l}\partial_\mu \Phi_l(\bar{X}^{i,N}_s,\bar{X}^{i,N}_s/\epsilon,\bar{\mu}^N_s)(\bar{X}^{j,N}_s)]\cdot ( \sigma(\bar{X}^{j,N}_s,\bar{X}^{j,N}_s/\epsilon,\bar{\mu}^N_s) dW_s^j)\biggr]\biggr\rbrace \\
& = \int_0^t \nabla_{x} g \cdot \biggl[ \frac{\epsilon}{N} \sum_{j=1}^N \partial_\mu \Phi(\bar{X}^{i,N}_s,\bar{X}^{i,N}_s/\epsilon,\bar{\mu}^N_s)(\bar{X}^{j,N}_s)\sigma(\bar{X}^{j,N}_s,\bar{X}^{j,N}_s/\epsilon,\bar{\mu}^N_s)dW^j_s\biggr]\\
B^{i,N}_8(t)& =  \frac{1}{N}\sum_{l=1}^d \biggl\lbrace\epsilon \int_0^t A:[\nabla_{x}g_{x_l}\otimes \partial_\mu \Phi_l(\bar{X}^{i,N}_s,\bar{X}^{i,N}_s/\epsilon,\bar{\mu}^N_s)(\bar{X}^{i,N}_s)\\
&\hspace{1cm}+g_{x_l}\nabla_x\partial_\mu \Phi_l(\bar{X}^{i,N}_s,\bar{X}^{i,N}_s/\epsilon,\bar{\mu}^N_s)(\bar{X}^{i,N}_s)]\\
&\hspace{1cm}+\biggl[A[I+\nabla_y\Phi]^\top (\bar{B}^\top)^{-1}\biggr]:\nabla_{p}g_{x_l} \otimes\partial_\mu \Phi_l(\bar{X}^{i,N}_s,\bar{X}^{i,N}_s/\epsilon,\bar{\mu}^N_s)(\bar{X}^{i,N}_s) ds\\
&\hspace{1cm} +  \int_0^t A:[g_{x_l}\nabla_{y} \partial_\mu \Phi_l(\bar{X}^{i,N}_s,\bar{X}^{i,N}_s/\epsilon,\bar{\mu}^N_s)(\bar{X}^{i,N}_s)]  ds \biggr\rbrace\\
& = \int_0^t\biggl[ A:[\frac{1}{N}\nabla_{y} \partial_\mu \Phi(\bar{X}^{i,N}_s,\bar{X}^{i,N}_s/\epsilon,\bar{\mu}^N_s)(\bar{X}^{i,N}_s) + \frac{\epsilon}{N}\nabla_{x} \partial_\mu \Phi(\bar{X}^{i,N}_s,\bar{X}^{i,N}_s/\epsilon,\bar{\mu}^N_s)(\bar{X}^{i,N}_s)  ] \biggr]\cdot \nabla_{x} g \\
&\hspace{1cm}+ \biggl[\frac{\epsilon}{N}\partial_\mu \Phi(\bar{X}^{i,N}_s,\bar{X}^{i,N}_s/\epsilon,\bar{\mu}^N_s)(\bar{X}^{i,N}_s) A \biggr]:\nabla_{x}\nabla_{x} g\\
&+\frac{\epsilon}{N}\biggl[\partial_\mu \Phi(\bar{X}^{i,N}_s,\bar{X}^{i,N}_s/\epsilon,\bar{\mu}^N_s)(\bar{X}^{i,N}_s)A[I+\nabla_y\Phi]^\top (\bar{B}^\top)^{-1}\biggr]:\nabla_p\nabla_x g ds .
\end{align*}

Rearranging the above, and using that by symmetry
\begin{align*}
\frac{1}{2}D(x,y,\mu):\nabla_x\nabla_x g(x,p)=[\nabla_y\Phi(x,y,\mu)+\frac{1}{2}]A(x,y,\mu):\nabla_x\nabla_x g(x,p)+f\otimes \Phi(x,y,\mu):\nabla_x\nabla_x g(x,p),
\end{align*}
for all $x,p\in\R^d,y\in\bb{T}^d,\mu\in\mc{P}(\R^d),$ we get that
\begin{align*}
g(\bar{X}^{i,N}_t,\bar{W}^{i,N}_t) -g(\bar{X}^{i,N}_s,\bar{W}^{i,N}_s)& = \int_s^t \int_{\bb{T}^d\times\R^m}\mc{A}[g]( \bar{X}^{i,N}_\tau,y,z,\bar{\mu}^N_\tau,\bar{W}^{i,N}_\tau)\rho^{i,N}_\tau(dydz)d\tau\\
&+\int_s^t \sqrt{\int_{\bb{T}^d\times\R^m}\tilde{D}(\bar{X}^{i,N}_\tau,y,\bar{\mu}^N_\tau)  \rho^{i,N}_\tau(dydz)}:\nabla_p\nabla_x g(\bar{X}^{i,N}_\tau,\bar{W}^{i,N}_\tau) d\tau\\
&+\sum_{k=1}^{7} D^{i,N}_k,
\end{align*}
where
\begin{align*}
D^{i,N}_1& = \int_s^t \biggl[\frac{1}{N}A:[\nabla_{y} \partial_\mu \Phi(\bar{X}^{i,N}_\tau,\bar{X}^{i,N}_\tau/\epsilon,\bar{\mu}^N_\tau)(\bar{X}^{i,N}_\tau)]\\
&+\epsilon\biggl( \nabla_x \Phi b+ \frac{1}{2}A:\nabla_x\nabla_x \Phi +\frac{1}{N}A:\nabla_{x} \partial_\mu \Phi(\bar{X}^{i,N}_\tau,\bar{X}^{i,N}_\tau/\epsilon,\bar{\mu}^N_\tau)(\bar{X}^{i,N}_\tau)\\
&\hspace{1cm} +\biggl\lbrace\int_{\R^d} \partial_\mu \Phi(\bar{X}^{i,N}_\tau,\bar{X}^{i,N}_\tau/\epsilon,\bar{\mu}^N_\tau)(v) b(v,v/\epsilon,\bar{\mu}^N_\tau) +\frac{1}{2}A(v,v/\epsilon,\bar{\mu}^N_\tau)\\
&\hspace{2cm}:\biggl[\partial_v\partial_\mu \Phi(\bar{X}^{i,N}_\tau,\bar{X}^{i,N}_\tau/\epsilon,\bar{\mu}^N_\tau)(v) + \frac{1}{N}\partial^2_\mu\Phi(\bar{X}^{i,N}_\tau,\bar{X}^{i,N}_\tau/\epsilon,\bar{\mu}^N_\tau)(v,v)  \biggr] \bar{\mu}^N_\tau(dv)\biggr\rbrace\biggr)\biggr]\\
&\hspace{11cm}\cdot \nabla_{x} g(\bar{X}^{i,N}_\tau,\bar{W}^{i,N}_\tau)d\tau \\
&+\epsilon\int_s^t\biggl[\nabla_x \Phi A+b\otimes \Phi +\frac{1}{N}\partial_\mu \Phi(\bar{X}^{i,N}_\tau,\bar{X}^{i,N}_\tau/\epsilon,\bar{\mu}^N_\tau)(\bar{X}^{i,N}_\tau) A \biggr]:\nabla_{x}\nabla_{x} g(\bar{X}^{i,N}_\tau,\bar{W}^{i,N}_\tau)d\tau \\
&+\epsilon\int_s^t \biggl[\nabla_x\Phi A[I+\nabla_y\Phi]^\top(\bar{B}^\top)^{-1}+\frac{1}{N}\partial_\mu \Phi(\bar{X}^{i,N}_s,\bar{X}^{i,N}_s/\epsilon,\bar{\mu}^N_s)(\bar{X}^{i,N}_s)A[I+\nabla_y\Phi]^\top (\bar{B}^\top)^{-1}\biggr]\\
&\hspace{11cm}:\nabla_p\nabla_xg(\bar{X}^{i,N}_\tau,\bar{W}^{i,N}_\tau)d\tau\\
&+\epsilon\int_s^t\sum_{l=1}^d\biggl\lbrace\frac{1}{2} [A :\nabla_x\nabla_x g_{x_l}(\bar{X}^{i,N}_\tau,\bar{W}^{i,N}_\tau)]\Phi_l+\biggl[A[I+\nabla_y\Phi]^\top (\bar{B}^\top)^{-1}\biggr]: \nabla_p\nabla_xg_{x_l}\Phi_l\\
&\hspace{4cm}+\frac{1}{2}\biggl[\bar{B}^{-1}[I+\nabla_y\Phi]A[I+\nabla_y\Phi]^\top (\bar{B}^\top)^{-1}\biggr]:\nabla_p\nabla_pg_{x_l}(\bar{X}^{i,N}_\tau,\bar{W}^{i,N}_\tau) \Phi_l\biggr\rbrace  d\tau\\
D^{i,N}_2& = \epsilon\int_s^t \biggl[\nabla_x \Phi \sigma u_i^N(\tau) +\biggl\lbrace\frac{1}{N}\sum_{j=1}^N \partial_\mu \Phi(\bar{X}^{i,N}_\tau,\bar{X}^{i,N}_\tau/\epsilon,\bar{\mu}^N_\tau)(\bar{X}^{j,N}_\tau)\sigma(\bar{X}^{j,N}_\tau,\bar{X}^{j,N}_\tau/\epsilon,\bar{\mu}^N_\tau) u_j^N(\tau)\biggr\rbrace\biggr]\\
&\hspace{11cm}\cdot \nabla_{x} g(\bar{X}^{i,N}_\tau,\bar{W}^{i,N}_\tau)\\
& + [\sigma u^N_i(\tau)]\otimes \Phi : \nabla_{x}\nabla_{x} g(\bar{X}^{i,N}_\tau,\bar{W}^{i,N}_\tau) d\tau\\
D^{i,N}_3& = \int_s^t \nabla_{x} g(\bar{X}^{i,N}_\tau,\bar{W}^{i,N}_\tau)\cdot ([I +\epsilon\nabla_x\Phi +\nabla_y \Phi]\sigma dW^i_\tau ) +\int_s^t \epsilon \nabla_{x}\nabla_{x} g(\bar{X}^{i,N}_\tau,\bar{W}^{i,N}_\tau): \Phi\otimes (\sigma dW^i_\tau)\\
& +\int_s^t \nabla_{x} g(\bar{X}^{i,N}_\tau,\bar{W}^{i,N}_\tau)\cdot \biggl[ \frac{\epsilon}{N} \sum_{j=1}^N \partial_\mu \Phi(\bar{X}^{i,N}_\tau,\bar{X}^{i,N}_\tau/\epsilon,\bar{\mu}^N_\tau)(\bar{X}^{j,N}_\tau)\sigma(\bar{X}^{j,N}_\tau,\bar{X}^{j,N}_\tau/\epsilon,\bar{\mu}^N_\tau)dW^j_\tau\biggr]\\
&+\int_s^t \nabla_p g(\bar{X}^{i,N}_\tau,\bar{W}^{i,N}_\tau)\cdot (\bar{B}^{-1}[I+\nabla_y\Phi]\sigma dW^i_s)+\int_s^t \epsilon \nabla_{x}\nabla_{p} g(\bar{X}^{i,N}_\tau,\bar{W}^{i,N}_\tau): \Phi\otimes (\bar{B}^{-1}[I+\nabla_y\Phi]\sigma dW^i_s)\\
D^{i,N}_4& = \epsilon \biggl[\nabla_{x} g(\bar{X}^{i,N}_\tau,\bar{W}^{i,N}_\tau) \cdot\Phi(\bar{X}^{i,N}_s,\bar{X}^{i,N}_s/\epsilon,\bar{\mu}^N_s) - \nabla_{x} g(\bar{X}^{i,N}_\tau,\bar{W}^{i,N}_\tau)\cdot \Phi (\bar{X}^{i,N}_t,\bar{X}^{i,N}_t/\epsilon,\bar{\mu}^N_t)\biggr]\\
D^{i,N}_5& = \int_s^t \biggl[\frac{1}{N}\sum_{j=1}^N \partial_\mu \Phi(\bar{X}^{i,N}_\tau,\bar{X}^{i,N}_\tau/\epsilon,\bar{\mu}^N_\tau)(\bar{X}^{j,N}_\tau)f(\bar{X}^{j,N}_\tau,\bar{X}^{j,N}_\tau/\epsilon,\bar{\mu}^N_\tau)\biggr]\cdot \nabla_{x} g(\bar{X}^{i,N}_\tau,\bar{W}^{i,N}_\tau)d\tau\\
D^{i,N}_6& =\int_s^t\biggl[\frac{1}{2}\bar{B}^{-1}(\bar{X}^{i,N}_\tau,\bar{\mu}^N_\tau)\tilde{D}(\bar{X}^{i,N}_\tau,\bar{X}^{i,N}_\tau/\epsilon,\bar{\mu}^N_\tau) (\bar{B}^\top)^{-1}(\bar{X}^{i,N}_\tau,\bar{\mu}^N_\tau)-\frac{1}{2}I\biggr]:\nabla_p\nabla_pg(\bar{X}^{i,N}_\tau,\bar{W}^{i,N}_\tau)d\tau\\
D^{i,N}_7&=\int_s^t\biggl[\tilde{D}(\bar{X}^{i,N}_\tau,\bar{X}^{i,N}_\tau/\epsilon,\bar{\mu}^N_\tau) (\bar{B}^\top)^{-1}(\bar{X}^{i,N}_\tau,\bar{\mu}^N_\tau)-\sqrt{\int_{\bb{T}^d\times\R^m}\tilde{D}(\bar{X}^{i,N}_\tau,y,\bar{\mu}^N_\tau)  \rho^{i,N}_\tau(dydz)}\biggr]\\
&\hspace{11cm}:\nabla_p\nabla_xg(\bar{X}^{i,N}_\tau,\bar{W}^{i,N}_\tau)d\tau,
\end{align*}

$\rho^{i,N}$ are defined as in Equation \eqref{eq:occmeas} and the arguments which are omitted are taken to be $(\bar{X}^{i,N}_\tau,\bar{X}^{i,N}_\tau/\epsilon,\bar{\mu}^N_\tau)$, or in the case of $\bar{B}$, $(\bar{X}^{i,N}_\tau,\bar{\mu}^N_\tau)$. Thus
\begin{align*}
\E^{Q^N}\biggl[\Psi(M_g^{Q^N}(t)-M_g^{Q^N}(s))\biggr] =  \frac{1}{N}\sum_{i=1}^N \Psi(\bar{X}^{i,N},\rho^{i,N},\bar{W}^{i,N})\sum_{k=1}^{7} D^{i,N}_k.
\end{align*}

Using Assumption \ref{assumption:LipschitzandBounded} and Proposition \ref{prop:Phiexistenceregularity}, we first show that for large enough $N$:
\begin{align}\label{eq:vanishingmartingaleprobterms}
\E\biggl[\biggr|\frac{1}{N}\sum_{i=1}^N  \Psi(\bar{X}^{i,N},\rho^{i,N},\bar{W}^{i,N})\sum_{k=1}^4 D^{i,N}_k\biggr|\biggr]\leq \max\br{\epsilon,\frac{1}{N^{1/2}}}C
\end{align}
where $C$ depends only on the sup norms of $\Psi$ and $h$ and its first 3 derivatives. This vanishes as $N\toinf$, so once we prove the following (\ref{eq:D5vanishes}), Lemma \ref{lem:marprob2} will be proved:
\begin{align}
\lim_{N\toinf}\E\biggl[| \frac{1}{N}\sum_{i=1}^N \Psi(\bar{X}^{i,N},\rho^{i,N},\bar{W}^{i,N}) D_k^{i,N}|\biggr]=0,k=5,6,7.\label{eq:D5vanishes}
\end{align}

Let's first show \eqref{eq:vanishingmartingaleprobterms}.
Firstly, we observe that
\begin{align*}
\E\biggl[\biggr|\frac{1}{N}\sum_{i=1}^N  \Psi(\bar{X}^{i,N},\rho^{i,N},\bar{W}^{i,N})D^{i,N}_4\biggr|\biggr]&\leq 2\epsilon \norm{\Psi}_\infty\norm{\nabla_x g}_\infty  \frac{1}{N}\sum_{i=1}^N \E[\sup_{t\in [0,1]}  |\Phi(\bar{X}^{i,N}_t,\bar{X}^{i,N}_t/\epsilon,\bar{\mu}^N_t)|] \\
&\leq \epsilon C
\end{align*}
by Proposition \ref{prop:Phiexistenceregularity}.

Next, we observe that
\begin{align*}
&\E\biggl[\biggr|\frac{1}{N}\sum_{i=1}^N  \Psi(\bar{X}^{i,N},\rho^{i,N},\bar{W}^{i,N})D^{i,N}_2\biggr|\biggr]\\
&\leq \epsilon C(m,d)\norm{\Psi}_\infty\norm{g}_{C^2_b(\R^d\times\R^d)} \frac{1}{N}\sum_{i=1}^N\E\biggl[ \biggl( \int_{s}^{t} | \nabla_x \Phi \sigma|^2 d\tau  \int_0^1 |u^N_i(\tau)|^2 d\tau   \biggr)^{1/2}\\
&+ \biggl(\int_{s}^{t} |\Phi|^2 |\sigma|^2 d\tau \int_0^1 |u^N_i(\tau)|^2 d\tau   \biggr)^{1/2}\\
&+\frac{1}{N} \sum_{j=1}^N \biggl(\int_s^t|\partial_\mu \Phi(\bar{X}^{i,N}_\tau,\bar{X}^{i,N}_\tau/\epsilon,\bar{\mu}^N_\tau)(\bar{X}^{j,N}_\tau)\sigma(\bar{X}^{j,N}_\tau,\bar{X}^{j,N}_\tau/\epsilon,\bar{\mu}^N_\tau)|^2 ds\int_0^1 |u_j^N(\tau)|^2 d\tau\biggr)^{1/2}\biggr]\\
&\leq \epsilon C(m,d)\norm{\Psi}_\infty\norm{g}_{C^2_b(\R^d\times\R^d)}\biggl\lbrace|t-s|^{1/2} \biggl(\frac{1}{N}\sum_{i=1}^N\E\biggl[\int_0^1 |u^N_i(\tau)|^2 d\tau   \biggr]\biggr)^{1/2}\\
&+\E\biggl[\biggl(\int_s^t \norm{\partial_\mu \Phi(\bar{X}^{i,N}_\tau,\bar{X}^{i,N}_\tau/\epsilon,\bar{\mu}^N_\tau)(\cdot)}_{L^2(\R^d,\bar{\mu}^N_\tau)}^2 d\tau\biggr)^{1/2} \biggl(\frac{1}{N} \sum_{j=1}^N \int_0^1 |u_j^N(\tau)|^2 d\tau\biggr)^{1/2}\biggr]\biggr\rbrace\\
&\leq  \epsilon C(m,d)\norm{\Psi}_\infty\norm{g}_{C^2_b(\R^d\times\R^d)}|t-s|^{1/2} \biggl(\frac{1}{N}\sum_{i=1}^N\E\biggl[\int_0^1 |u^N_i(\tau)|^2 d\tau   \biggr]\biggr)^{1/2}\\
&\leq \epsilon C(m,d)C_{con}^{1/2}\norm{\Psi}_\infty\norm{g}_{C^2_b(\R^d\times\R^d)}
\end{align*}
by H\"older's inequality, monotonicity of the time integrals, Assumption \ref{assumption:LipschitzandBounded}, Proposition \ref{prop:Phiexistenceregularity}, Jensen's inequality, and the control bound \eqref{eq:controlL2boundunspecific}.

In addition,
\begin{align*}
&\E\biggl[\biggr|\frac{1}{N}\sum_{i=1}^N  \Psi(\bar{X}^{i,N},\rho^{i,N},\bar{W}^{i,N})D^{i,N}_3\biggr|\biggr]\\
&\E\biggl[\biggr|\frac{1}{N}\sum_{i=1}^N  \Psi(\bar{X}^{i,N},\rho^{i,N},\bar{W}^{i,N})D^{i,N}_3\biggr|^2\biggr]^{1/2} \\
&\leq C(m,d)\norm{\Psi}_\infty\norm{g}_{C^2_b(\R^d\times\R^d)}\E\biggl[\frac{1}{N^2}\sum_{i=1}^N\int_s^t 1+\epsilon^2|\nabla_x\Phi|^2+|\nabla_y\Phi|^2+|\Phi|^2 \\
&\hspace{8cm}+ \epsilon^2\norm{\partial_\mu \Phi(\bar{X}^{i,N}_\tau,\bar{X}^{i,N}_\tau/\epsilon,\bar{\mu}^N_\tau)(\cdot)}^2_{L^2(\R^d,\bar{\mu}^N_\tau)}d\tau \biggr]^{1/2}\\
&\leq  C(m,d)\norm{\Psi}_\infty\norm{g}_{C^2_b(\R^d\times\R^d)}\frac{1}{N^{1/2}}|t-s|^{1/2}(1+\epsilon)\\
&\leq  C(m,d)\norm{\Psi}_\infty\norm{g}_{C^2_b(\R^d\times\R^d)}\frac{1}{N^{1/2}}.
\end{align*}
by Jensen's Inequality, It\^o Isometry (using orthogonality of the martingales and $\mc{G}_s$-measurability of $\Psi$ - see e.g. \cite{BDF} p.93), Assumption \ref{assumption:LipschitzandBounded}, Proposition \ref{prop:Phiexistenceregularity}, and Corollary \ref{cor:limitingcoefficientsregularity}.

Lastly, we observe that
\begin{align*}
&\E\biggl[\biggr|\frac{1}{N}\sum_{i=1}^N  \Psi(\bar{X}^{i,N},\rho^{i,N},\bar{W}^{i,N})D^{i,N}_1\biggr|\biggr]\\
&\leq C(m,d)\norm{\Psi}_\infty\norm{g}_{C^3_b(\R^d\times\R^d}\frac{1}{N}\sum_{i=1}^N\E\biggl[\int_s^{t}\frac{1}{N}\biggl\lbrace|\nabla_{y} \partial_\mu \Phi(\bar{X}^{i,N}_\tau,\bar{X}^{i,N}_\tau/\epsilon,\bar{\mu}^N_\tau)(\bar{X}^{i,N}_\tau)|\\
&+\epsilon |\nabla_{x} \partial_\mu \Phi(\bar{X}^{i,N}_\tau,\bar{X}^{i,N}_\tau/\epsilon,\bar{\mu}^N_\tau)(\bar{X}^{i,N}_\tau)|+\epsilon|\partial_\mu \Phi(\bar{X}^{i,N}_\tau,\bar{X}^{i,N}_\tau/\epsilon,\bar{\mu}^N_\tau)(\bar{X}^{i,N}_\tau)|(1+|\nabla_y\Phi|)\ \biggr\rbrace \\
&+\epsilon\biggl\lbrace\int_{\R^d} |\partial_\mu \Phi(\bar{X}^{i,N}_\tau,\bar{X}^{i,N}_\tau/\epsilon,\bar{\mu}^N_\tau)(v)|+|\partial_v\partial_\mu \Phi(\bar{X}^{i,N}_\tau,\bar{X}^{i,N}_\tau/\epsilon,\bar{\mu}^N_\tau)(v)| + \frac{1}{N}|\partial^2_\mu\Phi(\bar{X}^{i,N}_\tau,\bar{X}^{i,N}_\tau/\epsilon,\bar{\mu}^N_\tau)(v,v)| \bar{\mu}^N_\tau(dv)\\
&+|\nabla_x\Phi| (1+|\nabla_y\Phi|)+|\nabla_x\nabla_x \Phi|+(1+|\nabla_y\Phi|)^2 |\Phi| \biggr\rbrace   d\tau \biggr]\\
&\leq C(m,d)\norm{\Psi}_\infty\norm{g}_{C^3_b(\R^d\times\R^d}\frac{1}{N}\sum_{i=1}^N\E\biggl[\int_s^{t}\frac{1}{N}\biggl\lbrace|\nabla_{y} \partial_\mu \Phi(\bar{X}^{i,N}_\tau,\bar{X}^{i,N}_\tau/\epsilon,\bar{\mu}^N_\tau)(\bar{X}^{i,N}_\tau)|\\
&+\epsilon |\nabla_{x} \partial_\mu \Phi(\bar{X}^{i,N}_\tau,\bar{X}^{i,N}_\tau/\epsilon,\bar{\mu}^N_\tau)(\bar{X}^{i,N}_\tau)|+\epsilon|\partial_\mu \Phi(\bar{X}^{i,N}_\tau,\bar{X}^{i,N}_\tau/\epsilon,\bar{\mu}^N_\tau)(\bar{X}^{i,N}_\tau)|\ \biggr\rbrace \\
&+\epsilon\biggl\lbrace\norm{\partial_\mu \Phi(\bar{X}^{i,N}_\tau,\bar{X}^{i,N}_\tau/\epsilon,\bar{\mu}^N_\tau)(\cdot)}_{L^2(\R^d,\bar{\mu}^N_\tau)}+\norm{\partial_v\partial_\mu \Phi(\bar{X}^{i,N}_\tau,\bar{X}^{i,N}_\tau/\epsilon,\bar{\mu}^N_\tau)(\cdot)}_{L^2(\R^d,\bar{\mu}^N_\tau)} \\
&+ \norm{\partial^2_\mu\Phi(\bar{X}^{i,N}_\tau,\bar{X}^{i,N}_\tau/\epsilon,\bar{\mu}^N_\tau)(\cdot,\circ)}_{L^2(\R^d,\bar{\mu}^N_\tau)\otimes L^2(\R^d;\bar{\mu}^N_\tau)}+1 \biggr\rbrace   d\tau \biggr]\\
&\leq C(m,d)\norm{\Psi}_\infty\norm{g}_{C^3_b(\R^d\times\R^d}\biggl\lbrace\E\biggl[\int_s^{t}\frac{1}{N^2}\sum_{i=1}^N\sum_{j=1}^N\biggl\lbrace|\nabla_{y} \partial_\mu \Phi(\bar{X}^{i,N}_\tau,\bar{X}^{i,N}_\tau/\epsilon,\bar{\mu}^N_\tau)(\bar{X}^{j,N}_\tau)|\\
&+\epsilon |\nabla_{x} \partial_\mu \Phi(\bar{X}^{i,N}_\tau,\bar{X}^{i,N}_\tau/\epsilon,\bar{\mu}^N_\tau)(\bar{X}^{j,N}_\tau)|+\epsilon|\partial_\mu \Phi(\bar{X}^{i,N}_\tau,\bar{X}^{i,N}_\tau/\epsilon,\bar{\mu}^N_\tau)(\bar{X}^{j,N}_\tau)|\biggr\rbrace\1_{i=j} d\tau\biggr] +|t-s|\epsilon\biggr\rbrace \\
&\leq C(m,d)\norm{\Psi}_\infty\norm{g}_{C^3_b(\R^d\times\R^d}\biggl\lbrace\E\biggl[\int_s^{t}\frac{1}{N^2}\sum_{i=1}^N\biggl(\sum_{j=1}^N\biggl\lbrace|\nabla_{y} \partial_\mu \Phi(\bar{X}^{i,N}_\tau,\bar{X}^{i,N}_\tau/\epsilon,\bar{\mu}^N_\tau)(\bar{X}^{j,N}_\tau)|\\
&+\epsilon |\nabla_{x} \partial_\mu \Phi(\bar{X}^{i,N}_\tau,\bar{X}^{i,N}_\tau/\epsilon,\bar{\mu}^N_\tau)(\bar{X}^{j,N}_\tau)|+\epsilon|\partial_\mu \Phi(\bar{X}^{i,N}_\tau,\bar{X}^{i,N}_\tau/\epsilon,\bar{\mu}^N_\tau)(\bar{X}^{j,N}_\tau)|\biggr\rbrace^2\sum_{j=1}^N(\1_{i=j})^2\biggr)^{1/2} d\tau\biggr] +|t-s|\epsilon\biggr\rbrace \\
&\leq C(m,d)\norm{\Psi}_\infty\norm{g}_{C^3_b(\R^d\times\R^d}\biggl\lbrace\E\biggl[\int_s^{t}\frac{1}{N^2}\sum_{i=1}^N\biggl(\sum_{j=1}^N|\nabla_{y} \partial_\mu \Phi(\bar{X}^{i,N}_\tau,\bar{X}^{i,N}_\tau/\epsilon,\bar{\mu}^N_\tau)(\bar{X}^{j,N}_\tau)|^2\\
&+\epsilon^2 |\nabla_{x} \partial_\mu \Phi(\bar{X}^{i,N}_\tau,\bar{X}^{i,N}_\tau/\epsilon,\bar{\mu}^N_\tau)(\bar{X}^{j,N}_\tau)|^2+\epsilon^2|\partial_\mu \Phi(\bar{X}^{i,N}_\tau,\bar{X}^{i,N}_\tau/\epsilon,\bar{\mu}^N_\tau)(\bar{X}^{j,N}_\tau)|^2\biggr)^{1/2} d\tau\biggr] +|t-s|\epsilon\biggr\rbrace \\
&\leq C(m,d)\norm{\Psi}_\infty\norm{g}_{C^3_b(\R^d\times\R^d}\biggl\lbrace\E\biggl[\frac{1}{N^{3/2}}\sum_{i=1}^N\int_s^{t}\biggl(\norm{|\nabla_{y} \partial_\mu \Phi(\bar{X}^{i,N}_\tau,\bar{X}^{i,N}_\tau/\epsilon,\bar{\mu}^N_\tau)(\cdot)}_{L^2(\R^d,\tilde{\mu}^N_\tau)}\\
&+\epsilon \norm{\nabla_{x} \partial_\mu \Phi(\bar{X}^{i,N}_\tau,\bar{X}^{i,N}_\tau/\epsilon,\bar{\mu}^N_\tau)(\cdot)}_{L^2(\R^d,\tilde{\mu}^N_\tau)}+\epsilon\norm{\partial_\mu \Phi(\bar{X}^{i,N}_\tau,\bar{X}^{i,N}_\tau/\epsilon,\bar{\mu}^N_\tau)(\cdot)}_{L^2(\R^d,\tilde{\mu}^N_\tau)}\biggr) d\tau\biggr] +|t-s|\epsilon\biggr\rbrace \\
&\leq C(m,d)\norm{\Psi}_\infty\norm{g}_{C^3_b(\R^d\times\R^d}|t-s|\biggl\lbrace\frac{1}{N^{1/2}}+\frac{\epsilon}{N^{1/2}}+\epsilon\biggr\rbrace \\
&\leq C(m,d)\norm{\Psi}_\infty\norm{g}_{C^3_b(\R^d\times\R^d}\max\br{\frac{1}{N^{1/2}},\epsilon}
\end{align*}
by Assumption \ref{assumption:LipschitzandBounded}, Proposition \ref{prop:Phiexistenceregularity}, and Corollary \ref{cor:limitingcoefficientsregularity}. So indeed \eqref{eq:vanishingmartingaleprobterms} holds.

Now we will show \eqref{eq:D5vanishes}. Unlike those appearing in \eqref{eq:vanishingmartingaleprobterms}, the terms we wish to vanish in (\ref{eq:D5vanishes}) are a priori $\mc{O}(1)$ in $N$. However, as we will see, the fact that $Q$ almost surely satisfies \ref{V:V4} via Proposition \ref{remark:V4explaination} and the centering condition from Assumption \ref{assumption:centeringcondition} will result in these terms vanishing when we pass to the limit. We first observe that
\begin{align*}
\E\biggl[\biggl|\frac{1}{N}\sum_{i=1}^N \Psi(\bar{X}^{i,N},\rho^{i,N},\bar{W}^{i,N})D_{k}^{i,N}\biggr|\biggr]&\leq \norm{\Psi}_\infty \norm{g}_{C^2_b(\R^d\times\R^d)} \E\biggl[D^N_{k} \biggr],k=5,6,7
\end{align*}
where
\begin{align*}
D^N_5&\coloneqq \int_s^t \frac{1}{N}\sum_{i=1}^N\biggl|\frac{1}{N}\sum_{j=1}^N \partial_\mu \Phi(\bar{X}^{i,N}_\tau,\bar{X}^{i,N}_\tau/\epsilon,\bar{\mu}^N_\tau)(\bar{X}^{j,N}_\tau)f(\bar{X}^{j,N}_\tau,\bar{X}^{j,N}_\tau/\epsilon,\bar{\mu}^N_\tau)\biggr|d\tau\\
D^N_6& =\int_s^t\frac{1}{N}\sum_{i=1}^N\biggl|\frac{1}{2}\bar{B}^{-1}(\bar{X}^{i,N}_\tau,\bar{\mu}^N_\tau)\tilde{D}(\bar{X}^{i,N}_\tau,\bar{X}^{i,N}_\tau/\epsilon,\bar{\mu}^N_\tau) (\bar{B}^\top)^{-1}(\bar{X}^{i,N}_\tau,\bar{\mu}^N_\tau)-\frac{1}{2}I\biggr|d\tau\\
D^N_7&=\int_s^t\frac{1}{N}\sum_{i=1}^N\biggl|\tilde{D}(\bar{X}^{i,N}_\tau,\bar{X}^{i,N}_\tau/\epsilon,\bar{\mu}^N_\tau) (\bar{B}^\top)^{-1}(\bar{X}^{i,N}_\tau,\bar{\mu}^N_\tau)-\sqrt{\int_{\bb{T}^d}\tilde{D}(\bar{X}^{i,N}_\tau,y,\bar{\mu}^N_\tau)  m^{i,N}_\tau(dy)}\biggr|d\tau,
\end{align*}

We can rewrite $D^N_{k},k=5,6,7$ in terms of the occupation measures defined in Equation \eqref{eq:occmeas} as:
\begin{align*}
D^N_{5}= \int_s^t \int_{\mc{C}}\int_{\bb{T}^d\times\R^m} \biggl|\int_{\mc{C}}\int_{\bb{T}^d\times\R^m} \partial_\mu \Phi(\phi(\tau),y,\bar{\mu}^N_\tau)(\psi(\tau))f(\psi(\tau),\hat{y},\bar{\mu}^N_\tau) r_\tau(d\hat{y}d\hat{z})&Q^N(d\psi dr dw)  \biggr|\rho_\tau(dydz) \\
&Q^N(d\phi d\rho dv)d\tau.
\end{align*}
and
\begin{align*}
D^N_6& =\int_{\mc{C}}\int_s^t\biggl|\frac{1}{2}\bar{B}^{-1}(\phi(\tau),\bar{\mu}^N_\tau)\int_{\bb{T}^d\times\R^m} \tilde{D}(\phi(\tau),y,\bar{\mu}^N_\tau)r_\tau(dydz) (\bar{B}^\top)^{-1}(\phi(\tau),\bar{\mu}^N_\tau)-\frac{1}{2}I\biggr|d\tau Q^N(d\phi dr dw)\\
D^N_7&=\int_{\mc{C}}\int_s^t\biggl|\int_{\bb{T}^d\times\R^m} \tilde{D}(\phi(\tau),y,\bar{\mu}^N_\tau)r_\tau(dydz) (\bar{B}^\top)^{-1}(\phi(\tau),\bar{\mu}^N_\tau)\\
&\hspace{7cm}-\sqrt{\int_{\bb{T}^d\times\R^m} \tilde{D}(\phi(\tau),y,\bar{\mu}^N_\tau)  r_\tau(dydz)}\biggr|d\tau Q^N(d\phi dr dw).
\end{align*}
We will show each of these vanishes in expectation, at which point Lemma \ref{lem:marprob2} will be proved. Since this is simpler for $D^N_6$ and $D^N_7$, discuss these two terms first.
We first observe that, by boundedness and continuity of $\bar{B}^{-1}$ and $\tilde{D}$ from Corollary \ref{cor:limitingcoefficientsregularity}, that
\begin{align*}
&\lim_{N\toinf}\E[D^N_6]\\
& = \E\biggl[ \lim_{N\toinf}\int_{\mc{C}}\int_s^t\biggl|\frac{1}{2}\bar{B}^{-1}(\phi(\tau),\nu_{Q^N}(\tau))\int_{\bb{T}^d\times\R^m} \tilde{D}(\phi(\tau),y,\nu_{Q^N}(\tau))r_\tau(dydz) (\bar{B}^\top)^{-1}(\phi(\tau),\nu_{Q^N}(\tau))\\
&\hspace{12cm}-\frac{1}{2}I\biggr|d\tau Q^N(d\phi dr dw)\biggr]\\
& = \E\biggl[ \int_{\mc{C}}\int_s^t\biggl|\frac{1}{2}\bar{B}^{-1}(\phi(\tau),\nu_{Q}(\tau))\int_{\bb{T}^d\times\R^m} \tilde{D}(\phi(\tau),y,\nu_{Q}(\tau))r_\tau(dydz) (\bar{B}^\top)^{-1}(\phi(\tau),\nu_{Q}(\tau))\\
&\hspace{12cm}-\frac{1}{2}I\biggr|d\tau Q(d\phi dn dr dw)\biggr],
\end{align*}
where in the second step we used continuity of $\nu_\cdot(t)$ from Proposition \ref{prop:nucontmeasure} and Theorem A.3.18 in \cite{DE}. Then using that $Q$ almost surely satisfies \ref{V:V4} via Proposition \ref{remark:V4explaination},
\begin{align*}
&\E\biggl[ \int_{\mc{X}}\int_{\mc{Y}}\int_s^t\biggl|\frac{1}{2}\bar{B}^{-1}(\phi(\tau),\nu_{Q}(\tau))\int_{\bb{T}^d\times\R^m} \tilde{D}(\phi(\tau),y,\nu_{Q}(\tau))r_\tau(dydz) (\bar{B}^\top)^{-1}(\phi(\tau),\nu_{Q}(\tau))\\
&\hspace{12cm}-\frac{1}{2}I\biggr|d\tau \lambda(dr|\phi)Q_{\mc{X}}(d\phi)\biggr]\\
&=\E\biggl[ \int_{\mc{X}}\int_s^t\biggl|\frac{1}{2}\bar{B}^{-1}(\phi(\tau),\nu_{Q}(\tau))\int_{\bb{T}^d\times\R^m} \tilde{D}(\phi(\tau),y,\nu_{Q}(\tau))\pi(dy;\phi(\tau),\nu_Q(\tau)) (\bar{B}^\top)^{-1}(\phi(\tau),\nu_{Q}(\tau))\\
&\hspace{12cm}-\frac{1}{2}I\biggr|d\tau Q_{\mc{X}}(d\phi)\biggr]\\
& = \E\biggl[ \int_{\mc{X}}\int_s^t\biggl|\frac{1}{2}\bar{B}^{-1}(\phi(\tau),\nu_{Q}(\tau))\bar{D}(\phi(\tau),\nu_{Q}(\tau)) (\bar{B}^\top)^{-1}(\phi(\tau),\nu_{Q}(\tau))-\frac{1}{2}I\biggr|d\tau Q_{\mc{X}}(d\phi)\biggr]\text{ by Remark \ref{remark:altdiffusionrep}}\\
& = \E\biggl[ \int_{\mc{X}}\int_s^t\biggl|\frac{1}{2}\bar{B}^{-1}(\phi(\tau),\nu_{Q}(\tau))\bar{B}(\phi(\tau),\nu_{Q}(\tau))\bar{B}^\top(\phi(\tau),\nu_{Q}(\tau)) (\bar{B}^\top)^{-1}(\phi(\tau),\nu_{Q}(\tau))-\frac{1}{2}I\biggr|d\tau Q_{\mc{X}}(d\phi)\biggr]\\
& = \E\biggl[ \int_s^t\biggl|\frac{1}{2}I-\frac{1}{2}I\biggr|d\tau )\biggr]\\
&=0.
\end{align*}
Similarly, for $D^N_7$, using in addition the continuity of the matrix square root:
\begin{align*}
&\lim_{N\toinf}\E[D^N_7]\\
& = \E\biggl[\lim_{N\toinf}\int_{\mc{C}}\int_s^t\biggl|\int_{\bb{T}^d\times\R^m} \tilde{D}(\phi(\tau),y,\nu_{Q^N}(\tau))r_\tau(dydz) (\bar{B}^\top)^{-1}(\phi(\tau),\nu_{Q^N}(\tau))\\
&\hspace{6cm}-\sqrt{\int_{\bb{T}^d\times\R^m} \tilde{D}(\phi(\tau),y,\nu_{Q^N}(\tau))  r_\tau(dydz)}\biggr|d\tau Q^N(d\phi dn dr dw\biggr]\\
& = \E\biggl[\int_{\mc{C}}\int_s^t\biggl|\int_{\bb{T}^d\times\R^m} \tilde{D}(\phi(\tau),y,\nu_{Q}(\tau))r_\tau(dydz) (\bar{B}^\top)^{-1}(\phi(\tau),\nu_{Q}(\tau))\\
&\hspace{6cm}-\sqrt{\int_{\bb{T}^d\times\R^m} \tilde{D}(\phi(\tau),y,\nu_{Q}(\tau))  r_\tau(dydz)}\biggr|d\tau Q(d\phi dn dr dw\biggr]\\
&= \E\biggl[\int_{\mc{X}}\int_{\mc{Y}}\int_s^t\biggl|\int_{\bb{T}^d\times\R^m} \tilde{D}(\phi(\tau),y,\nu_{Q}(\tau))r_\tau(dydz) (\bar{B}^\top)^{-1}(\phi(\tau),\nu_{Q}(\tau))\\
&\hspace{6cm}-\sqrt{\int_{\bb{T}^d\times\R^m} \tilde{D}(\phi(\tau),y,\nu_{Q}(\tau))  r_\tau(dydz)}\biggr|d\tau\lambda(dr|\phi)Q_{\mc{X}}(d\phi)\biggr]\\
& = \E\biggl[\int_{\mc{X}}\int_s^t\biggl|\int_{\bb{T}^d\times\R^m} \tilde{D}(\phi(\tau),y,\nu_{Q}(\tau))\pi(dy;\phi(\tau),\nu_Q(\tau)) (\bar{B}^\top)^{-1}(\phi(\tau),\nu_{Q}(\tau))\\
&\hspace{6cm}-\sqrt{\int_{\bb{T}^d\times\R^m} \tilde{D}(\phi(\tau),y,\nu_{Q}(\tau))  \pi(dy;\phi(\tau),\nu_Q(\tau))}\biggr|d\tau Q_{\mc{X}}(d\phi)\biggr]\\
&=\E\biggl[\int_{\mc{X}}\int_s^t\biggl|\bar{D}(\phi(\tau),\nu_{Q}(\tau)) (\bar{B}^\top)^{-1}(\phi(\tau),\nu_{Q}(\tau))-\sqrt{\bar{D}(\phi(\tau),\nu_{Q}(\tau))}\biggr|d\tau Q_{\mc{X}}(d\phi)\biggr]\\
& = \E\biggl[\int_{\mc{X}}\int_s^t\biggl|\bar{B}(\phi(\tau),\nu_{Q}(\tau))\bar{B}^\top(\phi(\tau),\nu_{Q}(\tau)) (\bar{B}^\top)^{-1}(\phi(\tau),\nu_{Q}(\tau))-\bar{B}(\phi(\tau),\nu_{Q}(\tau))\biggr|d\tau Q_{\mc{X}}(d\phi)\biggr]\\
& = \E\biggl[\int_{\mc{X}}\int_s^t\biggl|\bar{B}(\phi(\tau),\nu_{Q}(\tau))-\bar{B}(\phi(\tau),\nu_{Q}(\tau))\biggr|d\tau Q_{\mc{X}}(d\phi)\biggr]\\
&=0.
\end{align*}

Now turning to $D^N_5$, we have by Proposition \ref{prop:Phiexistenceregularity}, Assumption \ref{assumption:LipschitzandBounded}, and Bounded Convergence Theorem:
\begin{align*}
\lim_{N\toinf}\E[D^N_{5}]&\leq \E\biggl[\lim_{N\toinf} \int_{\mc{C}}\int_{\mc{C}}\int_s^t\int_{\bb{T}^d\times\R^m} \biggl|\int_{\bb{T}^d\times\R^m} \partial_\mu \Phi(\phi(\tau),y,\nu_{Q^N}(\tau))(\psi(\tau))f(\psi(\tau),\hat{y},\nu_{Q^N}(\tau)) r_\tau(d\hat{y}d\hat{z}) \biggr|\\
&\hspace{8cm}\rho_\tau(dydz)d\tau Q^N(d\psi dr dw) Q^N(d\phi d\rho dv)\biggr].
\end{align*}

Since $Q^N\tto Q$ in $\mc{P}(\mc{C})$ almost surely and the integrand is bounded and continuity, Proposition \ref{prop:nucontmeasure}, Proposition 4.6 on p.115 of \cite{EK}, and Theorem A.3.18 in \cite{DE} imply:
\begin{align*}
&\E\biggl[\lim_{N\toinf} \int_{\mc{C}}\int_{\mc{C}}\int_s^t\int_{\bb{T}^d\times\R^m} \biggl|\int_{\bb{T}^d\times\R^m} \partial_\mu \Phi(\phi(\tau),y,\nu_{Q^N}(\tau))(\psi(\tau))f(\psi(\tau),\hat{y},\nu_{Q^N}(\tau)) r_\tau(d\hat{y}d\hat{z}) \biggr|\rho_\tau(dydz) d\tau\\
&\hspace{12cm} Q^N(d\psi dr dw) Q^N(d\phi d\rho dv)\biggr]\\
& = \E\biggl[\int_{\mc{C}}\int_{\mc{C}}\int_s^t\int_{\bb{T}^d\times\R^m} \biggl|\int_{\bb{T}^d\times\R^m} \partial_\mu \Phi(\phi(\tau),y,\nu_{Q}(\tau))(\psi(\tau))f(\psi(\tau),\hat{y},\nu_{Q}(\tau)) r_\tau(d\hat{y}d\hat{z}) \biggr|\rho_\tau(dydz) d\tau \\
&\hspace{12cm}Q(d\psi dr dw) Q(d\phi d\rho dv )\biggr].
\end{align*}

Now by H\"older's inequality and Tonelli's Theorem,
\begin{align*}
&\E\biggl[\int_{\mc{C}}\int_{\mc{C}}\int_s^t\int_{\bb{T}^d\times\R^m} \biggl| \partial_\mu \Phi(\phi(\tau),y,\nu_{Q}(\tau))(\psi(\tau))\biggl\lbrace \int_{\bb{T}^d\times\R^m} f(\psi(\tau),\hat{y},\nu_{Q}(\tau)) r_\tau(d\hat{y}d\hat{z})\biggr\rbrace \biggr|\rho_\tau(dydz) d\tau\\
&\hspace{12cm} Q(d\psi dr dw) Q(d\phi d\rho dv )\biggr]\\
&\leq \E\biggl[\biggl(\int_s^t\biggl(\int_{\mc{C}}\biggl(\int_{\mc{C}}\int_{\bb{T}^d\times\R^m} |\partial_\mu \Phi(\phi(\tau),y,\nu_{Q}(\tau))(\psi(\tau))|^2\rho_\tau(dydz) Q(d\psi dr dw)\biggr)^{1/2}Q(d\phi d\rho dv )\biggr)^{2}d\tau\biggr)^{1/2}\\
&\hspace{4cm}\times\biggl(\int_s^t\int_{\mc{C}} \biggl|\int_{\bb{T}^d\times\R^m} f(\psi(\tau),\hat{y},\nu_{Q}(\tau)) r_\tau(d\hat{y}d\hat{z})\biggr|^2 Q(d\psi dr dw)d\tau\biggr)^{1/2} \biggr]\\
& = \E\biggl[\biggl(\int_s^t\biggl(\int_{\mc{C}}\biggl(\int_{\bb{T}^d\times\R^m}  \norm{\partial_\mu \Phi(\phi(\tau),y,\nu_{Q}(\tau))(\cdot)}^2_{L^2(\R^d,\nu_{Q}(\tau))}\rho_\tau(dydz)\nu_{Q}(\tau)(dx) \biggr)^{1/2}Q(d\phi d\rho dv )\biggr)^2d\tau\biggr)^{1/2}\\
&\hspace{4cm}\times\biggl(\int_s^t \int_{\mc{C}} \biggl|\int_{\bb{T}^d\times\R^m} f(\psi(\tau),\hat{y},\nu_{Q}(\tau)) r_\tau(d\hat{y}d\hat{z})\biggr|^2  Q(d\psi dr dw)d\tau \biggr)^{1/2}\biggr]\\
&\leq C|t-s|^{1/2}\E\biggl[\biggl(\int_s^t \int_{\mc{C}} \biggl|\int_{\bb{T}^d\times\R^m} f(\psi(\tau),\hat{y},\nu_{Q}(\tau)) r_\tau(d\hat{y}d\hat{z})\biggr|^2  Q(d\psi dr dw)d\tau \biggr)^{1/2}\biggr],
\end{align*}
where we used uniform boundedness of the $L^2(\R^d,\nu)$ norm of $\partial_\mu \Phi(x,y,\nu)(\cdot)$ from Proposition \ref{prop:Phiexistenceregularity} in the last step.

Using that $Q$ almost surely satisfies \ref{V:V4} via Proposition \ref{remark:V4explaination}:
\begin{align*}
&\E\biggl[\biggl(\int_s^t\int_{\mc{C}} \biggl|\int_{\bb{T}^d\times\R^m} f(\psi(\tau),\hat{y},\nu_{Q}(\tau)) r_\tau(d\hat{y}d\hat{z})\biggr|^2  Q(d\psi dr dw)d\tau \biggr)^{1/2}\biggr] \\
& = \E\biggl[\biggl(\int_s^t\int_{\mc{C}}\int_{\mc{X}}\int_{\mc{Y}} \biggl|\int_{\bb{T}^d\times\R^m} f(\psi(\tau),\hat{y},\nu_{Q}(\tau)) r_\tau(d\hat{y}d\hat{z})\biggr|^2  \lambda(dr|\psi)Q_{\mc{X}}(d\psi)d\tau \biggr)^{1/2}\biggr]\\
& = \E\biggl[\biggl(\int_s^t\int_{\mc{C}}\int_{\mc{X}}\biggl|\int_{\bb{T}^d\times\R^m} f(\psi(\tau),\hat{y},\nu_{Q}(\tau)) \pi(d\hat{y}|\psi(\tau),\nu_Q(\tau))\biggr|^2  Q_{\mc{X}}(d\psi)d\tau \biggr)^{1/2}\biggr]\\
& = 0 \text{ by Assumption \ref{assumption:centeringcondition}}.
\end{align*}

Thus (\ref{eq:D5vanishes}) holds, and the proof of Lemma \ref{lem:marprob2} is complete.
\end{proof}

We have then that for each $(s,t,\Psi,g)\in [0,1]\times[0,1]\times C_b(\mc{C}) \times C^\infty_c(\R^d\times\R^d)$ there is a set $Z_{(s,t,\Psi,g)}\in \tilde{\mc{F}}$ such that $\tilde\Prob(Z_{(s,t,\Psi,g)})=0$ and
\begin{align*}
\E^{Q_{\tilde\omega}}\biggl[\Psi(M_g^{Q_{\tilde\omega}}(t)-M_g^{Q_{\tilde\omega}}(s))\biggr]=0,\forall \tilde\omega\in\tilde\W\setminus Z_{(s,t,\Psi,g)}.
\end{align*}

Since there is a a countable collection of $g\in C^\infty_c(\R^d\times\R^d)$ which is dense in $C^\infty_c(\R^d\times\R^d)$, a countable collection $(s,t)\in [0,1]^2$ which is dense in $[0,1]^2$, and countably many $\Phi \in C_b(\mc{C})$ generating each of the countably many sigma algebras $\mc{G}_{s_l}$ (see \cite{Lacker} Lemma A.1), letting $Z$ be the union over all these countable collections of $Z_{(s,t,\Psi,g)}$, we have $Z\in\tilde\F$, $\tilde\Prob(Z)=0$, and
\begin{align*}
\E^{Q_{\tilde\omega}}\biggl[\Psi(M_g^{Q_{\tilde\omega}}(t)-M_g^{Q_{\tilde\omega}}(s))\biggr]=0,\forall \tilde\omega\in\tilde\W\setminus Z.
\end{align*}
So Theorem \ref{thm:V1} is proved.

\subsubsection{Proof of \ref{V:V2}}
By Skorokhod's representation theorem, we can invoke another probability space on which the convergence of $Q^N\tto Q$ occurs with probability 1. Without making a distinction in the notation between that probability space and our original one, we note that by Fatou's lemma
\begin{align*}
\E\biggl[\E^{Q}\biggl[\int_{\bb{T}^d\times\R^m\times [0,1]}|z|^2 \rho(dydzdt) \biggr]\biggr]&\leq \liminf_{N\toinf} \E\biggl[\int_{\mc{C}}\biggl\lbrace\int_{\bb{T}^d\times\R^m\times [0,1]}|z|^2 r(dydzdt)  \biggr\rbrace Q^N(d\phi dr dw)\biggr]\\
& = \liminf_{N\toinf} \E\biggl[\frac{1}{N}\sum_{i=1}^N\int_0^1 |u^N_i(s)|^2ds\biggr]\\
&<\infty \text{ by Assumption (\ref{eq:controlL2boundunspecific})}\red{.}
\end{align*}

\subsubsection{Proof of \ref{V:V3}}
This follows immediately from weak convergence, since by Proposition \ref{prop:nucontmeasure}, for $g\in C_b(\R^d)$, $\Theta\mapsto \int_{\R^d} g(x)\nu_\Theta(0)(dx)$ is a continuous bounded map from $\mc{P}(\mc{C})$ to $\R$. Thus, again invoking Skorokhod's representation theorem:
\begin{align*}
\int_{\R^d}g(x)\nu_0(dx)& = \lim_{N\toinf} \int_{\R^d} g(x) \biggl(\frac{1}{N}\sum_{i=1}^N \delta_{x^{i,N}}\biggr)(dx)\\
& = \lim_{N\toinf} \int_{\R^d} g(x) \nu_{Q^N}(0)(dx)\\
& = \int_{\R^d} g(x)\nu_Q(0)(dx) \\
& = \int_{\R^d} g(x)\nu_{Q}(0)(dx),
\end{align*}
almost surely for each $g$. By a density argument we can ensure there is a null set on which the equality fails that is independent of the choice of $g$. Thus we get that $Q$ $\tilde\Prob$-.a.s. satisfies \ref{V:V3}.

\section{The Laplace Principle Lower Bound}
\label{sec:lowerbound}
We now proceed with proving the Laplace Principle Lower Bound:
\begin{align}
\label{eq:lowerbound}
\liminf_{N\toinf} -\frac{1}{N}\log\E[\exp(-N F(\mu^N))] &\geq \inf_{\theta\in\mc{P}(\mc{X})}\br{F(\theta)+I(\theta)}.
\end{align}

It suffices to prove this bound along any subsequence such that the left hand side converges. Such a sequence exists since  $-\frac{1}{N}\log\E[\exp(-NF(\mu^N))]\leq \norm{F}_\infty$. Fix $\eta>0$. By Proposition \ref{prop:varrep}, for each $N\in \bb{N}$, there exists $v_N\in \mc{U}_N$ such that
\begin{align*}
-\frac{1}{N}\log\E[\exp(-NF(\mu^N))]\geq \frac{1}{2}\E[\frac{1}{N}\sum_{i=1}^{N}\int_0^1 |v_i^{N}(t)|^2dt]+\E[F(\bar{\mu}^N)]-\eta.
\end{align*}

Note also that for this choice of controls, we have for all $N\in\bb{N}$,
\begin{align}
\label{eq:controlL2bound}
\E[\frac{1}{N}\sum_{i=1}^{N}\int_0^1|v_i^{N}(t)|^2dt]\leq 4\norm{F}_\infty + 2\eta.
\end{align}

Thus the bound \eqref{eq:controlL2boundunspecific} is satisfied, so the results of Section \ref{sec:limitingbehavior} apply with $\br{v^N}_{N\in\bb{N}}$ as our choice of controls, and for $\br{Q^N}_{N\in\bb{N}}$ as in Equation \eqref{eq:occmeas} with $\mc{Y}$-marginal determined by $\br{v^N}_{N\in\bb{N}}$, $Q^N\tto Q$ as $\mc{P}(\mc{C})$-valued random variables such that $Q\in\mc{V}$ almost-surely. So
\begin{align*}
    \liminf_{N\toinf}-\frac{1}{N}\log\E[\exp(-NF(\mu^N))] &\geq \liminf_{N\toinf}\left[\frac{1}{2}\E[\frac{1}{N}\sum_{i=1}^N |v^N_i(t)|^2dt]+\E[F(\bar{\mu}^N]\right]-\eta\\
    & = \liminf_{N\toinf}\left[\E[\frac{1}{2}\int_{\mc{Y}}\int_{\bb{T}^d\times\R^m\times[0,1]} |z|^2 r(dydzdt)Q^N_{\mc{Y}}(dr)] +\E[F(Q^N_{\mc{X}})]\right]-\eta\\
    &\geq \frac{1}{2}\tilde{\E}\biggl[\int_{\mc{Y}}\int_{\bb{T}^d\times\R^m\times[0,1]} |z|^2 r(dydzdt)Q_{\mc{Y}}(dr) +F(Q_{\mc{X}})\biggr]-\eta\\
    & \text{ by Fatou's Lemma}\\
    &\geq \inf_{\theta\in \mc{P(\mc{X})}}\left\{\inf_{\Theta\in\mc{V}:\Theta_{\mc{X}}=\theta}\E^{\Theta}\biggl[\frac{1}{2}\int_{\bb{T}^d\times\R^m\times[0,1]} |z|^2 \rho(dydzdt)\biggr] +F(\theta)\right\}-\eta\\
    & = \inf_{\theta\in \mc{P(\mc{X}^*)}}\br{ I(\theta)+F(\theta)}-\eta.
\end{align*}
Since $\eta$ is arbitrary the lower bound (\ref{eq:lowerbound}) is proved.

\section{Compactness of Level Sets}\label{sec:compactlevelsets}

Consider $I$ as defined in Equation \eqref{eq:ratefunction}. We want to prove that, assuming \ref{assumption:uniformellipticity}-\ref{assumption:limitinguniformellipticity}, for each $s\in[0,\infty)$, the set
\begin{align}
I_s\coloneqq \br{\theta\in \mc{P}(\mc{X}):I(\theta)\leq s}\label{eq:LevelSet}
\end{align}
is a compact subset of $\mc{P}(\mc{X})$. This will imply that indeed $I$ is a good rate function.

Since in this section we are dealing with sequences of measures all of which coincide with weak solutions of Equation \eqref{eq:controlledMcKeanLimit}, but with possibly different controls, we introduce a new notation for the coordinate process which allows us to keep track of which measure the $\mc{X}$-component of the coordinate process corresponds to. For this we use the parameterized version of the limiting Equation \eqref{eq:paramMcKeanLimit}.

For $Q$ corresponding to a weak solution of Equation \eqref{eq:controlledMcKeanLimit}, $Q$ also corresponds to a solution of Equation \eqref{eq:paramMcKeanLimit} with $\nu_{Q}$ as defined in Equation \eqref{eq:nuQ} in the place of $\nu$. Thus we consider the process triple $(\tilde{X}^{\nu_{Q}},\rho,W)$, which can be given explicitly as the coordinate process on the probability space $(\mc{C},\mc{B}(\mc{C}),Q)$ endowed with the canonical filtration $\mc{G}_t\coloneqq \sigma\biggl((\tilde{X}^{\nu_{Q}}_s,\rho(s),W_s),0\leq s\leq t \biggr)$. Thus, for $\omega = (\phi,r,w)\in \mc{C}$,
\begin{align}
\label{eq:paramcanonicalprocess}
    \tilde{X}^{\nu_{Q}}_t(\omega) = \phi(t),\hspace{1.5cm} \rho(t,\omega) = r|_{\mc{B}(\R^m\times [0,t])},\hspace{1.5cm}W_t(\omega)=w(t).
\end{align}

\begin{lem}
\label{lem:precompactlevelsets}
Fix $K<\infty$ and consider a sequence $\br{Q^N}_{N\in\bb{N}}\subset \mc{P}(\mc{C})$ such that for every $N\in\bb{N}$, $Q^N$ is in $\mc{V}$ from Definition \ref{def:V} and
\begin{align*}
\E^{Q^N}\biggl[\int_{\bb{T}^d\times\R^m\times [0,1]}|z|^2 \rho(dydzdt) \biggr]<K .
\end{align*}
Then $\br{Q^N}_{N\in\bb{N}}$ is tight.
\end{lem}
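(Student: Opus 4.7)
The plan is to establish tightness of $\{Q^N\}_{N\in\bb{N}}\subset \mc{P}(\mc{C})$ by reducing to tightness of each marginal on $\mc{X}$, $\mc{Y}$, and $\mc{W}$ separately, since tightness of all three marginals implies tightness on the product space. For each marginal I intend to mimic the corresponding arguments already established in Subsections \ref{subsubsection:TightnessQ^N_Z}, \ref{subsubsection:TightnessQ^N_W}, and \ref{subsubsection:TightnessQ^N_X} for the occupation measure sequence, but now in the deterministic setting where the $L^2$ control bound is assumed outright rather than carried through from the prelimit.

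For the $\mc{Y}$-marginal, the argument is identical to Subsection \ref{subsubsection:TightnessQ^N_Z}: the function $g(r)=\int_{\bb{T}^d\times\R^m\times[0,1]}|z|^2 r(dydzdt)$ is a tightness function on $\mc{Y}$, hence $G(\Theta)=\int g\,d\Theta$ is a tightness function on $\mc{P}(\mc{Y})$, and the hypothesized uniform bound gives $\sup_N G(Q^N_{\mc{Y}}) \leq K$. For the $\mc{W}$-marginal, property \ref{V:V1} forces the $\mc{W}$-component of the canonical process to be a standard $d$-dimensional Brownian motion under every $Q^N$, so $Q^N_{\mc{W}}$ is the standard Wiener measure for each $N$ and tightness is automatic (any single Borel probability on a Polish space is tight).

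For the $\mc{X}$-marginal I will use the tightness function $G$ on $\mc{P}(\mc{X})$ defined in \eqref{eq:PXtightnessfunction} and show $\sup_N \E^{Q^N_{\mc{X}}}[G]<\infty$. Writing the canonical process $\tilde{X}^{\nu_{Q^N}}$ under $Q^N$ via \eqref{eq:paramMcKeanLimit}, for any $0\leq s\leq t\leq 1$ we have
\begin{align*}
\tilde{X}^{\nu_{Q^N}}_t-\tilde{X}^{\nu_{Q^N}}_s &= \int_s^t \bar{\beta}(\tilde{X}^{\nu_{Q^N}}_\tau,\nu_{Q^N}(\tau))\,d\tau + \int_s^t\!\!\int_{\bb{T}^d\times\R^m}[\nabla_y\Phi+I]\sigma\,z\,\rho_\tau(dydz)\,d\tau\\
&\quad + \int_s^t \bar{B}(\tilde{X}^{\nu_{Q^N}}_\tau,\nu_{Q^N}(\tau))\,dW_\tau.
\end{align*}
By Assumption \ref{assumption:LipschitzandBounded}, Proposition \ref{prop:Phiexistenceregularity}, and Corollary \ref{cor:limitingcoefficientsregularity}, the coefficients $\bar{\beta}$, $[I+\nabla_y\Phi]\sigma$, and $\bar{B}$ are all uniformly bounded, so Cauchy--Schwarz gives a deterministic bound on the drift term of the form $C(t-s) + C(t-s)^{1/2}\bigl(\int_s^t\!\int |z|^2\rho_\tau(dydz)d\tau\bigr)^{1/2}$, and Lemma C.1 of \cite{FischerFormofRateFunction} controls the stochastic integral giving a $\eta^{-1/4}$-Hölder type increment with finite expected sup. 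Combined with the initial-condition control from \ref{V:V3} (all $\tilde{X}^{\nu_{Q^N}}_0$ have distribution $\nu_0\in\mc{P}_2(\R^d)$), the uniform bound $\sup_N \E^{Q^N_{\mc{X}}}[G] \leq C(1+\sqrt{K})$ follows exactly as in the controlled prelimit argument of Subsection \ref{subsubsection:TightnessQ^N_X}, but in the much simpler averaged setting where no $\epsilon$-scale term $C^{i,N}_8$ appears and no auxiliary $\nu^N$ construction is needed.

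The main technical point is simply making sure one has the uniform boundedness and continuity of all limiting coefficients that enter \eqref{eq:paramMcKeanLimit}; given the regularity established in Propositions \ref{prop:invtmeasure}, \ref{prop:Phiexistenceregularity}, and Corollary \ref{cor:limitingcoefficientsregularity} under \ref{assumption:uniformellipticity}--\ref{assumption:limitinguniformellipticity}, the rest of the argument is essentially a direct transcription of the corresponding prelimit estimates, avoiding the substantial complications that arose there from multiscale terms and empirical-measure derivatives. Hence the three marginal tightness statements combine via Prokhorov's theorem to give tightness of $\{Q^N\}_{N\in\bb{N}}$ in $\mc{P}(\mc{C})$, completing the proof.
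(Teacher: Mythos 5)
Your proposal is correct, and it follows the paper's overall structure exactly — tightness marginal by marginal, with the $\mc{Y}$-marginal handled via the tightness function $g(r)=\int_{\bb{T}^d\times\R^m\times[0,1]}|z|^2r(dydzdt)$ and the $\mc{W}$-marginal trivial because every $Q^N_{\mc{W}}$ is the Wiener measure by \ref{V:V1} — but it diverges from the paper on the $\mc{X}$-marginal. You transplant the prelimit argument of Subsection \ref{subsubsection:TightnessQ^N_X}: you bound $\sup_N G(Q^N_{\mc{X}})$ for the tightness function $G$ of \eqref{eq:PXtightnessfunction}, splitting the increment of $\tilde{X}^{\nu_{Q^N}}$ into a bounded drift (giving $C\eta$, hence $C\eta^{3/4}$ after the $\eta^{-1/4}$ weight), a control term bounded via Cauchy--Schwarz by $C\eta^{1/2}\bigl(\int_0^1\int|z|^2\rho_t(dydz)dt\bigr)^{1/2}$ whose expectation is $\leq C\sqrt{K}$, and a martingale term controlled by Lemma C.1 of \cite{FischerFormofRateFunction} (legitimate here since $\bar{B}$ is bounded by Corollary \ref{cor:limitingcoefficientsregularity}), plus the $|\phi(0)|$ term which is finite since $\nu_0\in\mc{P}_2(\R^d)$ by \ref{V:V3}. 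The paper instead invokes the criterion of Theorem 2.4.10 in \cite{KS}: it establishes the pathwise bound $|\tilde{X}^{\nu_{Q^N}}_{t_1}-\tilde{X}^{\nu_{Q^N}}_{t_2}|\leq C\bigl((t_2-t_1)+\sqrt{t_2-t_1}\bigl(\sqrt{\int_0^1\int|z|^2\rho_t(dydz)dt}+1\bigr)\bigr)$, applies Chebyshev and Young's inequality, and concludes by dominated convergence that the modulus of continuity vanishes in probability uniformly in $N$. Both routes are valid; yours yields a clean uniform moment bound of order $1+\sqrt{K}$ and reuses machinery already set up in Section \ref{subsection:tightness} (note only that, since the $Q^N$ here are deterministic, the relevant bound is $\sup_N G(Q^N_{\mc{X}})<\infty$ itself rather than an outer expectation, which is exactly what your estimate gives), while the paper's route avoids the $\eta^{-1/4}$-modulus tightness function and the Fischer martingale lemma altogether in this simpler averaged setting.
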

\begin{proof}
As in Subsection \ref{subsection:tightness}, it suffices to show tightness of each of the marginals. It is worth noting that where before we were proving tightness of $\mc{L}(Q^N)$ in $\mc{P}(\mc{P}(\mc{C}))$, here we have that $Q^N$ are deterministic measures and we are proving tightness of the measures themselves in $\mc{P}(Q^N)$.

Tightness of the $\mc{W}$-marginals follows immediately since all are the standard Wiener measure by definition.

Tightness of the $\mc{Y}$-marginals is very similar to Subsection \ref{subsubsection:TightnessQ^N_Z}.
\begin{align*}
g(r):= \int_{\bb{T}^d\times\R^{m}\times [0,1]} |z|^2r(dydz dt)
\end{align*}
is a tightness function on $\mc{Y}$, so since
\begin{align*}
\E^{Q^N}\biggl[\int_{\R^m\times [0,1]}|z|^2 \rho(dzdt)\biggr]<\infty,
\end{align*}
$\br{Q^N_{\mc{Y}}}_{N\in\bb{N}}$ is tight.

For the tightness of the $\mc{X}$-marginals, we use that each $Q^N$ satisfies \ref{V:V1}; that is, $Q^N_{\mc{X}}=\mc{L}(\tilde{X}^{\nu_{Q^N}})$. Via Theorem 2.4.10 in \cite{KS}, it suffices to show that for every $\eta>0$,
\begin{align*}
\lim_{\rho\downarrow 0}\sup_{N\in\bb{N}}Q^N_{\mc{X}}\biggl(\sup_{|t_1-t_2|<\rho,0\leq t_1<t_2\leq 1} |\tilde{X}^{\nu_{Q^N}}_{t_1}-\tilde{X}^{\nu_{Q^N}}_{t_2}|\geq \eta\biggr)=0,
\end{align*}
where here we are using the notation from Equation \eqref{eq:paramcanonicalprocess}. We have that by Chebyshev's inequality,
\begin{align*}
&\lim_{\rho\downarrow 0}\sup_{N\in\bb{N}}Q^N_{\mc{X}}\biggl(\sup_{|t_1-t_2|<\rho,0\leq t_1<t_2\leq 1} |\tilde{X}^{\nu_{Q^N}}_{t_1}-\tilde{X}^{\nu_{Q^N}}_{t_2}|\geq \eta\biggr)\\
&\leq \lim_{\rho\downarrow 0}\frac{1}{\eta}\sup_{N\in\bb{N}}\E^{Q^N}\biggl[\sup_{|t_1-t_2|<\rho,0\leq t_1<t_2\leq 1} |\tilde{X}^{\nu_{Q^N}}_{t_1}-\tilde{X}^{\nu_{Q^N}}_{t_2}| \biggr].
\end{align*}
Since
\begin{align*}
&|\tilde{X}^{\nu_{Q^N}}_{t_1}-\tilde{X}^{\nu_{Q^N}}_{t_2}|\\
&=\biggl|\int_{t_1}^{t_2} \bar{\beta}(\tilde{X}_t^{\nu_{Q^N}},\nu_{Q^N}(t))dt+\int_{t_1}^{t_2}\int_{\bb{T}^d\times\R^m}[\nabla_y \Phi(\tilde{X}_t^{\nu_{Q^N}},y,\nu_{Q^N}(t))+I]\sigma(\tilde{X}_t^{\nu_{Q^N}},y,\nu_{Q^N}(t)) z \rho_t(dydz) dt\\
    &+\int_{t_1}^{t_2}\bar{B}(\tilde{X}_t^{\nu},\nu_{Q^N}(t))dW_t\biggr|,\nonumber
\end{align*}
we get via H\"older's inequality, It\^o isometry, Assumption \ref{assumption:LipschitzandBounded}, Proposition \ref{prop:Phiexistenceregularity} and Corollary \ref{cor:limitingcoefficientsregularity} that
\begin{align*}
|\tilde{X}^{\nu_{Q^N}}_{t_1}-\tilde{X}^{\nu_{Q^N}}_{t_2}|&\leq C\biggl((t_2-t_1) + \sqrt{t_2-t_1}\biggl(\sqrt{ \int_0^1\int_{\bb{T}^d\times\R^m} |z|^2 \rho_t(dydz)dt} + 1\biggr)\biggr).
\end{align*}
Then we have by  Young's inequality that
\begin{align*}
\sup_{|t_1-t_2|<\rho,0\leq t_1<t_2\leq 1} |\tilde{X}^{\nu_{Q^N}}_{t_1}-\tilde{X}^{\nu_{Q^N}}_{t_2}| &\leq C\biggl(\frac{5}{2} + \frac{1}{2}\int_0^1\int_{\bb{T}^d\times\R^m} |z|^2\rho_t(dydz)dt\biggr).
\end{align*}
Then
\begin{align*}
&\sup_{N\in\bb{N}}\E^{Q^N}\biggl[C\biggl(\frac{5}{2} + \frac{1}{2}\int_0^1\int_{\R^m} |z|^2\rho_t(dz)dt\biggr)\biggr]\leq C\biggl(\frac{5}{2} + \frac{1}{2}K\biggr) \text{ by assumption.}
\end{align*}
So by dominated convergence theorem, we have
\begin{align*}
&\lim_{\rho\downarrow 0}\sup_{N\in\bb{N}}Q^N_{\mc{X}}\biggl(\sup_{|t_1-t_2|<\rho,0\leq t_1<t_2\leq 1} |\tilde{X}^{\nu_{Q^N}}_{t_1}-\tilde{X}^{\nu_{Q^N}}_{t_2}|\geq \eta\biggr)\\
&\leq \lim_{\rho\downarrow 0}\frac{1}{\eta}\sup_{N\in\bb{N}}\E^{Q^N}\biggl[\sup_{|t_1-t_2|<\rho,0\leq t_1<t_2\leq 1} |\tilde{X}^{\nu_{Q^N}}_{t_1}-\tilde{X}^{\nu_{Q^N}}_{t_2}| \biggr]\\
& \leq \frac{C}{\eta}\sup_{N\in\bb{N}} \E^{Q^N}\biggl[\lim_{\rho\downarrow 0}\sup_{|t_1-t_2|<\rho,0\leq t_1<t_2\leq 1} (t_2-t_1) + \sqrt{t_2-t_1}\biggl(\sqrt{ \int_0^1\int_{\R^m} |z|^2 \rho_t(dz)dt} + 1\biggr)\biggr]\\
& = 0. 
\end{align*}
\end{proof}
\begin{lem}
\label{lem:limviablelevelsets}
Fix $K<\infty$ and consider a convergent sequence $\br{Q^N}_{N\in\bb{N}}\subset \mc{P}(\mc{C})$ such that for every $N\in\bb{N}$, $Q^N$ is in $\mc{V}$ from Definition \ref{def:V} and
\begin{align*}
\E^{Q^N}\biggl[\int_{\bb{T}^d\times\R^m\times [0,1]}|z|^2 \rho(dydzdt) \biggr]<K .
\end{align*}
Then for $Q$ such that $Q^N\tto Q$, $Q$ is in $\mc{V}$.
\end{lem}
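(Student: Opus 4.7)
The plan is to verify that the limit $Q$ satisfies each of the four conditions \ref{V:V1}--\ref{V:V4} of Definition \ref{def:V}. The argument is a deterministic analogue of Subsection \ref{subsection:identifylimit}, but substantially simpler since each prelimit $Q^N$ already corresponds to a solution of the limiting controlled McKean--Vlasov equation \eqref{eq:controlledMcKeanLimit} (rather than the prelimit interacting system), so the $\epsilon$-dependent corrector terms and $1/N$ terms do not appear.

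First I would handle \ref{V:V3} and \ref{V:V2}. For \ref{V:V3}, continuity of the map $\Theta \mapsto \nu_\Theta(0)$ from Proposition \ref{prop:nucontmeasure} together with $\nu_{Q^N}(0) = \nu_0$ for all $N$ gives $\nu_Q(0) = \nu_0$ immediately. For \ref{V:V2}, the functional $r \mapsto \int_{\mathbb{T}^d \times \mathbb{R}^m \times [0,1]} |z|^2 \, r(dy\,dz\,dt)$ is lower semicontinuous on $\mathcal{Y}$ (as the supremum of continuous bounded truncations), so Fatou's lemma applied along the convergent sequence $Q^N_{\mathcal{Y}} \to Q_{\mathcal{Y}}$ yields
\begin{align*}
\E^{Q}\!\left[\int_{\mathbb{T}^d \times \mathbb{R}^m \times [0,1]} |z|^2 \rho(dy\,dz\,dt)\right] \leq \liminf_{N\to\infty} \E^{Q^N}\!\left[\int_{\mathbb{T}^d \times \mathbb{R}^m \times [0,1]} |z|^2 \rho(dy\,dz\,dt)\right] \leq K.
\end{align*}

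Next I would establish \ref{V:V4}. Here I would mimic the proof of Lemma \ref{lemma:willimplyV4} and Proposition \ref{remark:V4explaination}, but without the prelimit It\^o expansion: since each $Q^N$ already satisfies \ref{V:V4}, for any $g \in C^2_b(\mathbb{T}^d)$ one has
\begin{align*}
\int_{\mathcal{C}} \left|\int_{\mathbb{T}^d \times \mathbb{R}^m \times [0,t]} \mathcal{L}^1_{\phi(s),\nu_{Q^N}(s)} g(y) \, r(dy\,dz\,ds)\right| Q^N(d\phi\,dr\,dw) = 0.
\end{align*}
By assumption \ref{assumption:LipschitzandBounded} and Corollary \ref{cor:limitingcoefficientsregularity} the integrand is bounded and continuous in $(\phi, r, \Theta)$ (using Proposition \ref{prop:nucontmeasure}), so Theorem A.3.18 of \cite{DE} allows us to pass to the limit under $Q^N \to Q$ and obtain the same identity with $\nu_Q$ in place of $\nu_{Q^N}$. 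The argument following Proposition \ref{remark:V4explaination}, using uniqueness of the invariant measure $\pi(\cdot|x,\mu)$ from Proposition \ref{prop:invtmeasure}, then shows $Q$ satisfies \ref{V:V4}.

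Finally, for \ref{V:V1}, I would use the martingale problem formulation from Equation \eqref{eq:martingaleproblem}: it suffices to show that for each $g \in C^\infty_c(\mathbb{R}^d \times \mathbb{R}^d)$, $0 \leq s \leq t \leq 1$, and $\mathcal{G}_s$-measurable $\Psi \in C_b(\mathcal{C})$, one has $\E^{Q}[\Psi(M^Q_g(t) - M^Q_g(s))] = 0$. Since each $Q^N \in \mathcal{V}$, the analogous identity holds with $Q^N$ in place of $Q$, so the task reduces to showing
\begin{align*}
\E^{Q^N}[\Psi(M^{Q^N}_g(t) - M^{Q^N}_g(s))] \longrightarrow \E^{Q}[\Psi(M^{Q}_g(t) - M^{Q}_g(s))].
\end{align*}
This is the deterministic analogue of Lemma \ref{lem:marprob1}; the proof transfers directly, the only technical point being that the integrand in \eqref{eq:M} grows linearly in $z$ through the term $[\nabla_y\Phi + I]\sigma z$, so continuity and boundedness of $(\phi, r, \Theta) \mapsto \int \mathcal{A}[g] \, r_s(dy\,dz)\,ds$ on $\mathcal{C}$ is not immediate. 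The hard part will therefore be the same truncation argument as in Lemma \ref{lem:marprob1}: I would split the $z$-integration via indicators $\mathbf{1}_{B^1(M)}$ and $\mathbf{1}_{B^2(M)}$ on $\{\int |z| \, r \leq M\}$ and its complement, passing to the limit in the bounded piece using weak convergence and dominating the unbounded piece by $C/M$ times the uniform second moment bound $K$, then sending $M \to \infty$. The diffusive term $\sqrt{\int \tilde{D} \, r_s(dy\,dz)}:\nabla_p\nabla_x g$ is bounded by Corollary \ref{cor:limitingcoefficientsregularity} and continuity of the matrix square root, so it passes to the limit directly. This completes the verification that $Q \in \mathcal{V}$.
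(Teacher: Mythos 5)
Your treatment of \ref{V:V2}, \ref{V:V3} and \ref{V:V4} matches the paper's proof in substance: Fatou/lower semicontinuity for \ref{V:V2}, Proposition \ref{prop:nucontmeasure} for \ref{V:V3}, and passing the identity of Lemma \ref{lemma:willimplyV4} to the limit (using that each $Q^N$ already satisfies \ref{V:V4} together with $(\mc{L}^1)^*\pi=0$) and then invoking the argument of Proposition \ref{remark:V4explaination}. Your ordering (proving \ref{V:V4} before \ref{V:V1}) differs from the paper but is legitimate, and in fact necessary for your route, since identifying $Q$ with a weak solution of \eqref{eq:controlledMcKeanLimit} from the \emph{non-averaged} martingale functional \eqref{eq:M} requires knowing the $\bb{T}^d$-marginal of $\rho$ is $\pi$ under $Q$. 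Note also that even writing the prelimit identity $\E^{Q^N}[\Psi(M^{Q^N}_g(t)-M^{Q^N}_g(s))]=0$ for the full functional \eqref{eq:M} already uses \ref{V:V4} for $Q^N$: condition \ref{V:V1} alone only gives the martingale property for the averaged generator.

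The genuine gap is in your treatment of the diffusion term in \ref{V:V1}. You keep the functional \eqref{eq:M} and assert that $r\mapsto \int_s^t \sqrt{\int_{\bb{T}^d\times\R^m}\tilde{D}(\phi(\tau),y,\nu(\tau))\,r_\tau(dydz)}:\nabla_p\nabla_x g\,d\tau$ passes to the limit ``directly'' by boundedness and continuity of the matrix square root. This map is \emph{not} continuous on $\mc{Y}$: convergence $r^n\to r$ only controls time-integrated linear functionals of the disintegration $r^n_\tau$, and the square root does not commute with this averaging. For instance, if $r^n_\tau(dydz)=\delta_0(dz)\mu^n_\tau(dy)$ with $\mu^n_\tau$ oscillating rapidly in $\tau$ between two measures $\mu_1,\mu_2$ with $\frac{1}{2}(\mu_1+\mu_2)=\pi$, then $r^n\to \delta_0(dz)\pi(dy)d\tau$ in $\mc{Y}$, while $\int_s^t\sqrt{\int\tilde{D}\,\mu^n_\tau(dy)}\,d\tau$ converges to the average of the two square roots rather than $\sqrt{\int\tilde{D}\,d\pi}\,(t-s)$; so the limit of the functional is wrong on such sequences, and continuity fails even at points in the support of $Q$. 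This is precisely why the paper's proof of this lemma does \emph{not} reuse Lemma \ref{lem:marprob1} verbatim: since each $Q^N$ satisfies \ref{V:V4}, the paper replaces $M^Q_g$ by the simplified functional \eqref{eq:M2} with generator \eqref{eq:decomposeA2}, in which the diffusion coefficients are $\bar{D}(x,\nu(s))$ and $\bar{B}(x,\nu(s))$ and the only $r$-dependence is the linear control term (handled by the truncation you describe). Your argument is repairable by the same device — having already established \ref{V:V4} for $Q$, the square-root term coincides $Q^N$- and $Q$-almost surely with $\int_s^t\bar{B}(\phi(\tau),\nu(\tau)):\nabla_p\nabla_x g\,d\tau$, and you should make this replacement before passing to the limit — but as written the continuity claim is false and this step needs to be redone.
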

\begin{proof}
The fact that $Q$ satisfies \ref{V:V2} follows immediately from Fatou's lemma. Since by Proposition \ref{prop:nucontmeasure} $\nu_0=\lim_{N\toinf}\nu_{Q^N}(0) = \nu_{Q}(0)$, \ref{V:V3} is satisfied.

We now prove $Q$ satisfies \ref{V:V1}. As before, our tool here is the martingale problem associated to Equation \eqref{eq:paramMcKeanLimit}. It suffices to show that for fixed $h\in C^\infty_c(\R^d\times\R^d),0\leq s\leq t\leq 1,$ and $\mc{G}_s$-measurable $\Psi\in C_b(\mc{C})$ that
\begin{align}\label{eq:martingaleproblemcompactnesslevelsets}
\E^Q\biggl[\Psi(M_h^Q(t)-M_h^Q(s))\biggr]=0
\end{align}
where $M_h^Q$ is given in Equation \eqref{eq:M}. Note that since we know $Q^N$ satisfies \ref{V:V4} for all $N$, we can in fact simplify the form of the process $M_h^Q$ to:
\begin{align}
\label{eq:M2}
    &M^Q_h(t,(\phi,r,w)) = h(\phi(t),w(t))-h(\phi(0),w(0))-\int_0^t\int_{\bb{T}^d\times\R^m} \mc{A}_2[h](\phi(s),y,z,\nu_Q(s),w(s))r_s(dydz)ds
\end{align}
where
\begin{align} \label{eq:decomposeA2}
    \mc{A}_2[g](x,y,z,\nu_Q(s),p) &\coloneqq\biggl[\bar{\beta}(x,\nu_Q(s))+ [\nabla_y\Phi(x,y,\nu_Q(s))+I]\sigma(x,y,\nu_Q(s))z\biggr]\cdot \nabla_x g(x,p)\\
    &\quad+ \frac{1}{2}\bar{D}(x,\nu_Q(s)):\nabla_x\nabla_x g(x,p)+\frac{1}{2}I:\nabla_p\nabla_pg(x,p)\nonumber\\
    &\quad+\bar{B}(x,\nu_Q(s)):\nabla_p\nabla_x g(x,p)\nonumber.
\end{align}
It suffices to show that
\begin{align*}
\E^{Q^N}\biggl[\Psi(M_h^{Q^N}(t)-M_h^{Q^N}(s))\biggr]\tto \E^{Q}\biggl[\Psi(M_h^{Q}(t)-M_h^{Q}(s))\biggr]
\end{align*}
since by \ref{V:V1}
\begin{align*}
\E^{Q^N}\biggl[\Psi(M_h^{Q^N}(t)-M_h^{Q^N}(s))\biggr]=0.
\end{align*}

Unlike in the previous proof of \ref{V:V1}, here the convergence is as a sequence of real numbers and not in distribution, since $Q^N$ are deterministic.

So that we can keep track of which measure $\rho$ and Brownian Motion $W$ correspond to in the Coordinate Process \eqref{eq:paramcanonicalprocess} on $\mc{C}$ under $Q^N$, we relabel it  $( \tilde{X}^{\nu_{Q^N}},\rho^N,W^N)$. Under $Q$, we keep the notation $(\tilde{X}^{\nu_{Q}},\rho,W)$.  Invoking Skorokhod's representation theorem to find another probability space on which the convergence of the random variables $ ( \tilde{X}^{\nu_{Q^N}},\rho^N,W^N)\tto ( \tilde{X}^{\nu_{Q}},\rho,W)$ occurs for almost every $\omega\in \W$, we have
\begin{align*}
\E\biggl[\Psi\left|M_h^{Q^N}(t)-M_h^{Q}(t)+M_h^{Q}(s) - M_h^{Q^N}(s)\right|\biggr]&\leq C(\norm{\Psi}_\infty)\biggl(\E\biggl[\biggl| M_h^{Q^N}(t)-M_h^{Q}(t)\biggr|\biggr]\\
&+\E\biggl[\biggl| M_h^{Q^N}(s)-M_h^{Q}(s)\biggr|\biggr]\biggr)
\end{align*}
and
\begin{align*}
&\E\biggl[\biggl| M_h^{Q^N}(t)-M_h^{Q}(t)\biggr|\biggr] \\
& = \E\biggl[\biggl|h(\tilde{X}^{\nu_{Q^N}}_t,W^N_t))-h(\tilde{X}^{\nu_{Q}}_t,W_t)+\int_0^t\int_{\bb{T}^d\times\R^m} \mc{A}_2[h](\tilde{X}^{\nu_{Q}}_s,y,z,\nu_Q(s),W_s)\rho_s(dydz)ds\\
&\hspace{6cm} -\int_0^t\int_{\bb{T}^d\times\R^m} \mc{A}_2[h](\tilde{X}^{\nu_{Q^N}}_s,y,z,\nu_{Q^N}(s),W^N_s)\rho^N_s(dydz)ds\biggr|\biggr].
\end{align*}
By continuity and boundedness of $g$ and convergence of $(\tilde{X}^{\nu_{Q^N}},W^N)\tto (\tilde{X}^{\nu_{Q}},W)$ along with bounded convergence theorem,
\begin{align*}
\E\biggl[\biggl|g(\tilde{X}^{\nu_{Q^N}}_t,W^N_t))-g(\tilde{X}^{\nu_{Q}}_t,W_t)\biggr|\biggr]\tto 0 \text{ as }N\toinf.
\end{align*}
By Assumption \ref{assumption:LipschitzandBounded} and Corollary \ref{cor:limitingcoefficientsregularity} along with dominated convergence theorem (using $L^2$ boundedness of the controls):
\begin{align*}
&\lim_{N\toinf}\E\biggl[\biggl|\int_0^t\int_{\bb{T}^d\times\R^m} \mc{A}_2[h](\tilde{X}^{\nu_{Q}}_s,y,z,\nu_Q(s),W_s)\rho_s(dydz)ds \\
&\hspace{6cm}-\int_0^t\int_{\bb{T}^d\times\R^m} \mc{A}_2[h](\tilde{X}^{\nu_{Q^N}}_s,y,z,\nu_{Q^N}(s),W^N_s)\rho^N_s(dydz)ds\biggr|\biggr]\\
& \leq \lim_{N\toinf}\E\biggl[\int_0^t\biggl|\int_{\bb{T}^d\times\R^m} \mc{A}_2[h](\tilde{X}^{\nu_{Q}}_s,y,z,\nu_Q(s),W_s)\rho_s(dydz)ds\\
&\hspace{6cm} -\int_{\bb{T}^d\times\R^m} \mc{A}_2[h](\tilde{X}^{\nu_{Q^N}}_s,y,z,\nu_{Q^N}(s),W^N_s)\rho^N_s(dydz)\biggr|ds\biggr]\\
& = \E\biggl[\int_0^t\lim_{N\toinf}\biggl|\int_{\bb{T}^d\times\R^m} \mc{A}_2[h](\tilde{X}^{\nu_{Q}}_s,y,z,\nu_Q(s),W_s)\rho_s(dydz)ds\\
&\hspace{6cm} -\int_{\bb{T}^d\times\R^m} \mc{A}_2[h](\tilde{X}^{\nu_{Q^N}}_s,y,z,\nu_{Q^N}(s),W^N_s)\rho^N_s(dydz)\biggr|ds\biggr].
\end{align*}

By continuity of the coefficients in $x$ and $\mu$ from Assumption \ref{assumption:LipschitzandBounded} and Corollary \ref{cor:limitingcoefficientsregularity}, along with the assumed uniform $L^2$ bound on the control and with the fact that the growth in the control is linear, if we can show that $\nu_{Q^N}(t)\tto \nu_{Q}(t)$ in $\mc{P}(\R^d)$ for each $t\in [0,1]$ then this term will vanish by essentially the same argument given in the proof of Lemma \ref{lem:marprob1}. But this follows immediately by the assumption that $Q^N\tto Q$ almost surely and Proposition \ref{prop:nucontmeasure}, and by Chebyshev's inequality and the same density argument as at the end of Subsection \ref{subsubsection:V1} we have that $Q$ satisfies \ref{V:V1}.

Finally we prove that $Q$ satisfies \ref{V:V4}. Again invoking Skorokhod's representation theorem to find another probability space on which the convergence of the random variables $ ( \tilde{X}^{\nu_{Q^N}},\rho^N,W^N)\tto ( \tilde{X}^{\nu_{Q}},\rho,W)$ occurs for almost every $\omega\in \W$, taking any $g\in C^2_b(\R^d)$ and $t\in[0,1]$:

\begin{align*}
&\E\biggl[\biggl|\int_{\bb{T}^d\times\R^m\times [0,t]} \mc{L}^1_{\tilde{X}^{\nu_{Q}}_s,\nu_Q(s)}g(y)\rho(dydzds)\biggr|\biggr]\\
&\leq \liminf_{N\toinf}\E\biggl[\biggl|\int_{\bb{T}^d\times\R^m\times [0,t]} \mc{L}^1_{\tilde{X}^{\nu_{Q^N}}_s,\nu_{Q^N}(s)}g(y)\rho^N(dydzds) \biggr|\biggr] \\
&=\liminf_{N\toinf}\E\biggl[\biggl|\int_0^t\int_{\bb{T}^d} \mc{L}^1_{\tilde{X}^{\nu_{Q^N}}_s,\nu_{Q^N}(s)}g(y) \pi(dy;\tilde{X}^{\nu_{Q^N}}_s,\nu_{Q^N}(s))\biggr|\biggr]\text{ by }\ref{V:V4},\\
&=0 \text{ since $\pi$ satisfies \eqref{eq:pi}},
\end{align*}
where to get to the second line we used Fatou's Lemma, continuity of $\mc{L}^1_{x,\mu}f(y)$ via Assumption \ref{assumption:LipschitzandBounded}, Proposition \ref{prop:nucontmeasure},  and Theorem A.3.18 in \cite{DE}. So the result of Lemma \ref{lemma:willimplyV4} holds for $Q$, and hence via the proof of Proposition \ref{remark:V4explaination}, $Q$ satisfies \ref{V:V4}.
\end{proof}

Lemma \ref{lem:precompactlevelsets} establishes precompactness of $I_s$ defined in (\ref{eq:LevelSet}). Now we will use both Lemmas \ref{lem:precompactlevelsets} and \ref{lem:limviablelevelsets} to prove the level sets $I_s$ are closed via showing lower-semicontinuity of $I$.
\begin{lem} \label{lem:lowersemicont}
The functional $I$ given in Equation \eqref{eq:ratefunction} is lower semi-continuous.
\end{lem}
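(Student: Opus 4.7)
The plan is to establish lower semi-continuity by a standard direct argument using the two preceding Lemmas. Suppose $\theta^N\to\theta$ in $\mc{P}(\mc{X})$. I may assume without loss of generality that $L\coloneqq\liminf_{N\toinf}I(\theta^N)<\infty$, since otherwise the inequality $I(\theta)\leq\liminf_{N\toinf}I(\theta^N)$ is trivial; after extracting a subsequence (not relabeled), I may assume $I(\theta^N)\to L$ and, after discarding finitely many indices, that $I(\theta^N)\leq L+1$ for every $N$. For each $N$, by the definition of the infimum in Equation \eqref{eq:ratefunction}, I pick $\Theta^N\in\mc{V}$ with $\Theta^N_{\mc{X}}=\theta^N$ satisfying
\begin{align*}
\E^{\Theta^N}\biggl[\frac{1}{2}\int_{\bb{T}^d\times\R^m\times[0,1]}|z|^2\rho(dydzdt)\biggr]\leq I(\theta^N)+\frac{1}{N}.
\end{align*}

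Next I apply the compactness and closedness results for viable measures. The uniform bound
\begin{align*}
\sup_{N\in\bb{N}}\E^{\Theta^N}\biggl[\int_{\bb{T}^d\times\R^m\times[0,1]}|z|^2\rho(dydzdt)\biggr]\leq 2L+2
\end{align*}
puts $\br{\Theta^N}$ into the hypothesis of Lemma \ref{lem:precompactlevelsets}, which gives tightness and hence a further (not relabeled) weakly convergent subsequence $\Theta^N\to \Theta^\infty$ in $\mc{P}(\mc{C})$. Lemma \ref{lem:limviablelevelsets} then yields $\Theta^\infty\in\mc{V}$. Moreover, since the $\mc{X}$-marginal map $\mc{P}(\mc{C})\tto\mc{P}(\mc{X})$ is continuous and $\Theta^N_{\mc{X}}=\theta^N\to\theta$, we have $\Theta^\infty_{\mc{X}}=\theta$, so $\Theta^\infty$ is an admissible candidate in the infimum defining $I(\theta)$.

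It remains to pass the cost to the limit. The functional $r\mapsto \int_{\bb{T}^d\times\R^m\times[0,1]}|z|^2 r(dydzdt)$ is lower semi-continuous and bounded below on $\mc{Y}$ (this is precisely the tightness function $g$ used in Subsection \ref{subsubsection:TightnessQ^N_Z}), so the map $\Theta\mapsto \E^{\Theta}\bigl[\int|z|^2\rho(dydzdt)\bigr]=\int_{\mc{C}}g(r)\,\Theta_{\mc{Y}}(dr)$ is lower semi-continuous on $\mc{P}(\mc{C})$ by Theorem A.3.12 in \cite{DE} (Fatou's lemma for weak convergence of measures applied to a nonnegative lower semi-continuous integrand). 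Hence
\begin{align*}
I(\theta)\leq \E^{\Theta^\infty}\biggl[\frac{1}{2}\int_{\bb{T}^d\times\R^m\times[0,1]}|z|^2\rho(dydzdt)\biggr]\leq \liminf_{N\toinf}\E^{\Theta^N}\biggl[\frac{1}{2}\int_{\bb{T}^d\times\R^m\times[0,1]}|z|^2\rho(dydzdt)\biggr]\leq L,
\end{align*}
which is the desired inequality. Combined with the precompactness in Lemma \ref{lem:precompactlevelsets} applied along any sequence in a fixed level set $I_s$, this shows that $I_s$ is sequentially compact, and hence compact, in $\mc{P}(\mc{X})$; the only point requiring care is the pairing of Lemmas \ref{lem:precompactlevelsets}–\ref{lem:limviablelevelsets} with the lower semi-continuity of the quadratic cost, but both of those ingredients have already been established.
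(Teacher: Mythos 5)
Your proof is correct and follows essentially the same route as the paper's: extract near-optimal $\Theta^N\in\mc{V}$ with $\Theta^N_{\mc{X}}=\theta^N$, use Lemma \ref{lem:precompactlevelsets} for tightness, Lemma \ref{lem:limviablelevelsets} for membership of the limit in $\mc{V}$, and a Fatou-type lower semi-continuity of the quadratic cost (Theorem A.3.12 in \cite{DE}) to pass to the limit; you even make explicit the identification $\Theta^\infty_{\mc{X}}=\theta$, which the paper leaves implicit. The only (harmless) blemish is the constant in your uniform bound, which should be $2L+4$ rather than $2L+2$ given your choice of $I(\theta^N)\leq L+1$ plus the $1/N$ slack, but any finite uniform bound suffices for the tightness lemma.
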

\begin{proof}
Consider a sequence $\br{\theta^N}\subset \mc{P}(\mc{X})$ with limit $\theta$. We wish to show
\begin{align*}
\liminf_{N\toinf}I(\theta^N)\geq I(\theta).
\end{align*}
It suffices to consider the case there the left hand side is finite, so there is $M\in [0,\infty)$ such that $\liminf_{N\toinf}I(\theta^N)\leq M$. Then, recalling that
\begin{align*}
I(\theta^N) = \inf_{\Theta^N\in\mc{V}:\Theta^N_{\mc{X}}=\theta^N} \E^{\Theta^N}\biggl[\frac{1}{2}\int_{\bb{T}^d\times\R^m\times [0,1]}|z|^2\rho(dydzdt)\biggr],
\end{align*}
by taking a subsequence of $\br{\theta^N}$ if necessary, we can find measures $\Theta^N$ such that $\Theta^N_{\mc{X}}=\theta^N$,
 \begin{align}\label{eq:levelsetsbound}
 \sup_{N\in\bb{N}}\E^{\Theta^N}\biggl[\frac{1}{2}\int_{\bb{T}^d\times\R^m\times [0,1]}|z|^2\rho(dydzdt)\biggr]<M+1,
 \end{align}
 and
 \begin{align*}
 I(\theta^N) \geq \E^{\Theta^N}\biggl[\frac{1}{2}\int_{\bb{T}^d\times\R^m\times [0,1]}|z|^2\rho(dydzdt)\biggr] - \frac{1}{N}.
 \end{align*}
Then by Lemma \ref{lem:precompactlevelsets} we can consider a subsequence along which $\br{\Theta^N}$ converges to some $\Theta$. By Lemma \ref{lem:limviablelevelsets} $\Theta$ is viable. Hence by Fatou's lemma,
\begin{align*}
\liminf_{N\toinf} I(\theta^N) &\geq \liminf_{N\toinf}\E^{\Theta^N}\biggl[\frac{1}{2}\int_{\bb{T}^d\times\R^m\times [0,1]}|z|^2\rho(dydzdt)\biggr] - \frac{1}{N}\\
&\geq \E^{\Theta}\biggl[\frac{1}{2}\int_{\bb{T}^d\times\R^m\times [0,1]}|z|^2\rho(dydzdt)\biggr]\\
&\geq \inf_{\Theta\in\mc{V}:\Theta_{\mc{X}}=\theta}\E^{\Theta}\biggl[\frac{1}{2}\int_{\bb{T}^d\times\R^m\times [0,1]}|z|^2\rho(dydzdt)\biggr]\\
& = I(\theta),
\end{align*}
so lower semi-continuity of $I$ is proved.
\end{proof}
\section{The Laplace Principle Upper Bound}\label{sec:upperbound}
In order to close the proof of Theorem \ref{thm:LaplacePrinciple}, we now need to show for any $F\in C_b(\mc{P}(\mc{X}))$ that the Laplace Principle Upper Bound \eqref{eq:laplaceprincipleupperbound} holds. Thanks to Proposition \ref{prop:I=Iav}, we know that $I=I^{av}$, so we can equivalently show:
\begin{align}
\label{eq:upperbound}
    \limsup_{N\toinf}-\frac{1}{N}\log\E[\exp(-NF(\mu^N))]\leq \inf_{\theta\in \mc{P}(\mc{X})}\br{I^{av}(\theta)+F(\theta)}.
\end{align}

We will prove this bound via two different methods, under the additional assumptions \ref{assumption:samebrownianmotion}/\ref{assumption:weaksenseuniqueness} and \ref{assumption:xindependenceassumptionforupperbound} respectively.

In the setting of the additional assumptions \ref{assumption:samebrownianmotion}/\ref{assumption:weaksenseuniqueness}, as we will see, the result follows along the same lines of Section 6 of \cite{BDF} (see also Sections 3.2 and 4.2 of \cite{BCsmallnoise} and Section 3.4 in \cite{BCcurrents}). We are able to do so thanks to the fact that in this regime, $\bar{W}^{i,N}$ in Equation \eqref{eq:barW} being used to construct the occupation measures \eqref{eq:occmeas} are simply the original driving Brownian Motions for the particles \eqref{eq:multimeanfield}- that is $\bar{W}^{i,N}=W^i$ for all $i,N\in\bb{N}$.

In the setting of the additional assumption \ref{assumption:xindependenceassumptionforupperbound}, we do not have such a luxury, and thus tracking the joint distribution of the driving controls and Brownian motions in Equation \eqref{eq:controlledprelimit} is more complicated. We thus make an approximation argument via semi-Markovian controls which are continuous, bounded functions of $\bar{W}^{i,N}$, owing largely to ideas found in the proof of Theorem 2.4 in \cite{Lacker} and results from \cite{CDL}.

In both subsections, we make the standing assumptions \ref{assumption:initialconditions}-\ref{assumption:limitinguniformellipticity}.

\subsection{Proof under the additional assumptions \ref{assumption:samebrownianmotion} and \ref{assumption:weaksenseuniqueness}}\label{subsection:sameBMupperboundproof}\label{subsec:upperboundundersameBM}
Under this additional assumption, $\bar{B}(x,\mu)=\sigma(x,\mu)$ and $\Phi\equiv 0$, so Equation \eqref{eq:BDFlimitav} from the definition of $\mc{V}^{av}$ in Definition \ref{defi:Vav} is given by Equation \eqref{eq:limitingcontrolledmckeanvlasovsimp}.
\begin{remark}\label{eq:onhavingacoupledslowfastsystem}
Note that, due to the fact that we consider the case where the fast motion of Equation \eqref{eq:multimeanfield} is the same as the slow motion but on a time scale $\frac{1}{\epsilon}$ faster, we have in fact that in this simplified regime that $\pi(dy;x,\mu)=dy$; that is, the invariant measure associated to the fast dynamics is Lebesgue measure on the torus. However, this has no bearing on the proof of the equivalence of $I$ and $I^{av}$ in Proposition \ref{prop:I=Iav}, and hence we could just as well consider the empirical measure of Equation \eqref{eq:multimeanfield} with $X^{i,N}_t/\epsilon$ replaced by some $Y^{i,N}$ satisfying
\begin{align*}
dY^{i,N}_t = \frac{1}{\epsilon}\biggl[\frac{1}{\epsilon}g(X^{i,N}_t,Y^{i,N}_t,\mu^N_t)+c(X^{i,N}_t,Y^{i,N}_t,\mu^N_t)\biggr]dt + \frac{1}{\epsilon}\tau(X^{i,N}_t,Y^{i,N}_t,\mu^N_t)dW^i_t
\end{align*}
and under sufficient coercive conditions on the fast process the LDP would hold (though with a different form of the limiting coefficients $\bar{\beta}$ and $\bar{D}$ from Equation \eqref{eq:limitingcoefficients}) -see \cites{MS,Rockner}. We thus present forthcoming proof without using the independence of $\pi$ from $x$ and $\mu$. 
\end{remark}
Since, as previously stated, our method of proof under these assumptions mimics that of \cites{BDF,BCsmallnoise}, we will also be making heavy use of their notion of weak sense uniqueness, which one should now recall from Definition \ref{def:weaksenseuniquness}. This is the very property that the additional Assumption \ref{assumption:weaksenseuniqueness}i) is supposing.

We are now ready to prove the bound \eqref{eq:laplaceprincipleupperbound} (equivalently the bound \eqref{eq:upperbound}) in this regime.

\begin{proof}
Let $F\in C_b(\mc{P}(\mc{X}))$ and $\eta>0$. Take $\theta \in \mc{P}(\mc{X})$ such that
\begin{align*}
I^{av}(\theta)+F(\theta) \leq \inf_{\theta \in \mc{P}(\mc{X})} \br{ I^{av}(\theta)+F(\theta)} +\frac{\eta}{2}.
\end{align*}

Since the bound given in Equation \eqref{eq:upperbound} is trivial if the right hand side is $+\infty$, we may assume it is finite. 

By the definition of $I^{av}$, there exists $\Theta\in\mc{V}^{av}$ such that $\Theta_{\mc{X}}=\theta.$ By merit of \ref{V:V1BDFav}, we get that letting $(\hat{X},\hat{\rho},\hat{W})$ be the canonical process on $\mc{X}\times\mc{Z}\times\mc{W}$ as defined analogously to Equation \eqref{eq:canonicalprocess}, $$((\mc{X}\times\mc{Z}\times\mc{W},\mc{B}(\mc{X}\times\mc{Z}\times\mc{W}),\Theta),\br{\hat{\mc{G}}^\Theta_{t+}},(\hat{X},\hat{\rho},\hat{W}))$$ is a weak solution of (\ref{eq:limitingcontrolledmckeanvlasovsimp}). Note that we take the $\Theta$-augmentation and right limit of $\hat{\mc{G}}_t\coloneqq \sigma((\hat{X}_s,\hat{\rho}_s,\hat{W}_s),0\leq s\leq t)$ so that we have a filtration that satisfies the usual conditions. By \ref{V:V1BDFav} we know that the martingale problem \eqref{eq:martingaleproblemcompactnesslevelsets} (but with $r\in\mc{Y}$ replaced by $\hat{r}\in\mc{Z}$ and setting $f=\Phi\equiv 0$, $\sigma(x,y,\mu)=\sigma(x,\mu)=\bar{B}(x,\mu)$) is satisfied by the coordinate process, and by Exercise 5.4.13 in \cite{KS} it is also satisfied with $\hat{\mc{G}}^\Theta_{t+}$ in the place of $\hat{\mc{G}}_t$. See also Remark 4.2 in \cite{BDF} for a further discussion of this.

As per equations \eqref{eq:BDFratefunctionavstandard} and \eqref{eq:BDFlimitavstandard} and the discussion at the end of Subsection \ref{subsec:altvariationalform}, we can further always assume that $\hat{\rho}_t(dz)= \delta_{u(t)}(dz)$ for a square-integrable $\R^d$-valued process $u(t)$.

We thus can find $\tilde{\Theta}\in\mc{V}^{av}$ such that $\tilde{\Theta}_{\mc{X}}=\theta$, and $$((\mc{X}\times\mc{Z}\times\mc{W},\mc{B}(\mc{X}\times\mc{Z}\times\mc{W}),\tilde{\Theta}),\br{\hat{\mc{G}}^{\tilde{\Theta}}_{t+}},(\hat{X},\tilde{\rho},\hat{W}))$$ is a weak solution to (\ref{eq:limitingcontrolledmckeanvlasovsimp}) with $\tilde{\rho}_t(\omega)(D) = \delta_{\tilde{u}(t,\omega)}(D)$ for $D\in\mc{B}(\R^d)$, and $\tilde{u}$ an $\R^d$-valued process such that \begin{align*}
   \E^{\tilde\Theta}\biggl[\frac{1}{2}\int_0^1|\tilde{u}(t)|^2dt\biggr]= \E^{\tilde\Theta}\biggl[\frac{1}{2}\int_{\R^m\times [0,1]}|z|^2\tilde{\rho}(dzdt)\biggr]\leq I(\theta)+\frac{\eta}{2}.
\end{align*}

Recall the mapping $\vartheta$ from (3) in Definition \ref{def:weaksenseuniquness}. Decompose $\tilde{\Theta}\circ \vartheta^{-1}\in \mc{P}(\R^d\times \mc{Z}\times\mc{W})$ as $\tilde{\Theta}\circ \vartheta^{-1}(dx,dr,dw)=\Lambda(dr;x,w)\nu_0(dx)\tilde{\Theta}_{\mc{W}}(dw)$. Now let us define for each $N$ a probability space $(\W_\infty,\F_\infty,\Prob_N)$ by setting $\W_\infty = \otimes_{i=1}^\infty \mc{Z}\times\mc{W}$,$\F_\infty = \mc{B}(\W_\infty)$, and $$P_N(d(r_1,r_2,...),d(w_1,w_2,...))=\otimes_{i=1}^N\Lambda(dr_i;w_i,x^{i,N}) \tilde{\Theta}_{\mc{W}}(dw_i)\otimes_{i=N+1}^\infty \tilde{\Theta}|_{\mc{B}(\mc{Z}\times\mc{W})}(dr_idw_i),$$ where $x^{i,N}$ are the deterministic initial conditions from Assumption \ref{assumption:initialconditions}.

For $\omega=(\omega_1,\omega_2,...)=((r_1,w_1),(r_2,w_2),...)\in\W_\infty$, define (see also Section 3.2 of \cite{BCsmallnoise})
\begin{align*}
u^\infty_i(t,\omega)=\tilde{u}(t,\omega_i)=\int_{\R^d}zr_{i,t}(dz),\quad W^{i,\infty}_t(\omega) = \hat{W}_t(\omega_i)=w_i,\quad i\in\bb{N},t\in[0,1].
\end{align*}
Here we decompose $r_i(dzdt)=r_{i,t}(dz)dt$. By construction and \ref{V:V1BDFav}, $W^{i,\infty},i=1,...,N$ are standard, mutually Brownian motions under $\Prob_N$, since $\tilde{\Theta}_{\mc{W}}$ is the classical Wiener measure by definition.

Denote by $\E^N$ the expectation under $\Prob_N$. Then

\begin{align}
\label{eq:L2controlupperbound}
&\limsup_{N\toinf}\E^N[\frac{1}{N}\sum_{i=1}^N \int_0^1|u^\infty_i(t)|^2dt] = \limsup_{N\toinf}\frac{1}{N}\sum_{i=1}^N\int_{\mc{W}}\int_{\mc{Z}} \int_0^1|\int_{\R^d}zr_{t}(dz)|^2dt\Lambda(dr;x^{i,N},w)\tilde{\Theta}(dw) \\
&\leq \int_{\R^d}\int_{\mc{W}}\int_{\mc{Z}} \int_0^1|\int_{\R^d}zr_{t}(dz)|^2dt\Lambda(dr;x,w)\tilde{\Theta}(dw)\nu_0(dx)\nonumber\\
&=\E^{\tilde{\Theta}}[\int_0^1|\int_{\R^d}z\tilde{\rho}_t(dz)|^2dt]= \E^{\tilde{\Theta}}[\int_{\R^d\times[0,1]}|z|^2\tilde{\rho}(dzdt)]<\infty\nonumber
\end{align}
by Assumption \ref{assumption:weaksenseuniqueness}ii) and \ref{V:V2BDFav}.

Let $\br{\tilde{X}^{i,N}}_{i\in\br{1,...,N}}$ be the unique solution to the system of SDEs on $(\W_\infty,\F_\infty,\Prob_N)$
\begin{align*}
    d\tilde{X}^{i,N}_t &= \biggl[b(\tilde{X}^{i,N}_t,\tilde{X}^{i,N}_t/\epsilon,\tilde{\mu}^N_t)+\sigma(\tilde{X}^{i,N}_t,\tilde{\mu}^N_t) u^{i,\infty}(t)\biggr]dt + \sigma(\tilde{X}^{i,N}_t,\tilde{\mu}^N_t) dW^{i,\infty}_t\\
    \tilde{X}^{i,N}_0&=x^{i,N}
\end{align*}
for $N\in\bb{N}$ and $\tilde{\mu}^N_t$ the empirical measure of $\tilde{X}^{1,N},...,\tilde{X}^{N,N}$ at time $t$ (the existence of such solutions is a consequence of Proposition \ref{prop:uniquestrongsol} via the discussion on p.81 of \cite{BDF}).

Define $\hat{\mc{Y}}=\mc{R}^1(\bb{T}^d)$, where
\begin{align*}
\mc{R}^\alpha(\bb{T}^d) \coloneqq \br{n:n\text{ is a positive Borel measure on }\bb{T}^d\times [0,\alpha] \text{ and }n(\bb{T}^d\times[0,t])=t,\forall t\in [0,\alpha]}.
\end{align*}

Note that while $\mc{Z}$ from Equation \eqref{eq:Zspace} is the space where the $\R^d$-marginal of an element of $\mc{Y}$ takes values, $\hat{\mc{Y}}$ is the space where the $\bb{T}^d$-marginal of an element of $\mc{Y}$ takes values.

Now define a sequence of random variables on $\mc{P}(\mc{X}\times\hat{\mc{Y}}\times\mc{Z}\times{\mc{W}})$ by for $A\in \mc{B}(\mc{X}),B\in\mc{B}(\hat{\mc{Y}}),C\in\mc{B}(\mc{Z}),D\in\mc{B}(\mc{W})$:
\begin{align}\label{eq:occmeasurewhenbarW=W}
    \tilde{Q}^N_\omega(A\times B\times C\times D) = \frac{1}{N}\sum_{i=1}^N \delta_{\tilde{X}^{i,N}(\cdot,\omega)}(A)\delta_{\tilde{m}^{i,N}(\omega)} (B) \delta_{\tilde{\rho}^{i,\infty}(\omega)}(C)\delta_{W^{i,\infty}}(D).
\end{align}
Here for $E\in\mc{B}(\R^d)$, $F\in \mc{B}(\bb{T}^d)$, and $I\in\mc{B}([0,1])$:
\begin{align}\label{eq:rhoconstruction}
\tilde{\rho}^{i,\infty}(\omega)(I\times E) \coloneqq \tilde{\rho}(\omega_i)(I\times E) = \int_I \delta_{\tilde{u}(t,\omega_i)}(E)dt =\int_I \delta_{u^{i,\infty}(t,\omega)}(E)dt
\end{align}
and
\begin{align}\label{eq:minconstruction}
\tilde{m}^{i,N}(\omega)(I\times F)\coloneqq \int_{I}\delta_{(\tilde{X}^{i,N}_t(\omega)/\epsilon) \text{mod}1}(F)dt.
\end{align}

Note that these occupation measures are defined similarly to those in Equation \eqref{eq:occmeas}, but that, crucially, in this restricted setting $\bar{W}^{i,N}=W^{i,\infty}$. Moreover, since we are using the equivalent formulation of the rate function $I^{av}$ from Equation \eqref{eq:BDFratefunctionav} rather than $I$ from Equation \eqref{eq:ratefunction}, we replace the second marginal, which was an element of $\mc{P}(\mc{Y})$ given the empirical measure on the $\rho^{i,N}$'s as constructed via the relation \eqref{eq:inducedrelaxedcontrol} by the occupation measures on the decoupled $\bb{T}^d$ and $\R^d$ marginals of $\br{\rho^{i,N}}$, $\br{\tilde{m}^{i,N}}\subset \mc{Y}$ and $\br{\tilde{\rho}^{i,\infty}}\subset \mc{Z}$ respectively. The role of the second marginal here is merely to track what becomes the invariant measure $\pi$ in the limit in a means that allows us to easily refer to the proofs in Section \ref{sec:limitingbehavior}.

We want to see that $\tilde{Q}^N|_{\mc{B}(\mc{X}\times\mc{Z}\times\mc{W})}$ converges weakly to $\tilde{Q}|_{\mc{B}(\mc{X}\times\mc{Z}\times\mc{W})}\in\mc{V}^{av}$ as a $\mc{P}(\mc{X}\times\mc{Z}\times\mc{W})$- valued random variable, and that $\tilde{Q}_{\mc{X}}=\theta$.

We first show tightness of $\br{\tilde{Q}^N}_{N\in\bb{N}}$ as a sequence of $\mc{P}(\mc{X}\times\hat{\mc{Y}}\times\mc{Z}\times\mc{W})$-valued random variables. Since Equation \eqref{eq:L2controlupperbound} holds, the tightness of the $\mc{X},\mc{Z}$, and $\mc{W}$ marginals follows exactly as in Subsection \ref{subsection:tightness}. For tightness of the $\hat{\mc{Y}}$ marginal, we have that $\bb{T}^d\times [0,1]$ is compact, so $\mc{M}^1(\bb{T}^d\times [0,1])$, where $\mc{M}^1(E)$ denotes the set of sub-probability measures on $E$, is compact by Corollary A.3.16 in \cite{DE} (this also works for $\mc{M}^\alpha(E)$, positive Borel measures $\mu$ on $E$ with $\mu(E)\leq\alpha$, for any $\alpha>0$). Then by the proof of Lemma 3.3.1 in \cite{DE}, $\mc{R}^1(\bb{T}^d)\subset \mc{M}^1(\bb{T}^d\times [0,1])$ is closed in the topology of weak convergence (if a weakly converging sequence of measures on $\bb{T}^d\times[0,1]$ has the property that for each member of the sequence, its second marginal is Lebesgue measure, then this will also be true of the limiting measure), and hence $\hat{\mc{Y}}= \mc{R}^1(\bb{T}^d)$ is compact. Then $\mc{P}(\hat{\mc{Y}})$ is compact, and hence $\mc{P}(\mc{P}(\hat{\mc{Y}}))$ is compact. Since $\br{\mc{L}(\tilde{Q}^N)}_{N\in\bb{N}}\subset  \mc{P}(\mc{P}(\hat{\mc{Y}}))$, and on a metrizable space compactness implies sequential compactness, we immediately get $\br{\tilde{Q}^N_{\mc{Y}}}_{N\in\bb{N}}$ is tight as a sequence of $\mc{P}(\mc{Y})$-random variables.

Thus we can extract a weakly convergent subsequence of $\br{\tilde{Q}^N}_{N\in\bb{N}}$, which we will not relabel in the notation, to some $\tilde{Q} \in \mc{P}(\mc{X}\times\mc{Y}\times\mc{Z}\times\mc{W})$.

 Then, via the same proofs as in Section 6.2.1. but with $\tilde{m}^{i,N}$ in the place of $\rho^{i,N}$ (noting that the integrals involved only depend on the $y$-marginal of $\hat{\rho}^{i,N}$, which is exactly $\tilde{m}^{i,N}$), we find that
\begin{align}\label{eq:secondmargisinvariantmeasuresimp}
\tilde{Q}\biggl(\biggl\lbrace(\phi,n,r,w)\in\mc{X}\times\mc{Y}\times\mc{Z}\times\mc{W}: n_s(dy) = \pi(dy|\phi(s),\nu_{\tilde{Q}}(s)),\forall s\in[0,1]\biggr\rbrace\biggr)=1.
\end{align}

Reformulating the martingale problem from Theorem \ref{thm:V1} by taking instead $\Psi\in C_b(\mc{X}\times\mc{Y}\times\mc{Z}\times \mc{W})$ which is measurable with respect to the filtration generated by the coordinate process on $\mc{X}\times\mc{Y}\times\mc{Z}\times\mc{W}$ and modifying $\mc{M}^\Theta_g$ to
\begin{align*}
\tilde{\mc{M}}^\Theta_g(t,(\phi,n,r,w))&=g(\phi(t),w(t))-g(\phi(0),0)\\
&-\int_0^1 \biggl[\int_{\bb{T}^d}b(\phi(s),y,\hat{\nu}_{\Theta}(s))n_s(dy)+\sigma(\phi(s),\hat{\nu}_{\Theta}(s))\int_{\R^d}zr_s(dz) \biggr]\cdot \nabla_x g(\phi(s),w(s))\\
&-\frac{1}{2}A(\phi(s),\hat{\nu}_{\Theta}(s)):\nabla_x\nabla_x g(\phi(s),w(s)) - \sigma(\phi(s),\hat{\nu}_{\Theta}(s)) :\nabla_p\nabla_x g(\phi(s),w(s))\\
&-\frac{1}{2}I:\nabla_p\nabla_p g(\phi(s),w(s))ds
\end{align*}
we get, using that in this simplified regime all terms are only integrated against the $y$ or $z$ marginal of what was $\rho^{i,N}$, the exact same proof as before shows that $\tilde{Q}$ is almost surely a weak solution to Equation \eqref{eq:limitingcontrolledmckeanvlasovsimp} with $m_t$ in the place of $\pi$, where here we mean the coordinate process $(\hat{X},m,\hat{\rho},\hat{W})$ on $\mc{X}\times\mc{Y}\times\mc{Z}\times\mc{W}$ satisfies the given Equation on some filtered probability space. But using \eqref{eq:secondmargisinvariantmeasuresimp}, we get that writing $\tilde{Q}(d\phi dn dr dw) = \lambda(dn|\phi, r, w)\tilde{Q}|_{\mc{B}(\mc{X}\times\mc{Z}\times\mc{W})}(d\phi dr dw)$ that $\lambda(dn|\phi, r, w) = \delta_{\bar{m}_\phi}(dn)$, where for $A\in\mc{B}(\bb{T}^d),I\in\mc{B}([0,1])$,
\begin{align*}
\bar{m}_\phi(A\times I)=\int_I \pi(A;\phi(t),\nu_{\tilde{Q}}(t))dt.
\end{align*}
This means that indeed $\tilde{Q}|_{\mc{B}(\mc{X}\times\mc{Z}\times\mc{W})}$ corresponds to a weak solution of Equation \eqref{eq:limitingcontrolledmckeanvlasovsimp}.

Lastly, we observe that the proofs in Sections 6.2.3 and 6.2.4 go through in the exact same manner, so we have $\tilde{Q}|_{\mc{B}(\mc{X}\times\mc{Z}\times\mc{W})}$ is almost surely in $\mc{V}^{av}$, as desired.

We wish now to conclude that, almost surely, $\tilde{Q}_{\mc{X}}=\tilde{\Theta}_{\mc{X}}$. Note that under assumption \ref{assumption:samebrownianmotion}, $\tilde{Q}|_{\mc{B}(\mc{X}\times\mc{Z}\times\mc{W})},\tilde{\Theta}\in \mc{V}^{av}$ imply conditions (1) and (2) in Definition \ref{def:weaksenseuniquness}. Thus, by Assumption \ref{assumption:weaksenseuniqueness}i), it suffices to prove that $\tilde{Q}|_{\mc{B}(\mc{X}\times\mc{Z}\times\mc{W})}\circ\vartheta^{-1}=\tilde{\Theta}\circ\vartheta^{-1}$, where $\vartheta$ is as in (3) in Definition \ref{def:weaksenseuniquness}.  By the mapping theorem (Theorem 2.7 in \cite{billingsley}) and continuity of $\vartheta$, we can simply show
\begin{align}\label{eq:convergenceforweakuniqueness}
\Prob_N\circ[\tilde{Q}^N|_{\mc{B}(\mc{X}\times\mc{Z}\times\mc{W})}\circ\vartheta^{-1}]^{-1} \tto \delta_{\tilde{\Theta}\circ\vartheta^{-1}}.
\end{align}
Since $(\tilde{\rho}^{i,\infty},W^{i,\infty})_{i=1}^N$ are independent and identically-distributed, if we ignore the initial conditions this would be a consequence of Varadarajan's theorem (\cite{Dudley} p.399). To account for the initial conditions is precisely the reason for the construction of $\Prob_N$ and the Assumption \ref{assumption:weaksenseuniqueness}ii), and we can verify directly via Chebyshev's inequality that for any $F\in C_b(\R^d\times\mc{Z}\times\mc{W})$ and $\eta>0$:
\begin{align*}
&\Prob_N\biggl(\biggl|\int_{\R^d\times\mc{Z}\times\mc{W}}F(x,r,w)\tilde{Q}^N|_{\mc{B}(\mc{X}\times\mc{Z}\times\mc{W})}\circ \vartheta^{-1}(dxdrdw)-\int_{\R^d\times\mc{Z}\times\mc{W}}F(x,r,w)\tilde{\Theta}\circ\vartheta^{-1}(dxdrdw)\biggr|>\eta\biggr)\\
&\leq \frac{8\norm{F}^2_\infty}{\eta^2 N}+\frac{1}{\eta^2}\biggl(\frac{1}{N}\sum_{i=1}^N\int_{\mc{W}}\int_{\mc{Z}}F(x^{i,N},r,w)\Lambda(dr;x^{i,N},w)\tilde{\Theta}_{\mc{W}}(dw)\\
&-\int_{\R^d}\int_{\mc{W}}\int_{\mc{Z}}F(x,r,w)\Lambda(dr;x,w)\tilde{\Theta}_{\mc{W}}(dw)\nu_0(dx) \biggr)^2\\
&\tto 0 \text{ as }N\toinf\text{ by Assumption \ref{assumption:weaksenseuniqueness}ii).}
\end{align*}
Then the fact that \eqref{eq:convergenceforweakuniqueness} holds follows via a standard density argument similar to \cite{Dudley} p.399.

Therefore $\tilde{Q}_{\mc{X}}=\tilde{\Theta}_{\mc{X}}$, and we have using Proposition \ref{prop:varrep}:
\begin{align*}
    \limsup_{N\toinf}-\frac{1}{N}\log\E[\exp(-NF(\mu^N))] &= \limsup_{N\toinf}\inf_{u^N\in\mc{U}^N}\left\{\frac{1}{2}\E[\frac{1}{N}\sum_{i=1}^N \int_0^1|u^N_i(t)|^2dt] + \E[F(\bar{\mu}^N)]\right\}\\
    & \leq \limsup_{N\toinf}\left\{\frac{1}{2}\E^N[\frac{1}{N}\sum_{i=1}^N \int_0^1|u^\infty_i(t)|^2dt]+\E^N[F(\bar{\mu}^N)]\right\}\\
    & \leq \E^{\tilde\Theta}\biggl[\frac{1}{2}\int_{\R^m\times [0,1]}|z|^2\tilde{\rho}(dzdt)\biggr]+\limsup_{N\toinf}\E^N[F(\tilde{Q}^N_{\mc{X}})]\\
    &\leq \E^{\tilde\Theta}\biggl[\frac{1}{2}\int_{\R^m\times [0,1]}|z|^2\tilde{\rho}(dzdt)\biggr]+F(\tilde{\Theta}_{\mc{X}})\\
    &\leq I(\theta)+F(\theta)+\frac{\eta}{2}\\
    &\leq \inf_{\theta\in\mc{P}(\mc{X})}\br{I^{av}(\theta)+F(\theta)} +\eta ,
\end{align*}
where the infimum in the first line is taken to be over all stochastic bases (see the discussion on p.84 of \cite{BDF} and Remark B.1. in \cite{Fischer}).
Since $\eta$ is arbitrary, Equation \eqref{eq:upperbound} is proved.
\end{proof}

\subsection{Proof under the additional assumption \ref{assumption:xindependenceassumptionforupperbound}}\label{subsec:upperboundundernotsameBM}
In this regime, we do not have $\bar{W}^{i,N}=W^i$ with $\bar{W}^{i,N}$ is in Equation \eqref{eq:barW} and $W^i$ the driving Brownian motions in Equation \eqref{eq:multimeanfield}. Thus, as can be seen in the proofs found in Subsection \ref{subsubsection:V1}, constructing occupation measures as in Equation \eqref{eq:occmeasurewhenbarW=W} will not yield a solution to Equation \eqref{eq:BDFlimitav}, since the $\mc{W}$-marginal will not converge to the driving Brownian motion $\hat{W}$ (see the discussion in Remark \eqref{remark:ontheoccmeasures}). So indeed, $W^{i,\infty}$ must be replaced by $\bar{W}^{i,N}$. However, in doing so, $\tilde{Q}^N_{\mc{Z}\times\mc{W}}$ is no longer a sequence of IID random variables, and thus we can no longer apply (essentially) Varadarajan's theorem to conclude that $\tilde{Q}|_{\mc{B}(\mc{Z}\times\mc{W})}=\tilde{\Theta}|_{\mc{B}(\mc{Z}\times\mc{W})}$. Therefore, tracking the joint distribution of some prelimit control and $\bar{W}^{i,N}$, which is what will converge to the driving Brownian motion, is much more subtle in this situation. We thus use what is essentially the idea proposed in the proof of Theorem 4.2.1 in \cite{Kushner} in the one-particle setting, and show that in this regime we can approximate a nearly-optimal control for $I^{av}$ as defined in Equation \eqref{eq:BDFratefunctionav} by a bounded, continuous control in semi-Markovian feedback form (that is, and control which is a function on $[0,1]\times\R^d\times \mc{W}$ which at time $t$ depends stochastically solely on this history of the driving Brownian motion $\hat{W}$ up to time $t$). This then allows us to construct an empirical measure of particles controlled by sequence of feedback controls that nearly approximate the desired limit, since at this point we will not have any additional randomness coming from the control process, and are not tasked with the difficulty of trying to track the joint law of the controls and Brownian motions.

The situation at hand is more complicated than that of \cite{Kushner} for many reasons. Beyond just the fact that we are considering the convergence of random variables in $\mc{P}(\mc{X})$ rather than just $\mc{X}$, there is also the fact that we do not know a priori that the control can be assumed to take values in a compact subset of $\R^d$, or the coordinate process corresponding $\Theta\in\mc{V}^{av}$ can be taken to be adapted to the filtration generated by the initial condition and driving Brownian Motion. These are exactly the issues addressed by Theorem 2.4 in \cite{Lacker}. Indeed, referencing Proposition 2.5 therein, the quantity
\begin{align*}
\inf_{\theta\in\mc{P}(\mc{X})}\br{I^{av}(\theta)+F(\theta)}
\end{align*}
in Equation \eqref{eq:laplaceprinciple} can be viewed in the language of \cite{Lacker} as a ``relaxed formulation'' for the McKean-Vlasov control problem associated to $\Gamma$ as defined in Equation (2.5) with $f(t,x,\mu,a)=-a^2$, and rather than $g:\R^d\times \mc{P}(\R^d)\tto \R$ giving a terminal constraint, we have $-F:\mc{P}(\mc{X})\tto \R$ providing a constraint on the entire path.

As we will see, due to a difference in assumptions, Theorem 2.4 in \cite{Lacker} cannot be applied verbatim, and we need to use some structure inherent to our McKean-Vlasov control problem in order to ensure the first step in its proof goes through. In particular, this is where the assumption \ref{assumption:xindependenceassumptionforupperbound} comes in. Under this additional assumption, the coefficients appearing in $\mc{L}^1_{x,\mu}$ in Equation \eqref{eq:L1} do not depend on $x$, so that $\pi$ from Equation \eqref{eq:pi} and $\Phi$ from Equation \eqref{eq:cellproblem} do not depend on $x$. Thus the controlled limiting McKean-Vlasov Equation \eqref{eq:BDFlimitav} used in the definition of $\mc{V}^{av}$ in Definition \ref{defi:Vav} and hence $I^{av}$ in Equation \eqref{eq:BDFratefunctionav} reduces to:
\begin{align}\label{eq:reducedBDFcontrolledsystemB2}
d\hat{X}_t=[\bar{\beta}(\hat{X}_t,\mc{L}(\hat{X}_t))+\bar{B}(\mc{L}(\hat{X}_t))\int_{\R^d}z\hat{\rho}_t(dz)]dt+\bar{B}(\mc{L}(\hat{X}_t))d\hat{W}_t
\end{align}
where $\bar{B}$ is as in Equation \eqref{eq:equivalentaveragediffusion}, that is
\begin{align*}
\bar{B}(\mu)\bar{B}(\mu)^\top &= \int_{\bb{T}^d} \tilde{D}(y,\mu) \pi(dy|\mu),\mu\in \mc{P}_2(\R^d)\\
\tilde{D}(y,\mu)&=[I+\nabla_y\Phi(y,\mu)]\sigma\sigma^\top(y,\mu)[I+\nabla_y\Phi(y,\mu)]^\top,y\in\bb{T}^d,\mu\in \mc{P}_2(\R^d).
\end{align*}

Put simply, in this regime the effective diffusion $\bar{B}(x,\mu)=\bar{B}(\mu)$. It is precisely for this reason that we make the assumption \ref{assumption:xindependenceassumptionforupperbound}, since, as we will see, this is what allows for the proof method of Theorem 2.4 in \cite{Lacker} to go through.

It is also worth noting at this point that while the forthcoming proof method does not use weak-sense uniqueness directly, it can be shown via a standard coupling method (see e.g. Lemma 3.4. in \cite{BCsmallnoise} and \cite{FischerFormofRateFunction} Proposition C.2) that the weak-sense uniqueness holds for \eqref{eq:reducedBDFcontrolledsystemB2} (replacing \eqref{eq:limitingcontrolledmckeanvlasovsimp} by \eqref{eq:reducedBDFcontrolledsystemB2} in Definition \ref{def:weaksenseuniquness}). Thus, while our proof method allows for accounting for the change in driving Brownian motion which occurs in our multiscale setting, its application in the standard setting of \cite{BDF} would not circumnavigate the need for weak-sense uniqueness of the limiting controlled McKean-Vlasov Equation.

With this discussion in mind, we now provide the proof of the Laplace Principle Upper Bound \eqref{eq:laplaceprincipleupperbound} (equivalently \eqref{eq:upperbound}) in this regime: 
\begin{proof}
Given $\eta>0$, take $\theta \in \mc{P}(\mc{X})$ such that
\begin{align*}
I^{av}(\theta)+F(\theta) \leq \inf_{\theta \in \mc{P}(\mc{X})} \br{ I^{av}(\theta)+F(\theta)} +\frac{\eta}{3}.
\end{align*}

Since the bound given in Equation \eqref{eq:upperbound} is trivial if the right hand side is $+\infty$, we may assume it is finite.

Consider $\Theta\in \mc{V}^{av}$ such that $\Theta_{\mc{X}}=\theta$ and
\begin{align*}
\E^{\Theta}\biggl[\frac{1}{2}\int_{\R^d\times [0,1]}|z|^2\rho(dzdt)\biggr]\leq I^{av}(\theta)+\frac{\eta}{3}.
\end{align*}

We will now show that we can then find, denoting by $\E$ the expectation with respect to $\Prob_{\nu_0}\in \mc{P}(\R^d\times\mc{W})$ defined as $\nu_0\otimes \mc{P}_W$ where $\mc{P}_W$ is the classical Wiener measure and, by $\hat{W}$ its coordinate process, $\phi:[0,1]\times\R^d\times\mc{W}\tto \R^d$ continuous and bounded and $\tilde{\theta}\in\mc{P}(\mc{X})$ such that
\begin{align}\label{eq:approximationresult}
\E\biggl[\int_0^1 |\phi(t,\xi,\hat{W}_{t\wedge\cdot})|^2 dt\biggr]+F(\tilde{\theta})\leq \E^{\Theta}\biggl[\frac{1}{2}\int_{\R^d\times [0,1]}|z|^2\hat{\rho}(dzdt)\biggr]+F(\theta)+\frac{\eta}{3},
\end{align}
and $\tilde{\theta}=\mc{L}(\hat{X})$ satisfying
\begin{align}\label{eq:feedbackapproximationlimit}
d\hat{X}_t&=\biggl[\bar{\beta}(\hat{X}_t,\mc{L}(\hat{X}_t))+\bar{B}(\mc{L}(\hat{X}_t))\phi(t,\xi,\hat{W}_{t\wedge \cdot})\biggr]dt+\bar{B}(\mc{L}(\hat{X}_t))d\hat{W}_t\\
\hat{X}_0&= \xi\sim \nu_0\nonumber.
\end{align}

As discussed, this follows essentially via the proof of Theorem 2.4 in \cite{Lacker}. The situation is in fact much simpler here though, since in the controlled dynamics, as given by Equation \eqref{eq:reducedBDFcontrolledsystemB2}, we do not have a control term appearing in the diffusion, and thus there is no need for the construction of martingale measures in Lemma 7.1. (It is still worth noting that, as pointed out in Remark 4.12 of \cite{DPTequivalence}, Lemma 7.1 in \cite{Lacker} is based on an unpublished, erroneous result. The proof is resolved in Section 4.1.2 in \cite{DPTequivalence}). Moreover, the cost on the right hand side of Equation \eqref{eq:approximationresult} does not involve any moments of the process $\hat{X}_t$, so we do not need to make the same uniform integrability considerations as \cite{Lacker}.

A caveat, however, is that while our diffusion term in Equation \eqref{eq:reducedBDFcontrolledsystemB2} does satisfy Assumption B of \cite{Lacker} with $p=2$ via Corollary \ref{cor:limitingcoefficientsregularity}, due to the linear dependence on the control, for the drift we have rather
\begin{align*}
|\bar{\beta}(x,\mu)-\bar{B}(\mu)a-\bar{\beta}(x',\mu')-\bar{B}(\mu')a|\leq C(|x-x'|+\bb{W}_2(\mu,\mu')(1+|a|)).
\end{align*}
That is, the Lipschitz assumption does not hold uniformly in $a$.

Thus, we show how to carry out the first step (approximating $\hat{\rho}$ by bounded controls) in the proof of Theorem 2.4 explicitly.

By virtue of \ref{V:V1BDFav} and \ref{V:V3BDFav}, there exists a filtered probability space and adapted processes $(\hat{X},\hat{\rho},\hat{W})$ which satisfy Equation \eqref{eq:reducedBDFcontrolledsystemB2} such that $\Theta=\mc{L}(\hat{X},\hat{\rho},\hat{W})$ and $\hat{\nu}_{\Theta}(0)=\nu_0$.

Let $\iota_l:\R^d\tto \R^d$ be a measurable function such that $|\iota_l(z)|\leq l$ and $\iota_l(z)=z$ for $|z|\leq l$. Define $\hat{\rho}^l_t=\hat{\rho}_t\circ \iota^{-1}_l$, so that $\hat{\rho}^l_t(dz)dt\tto \hat{\rho}_t(dz)dt$ almost surely as $\mc{Z}$-valued random variables. Consider $\hat{X}^l$ the unique solution to the McKean-Vlasov Equation on (possibly an enlargement of) the same probability space on which Equation \eqref{eq:reducedBDFcontrolledsystemB2} is posed:
\begin{align*}
d\hat{X}^l_t&= \biggl[\bar{\beta}(\hat{X}^l_t,\mc{L}(\hat{X}^l_t))+\bar{B}(\mc{L}(\hat{X}^l_t))\int_{\R^d}z\hat{\rho}^l_t(dz)  \biggr] dt+\bar{B}(\mc{L}(\hat{X}^l_t))d\hat{W}_t.\\
\hat{X}^l_0&=\xi\sim \nu_0.
\end{align*}

Note that the Lipschitz property of the coefficients from Corollary \ref{cor:limitingcoefficientsregularity} ensures the well-posedness of this equation for each $l$, as in \cite{Lacker} p.1667 (restricting to bounded controls, Assumption $B$ therein holds).

 Denoting by $\hat{\E}$ the expectation on the probability space on with both equations are posed, we have by Burkholder-Davis-Gundy inequality and the Lipschitz property of the coefficients from Corollary \ref{cor:limitingcoefficientsregularity}:
\begin{align*}
\hat{\E}\biggl[\sup_{s\in [0,t]}\biggl|\hat{X}_s-\hat{X}^l_s \biggr|^2 \biggr]&\leq C\biggl\lbrace\int_0^t\hat{\E}\biggl[ \biggl|\bar{\beta}(\hat{X}_s,\mc{L}(\hat{X}_s))-\bar{\beta}(\hat{X}^l_s,\mc{L}(\hat{X}^l_s))\biggr|^2 \biggr]ds \\
&+\hat{\E}\biggl[\sup_{s\in[0,t]}\biggl|\int_0^s\biggl(\bar{B}(\mc{L}(\hat{X}_\tau))-\bar{B}(\mc{L}(\hat{X}^l_\tau))\biggr)\int_{\R^d}z\hat{\rho}^l_\tau(dz)d\tau\biggr|^2]\\
&+ \hat{\E}\biggl[\sup_{s\in[0,t]}\biggl|\int_0^s\bar{B}(\mc{L}(\hat{X}_\tau))\biggl(\int_{\R^d}z\hat{\rho}_\tau(dz)-\int_{\R^d}z\hat{\rho}^l_\tau(dz)\biggr)d\tau\biggr|^2]\\
&+\int_0^t\hat{\E}\biggl[ \biggl|\bar{B}(\mc{L}(\hat{X}_s))-\bar{B}(\mc{L}(\hat{X}^l_s))\biggr|^2 \biggr]ds  \biggr\rbrace\\
&\leq C\biggl\lbrace\int_0^t\hat{\E}\biggl[ \sup_{\tau\in[0,s]}|\hat{X}_\tau-\hat{X}^l_\tau|^2 \biggr]ds\\
&+\hat{\E}\biggl[\int_0^t\biggl|\bar{B}(\mc{L}(\hat{X}_s))-\bar{B}(\mc{L}(\hat{X}^l_s))\biggr|^2ds\int_0^t\biggl|\int_{\R^d}z\hat{\rho}^l_s(dz)\biggr|^2ds\biggr]\\
&+ \norm{\bar{B}}_\infty\hat{\E}\biggl[\int_0^t\biggl|\int_{\R^d}z\hat{\rho}_s(dz)-\int_{\R^d}z\hat{\rho}^l_s(dz)\biggr|^2ds\biggr]\biggr\rbrace.\\
\end{align*}

Assumption \ref{assumption:xindependenceassumptionforupperbound} is made precisely to handle the second term above. We have:
\begin{align*}
&\hat{\E}\biggl[\int_0^t\biggl|\bar{B}(\mc{L}(\hat{X}_s))-\bar{B}(\mc{L}(\hat{X}^l_s))\biggr|^2ds\int_0^t\biggl|\int_{\R^d}z\hat{\rho}^l_s(dz)\biggr|^2ds\biggr]\\
&\leq C\hat{\E}\biggl[\int_0^t\biggl|\bb{W}_2(\mc{L}(\hat{X}_s),\mc{L}(\hat{X}^l_s))\biggr|^2ds\int_0^t\biggl|\int_{\R^d}z\hat{\rho}^l_s(dz)\biggr|^2ds\biggr]\\
&\leq C\int_0^t\hat{\E}\biggl[ \sup_{\tau\in[0,s]}|\hat{X}_\tau-\hat{X}^l_\tau|^2 \biggr]ds \hat{\E}\biggl[\int_0^t\biggl|\int_{\R^d}z\hat{\rho}^l_s(dz)\biggr|^2ds \biggr]\\
&\leq C\int_0^t\hat{\E}\biggl[ \sup_{\tau\in[0,s]}|\hat{X}_\tau-\hat{X}^l_\tau|^2 \biggr]ds \hat{\E}\biggl[\int_0^1\int_{\R^d}|z|^2\hat{\rho}^l_s(dz)ds \biggr]\\
&\leq C\int_0^t\hat{\E}\biggl[ \sup_{\tau\in[0,s]}|\hat{X}_\tau-\hat{X}^l_\tau|^2 \biggr]ds \hat{\E}\biggl[\int_0^1\int_{\R^d}|z|^2\hat{\rho}_s(dz)ds \biggr]\\
&\leq C\int_0^t\hat{\E}\biggl[ \sup_{\tau\in[0,s]}|\hat{X}_\tau-\hat{X}^l_\tau|^2 \biggr]ds \text{ by \ref{V:V2BDFav}.}
\end{align*}
Then Gronwall's inequality yields
\begin{align*}
\hat{\E}\biggl[\sup_{s\in [0,t]}\biggl|\hat{X}_s-\hat{X}^l_s \biggr|^2 \biggr]&\leq \norm{\bar{B}}_\infty\hat{\E}\biggl[\int_0^t\biggl|\int_{\R^d}z\hat{\rho}_s(dz)-\int_{\R^d}z\hat{\rho}^l_s(dz)\biggr|^2ds\biggr]\exp(Ct)\\
&\leq 2C\norm{\bar{B}}_\infty\hat{\E}\biggl[\int_0^t\int_{\R^d}\1_{|z|>l}|z|^2\hat{\rho}_s(dz)ds\biggr]\exp(Ct)\\
&\tto 0 \text{ as }l\toinf.
\end{align*}

Thus, letting $\theta_l=\mc{L}(\hat{X}^l)$, $\theta_l\tto \theta$ in $\mc{P}_2(\mc{X})$ (see Definition \ref{def:lionderivative}) and hence in $\mc{P}(\mc{X})$, so $F(\theta_l)\tto F(\theta)$.

Moreover, $\frac{1}{2}\int_{\R^d\times[0,1]}|z|^2\hat{\rho}^l_t(dz)dt\leq \frac{1}{2}\int_{\R^d\times[0,1]}|z|^2\hat{\rho}_t(dz)dt,\forall l$, so by dominated convergence theorem:
\begin{align*}
\lim_{l\toinf}\frac{1}{2}\hat{\E}\biggl[\int_{\R^d\times[0,1]}|z|^2\hat{\rho}^l_t(dz)dt \biggr]& =\frac{1}{2}\hat{\E}\biggl[\int_{\R^d\times[0,1]}|z|^2\hat{\rho}_t(dz)dt \biggr],
\end{align*}
so letting $\Theta^l=\mc{L}(\hat{X}^l,\hat{\rho}^l,\hat{W})$, $$\frac{1}{2}\int_{\mc{X}\times\mc{Z}\times\mc{W}}\int_{\R^d\times[0,1]}|z|^2r(dzdt)\Theta^l(d\phi dr dw)\tto \frac{1}{2}\int_{\mc{X}\times\mc{Z}\times\mc{W}}\int_{\R^d\times[0,1]}|z|^2r(dzdt)\Theta(d\phi dr dw).$$

Now we can assume without loss of of generality that the ordinary control associated to $\hat{\rho}$ on the right hand side of the desired bound \eqref{eq:approximationresult} takes values in $B_l$ for some $l>0$, and that $\theta$ is the law of $\hat{X}$ solving Equation \eqref{eq:reducedBDFcontrolledsystemB2} with this bounded control. At this point assumption B in \cite{Lacker} indeed holds for the drift as well, since $a$ is restricted to $B_l$. From here steps 2 and 3 of the proof of Theorem 2.4 in \cite{Lacker} follow verbatim, This yields that there exists a filtered probability space and an adapted process $(\hat{X},u,\hat{W})$ such that $\br{u(t)}_{t\in [0,1]}$ is a $\F^{W}_t\coloneqq \sigma((X_0,W_s),0\leq s\leq t)$-progressively measurable and $B_l$ valued, $(\hat{X},u,\hat{W})$ satisfies Equation \eqref{eq:reducedBDFcontrolledsystemB2} with $u(t)$ in the place of $\int_{\R^d}z\hat{\rho}_t(dz)$, and $\bar{\Theta}=\mc{L}(\hat{X},\delta_{u(t)}(dz)dt,\hat{W})$ is in $\mc{V}^{av}$ with $\bar{\Theta}_{\mc{X}}=\bar{\theta}$ and
\begin{align*}
\E^{\bar{\Theta}}\biggl[\int_0^1 |u(t)|^2 dt\biggr]+F(\bar{\theta})\leq \E^{\Theta}\biggl[\frac{1}{2}\int_{\R^d\times [0,1]}|z|^2\hat{\rho}(dzdt)\biggr]+F(\theta)+\frac{\eta}{3}.
\end{align*}

We just want now to conclude that we can in fact take $u(t)=\phi(t,\xi,\hat{W}_{\cdot\wedge t})$ for $\phi:[0,1]\times\R^d\times\mc{W}\tto \R^d$ bounded and continuous. This is almost implicit in the proof of Theorem 2.4 in \cite{Lacker}, since in the last step they appeal to Lemma 3.11 in \cite{CDL}. In the proof of that lemma, the authors construct a relaxed control via adapted, continuous on $\R^d\times\mc{W}$, and piecewise constant in time functions $\psi:[0,1]\times\R^d\times\mc{W}\tto \mc{P}(\R^d)$. There are many ways to see what we can further take $\psi$ to be of the form $\psi(t,x,w) = \delta_{\phi(t,x,w)}(dz)dt$, but we can arrive at this conclusion after the fact by first applying Proposition 10 in \cite{CTT} to see that $u(t)=\phi(t,\xi,W_{t\wedge \cdot})$ for $\phi:[0,1]\times\R^d\times\mc{W}\tto \R^d$ bounded and predictable, then applying the second part of Proposition C.1 in \cite{CDL} to approximate $\phi$ point-wise $\Prob_{\nu_0}\otimes dt$- almost surely by $\phi^k:[0,1]\times\R^d\times\mc{W}\tto \R^d$ which are bounded, continuous, and predictable. Then
\begin{align*}
\E^{\bar{\Theta}}\biggl[\int_0^1 |u(t)|^2 dt\biggr] =\E\biggl[\int_0^1 |\phi(t,\xi,\hat{W}_{t\wedge \cdot})|^2dt \biggr]=\lim_{k\toinf} \E\biggl[\int_0^1 |\phi^k(t,\xi,\hat{W}_{t\wedge \cdot})|^2dt \biggr],
\end{align*}
and, using the same method as in the previous approximation argument, we can see for $\theta^k=\mc{L}(\hat{X}^k)$ solving Equation \eqref{eq:BDFlimitav} with $\phi^k(t,\xi,\hat{W}_{t\wedge \cdot})$ in the place of $\int_{\R^d}z\hat{\rho}_t(dz)$, we have $\theta^k\tto \bar{\theta}$ in $\mc{P}(\mc{X})$, and hence $F(\theta^k)\tto F(\bar{\theta})$ (see also part 2 of the proof of Proposition 4.15 in \cite{DPTequivalence}).

Thus, indeed \eqref{eq:approximationresult} holds.

We now note that, since the coefficients of Equation \eqref{eq:feedbackapproximationlimit} are continuous (in particular are Lipschitz continuous in $\bb{W}_2$) and bounded, weak uniqueness holds in the classical sense: if $\xi_1,\xi_2\sim \nu_0$ and $W^1,W^2$ are $d$-dimensional standard Wiener processes on possibly different filtered probability spaces, and $(\xi_1,X_1,W^1)$ and $(\xi_2,X_2,W^2)$ both satisfy Equation \eqref{eq:feedbackapproximationlimit}, then $\mc{L}(X_1)=\mc{L}(X_2)=\tilde{\theta}$. This can be see from, e.g. Corollary A.4 in \cite{DPTdpp}, although the situation is much simpler in our setting.

We are now ready to construct a sequence of viable controls and controlled controlled empirical measures such that, inserting this choice into the right-hand side of Equation \eqref{eq:varrep}, we have convergence to the left hand side of Equation \eqref{eq:feedbackapproximationlimit}.

Consider on a possibly different probability space with expectation denoted $\E^\infty$ the collection of particles satisfying the exchangeable system of SDEs
\begin{align*}
d\bar{X}^{i,N}_t &= \biggl[\frac{1}{\epsilon}f(\bar{X}_t^{i,N}/\epsilon,\bar\mu^N_t)+b(\bar{X}_t^{i,N},\bar{X}_t^{i,N}/\epsilon,\bar \mu^N_t)\\
&+\sigma(\bar{X}_t^{i,N}/\epsilon,\bar \mu^N_t)\sigma^\top(\bar{X}_t^{i,N}/\epsilon,\bar \mu^N_t)[I+\nabla_y\Phi(\bar{X}_t^{i,N}/\epsilon,\bar \mu^N_t)]^\top (\bar{B}^\top)^{-1}(\bar \mu^N_t)\phi(t,\bar{X}^{i,N}_0,\bar{W}^{i,N}_{t\wedge\cdot})\biggr]dt\\
&+\sigma(\bar{X}_t^{i,N},\bar{X}_t^{i,N}/\epsilon,\bar\mu^N_t)dW^i_t\nonumber\\
\bar{X}^{i,N}_0&=x^{i,N},
\end{align*}
$W^i$ are independent $m$-dimensional Brownian Motions,
\begin{align*}
\bar{\mu}^{N}_t(\omega):=\frac{1}{N}\sum_{i=1}^N\delta_{\bar{X}^{i,N}_t(\omega)},\hspace{2cm}\bar{\mu}^{N}(\omega):=\frac{1}{N}\sum_{i=1}^N\delta_{\bar{X}^{i,N}(\omega)},
\end{align*}
and $\bar{W}^{i,N}$ are constructed from $\bar{X}^{i,N},\bar{\mu}^N,W^i$ as in Equation \eqref{eq:barW}.

That is, we consider the controlled system from Equation \eqref{eq:controlledprelimit} with semi-Markovian feedback controls
\begin{align}\label{eq:semimarkovfeedbackcontrols}
u^N_i(t)\coloneqq \sigma^\top(\bar{X}_t^{i,N}/\epsilon,\bar \mu^N_t)[I+\nabla_y\Phi(\bar{X}_t^{i,N}/\epsilon,\bar \mu^N_t)]^\top (\bar{B}^\top)^{-1}(\bar \mu^N_t)\phi(t,\bar{X}^{i,N}_0,\bar{W}^{i,N}_{t\wedge\cdot})
\end{align}

Note that it is not the driving Brownian motions themselves which enter the $\mc{W}$-component of $\phi$ in these constructions, but rather the martingales $\bar{W}^{i,N}$. Indeed, the dimension of the $W^{i}_t$'s may not even be equal to $d$, so that they do not even necessarily belong to $\mc{W}$.

Note that via Assumption \ref{assumption:LipschitzandBounded}, Proposition \ref{prop:Phiexistenceregularity}, and Corollary \ref{cor:limitingcoefficientsregularity}, along with the construction of $\phi$, the controls are bounded. Do to the the boundedness and Lipschitz properties of the coefficients, we further have the solutions of the interacting particle system are strong, and hence due to their feedback form, the controls are adapted to the filtration generated by the initial conditions and the driving Brownian motions. Thus indeed we have
\begin{align*}
&\limsup_{N\toinf}-\frac{1}{N}\log\E[\exp(-NF(\mu^N))] \\
&= \limsup_{N\toinf}\inf_{u^N\in\mc{U}^N}\left\{\frac{1}{2}\E[\frac{1}{N}\sum_{i=1}^N \int_0^1|u^N_i(t)|^2dt] + \E[F(\bar{\mu}^N)]\right\}\\
& \leq \limsup_{N\toinf}\left\{\frac{1}{2}\E^\infty[\frac{1}{N}\sum_{i=1}^N \int_0^1|u^N_i(t)|^2dt]+\E^\infty[F(\bar{\mu}^N)]\right\}\\
& = \limsup_{N\toinf}\biggl\lbrace\frac{1}{2}\E^\infty[\frac{1}{N}\sum_{i=1}^N \int_0^1\biggl|\sigma^\top(\bar{X}_t^{i,N}/\epsilon,\bar \mu^N_t)[I+\nabla_y\Phi(\bar{X}_t^{i,N}/\epsilon,\bar \mu^N_t)]^\top (\bar{B}^\top)^{-1}(\bar \mu^N_t)\phi(t,\bar{X}^{i,N}_0,\bar{W}^{i,N}_{t\wedge\cdot})\biggr|^2dt]\\
&\hspace{12cm}+\E^\infty[F(\bar{\mu}^N)]\biggr\rbrace\\
& = \limsup_{N\toinf}\biggl\lbrace\frac{1}{2}\E^\infty[ \int_0^1\phi^\top(t,\bar{X}^{i,N}_0,\bar{W}^{i,N}_{t\wedge\cdot})\bar{B}^{-1}(\bar \mu^N_t)[I+\nabla_y\Phi(\bar{X}_t^{i,N}/\epsilon,\bar \mu^N_t)]\sigma\sigma^\top(\bar{X}_t^{i,N}/\epsilon,\bar \mu^N_t)\\
&\hspace{4cm}[I+\nabla_y\Phi(\bar{X}_t^{i,N}/\epsilon,\bar \mu^N_t)]^\top (\bar{B}^\top)^{-1}(\bar \mu^N_t)\phi(t,\bar{X}^{i,N}_0,\bar{W}^{i,N}_{t\wedge\cdot})dt]+\E^\infty[F(\bar{\mu}^N)]\biggr\rbrace\\
& = \limsup_{N\toinf}\biggl\lbrace\frac{1}{2}\E^\infty[\int_{\mc{X}\times\mc{Y}\times\mc{W}}\int_0^1\phi^\top(t,\psi(0),w(t\wedge\cdot))\bar{B}^{-1}(\hat{\nu}_{Q^N}(t))\int_{\bb{T}^d}[I+\nabla_y\Phi(y,\hat{\nu}_{Q^N}(t))]\sigma\sigma^\top(y,\hat{\nu}_{Q^N}(t))\\
&\hspace{2cm}[I+\nabla_y\Phi(y,\hat{\nu}_{Q^N}(t))]^\top n(dy)(\bar{B}^\top)^{-1}(\hat{\nu}_{Q^N}(t))\phi(t,\psi(0),w(t\wedge\cdot))dt\bar{Q}^N(d\psi dn dw)]\\
&\hspace{12cm}+\E^\infty[F(\bar{Q}^N_{\mc{X}}]\biggr\rbrace,
\end{align*}
where here we defined the sequence of random measures $\br{\bar{Q}^N}_{N\in\bb{N}}\subset\mc{P}(\mc{X}\times\hat{\mc{Y}}\times\mc{W})$ by
\begin{align*}
  \bar{Q}^N_\omega(A\times B\times C) = \frac{1}{N}\sum_{i=1}^N \delta_{\bar{X}^{i,N}(\cdot,\omega)}(A)\delta_{\bar{m}^{i,N}(\omega)} (B) \delta_{\bar{W}^{i,N}}(C)
\end{align*}
Here $\bar{m}^{i,N}$ are as in Equation \eqref{eq:minconstruction} but with this choice of $\bar{X}^{i,N}$.

As with the construction of the occupation measures \eqref{eq:occmeasurewhenbarW=W} in Subsection \ref{subsection:sameBMupperboundproof}, the role of the $\hat{\mc{Y}}$-marginal is just to converge to what becomes the invariant measure $\pi$ in the limit in a means that allows us to easily refer to the proofs in Section \ref{sec:limitingbehavior}. We want to show that $\bar{Q}^N_{\mc{X}\times\mc{W}}$ converges in distribution as a sequence of $\mc{P}(\mc{X}\times\mc{W})$- valued random variables to the deterministic limit $\bar{Q}=\mc{L}(\hat{X},\hat{W})$ solving Equation \eqref{eq:approximationresult}, so that $\bar{Q}_{\mc{X}}=\tilde{\theta}$.

Tightness of $\bar{Q}^N$ follows exactly as in Subsection \ref{subsection:tightness}, with tightness of the $\hat{\mc{Y}}$-marginals holding trivially as in Subsection \eqref{subsection:sameBMupperboundproof}. In addition, in the same way as Subsection \eqref{subsection:sameBMupperboundproof}, taking a subsequence of $\br{\bar{Q}^N}$ such that $\bar{Q}^N\tto \bar{Q}$ in distribution,
\begin{align*}
\bar{Q}\biggl(\biggl\lbrace(\psi,n,w)\in\mc{X}\times\hat{\mc{Y}}\times\mc{W}: n_s=\pi(dy|\hat{\nu}_{\hat\Theta}(s)) ds,\forall s\in[0,1]\biggr\rbrace\biggr)=1.
\end{align*}
Moreover, due to the boundedness and continuity of the controls $u_i^N$ from Equation \eqref{eq:semimarkovfeedbackcontrols} as functions on $[0,1]\times\R^d\times\bb{T}^d\times\mc{P}(\R^d)\times\mc{W}$, the same proof of the form of the limiting equation as in Subsection \ref{subsubsection:V1} holds, but rather than having an external control, we treat the control as a standard part of the drift - that is, it plays essentially the same role as $b$ in the computation. From this, we get (noting that the effect of the averaging on the term $b$ as contained in $\bar{\beta}$ in Equations \eqref{eq:McKeanLimit} and \eqref{eq:limitingcoefficients} is to replace it by $[I+\nabla_y\Phi]b$ and integrate it against the invariant measure), that any subsequence of $\bar{Q}^{N}_{\mc{X}\times\mc{W}}$ converges in distribution to $\mc{L}(\hat{X},\hat{W})\in\mc{P}(\mc{X}\times\mc{W})$ satisfying
\begin{align*}
d\hat{X}_t &= \biggl[\bar{\beta}(\hat{X}_t,\mc{L}(\hat{X}_t))+\int_{\bb{T}^d}[I+\nabla_y\Phi(y,\mc{L}(\hat{X}_t))]\sigma(y,\mc{L}(\hat{X}_t))\sigma^\top(y,\mc{L}(\hat{X}_t))\\
&\hspace{2cm}[I+\nabla_y\Phi(y,\mc{L}(\hat{X}_t))]^\top\pi(dy;\mc{L}(\hat{X}_t))\bar{B}^{-1}(\mc{L}(\hat{X}_t))\phi(t,\xi,\hat{W}_{t\wedge\cdot})\biggr]dt\\
&+\bar{B}(\mc{L}(\hat{X}_t))d\hat{W}_t\\
& = \biggl[\bar{\beta}(\hat{X}_t,\mc{L}(\hat{X}_t))+\bar{B}(\mc{L}(\hat{X}_t))\bar{B}^\top(\mc{L}(\hat{X}_t))(\bar{B}^\top)^{-1}(\mc{L}(\hat{X}_t))\phi(t,\xi,\hat{W}_{t\wedge\cdot})\biggr]dt+\bar{B}(\mc{L}(\hat{X}_t))d\hat{W}_t\\
& = \biggl[\bar{\beta}(\hat{X}_t,\mc{L}(\hat{X}_t))+\bar{B}(\mc{L}(\hat{X}_t))\phi(t,\xi,\hat{W}_{t\wedge\cdot})\biggr]dt+\bar{B}(\mc{L}(\hat{X}_t))d\hat{W}_t.
\end{align*}
with $\hat{X}_0\sim \nu_0$.

Thus, by uniqueness of solutions to Equation \eqref{eq:feedbackapproximationlimit}, we have in fact $Q^N$ converges in distribution along its entire sequence to the deterministic limit $Q=\mc{L}(\hat{X},\pi(dy;\tilde{\theta}(t))dt,\hat{W})$, and in particular $Q_{\mc{X}}=\tilde{\theta}$. Then we can return to the inequality
\begin{align*}
&\limsup_{N\toinf}-\frac{1}{N}\log\E[\exp(-NF(\mu^N))] \\
& = \limsup_{N\toinf}\biggl\lbrace\frac{1}{2}\E^\infty[\int_{\mc{X}\times\hat{\mc{Y}}\times\mc{W}}\int_0^1\phi^\top(t,\psi(0),w(t\wedge\cdot))\bar{B}^{-1}(\hat{\nu}_{Q^N}(t))\int_{\bb{T}^d}[I+\nabla_y\Phi(y,\hat{\nu}_{Q^N}(t))]\sigma\sigma^\top(y,\hat{\nu}_{Q^N}(t))\\
&\hspace{1cm}[I+\nabla_y\Phi(y,\hat{\nu}_{Q^N}(t))]^\top n(dy)(\bar{B}^\top)^{-1}(\hat{\nu}_{Q^N}(t))\phi(t,\psi(0),w(t\wedge\cdot))dt\bar{Q}^N(d\psi dn dw)]+\E^\infty[F(\bar{Q}^N_{\mc{X}}]\biggr\rbrace,
\end{align*}
and use continuity of $F$ and boundedness and continuity of the integrand of the first equation along with Theorem A.3.18 in \cite{DE} to continue as:
\begin{align*}
&\limsup_{N\toinf}-\frac{1}{N}\log\E[\exp(-NF(\mu^N))]\\
&\leq \frac{1}{2}\int_{\mc{X}\times\hat{\mc{Y}}\times\mc{W}}\int_0^1\phi^\top(t,\psi(0),w(t\wedge\cdot))\bar{B}^{-1}(\hat{\nu}_{Q}(t))\int_{\bb{T}^d}[I+\nabla_y\Phi(y,\hat{\nu}_{Q}(t))]\sigma\sigma^\top(y,\hat{\nu}_{Q}(t))\\
&\hspace{2cm}[I+\nabla_y\Phi(y,\hat{\nu}_{Q}(t))]^\top n(dy)(\bar{B}^\top)^{-1}(\hat{\nu}_{Q}(t))\phi(t,\psi(0),w(t\wedge\cdot))dt\bar{Q}(d\psi dn dw)+F(\bar{Q}_{\mc{X}})\\
& = \frac{1}{2}\int_{\mc{X}\times\mc{W}}\int_0^1\phi^\top(t,\psi(0),w(t\wedge\cdot))\bar{B}^{-1}(\tilde{\theta}(t))\int_{\bb{T}^d}[I+\nabla_y\Phi(y,\tilde{\theta}(t))]\sigma\sigma^\top(y,\tilde{\theta}(t))\\
&\hspace{2cm}[I+\nabla_y\Phi(y,\tilde{\theta}(t))]^\top \pi(dy;\phi(t),\tilde{\theta}(t))(\bar{B}^\top)^{-1}(\tilde{\theta}(t))\phi(t,\psi(0),w(t\wedge\cdot))dt\bar{Q}_{\mc{X}\times\mc{W}}(d\psi dw)+F(\tilde{\theta})\\
& = \frac{1}{2}\int_{\mc{X}\times\mc{W}}\int_0^1\phi^\top(t,\psi(0),w(t\wedge\cdot))\bar{B}^{-1}(\tilde{\theta}(t))\bar{B}(\tilde{\theta}(t))\bar{B}^\top(\tilde{\theta}(t))(\bar{B}^\top)^{-1}(\tilde{\theta}(t))\phi(t,\psi(0),w(t\wedge\cdot))dt\bar{Q}_{\mc{X}\times\mc{W}}(d\psi dw)\\
&\hspace{12cm}+F(\tilde{\theta})\\
& = \frac{1}{2}\int_{\mc{X}\times\mc{W}}\int_0^1|\phi(t,\psi(0),w(t\wedge\cdot))|^2dt\bar{Q}_{\mc{X}\times\mc{W}}(d\psi dw)+F(\tilde{\theta})\\
& = \E\biggl[\int_0^1 |\phi(t,\xi,\hat{W}_{t\wedge\cdot})|^2 dt\biggr]+F(\tilde{\theta})\\
&\leq \E^{\Theta}\biggl[\frac{1}{2}\int_{\R^d\times [0,1]}|z|^2\rho(dzdt)\biggr]+F(\theta)+\frac{\eta}{3}\text{ by Equation \eqref{eq:approximationresult}}\\
&\leq I^{av}(\theta)+F(\theta)+\frac{2\eta}{3}\\
&\leq \inf_{\theta \in \mc{P}(\mc{X})} \br{ I^{av}(\theta)+F(\theta)}+\eta.
\end{align*}
Since $F$ and $\eta$ were arbitrary, the Laplace Principle Upper Bound \eqref{eq:laplaceprincipleupperbound} is proved.
\end{proof}

\section{Conclusions and Future Work}\label{S:Conclusions}
We have derived a large deviations principle and law of large numbers for the empirical measure of a system of weakly interacting particles in a two-scale environment in the joint many-particle and averaging limit. We use weak convergence methods, and obtain a variational form of the rate function. We saw that for the system (\ref{eq:multimeanfield}), the two limiting procedures commute.

The results of this paper bring to light many interesting problems to be explored in future work.  An interesting extension of this work would be to proving a large deviations principle for systems whose coefficients depend on the ``fast empirical measure'' $\mu^{N,\epsilon}_t \coloneqq \frac{1}{N}\sum_{i=1}^N \delta_{X^{i,N}_t/\epsilon}$ as well. Such a result would then capture the system explored in \cite{delgadino2020}, and would perhaps serve to give insight into the nature of bifurcations in the number of steady states for certain classes McKean-Vlasov systems as originally investigated in \cite{Dawson}. In addition, the connection of the variational form of the rate function from Theorem \ref{thm:LaplacePrinciple} to the Dawson-G\"artner form of the rate function from Theorem \ref{theorem:DGform} as proved in Subsection \ref{subsec:DGequivalentform} opens the doors to studying the dynamical effects of multiscale structure on phase transitions and exit times from basins of attraction for the empirical measures of weakly interacting diffusions, and may allow for the design provably optimal importance sampling schemes for functionals of the empirical measure in the multiscale and non-multiscale settings \cites{Spiliopoulos2013a,DSX,AS}. In particular, understanding this connection further may allow for leveraging many recently-developed tools coming from gradient flows on metric spaces \cite{GradientFlows} and optimal control of McKean-Vlasov equations \cites{Lacker,LackerMarkovian,Fischer,DPTequivalence,DPTdpp} in addition to Large Deviations Theory \cite{FW} to make progress in such directions.

\appendix
\section{Preliminary Results on the Prelimit System \eqref{eq:controlledprelimit} and the Operator $\nu_Q(t)$} \label{Appendix:Prelimit}
\begin{proposition}\label{prop:uniquestrongsol}
Under assumption \ref{assumption:LipschitzandBounded}, the system of mean-field SDEs \eqref{eq:multimeanfield} admits a unique strong solution for each $N\in\bb{N}$.
\end{proposition}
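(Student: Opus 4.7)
The plan is to recast the interacting particle system \eqref{eq:multimeanfield} as a single SDE on the product space $\R^{Nd}$ and then invoke the classical existence--uniqueness theorem for SDEs with globally Lipschitz and bounded coefficients. Fix $N\in\bb{N}$ and $\epsilon>0$, and write the state vector as $\bm{X}_t=(X^{1,N}_t,\ldots,X^{N,N}_t)\in\R^{Nd}$ and the driving noise as $\bm{W}_t=(W^1_t,\ldots,W^N_t)\in\R^{Nm}$. For $\bm{x}=(x_1,\ldots,x_N)\in\R^{Nd}$ let $\mu_{\bm{x}}\coloneqq \tfrac{1}{N}\sum_{j=1}^N\delta_{x_j}$, and define the consolidated drift $\bar{F}:\R^{Nd}\tto\R^{Nd}$ and diffusion $\bar{\Sigma}:\R^{Nd}\tto\R^{Nd\times Nm}$ by declaring the $i$-th block of $\bar{F}(\bm{x})$ to be $\tfrac{1}{\epsilon}f(x_i,x_i/\epsilon,\mu_{\bm{x}})+b(x_i,x_i/\epsilon,\mu_{\bm{x}})$ and $\bar{\Sigma}(\bm{x})$ to be block-diagonal with $i$-th diagonal block $\sigma(x_i,x_i/\epsilon,\mu_{\bm{x}})$. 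Then \eqref{eq:multimeanfield} is equivalent to $d\bm{X}_t=\bar{F}(\bm{X}_t)\,dt+\bar{\Sigma}(\bm{X}_t)\,d\bm{W}_t$ with $\bm{X}_0=(x^{1,N},\ldots,x^{N,N})$.

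The key step is to verify that $\bar{F}$ and $\bar{\Sigma}$ are globally Lipschitz and bounded on $\R^{Nd}$. Boundedness is immediate from Assumption~\ref{assumption:LipschitzandBounded}. For the Lipschitz property, I would use three ingredients: (i) Assumption~\ref{assumption:LipschitzandBounded} gives $L$-Lipschitz dependence of $f,b,\sigma$ on the three arguments $(x,y,\mu)$; (ii) the coefficients are $1$-periodic in the second argument, so extending $y\mapsto f(x,y,\mu)$ periodically to all of $\R^d$ yields the same Lipschitz constant in the Euclidean metric as in the torus metric, whence $x\mapsto f(x,x/\epsilon,\mu)$ is Lipschitz with constant at most $L(1+1/\epsilon)$, and similarly for $b,\sigma$; (iii) the empirical-measure map satisfies the elementary bound
\begin{align*}
\bb{W}_2(\mu_{\bm{x}},\mu_{\bm{y}})^2\leq \frac{1}{N}\sum_{j=1}^N|x_j-y_j|^2 \leq \frac{1}{N}|\bm{x}-\bm{y}|^2,
\end{align*}
obtained by using the diagonal coupling $\pi=\tfrac{1}{N}\sum_j\delta_{(x_j,y_j)}$ in the definition of $\bb{W}_2$ (Definition~\ref{def:lionderivative}). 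Combining (i)--(iii) yields a Lipschitz constant for $\bar{F},\bar{\Sigma}$ that depends only on $L,\epsilon$ and $N$, both of which are fixed; in particular the dependence on $\mu$ is handled uniformly.

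Once $\bar{F}$ and $\bar{\Sigma}$ are shown to be globally Lipschitz and bounded, a standard Picard-iteration argument (e.g.\ Theorem 5.2.9 in Karatzas--Shreve, or \cite{EK} Chapter 5) produces a unique strong solution $\bm{X}_t$ on $[0,1]$, starting from the deterministic initial datum $(x^{1,N},\ldots,x^{N,N})$. Unpacking into components recovers the desired unique strong solution to \eqref{eq:multimeanfield}. There is no real obstacle here; the only mildly delicate point is the verification of the Lipschitz bound in $\mu$ through step (iii), and the observation in step (ii) that periodicity in $y$ rescues the Lipschitz property of the fast drift despite the formally large factor $1/\epsilon$ and the non-Lipschitz nature of the wrap-around map to the torus. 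The resulting Lipschitz/boundedness constants of course blow up as $\epsilon\downarrow 0$ or $N\toinf$, but this is irrelevant for existence and uniqueness at fixed $(N,\epsilon)$; the uniform-in-$(N,\epsilon)$ estimates needed elsewhere in the paper are established separately (see Proposition~\ref{prop:uniformXL2bound}).
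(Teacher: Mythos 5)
Your proposal is correct and follows essentially the same route as the paper: recast the particle system as a single finite-dimensional SDE with globally Lipschitz, bounded coefficients (at fixed $N$ and $\epsilon$) and invoke the classical existence–uniqueness theorem. The only cosmetic difference is that the paper doubles the dimension to $2dN$ by carrying $Y^{i,N}=X^{i,N}/\epsilon$ as extra coordinates, whereas you work on $\R^{Nd}$ and handle the composition $x\mapsto x/\epsilon$ via the periodic extension together with the diagonal-coupling bound for $\bb{W}_2$ — a valid, and in fact more explicit, verification of the Lipschitz estimate the paper leaves to the reader.
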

\begin{proof}
We observe that Equation \eqref{eq:multimeanfield} can be written as a standard $2dN$-dimensional SDE via
\begin{align*}
d\hat{X}^N_t = \biggl[\frac{1}{\epsilon} \hat{f}(\hat{X}^N_t)+\hat{b}(\hat{X}^N_t)\biggr]+\hat{\sigma}(\hat{X}^N_t)d\hat{W}_t^N
\end{align*}
where, letting $Y^{i,N}_t=X^{i,N}_t/\epsilon,\forall i\in\br{1,...,N}$, and $\hat{x}=(\hat{x}_1,...,\hat{x}_{2N})^\top,\hat{x}_i\in \R^d,i\in\br{1,...,2N}$, we have
$\hat{X}^N_t=(X^{1,N}_t,\cdots, X^{N,N}_t,Y^{1,N}_t,\cdots, Y^{N,N}_t)$ and where $\hat{W}^N_t=(W^1_t,...,W^N_t)^\top\in\R^{mN}$.

Let $g$ play the role of $f$ or $b$. For $g:\R^d\times \bb{T}^d\times \mc{P}_2(\R^d)\tto \R^d$, here we denote by $\hat{g}:\R^{2dN}\tto \R^{2dN}$ the function with $i$'th d-dimensional coordiante functions
\begin{align*}
\hat{g}_i(x_1,...,x_N,x_{N+1},...,x_{2N})=g(x_i,x_{N+i},\frac{1}{N}\sum_{j=1}^N\delta_{x_j}),\quad i=1,...,N
\end{align*}
and $\hat{g}_{N+i}=\frac{1}{\epsilon}\hat{g}_i,i=1,...,N.$ Similarly, $\hat{\sigma}:\R^{2dN}\tto \R^{2dN\times mN}$ is a matrix with entries given by $\hat{\sigma}_{i,j}:\R^{2dN}\tto \R^{d\times m}$ which are all $0$ except
\begin{align*}
\hat{\sigma}_{i,i}(x_1,...,x_N,x_{N+1},...,x_{2N})=\sigma(x_i,x_{N+i},\frac{1}{N}\sum_{j=1}^N\delta_{x_j}),\quad i=1,...,N
\end{align*}
and $\hat{\sigma}_{i+N,i}=\frac{1}{\epsilon}\hat{\sigma}_{i,i},i=1,...,N.$ One can then verify that \ref{assumption:LipschitzandBounded} implies that for each $N\in\bb{N},\exists C(N)$ such that for all $\hat{x}_1,\hat{x}_2\in\R^{2dN},$ $|\frac{1}{\epsilon}\hat{f}(\hat{x}_1)+\hat{b}(\hat{x}_1)-\frac{1}{\epsilon}\hat{f}(\hat{x}_2)-\hat{b}(\hat{x}_2)| + |\sigma(\hat{x}_1)-\sigma(\hat{x}_2)|\leq C(N)|\hat{x}_1-\hat{x}_2|$, so that by standard existence and uniqueness results for SDE's with globally Lipschitz coefficients, the proposition holds. See, for example, Theorem 5.2.1 in \cite{Oksendal}.
\end{proof}

\begin{proposition}\label{prop:uniformXL2bound}
For $\bar{X}^{i,N}$ as in Equation \eqref{eq:controlledprelimit} controlled by any $u^N\in\mc{U}_N$ satisfying almost surely for some $C_{con}>0$ the bound $\sup_{N\in \bb{N}} \E\biggl[\frac{1}{N}\sum_{i=1}^N \int_0^1 |u^N_i(t)|^2dt\biggr]\leq C_{con}$ and under assumptions \ref{assumption:initialconditions}-\ref{assumption:centeringcondition}:
\begin{align*}
\sup_{N\in\bb{N}}\E\biggl[\frac{1}{N}\sum_{i=1}^N \sup_{0\leq t\leq 1} |\bar{X}^{i,N}_t|^2\biggr] < \infty.
\end{align*}
This ensures that for all $N$, there exists a modification of $\bar{\mu}^N\in C([0,1];\mc{P}_2(\R^d))$ so that $\bar{\mu}^N_t$ is in $\mc{P}_2(\R^d)$ for all time. 
\end{proposition}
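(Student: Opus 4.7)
The plan is to use the standard averaging trick: apply It\^o's formula to $\epsilon \Phi(\bar{X}^{i,N}_t,\bar{X}^{i,N}_t/\epsilon,\bar{\mu}^N_t)$, where $\Phi$ is the corrector from the cell problem \eqref{eq:cellproblem}, and use the identity $\mc{L}^1_{x,\mu}\Phi = -f$ to cancel the singular $\frac{1}{\epsilon}f$ term in the drift of $\bar{X}^{i,N}$. This is exactly the decomposition already recorded in Subsection \ref{subsubsection:TightnessQ^N_X}, where we obtained
$$\bar{X}^{i,N}_t = x^{i,N}+\sum_{k=1}^{8} C^{i,N}_k(t).$$
Since that decomposition is precisely what we need, the only real work is to estimate each $C^{i,N}_k$ in the square-sup-in-time norm, averaged over $i$.

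First I would address the (mild) circularity that the derivation of the decomposition invokes Proposition \ref{prop:uniformXL2bound}: I would introduce the stopping times $\tau^N_M := \inf\{t\in[0,1]:\max_{1\leq i\leq N}|\bar{X}^{i,N}_t|>M\}$ and apply It\^o's formula on $[0,t\wedge\tau^N_M]$, where Proposition \ref{prop:empprojderivatives} applies without issue because $\bar{\mu}^N_{\cdot\wedge\tau^N_M}\in C([0,1];\mc{P}_2(\R^d))$ trivially. The estimates below will be uniform in $M$, and monotone convergence as $M\toinf$ (which is legitimate since existence of unique strong solutions from Proposition \ref{prop:uniquestrongsol} gives $\tau^N_M\uparrow 1$ a.s.) will then yield the unstopped bound.

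Next I would bound the terms $C^{i,N}_k$ using Assumption \ref{assumption:LipschitzandBounded}, Proposition \ref{prop:Phiexistenceregularity}, and $\epsilon\in(0,1]$. The deterministic drift pieces $C^{i,N}_1,C^{i,N}_4,C^{i,N}_7$ are bounded in sup-in-time by a constant since $b$, $A$, $\nabla_{x,y}\Phi$, and the various $\partial_\mu\Phi$-derivatives have the required sup and $L^2(\R^d,\mu)$ bounds. The boundary term $C^{i,N}_8$ is controlled by $2\epsilon\|\Phi\|_\infty$. The control pieces $C^{i,N}_2$ and $C^{i,N}_5$ are handled by H\"older's inequality:
\begin{align*}
\sup_{t\in[0,1]}|C^{i,N}_2(t)|^2&\leq C\int_0^1|u^N_i(s)|^2\,ds,\\
\sup_{t\in[0,1]}|C^{i,N}_5(t)|^2&\leq C\,\frac{1}{N}\sum_{j=1}^N\int_0^1|u^N_j(s)|^2\,ds,
\end{align*}
where for the latter I use the bound $|\frac{1}{N}\sum_j\partial_\mu\Phi(\cdot)(\bar{X}^{j,N}_s)\sigma(\cdots) u^N_j|^2\leq \|\partial_\mu\Phi(\cdot)(\cdot)\|^2_{L^2(\R^d,\bar{\mu}^N_s)}\,\|\sigma\|_\infty^2\,\frac{1}{N}\sum_j|u^N_j|^2$ via Cauchy--Schwarz together with the uniform $L^2(\R^d,\mu)$-bound on $\partial_\mu\Phi$ from Assumption \ref{assumption:fsigmaregularity}. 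The martingale terms $C^{i,N}_3,C^{i,N}_6$ are estimated by the Burkholder--Davis--Gundy inequality; in particular for $C^{i,N}_6$ the independence of the $W^j$ collapses the cross terms and leaves $\E[\sup_t|C^{i,N}_6(t)|^2]\leq \frac{C\epsilon^2}{N^2}\sum_j\E\int_0^1|\partial_\mu\Phi(\cdots)(\bar{X}^{j,N}_s)\sigma|^2\,ds\leq C/N$.

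Combining these and averaging over $i$ yields
\begin{align*}
\frac{1}{N}\sum_{i=1}^N\E\Bigl[\sup_{t\in[0,1]}|\bar{X}^{i,N}_t|^2\Bigr]\leq C\Bigl(1+\frac{1}{N}\sum_{i=1}^N|x^{i,N}|^2+\E\Bigl[\frac{1}{N}\sum_{i=1}^N\int_0^1|u^N_i(s)|^2\,ds\Bigr]\Bigr),
\end{align*}
and the right-hand side is uniformly bounded in $N$ by Assumption \ref{assumption:initialconditions} (which provides $\sup_N \frac{1}{N}\sum_i|x^{i,N}|^2<\infty$ via convergence in $\mc{P}_2(\R^d)$) and the hypothesized bound $C_{\mathrm{con}}$ on the controls. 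The final assertion that $\bar{\mu}^N\in C([0,1];\mc{P}_2(\R^d))$ then follows pathwise: the strong solution is continuous in $t$, so $t\mapsto\bar{\mu}^N_t$ is continuous into $\mc{P}(\R^d)$, and the finite second moment plus continuity of $t\mapsto\frac{1}{N}\sum_i|\bar{X}^{i,N}_t|^2$ upgrade this to continuity in $\mc{P}_2(\R^d)$ via the standard criterion. The main (mild) obstacle is purely bookkeeping -- carefully tracking which derivatives of $\Phi$ enter each term and verifying that each is covered by the regularity in Proposition \ref{prop:Phiexistenceregularity} -- together with the localization needed to justify It\^o's formula without assuming what we are trying to prove.
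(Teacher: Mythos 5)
Your proof is correct and follows essentially the paper's route: the paper simply redoes the It\^o--corrector computation of Subsection \ref{subsubsection:V1} with $g(x,p)=|x|^2$, while you square the equivalent decomposition $\bar{X}^{i,N}_t=x^{i,N}+\sum_{k=1}^8 C^{i,N}_k(t)$ from Subsection \ref{subsubsection:TightnessQ^N_X}, and the term-by-term estimates (H\"older, Burkholder--Davis--Gundy, the $L^2(\R^d,\mu)$ bounds on $\partial_\mu\Phi$, and the control bound) are the same. Your localization remark is harmless but not really needed, since the empirical measure of finitely many continuous paths is automatically a continuous $\mc{P}_2(\R^d)$-valued process, so the It\^o/empirical-projection step is justified without invoking the proposition being proved.
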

\begin{proof}
This follows via standard methods after using the same computations as in the proof of the bound \eqref{eq:vanishingmartingaleprobterms} in Subsection \ref{subsubsection:V1}, taking $g(x,p)=|x|^2$.

\end{proof}

We end this section with a proposition regarding the mapping defined in Equation \eqref{eq:nuQ}.
\begin{proposition}\label{prop:nucontmeasure}
For fixed $t\in[0,1]$, $Q\mapsto \nu_{Q}(t)$ is continuous, and for fixed $Q$, $t\mapsto \nu_Q(t)$ is continuous.
\end{proposition}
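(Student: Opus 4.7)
The plan is to verify both continuity claims by testing against bounded continuous functions $f \in C_b(\R^d)$, using that by Lemma A.5.1 in \cite{DE}, it suffices to check that $\int_{\R^d} f\, d\nu_{Q_n}(t) \to \int_{\R^d} f\, d\nu_Q(t)$ for all such $f$ in order to conclude weak convergence in $\mc{P}(\R^d)$. In both cases, the key observation is the identity
\begin{align*}
\int_{\R^d} f(x)\, \nu_Q(t)(dx) = \int_{\mc{C}} f(\phi(t))\, Q(d\phi\, dr\, dw),
\end{align*}
which follows immediately from the definition \eqref{eq:nuQ} of $\nu_Q(t)$ as the push-forward of $Q$ under the measurable map $(\phi,r,w)\mapsto \phi(t)$.

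For the first claim (continuity in $Q$ with $t$ fixed), I would take $Q_n\to Q$ in $\mc{P}(\mc{C})$ and note that the map $(\phi,r,w)\mapsto f(\phi(t))$ is a bounded continuous function from $\mc{C}$ to $\R$: boundedness comes from $f\in C_b(\R^d)$, and continuity follows because the evaluation map $ev(t):\mc{X}\to \R^d$ is continuous with respect to the uniform topology on $\mc{X}=C([0,1];\R^d)$ (and the projection $\mc{C}\to\mc{X}$ is continuous by the product topology). The convergence $\int_{\mc{C}} f(\phi(t))Q_n(d\phi dr dw)\to \int_{\mc{C}} f(\phi(t))Q(d\phi dr dw)$ is then immediate from the definition of weak convergence.

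For the second claim (continuity in $t$ with $Q$ fixed), I would take $t_n\to t$ and write
\begin{align*}
\int_{\R^d} f\, d\nu_Q(t_n)-\int_{\R^d} f\, d\nu_Q(t) = \int_{\mc{C}} \bigl[f(\phi(t_n))-f(\phi(t))\bigr]Q(d\phi dr dw).
\end{align*}
For each $(\phi,r,w)\in\mc{C}$, the path $\phi$ is continuous on $[0,1]$, so $\phi(t_n)\to \phi(t)$, and continuity of $f$ gives $f(\phi(t_n))\to f(\phi(t))$ pointwise. Since $|f(\phi(t_n))-f(\phi(t))|\leq 2\norm{f}_\infty$, the dominated convergence theorem yields the desired limit, and $t\mapsto \nu_Q(t)$ is continuous. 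There is no real obstacle here; the argument is essentially a direct application of the Portmanteau theorem together with bounded/dominated convergence, and both parts proceed in a few lines. The only minor care needed is to verify measurability of the integrand to make sense of the identity displayed at the start, which is standard once one recalls that $ev(t)$ is continuous.
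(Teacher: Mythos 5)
Your proof is correct and follows essentially the same route as the paper: for continuity in $Q$ the paper likewise observes that $(\phi,r,w)\mapsto f(\phi(t))$ is in $C_b(\mc{C})$ and tests weak convergence against $f\in C_b(\R^d)$. For continuity in $t$ the paper simply cites Section 4 of \cite{BDF}, and your dominated-convergence argument using pathwise continuity of $\phi$ is exactly the standard argument being invoked there, so nothing is missing.
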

\begin{proof}
Take $\br{Q^n}\subset \mc{P}(\mc{C})$ such that $Q^n\tto Q$ and $f\in C_b(\R^d)$. Then, since $(\phi,r,w)\mapsto f(\phi(t))\in C_b(\mc{C})$ we get
\begin{align*}
\lim_{n\toinf}\int_{\R^d}f(x) \nu_{Q^n}(t)(dx) & =\lim_{n\toinf} \int_{\mc{C}} f(\phi(t)) Q^n(d\phi dr dw)
 = \int_{\mc{C}}f(\phi(t)) Q(d\phi dr dw)
 = \int_{\R^d} f(x)\nu_{Q}(t)(dx).
\end{align*}
Continuity in time follows as in Section 4 of \cite{BDF}.
\end{proof}

\section{On Lions Differentiation}\label{Appendix:LionsDifferentiation}
We will need the following two definitions from \cite{CD}:

\begin{defi}
\label{def:lionderivative}
Given a function $u:\mc{P}_2(\R^d)\tto \R$, we may define a lifting of $u$ to $\tilde{u}:L^2(\tilde\W,\tilde\F,\tilde\Prob;\R^d)\tto \R$ via $\tilde u (X) = u(\mc{L}(X))$ for $X\in L^2(\tilde\W,\tilde\F,\tilde\Prob;\R^d)$. Here we assume $\tilde\W$ is a Polish space, $\tilde\F$ its Borel $\sigma$-field, and $\tilde\Prob$ is an atomless probability measure (since $\tilde\W$ is Polish, this is equivalent to every singleton having zero measure).

Here, letting $\mc{S}$ be a Polish space with metric $\rho$, and denoting by $\mu(|\cdot|^r)\coloneqq \int_{\mc{S}}\rho(x,x_0)^r \mu(dx)$ for $r\geq 1$ and $x_0$ a fixed element of $\mc{S}$,
\begin{align*}
\mc{P}_{r}(\mc{S}) \coloneqq \br{ \mu\in \mc{P}(\mc{S}):\mu(|\cdot|^{r}])<\infty}.
\end{align*}
$\mc{P}_{r}(\mc{S})$ is a polish space under the $L^{r}$-Wasserstein distance
\begin{align*}
\bb{W}_{r,\mc{S}} (\mu_1,\mu_2)\coloneqq \inf_{\pi \in\mc{C}_{\mu_1,\mu_2}} \biggl[\int_{\mc{S}\times \mc{S}} \rho(x,y)^{r} \pi(dx,dy)\biggr]^{1/r},
\end{align*}
where $\mc{C}_{\mu_1,\mu_2}$ denotes the set of all couplings of $\mu_1,\mu_2$ (see p.360 of \cite{CD}). When $\mc{S}=\R^d$, we will simply write $\bb{W}_r$ rather than $\bb{W}_{r,\R^d}$.

We say $u$ is L-differentiable or Lions-differentiable at $\mu_0\in\mc{P}_2(\R^d)$ if there exists a random variable $X_0$ on some $(\tilde\W,\tilde\F,\tilde\Prob)$ satisfying the above assumptions such that $\mc{L}(X_0)=\mu_0$ and $\tilde u$ is Fr\'echet differentiable at $X_0$.

The Fr\'echet derivative of $\tilde u$ can be viewed as an element of $L^2(\tilde\W,\tilde\F,\tilde\Prob;\R^d)$ by identifying $L^2(\tilde\W,\tilde\F,\tilde\Prob;\R^d)$ and its dual. From this, one can find that if $u$ is L-differentiable at $\mu_0\in\mc{P}_2(\R^d)$, there is a deterministic measurable function $\xi: \R^d\tto \R^d$ such that $D\tilde{u}(X_0)=\xi(X_0)$, and that $\xi$ is uniquely defined $\mu_0$-almost everywhere on $\R^d$. We denote this equivalence class of $\xi\in L^2(\R^d,\mu_0;\R^d)$ by $\partial_\mu u(\mu_0)$ and call $\partial_\mu u(\mu_0)(\cdot):\R^d\tto \R^d$ the Lions derivative of $u$ at $\mu_0$. Note that this definition is independent of the choice of $X_0$ and $(\tilde\W,\tilde\F,\tilde\Prob)$. See \cite{CD} Section 5.2.

To avoid confusion when $u$ depends on more variables than just $\mu$, if $\partial_\mu u(\mu_0)$ is differentiable at $v_0\in\R^d$, we denote its derivative at $v_0$ by $\partial_v\partial_\mu u(\mu_0)(v_0)$.
\end{defi}
\begin{defi}
\label{def:fullyC2}
(\cite{CD} Definition 5.83) We say $u:\mc{P}_2(\R^d)\tto \R$ is Fully $\mathbf{C^2}$ if the following conditions are satisfied:
\begin{enumerate}
\item $u$ is $C^1$ in the sense of L-differentiation, and its first derivative has a jointly continuous version $\mc{P}_2(\R^d)\times \R^d\ni (\mu,v)\mapsto \partial_\mu u(\mu)(v)\in\R^d$.
\item For each fixed $\mu\in\mc{P}_2(\R^d)$, the version of $\R^d\ni v\mapsto \partial_\mu u(\mu)(v)\in\R^d$ from the first condition is differentiable on $\R^d$ in the classical sense and its derivative is given by a jointly continuous function $\mc{P}_2(\R^d)\times \R^d\ni (\mu,v)\mapsto \partial_v\partial_\mu u(\mu)(v)\in\R^{d\times d}$.
\item For each fixed $v\in \R^d$, the version of $\mc{P}_2(\R^d)\ni \mu\mapsto \partial_\mu u(\mu)(v)\in \R^d$ in the first condition is continuously L-differentiable component-by-component, with a derivative given by a function $\mc{P}_2(\R^d)\times \R^d\times \R^d\ni(\mu,v,v')\mapsto \partial^2_\mu u(\mu)(v)(v')\in\R^{d\times d}$ such that for any $\mu\in\mc{P}_2(\R^d)$ and $X\in L^2(\tilde\W,\tilde\F,\tilde\Prob;\R^d)$ with $\mc{L}(X)=\mu$, $\partial^2_\mu u(\mu)(v)(X)$ gives the Fr\'echet derivative at $X$ of $L^2(\tilde\W,\tilde\F,\tilde\Prob;\R^d)\ni X'\mapsto \partial_\mu u(\mc{L}(X'))(v)$ for every $v\in\R^d$. Denoting $\partial^2_\mu u(\mu)(v)(v')$ by $\partial^2_\mu u(\mu)(v,v')$, the map $\mc{P}_2(\R^d)\times \R^d\times \R^d\ni(\mu,v,v')\mapsto \partial^2_\mu u(\mu)(v,v')$ is also assumed to be continuous in the product topology.
\end{enumerate}
\end{defi}

We recall now a useful connection between the Lions derivative as defined in \ref{def:lionderivative} and the empirical measure.
\begin{proposition}\label{prop:empprojderivatives}
For $g:\mc{P}_2(\R^d)\tto \R^d$ which is Fully $C^2$ in the sense of definition \ref{def:fullyC2}, we can define the empirical projection of $g$, as $g^N: (\R^d)^N\tto \R^d$ given by
\begin{align*}
g^N(\beta_1,...,\beta_N)\coloneqq g(\frac{1}{N}\sum_{i=1}^N \delta_{\beta_i}).
\end{align*}

Then $g^N$ is twice differentiable on $(\R^d)^N$, and for each $\beta_1,..,\beta_N\in\R^d$, $(i,j)\in \br{1,...,N}^2$, $l\in\br{1,...,d}$
\begin{align}
\label{eq:empfirstder}
\nabla_{\beta_i} g^N_l(\beta_1,...,\beta_N)= \frac{1}{N} \partial_\mu  g_l(\frac{1}{N}\sum_{i=1}^N \delta_{\beta_i}) (\beta_i)
\end{align}
and
\begin{align}
\label{eq:empsecondder}
\nabla_{\beta_i} \nabla_{\beta_j} g^N_l(\beta_1,...,\beta_N)= \frac{1}{N} \partial_v \partial_\mu  g_l(\frac{1}{N}\sum_{i=1}^N \delta_{\beta_i}) (\beta_i) \1_{i=j} + \frac{1}{N^2} \partial^2_\mu g_l(\frac{1}{N}\sum_{i=1}^N \delta_{\beta_i})(\beta_i,\beta_j).
\end{align}

In particular, under assumptions \ref{assumption:LipschitzandBounded}-\ref{assumption:centeringcondition}, this holds for $\Phi(x,y,\cdot)$ for fixed $x\in\R^d$ and $y\in\bb{T}^d$.
\end{proposition}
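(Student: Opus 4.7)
The plan is to realize $g^N$ as the composition of the lifting $\tilde{g}$ with a particular element of $L^2(\tilde{\W},\tilde{\F},\tilde{\Prob};\R^d)$ built out of $(\beta_1,\dots,\beta_N)$, and then differentiate using Fr\'echet calculus on $L^2$. Concretely, since $\tilde{\Prob}$ is atomless I will partition $\tilde{\W}=\bigsqcup_{i=1}^N A_i$ with $\tilde{\Prob}(A_i)=1/N$ and set $X^{\beta_1,\dots,\beta_N}(\tilde\omega):=\sum_{i=1}^N \beta_i\1_{A_i}(\tilde\omega)$. Then $\mc{L}(X^{\beta_1,\dots,\beta_N})=\frac{1}{N}\sum_{i=1}^N\delta_{\beta_i}=:\mu^N$, so that $g^N(\beta_1,\dots,\beta_N)=\tilde{g}(X^{\beta_1,\dots,\beta_N})$. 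The perturbation $\beta_i\mapsto \beta_i+he_k$ shifts $X^{\beta_1,\dots,\beta_N}$ by $he_k\1_{A_i}$ in $L^2$, which is exactly the kind of variation Fr\'echet differentiability of $\tilde{g}$ controls.

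For the first derivative I would expand
\[
\tilde{g}_l(X^{\beta_1,\dots,\beta_i+he_k,\dots,\beta_N})-\tilde{g}_l(X^{\beta_1,\dots,\beta_N})=\langle D\tilde{g}_l(X^{\beta_1,\dots,\beta_N}),\,he_k\1_{A_i}\rangle_{L^2}+o(h),
\]
and use the identification $D\tilde{g}_l(X)=\partial_\mu g_l(\mc{L}(X))(X)$ together with $\tilde{\Prob}(A_i)=1/N$ to evaluate the inner product as $\tfrac{h}{N}\partial_\mu g_l(\mu^N)(\beta_i)\cdot e_k$. Dividing by $h$ and sending $h\to 0$ yields \eqref{eq:empfirstder}. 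Continuity of $(\mu,v)\mapsto \partial_\mu g_l(\mu)(v)$ from Definition \ref{def:fullyC2}(1) gives continuity of these partials, so $g^N$ is $C^1$.

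For the second derivative I would differentiate the identity just obtained. Fix $l$; when $j\neq i$ the second argument $\beta_i$ of $\partial_\mu g_l(\mu^N)(\beta_i)$ does not depend on $\beta_j$, so setting $h(\mu):=\partial_\mu g_l(\mu)(\beta_i)$ (with $\beta_i$ frozen) and applying the first-derivative formula already proved to the empirical projection of the $\R^d$-valued map $h$, Definition \ref{def:fullyC2}(3) identifies the Lions derivative of $h$ at $\mu^N$ as $\partial^2_\mu g_l(\mu^N)(\beta_i,\cdot)$, producing the $\frac{1}{N^2}\partial_\mu^2 g_l(\mu^N)(\beta_i,\beta_j)$ term. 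When $j=i$ I need the chain rule: both the measure argument $\mu^N$ and the spatial argument $\beta_i$ of $\partial_\mu g_l(\mu^N)(\beta_i)$ move with $\beta_i$. The spatial variation contributes $\frac{1}{N}\partial_v\partial_\mu g_l(\mu^N)(\beta_i)$ via Definition \ref{def:fullyC2}(2), while the measure variation contributes $\frac{1}{N^2}\partial^2_\mu g_l(\mu^N)(\beta_i,\beta_i)$ exactly as in the $j\neq i$ case; summing gives \eqref{eq:empsecondder} with the indicator term.

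The main obstacle is the $j=i$ case: one must justify that the two dependencies of $\partial_\mu g_l(\mu^N)(\beta_i)$ on $\beta_i$ (through the measure and through the evaluation point) can be separated and added, which requires the joint continuity statements in all three parts of Definition \ref{def:fullyC2} to pass from the formal chain-rule computation to a rigorous limit. I will handle this by writing the difference quotient as a sum of two increments, one in which only the measure slot changes and one in which only the evaluation slot changes, estimating each via the previously established first-derivative formula applied to the appropriate auxiliary functional. Once all derivatives are identified, joint continuity of $\partial_v\partial_\mu g_l$ and $\partial^2_\mu g_l$ from Definition \ref{def:fullyC2} guarantees they are continuous in $(\beta_1,\dots,\beta_N)$, completing the twice-differentiability claim. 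The application to $\Phi(x,y,\cdot)$ is then immediate from Proposition \ref{prop:Phiexistenceregularity}, which provides the Fully $C^2$ regularity in $\mu$ uniformly in $(x,y)$.
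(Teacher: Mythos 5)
Your proposal is correct: the lifting of $g$ composed with the uniform-partition representative $X=\sum_{i=1}^N\beta_i\1_{A_i}$, $\tilde{\Prob}(A_i)=1/N$, followed by Fr\'echet differentiation and the diagonal/off-diagonal splitting for the second derivative, is exactly the standard argument behind this identity. The paper proves the proposition simply by citing Propositions 5.35 and 5.91 of \cite{CD} (plus Proposition \ref{prop:Phiexistenceregularity} for $\Phi$), and your write-up is essentially a reconstruction of the proof of those cited results, so the approach is the same in substance.
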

\begin{proof}
This follows from Propositions 5.35 and 5.91 of \cite{CD}. Since by Proposition \ref{prop:Phiexistenceregularity} $\Phi$ is Fully $C^2$, it applies to $\Phi(x,y,\cdot)$.
\end{proof}

\section{On the Operator $\mc{L}^1$ and Related PDEs}\label{Appendix:L1}
\begin{proposition}\label{prop:invtmeasure}
Under assumptions \ref{assumption:LipschitzandBounded}-\ref{assumption:fsigmaregularity}, the invariant measure $\pi$ defined by Equation \eqref{eq:pi} is uniquely determined for each $x\in\R^d$ and $\mu\in\mc{P}_2(\R^d)$ and has a continuous, bounded density $\tilde{\pi}$.
\end{proposition}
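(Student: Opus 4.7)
The plan is to exploit the fact that $\mc{L}^1_{x,\mu}$ is a uniformly elliptic, time-independent second-order operator on the compact manifold $\bb{T}^d$, and to deduce existence, uniqueness, and regularity of $\tilde\pi$ from classical results for elliptic PDEs and diffusions on compact manifolds, with $(x,\mu)$ treated as frozen parameters throughout.

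First I would verify the structural hypotheses. By \ref{assumption:uniformellipticity}, the diffusion matrix $A(x,y,\mu)$ is uniformly positive definite, and by \ref{assumption:LipschitzandBounded} and \ref{assumption:fsigmaregularity}, the coefficients $f(x,\cdot,\mu)$ and $A(x,\cdot,\mu)$ are bounded and H\"older continuous on $\bb{T}^d$, uniformly in the frozen parameters $x$ and $\mu$. Thus for each fixed $(x,\mu)$, the operator $\mc{L}^1_{x,\mu}$ is a uniformly elliptic operator with H\"older continuous coefficients on a closed manifold, and the SDE
\begin{align*}
dY_t = f(x,Y_t,\mu)\,dt + \sigma(x,Y_t,\mu)\,dW_t \pmod{1}
\end{align*}
is well-posed on $\bb{T}^d$ and generates a strong Markov process.

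Next, for existence and uniqueness of $\pi(\cdot|x,\mu)$, I would invoke the standard ergodic theory of elliptic diffusions on compact state spaces. By uniform ellipticity the associated semigroup on $C(\bb{T}^d)$ is strong Feller, and the usual support theorem (or equivalently, the Stroock--Varadhan support theorem combined with ellipticity) gives irreducibility. On the compact torus these two properties together yield, via Doob's theorem (or Has'minskii's classical criterion), existence and uniqueness of an invariant probability measure $\pi(\cdot|x,\mu)\in\mc{P}(\bb{T}^d)$.

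Finally, to exhibit a continuous bounded density $\tilde\pi$, I would analyze the adjoint PDE
\begin{align*}
(\mc{L}^1_{x,\mu})^*\tilde\pi(y) = \tfrac12\sum_{i,j}\partial_{y_i}\partial_{y_j}\bigl(A_{ij}(x,y,\mu)\tilde\pi(y)\bigr) - \sum_i\partial_{y_i}\bigl(f_i(x,y,\mu)\tilde\pi(y)\bigr) = 0
\end{align*}
with periodic boundary conditions. Starting from the weak identity $\int_{\bb{T}^d}\mc{L}^1_{x,\mu}g\,d\pi=0$ for all $g\in C^2(\bb{T}^d)$, De Giorgi--Nash--Moser estimates for divergence-form elliptic equations with bounded measurable coefficients give the existence of a locally bounded, H\"older continuous density with respect to Lebesgue measure on $\bb{T}^d$. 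A Schauder bootstrap using the H\"older regularity of $A$ and $f$ from \ref{assumption:fsigmaregularity} then upgrades $\tilde\pi$ to $C^{2,\alpha}(\bb{T}^d)$, in particular continuous and (by compactness) bounded. Compactness of $\bb{T}^d$ and uniqueness of $\pi$ imply the normalization $\int_{\bb{T}^d}\tilde\pi(y|x,\mu)\,dy=1$ is automatically satisfied. The main obstacle is matching the precise regularity of the coefficients stipulated in \ref{assumption:fsigmaregularity} with the hypotheses of the Schauder/elliptic regularity theorems applied to the formal adjoint; once this is done, the result is essentially classical, and one can alternatively cite the averaging literature (e.g., Bensoussan--Lions--Papanicolaou or Pardoux--Veretennikov) where this exact PDE is treated.
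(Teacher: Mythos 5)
Your argument is correct in substance and lands on the same classical facts the paper relies on, but the route is packaged differently. The paper's proof is essentially a citation: existence, uniqueness and the density follow from Theorem 4.4 of \cite{PS} (equivalently Theorem 4.3.4 of \cite{Bensoussan}), which treat the stationary adjoint problem on the torus via the Fredholm alternative in periodic Sobolev spaces, with a Morrey/Sobolev embedding supplying continuity and boundedness of $\tilde{\pi}$; continuity in the frozen parameters is then attributed to Chapter 3, Section 6 of \cite{Bensoussan}. You instead reconstruct the statement probabilistically (well-posedness of the frozen SDE on $\bb{T}^d$, strong Feller plus irreducibility, Doob/Has'minskii for existence and uniqueness of $\pi$) and analytically (De Giorgi--Nash--Moser for the double-divergence stationary Fokker--Planck equation, then a Schauder bootstrap). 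Both reductions are legitimate; yours is more self-contained about \emph{why} the cited theorems apply, while the paper's is shorter and, importantly, also records the parameter dependence it needs downstream.

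Two points to tighten. First, the paper's proof explicitly covers continuity of $\tilde{\pi}$ in the parameters $(x,\mu)$, which is used later (e.g.\ in the identification of limits and in Proposition \ref{prop:I=Iav}); your argument only produces continuity in $y$ with bounds uniform in $(x,\mu)$, so you should add the parameter-continuity step (uniform H\"older estimates in $y$ plus a compactness/uniqueness argument, or simply the treatment in \cite{Bensoussan}, Ch.~3, Sec.~6). Second, the bootstrap to $C^{2,\alpha}(\bb{T}^d)$ overreaches what Assumptions \ref{assumption:LipschitzandBounded}--\ref{assumption:fsigmaregularity} give you: $A(x,\cdot,\mu)$ is only Lipschitz/H\"older in $y$ (second $y$-derivatives of $\sigma$ are not assumed), and rewriting $(\mc{L}^1_{x,\mu})^*\tilde{\pi}=0$ in divergence form already consumes one $y$-derivative of $A$; fortunately the conclusion only needs a bounded, (H\"older) continuous density, so the De Giorgi--Nash--Moser step suffices and the $C^{2,\alpha}$ claim should be dropped or weakened.
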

\begin{proof}
This follows immediately from Theorem 4.4 in \cite{PS}/Theorem 4.3.4 in \cite{Bensoussan} and a standard embedding argument via Morrey's inequality. For continuity in the parameters, see e.g. Chapter 3, Section 6 of \cite{Bensoussan}.
\end{proof}

\begin{proposition}\label{prop:Phiexistenceregularity}
Under assumptions \ref{assumption:LipschitzandBounded}-\ref{assumption:centeringcondition}, there is a unique strong solution $\Phi$ to equation \eqref{eq:cellproblem}. Moreover, $\Phi$, all first and second order partial derivatives of $\Phi$ in $x$ and $y$ and $\nabla_y\nabla_x\nabla_x\Phi$ are bounded, $\Phi$ is Fully $C^2$ in the sense of Definition \ref{def:fullyC2}, and $\partial_\mu\Phi(x,y,\mu)(\cdot)$, $\partial_v\partial_\mu\Phi(x,y,\mu)(\cdot),$ $\nabla_x\partial_\mu\Phi(x,y,\mu)(\cdot)$, $\nabla_y\partial_\mu\Phi(x,y,\mu)(\cdot)$, $\nabla_y\nabla_x\partial_\mu\Phi(x,y,\mu)(\cdot)$, $\partial^2_\mu\Phi(x,y,\mu)(\cdot,\cdot)$ exist, are continuous with respect to all variables $x,v,v'\in\R^d,y\in\bb{T}^d,\mu\in\mc{P}(\R^d)$, and are uniformly bounded in $L^2(\R^d,\mu)(\otimes L^2(\R^d,\mu))$  with respect to $x$ and $y$.
\end{proposition}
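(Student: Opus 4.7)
The plan is to decompose the proof into three stages of regularity: (i) existence, uniqueness, and $(x,y)$-regularity for fixed $(x,\mu)$; (ii) $x$-regularity via differentiation of the cell equation; and (iii) Lions differentiability in $\mu$. Throughout, the guiding principle is that $\mc{L}^1_{x,\mu}$ is a uniformly elliptic second-order operator on the compact torus $\bb{T}^d$ (by \ref{assumption:uniformellipticity} and boundedness from \ref{assumption:LipschitzandBounded}), so that standard Fredholm/Schauder theory applies pointwise in the parameters, and the centering condition \ref{assumption:centeringcondition} guarantees that the right-hand sides are in the range of $\mc{L}^1_{x,\mu}$, with the normalization condition in \eqref{eq:cellproblem} pinning down the unique solution.

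For step (i), I would invoke the classical Fredholm theory for periodic elliptic problems (see e.g.\ \cite{Bensoussan} Ch.\ 3 or \cite{PS} Thm.\ 3.1). The centering assumption \ref{assumption:centeringcondition} $\int_{\bb{T}^d}f\,d\pi = 0$ and uniform ellipticity yield a unique strong solution $\Phi(x,y,\mu) \in W^{2,p}_{\mathrm{per}}(\bb{T}^d)$ for any $p<\infty$, with bounds depending only on $\|f\|_\infty$, $\|b\|_\infty$, $\|A\|_\infty$, the Hölder seminorm of $f,A$ in $y$, and $\lambda_1$. Morrey embedding and Schauder estimates then give $\|\Phi\|_\infty, \|\nabla_y\Phi\|_\infty, \|\nabla_y\nabla_y\Phi\|_\infty$ uniformly bounded in $(x,\mu)$. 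For step (ii), I would formally differentiate the cell equation in $x$ to obtain
\begin{equation*}
\mc{L}^1_{x,\mu}[\nabla_x\Phi_l] = -\nabla_x f_l - (\nabla_x f)\cdot\nabla_y\Phi_l - \tfrac12(\nabla_x A):\nabla_y\nabla_y\Phi_l,
\end{equation*}
and after subtracting the $\pi$-mean of the right-hand side (which equals the $x$-derivative of $\int f_l\,d\pi=0$ up to a term that vanishes by the centering condition) solve the resulting cell problem for $\nabla_x\Phi$. Iterating this produces equations for $\nabla_x\nabla_x\Phi$ and $\nabla_y\nabla_x\nabla_x\Phi$, and the a priori Schauder bounds rigorously identify the solutions as the true derivatives via a difference-quotient argument.

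For step (iii), the Lions derivatives $\partial_\mu\Phi(x,y,\mu)(v)$, $\partial_v\partial_\mu\Phi$ and $\partial^2_\mu\Phi$ are obtained by differentiating \eqref{eq:cellproblem} along a lifting $\mu_t=\mc{L}(X+tV)$ as in \cite{CD} Ch.\ 5. The key formal identity reads
\begin{equation*}
\mc{L}^1_{x,\mu}\bigl[\partial_\mu\Phi_l(x,y,\mu)(v)\bigr] \;=\; -\partial_\mu f_l(x,y,\mu)(v) \;-\; \bigl[\partial_\mu f(x,y,\mu)(v)\bigr]\cdot\nabla_y\Phi_l \;-\; \tfrac12\bigl[\partial_\mu A(x,y,\mu)(v)\bigr]:\nabla_y\nabla_y\Phi_l,
\end{equation*}
and the normalization is enforced by differentiating $\int\Phi_l\,d\pi=0$, which also involves $\partial_\mu\pi$. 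One must first establish regularity of $\pi$ in the Lions sense by differentiating its adjoint equation \eqref{eq:pi}; the same Fredholm/Schauder machinery applies, using \ref{assumption:fsigmaregularity}. Given this, the $L^2(\R^d,\mu)$-bounds on $\partial_\mu f,\partial_v\partial_\mu f,\partial_\mu\sigma,\partial^2_\mu f,\partial^2_\mu\sigma$ from \ref{assumption:fsigmaregularity} translate, via the linearity of the cell equation in its right-hand side and uniform ellipticity, into the claimed uniform $L^2$ and $L^2\otimes L^2$ bounds on the Lions derivatives of $\Phi$. Joint continuity in $(x,v,v',y,\mu)$ follows from the joint continuity assumption in \ref{assumption:fsigmaregularity} together with continuous dependence of solutions of uniformly elliptic PDEs on their coefficients (in $C^{0,\alpha}$ topology on the torus).

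The main obstacle is step (iii): differentiating \eqref{eq:cellproblem} in $\mu$ requires simultaneous Lions-differentiability of the invariant measure $\pi(\cdot|x,\mu)$ (itself defined only implicitly through the adjoint equation), and careful bookkeeping of the normalization condition under perturbations of $\mu$. The second Lions derivative $\partial^2_\mu\Phi$ is the most delicate, because both the operator $\mc{L}^1_{x,\mu}$ and the enforced centering shift against $\pi$ must be differentiated twice, producing cross terms involving $\partial_\mu\pi$ and $\partial^2_\mu\pi$. All remaining bounds then reduce, in a linear fashion, to the $L^2$ hypotheses of \ref{assumption:fsigmaregularity} combined with pointwise Schauder estimates.
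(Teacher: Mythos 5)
Your proposal is sound in outline but takes a genuinely different route from the paper. You work directly at the PDE level: Fredholm theory for the uniformly elliptic operator $\mc{L}^1_{x,\mu}$ on $\bb{T}^d$, Schauder/$W^{2,p}$ estimates uniform in the parameters, then repeated differentiation of the cell problem in $x$ and (via a lifting) in $\mu$, with the solvability and normalization of each differentiated problem tracked through the parameter-derivatives of $\pi$. The paper instead proves existence/uniqueness by citing the periodic Poisson-equation results of \cite{PS} (Theorems 6.16 and 7.9) and then bases \emph{all} the regularity on the probabilistic representation $\Phi(x,y,\mu)=\int_0^\infty\tilde{\E}[f(x,Y^{x,y,\mu}_s,\mu)]\,ds$, where $Y^{x,y,\mu}$ is the frozen fast process \eqref{eq:frozeneqn}: exponential ergodicity plus the centering condition make the integral converge, and the spatial and Lions derivatives are obtained from derivative ``transfer formulas'' of \cite{RocknerFullyCoupled} (Lemma 3.2, Theorem 2.1) as extended to Lions derivatives in Lemmas A.2 and A.5 of \cite{BezemekSpiliopoulosAveraging2022}. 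What each approach buys: yours is self-contained on the torus and makes the role of the centering condition transparent, but it forces you to prove, alongside $\Phi$, full parameter-regularity (including $\partial_\mu$ and $\partial^2_\mu$) of the invariant density $\tilde{\pi}$ and to do the solvability/normalization bookkeeping you rightly flag as the delicate part; the paper's route outsources exactly that machinery to existing results by differentiating an explicit formula through the flow of the frozen SDE. Two cautions if you pursue your route: (i) you do not ``subtract the $\pi$-mean of the right-hand side'' of the differentiated cell problem --- the compatibility condition $\int(\text{RHS})\,d\pi=0$ holds automatically once you differentiate the adjoint equation \eqref{eq:pi} and the centering condition \ref{assumption:centeringcondition}, and subtracting anything would change the solution; (ii) the normalization of each derivative of $\Phi$ is not $\int\partial\Phi_l\,d\pi=0$ but the identity obtained by differentiating $\int\Phi_l\,d\pi=0$, which brings in $\nabla_x\tilde{\pi}$, $\partial_\mu\tilde{\pi}$, etc., so the claimed uniform $L^2(\R^d,\mu)$ bounds require the corresponding uniform bounds on those derivatives of $\tilde{\pi}$ as an explicit intermediate step.
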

\begin{proof}

Existence and uniqueness follows directly from Theorems 6.16 and 7.9 in \cite{PS}.

Consider the frozen process on $\bb{T}^d$ for fixed $x\in\R^d$, $\mu\in\mc{P}_2(\R^d)$, $y\in\bb{T}^d$ given by
\begin{align}
\label{eq:frozeneqn}
dY^{x,y,\mu}_t&=f(x,Y^{x,y,\mu}_t,\mu)dt+\sigma(x,Y^{x,y,\mu}_t,\mu)d\tilde{W}_t\\
Y^{x,y,\mu}_0&=y \nonumber
\end{align}
where $\tilde{W}_t$ is a $m$-dimensional, $\tilde{F}_t$-adapted Brownian motion on some probability space $(\tilde\W,\tilde\F,\tilde\Prob)$ satisfying the usual conditions.

As per Proposition 4.1 in \cite{Rockner} and Section 11.6 in \cite{PS}, $\Phi$ is given by
\begin{align}
\label{eq:stochrepphi}
\Phi(x,y,\mu)=\int_0^\infty\tilde{\E}[f(x,Y^{x,y,\mu}_s,\mu)]ds.
\end{align}

Then the fact that $\Phi$ is fully $C^2$ and smooth in $x$ and $y$ and boundedness of $\Phi$, along with regularity of $\Phi$ of the same type given in \ref{assumption:fsigmaregularity} (with an additional $y$ derivative) follows from the unique representation of the cell problem given by Equation \eqref{eq:stochrepphi}. This has been studied in many situations in the existing literature (see, for example \cite{PV1}, \cite{PV2} for general results on Euclidean space with no measure dependence, \cite{Bensoussan} Chapter 3 Section 6 for the case where the fast component is on the torus with no measure dependence, as well as \cite{Rockner} for when $\Phi$ depends on a measure). The particular regularity assumption imposed here as \ref{assumption:fsigmaregularity} mirror those of \cite{RocknerFullyCoupled}, where regularity of the Poisson Equation (found in Theorem 2.1) is derived via derivative transfer formulas stated therein as Lemma 3.2. These transfer formulas are extended to the Lions Derivative in the analogous setting in Lemma A.2 of \cite{BezemekSpiliopoulosAveraging2022}, with the regularity result for the Lions derivatives and mixed spacial and Lions derivatives of the Poisson Equation appearing there as Lemma A.5. Note that in both \cite{RocknerFullyCoupled} and \cite{BezemekSpiliopoulosAveraging2022}, the $y$ components of the coefficients are not assumed to be periodic, and hence both allow for polynomial growth of the coefficients in $y$. This is inconsequential here due to the fact that we are confining the fast motion to the compact space $\bb{T}^d$, and the arguments still go through. In particular, the needed exponential ergodicity for Equation \eqref{eq:frozeneqn}, still holds even without a recurrence assumption on the drift (see, e.g. Theorem 6.16 in \cite{PS} and Chapter 3 Section 3 in \cite{Bensoussan}).
\end{proof}

\begin{corollary}\label{cor:limitingcoefficientsregularity}
Under assumptions \ref{assumption:LipschitzandBounded}-\ref{assumption:centeringcondition}, $\bar{\beta}$ and $\bar{D}$ as defined in Equation \eqref{eq:limitingcoefficients} and bounded and Lipschitz continuous in $(x,\bb{W}_2)$. Under the additional assumption \ref{assumption:limitinguniformellipticity}, $\bar{B}$ as defined in Equation \eqref{eq:McKeanLimit} is bounded and Lipschitz continuous in $(x,\bb{W}_2)$, and $\bar{B}^{-1}$ exists and is bounded and continuous.
\end{corollary}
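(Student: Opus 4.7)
The plan is to decompose $\bar\beta$ and $\bar D$ into integrals against $\pi$ of products/sums of bounded, Lipschitz factors, and then combine a Lipschitz bound on the integrand with a Lipschitz bound on the measure $\pi(\cdot;x,\mu)$ itself. The statements for $\bar B$ and $\bar B^{-1}$ will then follow from functional calculus on a compact set of uniformly elliptic matrices.

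First I would establish boundedness. Every factor appearing in $\beta(x,y,\mu)$ and $D(x,y,\mu)$ as given in Equation \eqref{eq:limitingcoefficients} is bounded uniformly: $f, b, \sigma$ (and hence $A=\sigma\sigma^\top$) by Assumption \ref{assumption:LipschitzandBounded}, and $\Phi$, $\nabla_x\Phi$, $\nabla_y\Phi$, $\nabla_x\nabla_y\Phi$ by Proposition \ref{prop:Phiexistenceregularity}. Since $\pi(\cdot;x,\mu)$ is a probability measure on $\bb T^d$, integration preserves the sup bound, giving boundedness of $\bar\beta$ and $\bar D$.

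Next I would prove Lipschitz continuity of the integrands. For the $x$-variable, boundedness of the $x$-derivatives of $\Phi$, $\nabla_y\Phi$, and $\nabla_x\nabla_y\Phi$ provided by Proposition \ref{prop:Phiexistenceregularity}, combined with the Lipschitz property of $f$, $b$, $\sigma$ from Assumption \ref{assumption:LipschitzandBounded}, gives a uniform Lipschitz bound on $\beta$ and $D$ in $x$ via the fundamental theorem of calculus. For the $\mu$-variable, I would use the standard inequality (see e.g.\ \cite{CD} Proposition 5.36)
\begin{equation*}
|g(\mu_1)-g(\mu_2)|\leq \sup_\mu \norm{\partial_\mu g(\mu)(\cdot)}_{L^2(\R^d,\mu)}\bb W_2(\mu_1,\mu_2),
\end{equation*}
together with the uniform $L^2(\R^d,\mu)$ bounds on $\partial_\mu\Phi$, $\nabla_x\partial_\mu\Phi$, $\nabla_y\partial_\mu\Phi$, $\nabla_x\nabla_y\partial_\mu\Phi$ supplied by Proposition \ref{prop:Phiexistenceregularity} and the $\bb W_2$-Lipschitz property of $f,b,\sigma$ from Assumption \ref{assumption:LipschitzandBounded}. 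This yields $|\beta(x_1,y,\mu_1)-\beta(x_2,y,\mu_2)|+|D(x_1,y,\mu_1)-D(x_2,y,\mu_2)|\leq C(|x_1-x_2|+\bb W_2(\mu_1,\mu_2))$ uniformly in $y\in\bb T^d$.

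The main obstacle is to show that $(x,\mu)\mapsto \pi(\cdot;x,\mu)$ is Lipschitz in a norm strong enough to combine with the preceding bound on the (bounded, Lipschitz) integrand. I would work with the density $\tilde\pi$ satisfying the adjoint equation \eqref{eq:pi}. Writing the PDE for the difference $\tilde\pi(\cdot;x_1,\mu_1)-\tilde\pi(\cdot;x_2,\mu_2)$, one obtains a Fredholm-type equation whose right-hand side encodes the $(x,\mu)$-variation of the coefficients of $\mc L^1_{x,\mu}$, which is controlled by $|x_1-x_2|+\bb W_2(\mu_1,\mu_2)$ thanks to Assumptions \ref{assumption:LipschitzandBounded} and \ref{assumption:fsigmaregularity}. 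Standard elliptic Schauder/$L^p$ regularity on the compact torus $\bb T^d$ (using uniform ellipticity \ref{assumption:uniformellipticity} and the normalization $\int_{\bb T^d}\tilde\pi\,dy=1$) then gives $\norm{\tilde\pi(\cdot;x_1,\mu_1)-\tilde\pi(\cdot;x_2,\mu_2)}_\infty\leq C(|x_1-x_2|+\bb W_2(\mu_1,\mu_2))$. Splitting
\begin{align*}
\bar\beta(x_1,\mu_1)-\bar\beta(x_2,\mu_2)&=\int_{\bb T^d}\bigl[\beta(x_1,y,\mu_1)-\beta(x_2,y,\mu_2)\bigr]\pi(dy;x_1,\mu_1)\\
&\quad+\int_{\bb T^d}\beta(x_2,y,\mu_2)\bigl[\tilde\pi(y;x_1,\mu_1)-\tilde\pi(y;x_2,\mu_2)\bigr]dy,
\end{align*}
(and similarly for $\bar D$) and applying the two bounds gives the desired Lipschitz continuity.

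Finally, under Assumption \ref{assumption:limitinguniformellipticity}, $\bar D(x,\mu)$ takes values in the compact set $K\coloneqq\{M\in\R^{d\times d}_{\mathrm{sym}}:\bar\lambda_1 I\leq M\leq \norm{\bar D}_\infty I\}$ of uniformly positive-definite matrices. On $K$, both $M\mapsto M^{1/2}$ and $M\mapsto M^{-1}$ are smooth, hence globally Lipschitz and uniformly bounded (by $\sqrt{\norm{\bar D}_\infty}$ and $1/\sqrt{\bar\lambda_1}$ respectively). Composing with the bounded Lipschitz map $(x,\mu)\mapsto \bar D(x,\mu)$ yields boundedness and $(x,\bb W_2)$-Lipschitz continuity of $\bar B=\sqrt{\bar D}$, and existence, boundedness, and continuity of $\bar B^{-1}$.
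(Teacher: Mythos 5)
Your proposal is correct in outline and, for three of its four steps, coincides with the paper's argument: boundedness is immediate from Assumption \ref{assumption:LipschitzandBounded} and Proposition \ref{prop:Phiexistenceregularity}; the joint $(x,\bb{W}_2)$-Lipschitz continuity of the integrands $\beta(\cdot,y,\cdot)$, $D(\cdot,y,\cdot)$ uniformly in $y$ is obtained exactly as you do, from the uniform bounds on $\nabla_x$- and Lions derivatives of $\Phi$, $\nabla_y\Phi$, $\nabla_x\Phi$, $\nabla_x\nabla_y\Phi$ together with Remark 5.27/Proposition 5.36 of \cite{CD}; and the passage to $\bar{B}=\sqrt{\bar{D}}$ and $\bar{B}^{-1}$ via smoothness of the square root and inverse on the set $\{\bar\lambda_1 I\leq M\leq \norm{\bar D}_\infty I\}$ is the same. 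Where you genuinely diverge is in handling the dependence of $\pi(\cdot;x,\mu)$ on the parameters: the paper delegates this step to Lemma A.6 of \cite{BezemekSpiliopoulosAveraging2022} (see also Lemma 3.2 ii) of \cite{RocknerFullyCoupled}), which rests on Poisson-equation/ergodicity transfer arguments, whereas you propose a direct elliptic estimate for the difference of the densities $\tilde\pi$ solving the adjoint problem \eqref{eq:pi}, via the Fredholm alternative and Schauder/$L^p$ theory on $\bb{T}^d$. This is a legitimate and self-contained alternative, in the spirit of \cite{Bensoussan} Chapter 3, Section 6 (which the paper itself invokes in Proposition \ref{prop:invtmeasure} for continuity of $\tilde\pi$ in the parameters), and the two-term splitting of $\bar\beta(x_1,\mu_1)-\bar\beta(x_2,\mu_2)$ you write down is exactly the right way to combine it with the integrand estimate.

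Two points in your sketch deserve care if you carry it out. First, to run Schauder estimates you need the right-hand side $[(\mc{L}^1_{x_2,\mu_2})^*-(\mc{L}^1_{x_1,\mu_1})^*]\tilde\pi(\cdot;x_2,\mu_2)$ controlled in a $y$-H\"older norm by $|x_1-x_2|+\bb{W}_2(\mu_1,\mu_2)$, not merely in sup norm; this requires the uniform H\"older-in-$y$ regularity of $\nabla_x f$, $\nabla_x\sigma$, $\partial_\mu f$, $\partial_\mu\sigma$ from Assumption \ref{assumption:fsigmaregularity}, which you should invoke explicitly. Alternatively, since the integrands $\beta$, $D$ are bounded, an $L^p$ (even $L^1$) estimate on $\tilde\pi(\cdot;x_1,\mu_1)-\tilde\pi(\cdot;x_2,\mu_2)$ already suffices for the splitting, so you can work in negative Sobolev spaces and avoid the sup-norm claim altogether. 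Second, the inverse of $(\mc{L}^1_{x,\mu})^*$ on mean-zero functions must be bounded \emph{uniformly} over $(x,\mu)\in\R^d\times\mc{P}_2(\R^d)$; this does follow from the uniform ellipticity \ref{assumption:uniformellipticity} and the uniform coefficient bounds (uniform spectral gap on the torus), but it is the crux of the quantitative estimate and should be stated rather than absorbed into ``standard elliptic regularity.''
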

\begin{proof}
Boundedness of $\bar{\beta}$ and $\bar{D}$ is immediate, since by assumption \ref{assumption:LipschitzandBounded} and Proposition \ref{prop:Phiexistenceregularity}, the functions which comprise $\beta$ and $D$ are bounded.

From Proposition \ref{prop:Phiexistenceregularity}, we have for $g=\Phi,\nabla_y\Phi,\nabla_x\Phi,$ or $\nabla_x\nabla_y\Phi$ that $\partial_\mu g(x,y,\mu)(\cdot)$ is bounded in $L^2(\R^d,\mu)$  for all $x\in\R^d,y\in\bb{T}^d,\mu\in\mc{P}_2(\R^d)$. This implies that $g(x,y,\cdot)$ is Lipschitz continuous in $\bb{W}_2$ for each $x\in\R^d$ and $y\in\bb{T}^d$ by Remark 5.27 in \cite{CD}. Similarly, $\nabla_x g(x,y,\mu)$ is uniformly bounded in in $x$, $y$, and $\mu$, so $g(\cdot,y,\mu)$ is Lipschitz continuous in $x$ for each $y\in\bb{T}^d,$ and $\mu\in\mc{P}_2(\R^d)$. Then, by Assumption \ref{assumption:LipschitzandBounded} and the aforementioned boundedness of all terms appearing in $\beta$ and $D$, $\beta(\cdot,y,\cdot)$ and $D(\cdot,y,\cdot)$ are jointly Lipschitz in $(x,\bb{W}_2)$ for each $y$. The Lipschitz continuity of $\bar{\beta}$ and $\bar{D}$ now follows as in Lemma A.6 of \cite{BezemekSpiliopoulosAveraging2022} (see also Lemma 3.2 ii) in \cite{RocknerFullyCoupled}).

Under the additional assumption \ref{assumption:limitinguniformellipticity}, we note that the mapping which takes a positive-definite matrix to its unique positive-definite square root is Fr\'echet differentiable up to arbitrary order, with all derivatives being bounded on sets of uniformly bounded, uniformly positive definite matrices (see Equation (6) in \cite{MatrixRoot}). Thus, as the composition of the bounded, Lipschitz continuous mappings $M\mapsto \sqrt{M}$ and $(x,\mu)\mapsto \bar{D}(x,\mu)$, $\bar{B}$ is itself bounded and Lipschitz continuous. Moreover, the matrix inverse is bounded and continuous on sets of uniformly bounded, uniformly positive definite matrices, so similarly $\bar{B}^{-1}$ is bounded and continuous.
\end{proof}

\begingroup
\begin{bibdiv}
\begin{biblist}
\bib{AS}{article}{

        title={Scaling limits and exit law for multiscale diffusions},

        author={S. A. Almada},

        author={K. Spiliopoulos},

        journal={Journal of Asymptotic Analysis},

        volume={87},

        date={2014},

        pages={65-90}
}
\bib{GradientFlows}{book}{

        title = {Gradient Flows: in Metric Spaces and in the Space of Probability Measures},

        author = {L. Ambrosio},

      author = {N. Gigli},

      author = {G. Savar\'e},

        date = {2005},

        publisher = {Springer},

        address = {NY}

}
\bib{Ansari}{article}{

        title={Mean first passage time solution of the Smoluchowski equation: Application of relaxation dynamics in myoglobin},

        author={A. Ansari},

        journal={Journal of Chemical Physics},

        volume={112},

        number={5},

        date={2000},

        pages={2516--2522}
}
\bib{Baldi}{article}{

        title={Large deviations for diffusions processes with
homogenization and applications},

        author={P. Baldi},

        journal={Annals of Probability},

        volume={19},

        number={2},

        date={1991},

        pages={509--524}
}

\bib{BR}{article}{

        title={From nonlinear Fokker–Planck equations to solutions of distribution dependent SDE},

        author={V. Barbu},
        author={M. R\"ockner},

        journal={Annals of Probability},

        volume={48},

        number={4},

        date={2020},

        pages={1902 -- 1920}
}
\bib{gammaconv}{arxiv}{

        title={Gamma convergence approach for the large deviations of the density in systems of interacting diffusion processes},

        author={J. Barré},
        author={C. Bernardin},
        author={R. Chétrite},
        author={Y. Chopra},
        author={M. Mariani},

        date={2019},
	    arxiveprint={
        		arxivid={1910.04026},
        		arxivclass={math.AP},
      }
}
\bib{BCCP}{article}{

        title={A Non-Maxwellian Steady Distribution for One-Dimensional Granular Media},

        author={D. Benedetto},
        author={E. Caglioti},
        author={J. A. Carrillo},
        author={M. Pulvirenti},

        journal={Journal of Statistical Physics},

        volume={91},

        date={1998},

        pages={979--990}
}
\bib{Bensoussan}{book}{

        title = {Asymptotic Analysis for Periodic Structures},

        author = {A. Bensoussan},

	    author = {J. L. Lions},
	
	    author = {G. Papanicolau},
	
        date = {1978},

        publisher = {	North Holland},

        address = {Amsterdam}

}





\bib{BezemekSpiliopoulosAveraging2022}{arxiv}{

        title={Rate of homogenization for fully-coupled McKean-Vlasov SDEs},

        author={Z. Bezemek},

        author={K. Spiliopoulos},

        date={2022},
      arxiveprint={
            arxivid={2202.07753},
            arxivclass={math.PR},
      }
}

\bib{billingsley}{book}{

        title = {Convergence of Probability Measures},

        author = {P.  Billingsley},

        date = {1999},

        publisher = {Wiley},

        address = {NY}

}
\bib{BinneyTremaine}{book}{

        title = {Galactic Dynamics},

        author = {J. Binney},

	    author = {S. Tremaine},
	
        date = {2008},

        publisher = {Princeton University Press},

        address = {Princeton}

}

\bib{BSmimicking}{article}{

        title={Mimicking an It\^o process by a solution of a stochastic differential equation},

        author={G. Brunick},
        author={S. Shreve},

        journal={The Annals of Applied Probability},

        volume={23},

        number={4},

        date={2013},

        pages={1584--1628}
}

\bib{BryngelsonOnuchicWolynes}{article}{

        title={Funnels, pathways and the energy landscape of protein folding: A synthesis},

        author={J. D. Bryngelson},
        author={J. N. Onuchic},
        author={N. D. Socci},
        author={P. G. Wolynes},

        journal={Proteins},

        volume={21},

        number={3},

        date={1995},

        pages={167--195}
}




%



\bib{BorkarGaitsgory}{article}{

        title={Averaging of singularly perturbed controlled stochastic differential equations},

        author={V. Borkar},
        author={V. Gaitsgory},

        journal={Applied Mathematics and Optimization},

        volume={56},

        number={2},

        date={2007},

        pages={169--209}
}

\bib{BCcurrents}{arxiv}{

        title={Asymptotic Behavior of Stochastic Currents under Large Deviation Scaling with Mean Field Interaction and Vanishing Noise},

        author={A. Budhirja},

        author={M. Conroy},

        date={2021},
      arxiveprint={
            arxivid={2007.12956},
            arxivclass={math.PR},
      }
}

\bib{BCsmallnoise}{article}{

        title={Empirical Measure and Small Noise Asymptotics Under Large Deviation Scaling for Interacting Diffusions},

        author={A. Budhirja},

        author={M. Conroy},

        journal={Journal of Theoretical Probability},

        volume={35},

        date={2022},

        pages={295--349}

}

\bib{BD}{article}{

        title={A Variational representation for positive functionals of infinite dimensional brownian motion},

        author={A. Budhirja},

        author={P. Dupuis},

        journal={Probab. Math. Statist.},

        volume={20},

        date={2001},

}

\bib{BDF}{article}{

        title={Large devation properties of weakly interacting particles via weak convergence methods},

        author={A. Budhirja},

        author={P. Dupuis},

        author={M. Fischer},

        journal={T.A. of Prob.},

        volume={40},

        date={2012},

        pages={74--100}

}
\bib{CD}{book}{

        title = {Probabilistic Theory of Mean Field Games with Applications I},

        author = {R. Carmona},

	    author = {F. Delarue},
	
        date = {2018},

        publisher = {	Springer},

        address = {NY}

}

\bib{CDL}{article}{

        title={Mean field games with common noise},

        author = {R. Carmona},

        author = {F. Delarue},

        author= {D. Lacker},

        journal={Ann. Probab.},

        volume={44},
        number={6},
        date={2016},

        pages={3740--3803}

}










\bib{CTT}{article}{

        title={A pseudo–Markov property for controlled diffusion processes},

        author = {J. Claisse},

        author = {D. Talay},

        author= {X. Tan},

        journal={SIAM Journal on Control and Optimization},

        volume={54},
        number={2},
        date={2016},

        pages={1017--1029}

}

\bib{Dawson}{article}{

        title={Critical dynamics and fluctuations for a mean-field model of cooperative behavior},

        author={D. A. Dawson},

        journal={J. Stat. Phys.},

        volume={31},

        date={1983},

        pages={29--85}

}

\bib{DG}{article}{

        title={Large deviations from the mckean-vlasov limit for weakly interacting diffusions
},

        author={D. A. Dawson},

        author={J. G\"artner},

        journal={Stochastics},

        volume={20},
        number={4},

        date={1987},

        pages={247--308}

}

\bib{DGbook}{article}{

        title={Large deviations, free energy functional and quasi-potential for a mean field model of interacting diffusions
},

        author={D. A. Dawson},

        author={J. G\"artner},

        journal={Memoirs of the American Mathematical Society},

        volume={78},
        number={398},

        date={1989}

}
\bib{MatrixRoot}{article}{

        title={A Taylor expansion of the square root matrix function},

        author={P. Del Moral},

        author={A. Niclas},

        journal={Journal of Mathematical Analysis and Applications},

        volume={465},

        number={1},

        date={2018},

        pages={259--266}

}

\bib{DLR}{article}{

        title={From the master equation to mean field game limit theory: Large deviations and concentration of measure},

        author={F. Delarue},

        author={D. Lacker},

        author={K. Ramanan},

        journal={Annals of Probability},

        volume={48},
        number={1},

        date={2020},

        pages={211--263}

}

\bib{delgadino2020}{article}{

        title={On the diffusive-mean field limit for weakly interacting diffusions exhibiting phase transitions},

        author={M. G. Delgadino},
        author={R. S. Gvalani},
        author={G. A. Pavliotis},

        date={2021},
        journal={Archive for Rational Mechanics and Analysis},
        volume={241},
        pages={91--148}
}

\bib{DPTequivalence}{arxiv}{

        title={McKean-Vlasov optimal control: limit theory and equivalence between different formulations},

        author={F. M. Djete},
        author={D. Possama\"i},
        author={X. Tan},

        date={2020},
      arxiveprint={
            arxivid={2001.00925},
            arxivclass={math.OC},
      }
}

\bib{DPTdpp}{article}{

        title={McKean-Vlasov optimal control: the dynamic programming principle},

        author={F. M. Djete},
        author={D. Possama\"i},
        author={X. Tan},

        date={2022},
        journal={The Annals of Probability},
        volume={50},
        number={2},
        pages={791--833}
}

\bib{Dudley}{book}{

        title = {Real Analysis and Probability
},

        author = {IR. M. Dudley},

        date = {2010},

        publisher = {	Cambridge University Press},

        address = {Cambridge}

}




\bib{DE}{book}{

        title = {A Weak Convergence Approach to the Theory of Large Deviations},

        author = {P. Dupuis},

      author = {R. S. Ellis},

        date = {1997},

        publisher = { Wiley},

        address = {NY}

}
\bib{DS}{article}{

        title={Large deviations for multiscale diffusion via weak convergence methods},

        author={P. Dupuis},

        author={K. Spiliopoulos},

        journal={Stochastic Processes and their Applications},

        volume={122},
        number={4},

        date={2012},

        pages={1947--1987}

}
\bib{DSX}{article}{

        title={Escaping from an attractor: Importance sampling and rest points I},

        author={P. Dupuis},

        author={K. Spiliopoulos},
        author={X. Zhou},
        journal={Ann. Appl. Probab.},

        volume={25},
        number={5},

        date={2015},

        pages={2909--2958}

}

\bib{EK}{book}{

        title = {Markov Processes: Characterization and Convergence},

        author = {S. Ethier},

	    author = {T. Kurtz},
	
        date = {1986},

        publisher = {	Wiley},

        address = {NY}

}
\bib{feng2010short}{article}{

        title={Short-maturity asymptotics for a fast mean-reverting heston stochastic volatility model},

        author={J. Feng},

        author={M. Forde},
        author={J. P. Fouque},
        journal={SIAM Journal on Financial Mathematics},

        volume={1},
        number={1},

        date={2010},

        pages={126--141}

}
\bib{feng2012small}{article}{

        title={Small-time asymptotics for fast mean-reverting stochastic volatility models},

        author={J. Feng},

        author={J. P. Fouque},
        author={R. Kumar},
        journal={The Annals of Applied Probability},

        volume={22},
        number={4},

        date={2012},

        pages={1541--1575}

}
\bib{FK}{book}{

        title = {Large Deviations for Stochastic Processes},

        author = {J. Feng},
        author={G. Kurtz},

        date = {2006},

        publisher = {American Mathematical Society},

        address = {Providence}

}

\bib{jean2000derivatives}{book}{

        title = {Derivatives in financial markets with stochastic volatility},

        author = {J. P. Fouque},

	    author = {G. Papanicolaou},
	    author={K. R. Sircar},
	
        date = {2000},

        publisher = {	Cambridge University Press},

        address = {Cambridge}

}
\bib{FS}{article}{

        title={A comparison of homogenization and large deviations, with applications to wavefront propagation},

        author={M. Freidlin},

        author={R. Sowers},

        journal={Stochastic Process and Their Applications},

        volume={82},
        number={1},

        date={1999},

        pages={23--52.}

}

\bib{FW}{book}{

        title = {Random Perturbations of Dynamical Systems},

        author = {M. I. Freidlin},

	    author = {A. D. Wentzell},
	
        date = {2012},

        publisher = {	Springer},

        address = {Heidelberg}

}



%




\bib{Fischer}{article}{

        title={On the connection between symmetric N-player games and mean field games
},

        author={M. Fischer},

        journal={Ann. Appl. Probab.},
        volume={27},
        number={2},
        date={2017},
        pages={757--810}
}
\bib{FischerFormofRateFunction}{article}{

        title={On the form of the large deviation rate function for the empirical measures of weakly interacting systems
},

        author={M. Fischer},

        journal={Bernoulli},

        volume={20},
        number={4},

        date={2014},

        pages={1765--1801}

}
\bib{GaitsgoryNguyen}{article}{

        title={Multiscale singularly perturbed control systems: Limit occupational measures sets and averaging},

        author={V. Gaitsgor},
        author={M. T. Nguyen},

        journal={SIAM Journal on Control and Optimization},

        volume={41},
        number={3},

        date={2002},

        pages={954--974}

}
\bib{GS}{article}{

        title={Inhomogeneous functionals and approximations of invariant distributions of ergodic diffusions: Central limit theorem and moderate deviation asymptotics},

        author={A. Ganguly},
        author={P. Sundar},

        journal={Stochastic Processes and their Applications},

        volume={133},

        date={2021},

        pages={74--110}

}
\bib{Garnier1}{article}{

        title={Large deviations for a mean field model of systemic risk},

        author={J. Garnier},
        author={G. Papanicolaou},
        author={T. W. Yang},

        journal={SIAM Journal of financial mathematics},

        volume={4},
        number={1},

        date={2013},

        pages={151--184}

}
\bib{Garnier2}{article}{

        title={Consensus convergence with stochastic effects},

        author={J. Garnier},
        author={G. Papanicolaou},
        author={T. W. Yang},

        journal={Vietnam Journal of mathematics},

        volume={45},
        number={1-2},

        date={2017},

        pages={51--75}

}










%










\bib{gyongymimicking}{article}{

        title={Mimicking the one-dimensional marginal distributions of processes having an It\^o differential},

        author={I. Gy\"ongy},
        journal={Probability theory and related fields},
        volume={71},
        number={4},

        date={1986},

        pages={501--516}

}
\bib{HLLS}{arxiv}{

        title={Central Limit Type Theorem and Large Deviations for Multi-Scale McKean-Vlasov SDEs},

        author={W. Hong},

        author={S. Li},
        author={W. Liu},
        author={X. Sun},

        date={2021},
      arxiveprint={
            arxivid={2112.08203},
            arxivclass={math.PR},
      }
}

\bib{HyeonThirumalai}{article}{

        title={Can energy landscapes roughness of proteins and RNA be measured by using mechanical unfolding experiments?},

        author={C. Hyeon},
        author={D. Thirumalai},

        journal={Proc. Natl. Acad. Sci.},
        address={USA},

        volume={100},
        number={18},

        date={2003},

        pages={10249--10253}

}

\bib{IssacsonMS}{article}{

        title={Mean field limits of particle-based stochastic reaction-diffusion models},

        author={S. A. Isaacson},

        author={J. Ma},
        author={K. Spiliopoulos},
      journal = {SIAM Journal on Mathematical Analysis},
      volume = {54},
      number = {1},
      pages = {453--511},
      date = {2022},
}










\bib{KS}{book}{

        title = {Brownian Motion and Stochastic Calculus},

        author = {I. Karatzas},

	    author = {S. Shreve},
	
        date = {1998},

        publisher = {	Springer},

        address = {NY}

}
\bib{KCBFL}{article}{

        title={Emergent Behaviour in Multi-particle Systems with Non-local Interactions},

        author={T. Kolokolnikov},

        author={A. Bertozzi},

        author={R. Fetecau},

        author={M. Lewis},

        journal={Physica D},
        volume={260},
        date={2013},
        pages={1-4}
}

\bib{Kushner}{book}{

        title = {Weak Convergence Methods and Singularly Perturbed Stochastic Control and Filtering Problems},

        author = {H. J. Kushner},

        date = {1990},

        publisher = {Birkh\"auser},

        address = {Boston-Basel-Berlin}

}

\bib{Lacker}{article}{

        title={Limit theory for controlled McKean-Vlasov dynamics},

        author={D. Lacker},

        journal={SIAM Journal on Control and Optimization},
        volume={55},
        number={3},
        date={2017},
        pages={1641--1672}
}

\bib{LackerMarkovian}{article}{

        title={Mean field games via controlled martingale problems: Existence of Markovian equilibria},

        author={D. Lacker},

        journal={Stochastic Processes and their Applications},
        volume={125},
        number={7},
        date={2015},
        pages={2856--2894}
}





\bib{Lucon2016}{article}{

        title={Transition from Gaussian to non-Gaussian fluctuations for mean-field diffusions in spatial interaction},

        author={E. Lu\'{c}on},
        author={W. Stannat},

        journal={Annals of Probability},
        volume={26},
        number={6},
        date={2016},
        pages={3840--3909}
}



%




%
%
%
%
\bib{Lipster}{article}{

        title={Large deviations for two scaled diffusions},

        author={R. Lipster},

        journal={Probability Theory and Related Fields},
        volume={106},
        number={1},
        date={1996},
        pages={71--104}
}

\bib{majda2008applied}{article}{

        title={An applied mathematics perspective on stochastic modelling for climate},

        author={A. J. Majda},
        author={C. Franzke},
        author={B. Khouider},

        journal={Philosophical Transactions of the Royal Society A},
        volume={336},
        number={1875},
        date={2008},
        pages={2429--2455}
}
\bib{MotschTadmor2014}{article}{

        title={Heterophilious dynamics enhances consensus},

        author={S. Motsch},
        author={E. Tadmor},

        journal={SIAM Review},
        volume={56},
        number={4},
        date={2014},
        pages={577--621}
}

\bib{MS}{article}{

        title={Moderate deviations principle for systems of slow-fast diffusions},

        author={M. R. Morse},

        author={K. Spiliopoulos},

        journal={Asymptotic Analysis},
        volume={105},
        number={3--4},
        date={2017},
        pages={97--135}
}

\bib{MT}{book}{

        title = {Collective dynamics from bacteria to crowds: An excursion through modeling, analysis and simulation},
        volume={533},
        series={CISM International Centre for Mechanical Sciences. Courses and Lectures},
        editor = {A. Muntean},
        editor = {F. Toschi},

        date = {2014},

        publisher = {Springer},

        address = {Vienna}

}

\bib{Oksendal}{book}{

        title = {Stochastic Differential Equations},

        author = {B. {\O}ksendal},
	
        date = {2003},

        publisher = {	Springer},

        address = {NY}

}
\bib{PV1}{article}{

        title={On Poisson equation and diffusion approximation I},

        author={E. Pardoux},
        author={A. Y. Veretennikov},
        journal={Annals of Probability},

        volume={29},
        number={3},

        date={2001},

        pages={1061--1085}

}

\bib{PV2}{article}{

        title={On Poisson equation and diffusion approximation II},

        author={E. Pardoux},
        author={A. Y. Veretennikov},
        journal={Annals of Probability},

        volume={31},
        number={3},

        date={2003},

        pages={1166--1192}

}

\bib{PS}{book}{

        title = {Multiscale Methods},

        author = {G. Pavliotis},

	    author = {G. A. Stuart},
	
        date = {2008},

        publisher = {	Springer},

        address = {NY}

}

\bib{Rachev}{book}{

        title = {Probability Metrics and the Stability of Stochastic Models},

        author = {S. T. Rachev},
	
        date = {1991},

        publisher = {	Wiley},

        address = {NY}

}
\bib{RocknerFullyCoupled}{article}{

        title={Diffusion approximation for fully coupled stochastic differential equations},

        author={M. R\"ockner},

        author={L. Xie},

        date={2021},
        volume = {49},
        number = {3},

        journal = {The Annals of Probability},
        pages= {101--122},
}
\bib{Rockner}{article}{

        title={Strong convergence order for slow-fast McKean-Vlasov stochastic differential equations},

        author={M. R\"ockner},

        author={X. Sun},
        author={Y. Xie},

        date={2021},
        journal={Ann. Inst. H. Poincaré Probab. Statist.},
        volume={57},
        number={1},
        pages={547--576}

}


%



\bib{Spiliopoulos2013a}{article}{

        title={Large deviations and importance sampling for systems of slow-fast motion},

        author={K. Spiliopoulos},
        journal={Applied Mathematics and Optimization},

        volume={67},

        date={2013},

        pages={123--161}

}
\bib{Spiliopoulos2014Fluctuations}{article}{

        title={Fluctuation analysis and short time asymptotics for multiple scales diffusion processes},

        author={K. Spiliopoulos},
        journal={Stochastics and Dynamics},

        volume={14},
        number={3},

        date={2014},

        pages={1350026}

}
\bib{Spiliopoulos2013QuenchedLDP}{article}{

        title={Quenched Large Deviations for Multiscale Diffusion Processes in Random Environments},

        author={K. Spiliopoulos},
        journal={Electronic Journal of Probability},

        volume={20},
        number={15},

        date={2015},

        pages={1--29}

}

\bib{Veretennikov}{arxiv}{

        title={On large deviations in the averaging principle for {SDEs} with a ``full dependence'', correction},

        author={A. Yu. Veretennikov},

        date={2005},
	    arxiveprint={
        		arxivid={math/0502098},
        		arxivclass={math.PR},
      },
		note={Initial
article in \textit{Annals of Probability}, Vol. 27, No. 1, (1999), pp. 284--296}
		
}

\bib{VeretennikovSPA2000}{article}{

        title={On large deviations for SDEs with small diffusion and averaging},

        author={A. Yu. Veretennikov},
        journal={Stochastic Processes and their Applications},

        volume={89},
        number={1},

        date={2000},

        pages={69--79}

}













\bib{YW}{article}{

        title={On the uniqueness of solutions of stochastic differential equations},

        author={T. Yamada},
        author={S. Watanabe},
        journal={J. Math. Kyoto Univ.},

        volume={11},
        number={1},

        date={1971},

        pages={155--167}

}
\bib{Zwanzig}{article}{

        title={Diffusion in a rough potential},

        author={R. Zwanzig},

        journal={Proc. Natl. Acad. Sci.},

        volume={85},

        date={1988},

        pages={2029--2030},
        address = {USA}
}

\end{biblist}
\end{bibdiv}
\endgroup

\end{document}